\documentclass[10pt]{amsart}

%

\usepackage[pagebackref]{hyperref}
\usepackage{pinlabel}
\usepackage{overpic}
\usepackage{svg}
\usepackage{sseq}
\hypersetup{
    colorlinks = true,
    citecolor = [rgb]{0, 0.5, 0.00},
}

\renewcommand*{\backref}[1]{}
\renewcommand*{\backrefalt}[4]{%
    \ifcase #1 (Not cited.)%
    \or        (Cited on page~#2.)%
    \else      (Cited on pages~#2.)%
    \fi}

\usepackage{mathtools}
\usepackage{amssymb}

\usepackage[margin=1.175in]{geometry}
\pagestyle{plain}

\usepackage{todonotes}
\usepackage{multirow}
\usepackage{longtable}
\usepackage{enumerate}
\usepackage{mathabx}
\usepackage{amsmath}
\usepackage{amsfonts}
\usepackage{amssymb}
\usepackage{float}
\usepackage{amsthm}
\usepackage{comment}
\usepackage{amscd}
\usepackage{amsxtra}
\usepackage{epsfig}
\usepackage{slashed}

\usepackage{listings}
\usepackage{color}

\definecolor{dkgreen}{rgb}{0,0.6,0}
\definecolor{gray}{rgb}{0.5,0.5,0.5}
\definecolor{mauve}{rgb}{0.58,0,0.82}

\lstset{frame=tb,
  language=Java,
  aboveskip=3mm,
  belowskip=3mm,
  showstringspaces=false,
  columns=flexible,
  basicstyle={\small\ttfamily},
  numbers=none,
  numberstyle=\tiny\color{gray},
  keywordstyle=\color{blue},
  commentstyle=\color{dkgreen},
  stringstyle=\color{mauve},
  breaklines=true,
  breakatwhitespace=true,
  tabsize=3
}

\usepackage{color}
\usepackage[all]{xy}
\usepackage{verbatim}

\usepackage{eucal}
\usepackage{mathrsfs}

\usepackage{graphicx}
\usepackage{tikz-cd}

\usepackage{url}
\usepackage{verbatim}

\usepackage{xy}
\xyoption{all}

\usepackage{amsthm}
\linespread{1.2}

\usepackage{tikz}
\usetikzlibrary{decorations.pathreplacing}


\usepackage{caption} 
\captionsetup[table]{skip=-10pt}

\makeatletter
\def\@tocline#1#2#3#4#5#6#7{\relax
  \ifnum #1>\c@tocdepth 
  \else
    \par \addpenalty\@secpenalty\addvspace{#2}%
    \begingroup \hyphenpenalty\@M
    \@ifempty{#4}{%
      \@tempdima\csname r@tocindent\number#1\endcsname\relax
    }{%
      \@tempdima#4\relax
    }%
    \parindent\z@ \leftskip#3\relax \advance\leftskip\@tempdima\relax
    \rightskip\@pnumwidth plus4em \parfillskip-\@pnumwidth
    #5\leavevmode\hskip-\@tempdima
      \ifcase #1
       \or\or \hskip 1em \or \hskip 2em \else \hskip 3em \fi%
      #6\nobreak\relax
    \hfill\hbox to\@pnumwidth{\@tocpagenum{#7}}\par
    \nobreak
    \endgroup
  \fi}
\makeatother

\DeclareMathOperator{\Aut}{Aut}

\DeclareMathOperator{\coker}{coker}
\DeclareMathOperator{\ind}{ind}

\DeclareMathOperator{\End}{End}

\DeclareMathOperator{\im}{im}
\DeclareMathOperator{\imaginary}{\mathfrak{Im}}

\newtheorem{lemma}{Lemma}[section]

\newtheorem{corollary}[lemma]{Corollary}

\newtheorem{theorem}[lemma]{Theorem}

\newtheorem{prop}[lemma]{Proposition}

\theoremstyle{definition}

\newtheorem{definition}[lemma]{Definition}

\newtheorem{example}[lemma]{Example}

\newtheorem{remark}[lemma]{Remark}

\numberwithin{equation}{section} 
\newcommand{\Z}{\mathbb{Z}}

\newcommand{\M}{\mathfrak{M}}
\newcommand{\FM}{\mathcal{F}\mathfrak{M}}

\renewcommand{\L}{\mathcal{L}}

\newcommand{\CP}{\mathbf{C}P}
\newcommand{\RP}{\mathbf{R}P}

\newcommand{\Map}{\mathrm{Map}}

\renewcommand{\L}{\mathcal{L}}

\newcommand{\C}{\mathbb{C}}

\newcommand{\e}{\mathrm{e}}

\newcommand{\R}{\mathbb{R}}



\newcommand{\D}{\slashed{\mathfrak D}}

\renewcommand{\S}{\mathbb{S}}
\renewcommand{\ss}{\mathfrak{s}}

\newcommand{\GL}{\mathrm{GL}}

\newcommand{\SO}{\mathrm{SO}}

\newcommand{\SU}{\mathrm{SU}}

\renewcommand{\H}{\mathcal{H}}




\title{Surgery formulas for Seiberg-Witten invariants and family Seiberg-Witten invariants}
\author{Haochen Qiu}

\begin{document}

\maketitle

\begin{abstract}
    We prove a surgery formula for the ordinary Seiberg-Witten invariants, and surgery formulas for the families Seiberg-Witten invariants of families of $4$-manifolds obtained through  fibrewise surgery. Our formula expresses the Seiberg-Witten invariants of the manifold after the surgery, in terms of the original Seiberg-Witten moduli space cut down by a cohomology class in the configuration space. We use these surgery formulas to study how a surgery can preserve or produce exotic phenomena.
\end{abstract}

\tableofcontents
\section{Introduction}
Let $\gamma$ be a loop in a closed smooth $4$-manifold $X$. A surgery along $\gamma$ is removing a neighborhood of $\gamma$ with a trivialization of the normal bundle, and gluing back a copy of $D^2\times S^2$. For example, a surgery along $ S^1\times\{pt\} \subset S^1\times S^3 $ would produce $S^4$, while a surgery along a trivial loop on $S^4$ may produce $S^2\times S^2$ or $\CP^2 \# \overline{\CP^2}$.  So such surgery establishes relations between lots of $4$-manifolds. The four projects in this paper describe how a surgery can preserve or produce exotic phenomena.

The tool we use comes from the Seiberg-Witten equations, which depends on a metric and a self-dual $2$-form. The input of the equation for $X$ includes a $\text{Spin}^c$-structure (they are related to elements in $H^2(X;\Z)$), a  $U(1)$-connection, and a ``spinor''. The set of equivalence classes of $U(1)$-connections and spinors under the ``gauge group'' $\text{Map}(X,S^1)$ is called the \textbf{configuration space} (denoted by $\mathcal{B}$), which is a fiber bundle with fiber $\CP^\infty$ and base a torus $T^{b_1(X)}$. A tuple consisting of a metric and a perturbing $2$-form is called a \textbf{parameter}. The solution of this equation with a suitable parameter is a smooth compact manifold in the configuration space. This manifold is called the SW \textbf{moduli space} (denoted by $\M$). Its dimension is computed by the Atiyah-Singer index theorem, and if it is even, we can integrate a poduct of $c_1(\CP^\infty)$ on the moduli space and get the so-called \textbf{SW invariant} (when the dimension is $0$, the integral just counts the points with signs). This is an invariant under diffeomorphism. Many examples of exotic $4$-manifolds were found by computing this invariant for two homeomorphic manifolds.

The family SW invariant ($FSW$), on the other hand, can detect higher dimensional exotic phenomena. Given a smooth family of $X$ over a base $B$ and a corresponding family of parameters, the union of the solutions is called the parameterized moduli space, and if its dimension is $0$ then $FSW$ is the signed counts of points with orientation. For each $k\ge 0$, Ruberman-Auckly construct a $(k+1)$-family of $X$ such that the $FSW$ for this family is an invariant of $\pi_k(\text{Diff}(X))$.

In the following projects, we generalize $SW$ and $FSW$ to $1$-dimensional moduli space, such that new invariants (we call them $SW^\Theta$ and $FSW^\Theta$) can detect exotic phenomena. Then we prove several surgery formulas that show how a surgery changes $SW$, $FSW$, $SW^\Theta$ and $FSW^\Theta$.

\subsection{Surgery formula for homologically nontrivial loop}
For a $4$-manifold $X$ with 
\[
H^1(X;\Z)=\Z,
\]
suppose $\ss$ is a $\text{Spin}^c$-structure such that $\dim \M(X,\ss)=1$. The configuration space is homotopy equivalent to a bundle over $S^1$ with fiber $ \CP^\infty$. Let $\Theta$ be the pullback of a generator of $H^1(S^1;\Z)$. Define the cut-down Seiberg-Witten invariant $SW^\Theta(X,\ss)$ be the integral of $\Theta$ on $\M(X,\ss) $. We prove that this invariant detects exotic smooth structures.

Let $\gamma \subset X$ be a loop that represents a generator of $H_1(X;\Z)/\text{torsion} = \Z$. Suppose a surgery along $\gamma$ produces $X'$. We show that any $\text{Spin}^c$-structure $\ss$ on $X$ can be extended to a unique $\text{Spin}^c$-structure $\ss'$ on $X'$. Since the surgery kills the first cohomology group, $H^1(X';\Z)=0$ and therefore $\dim \M(X',\ss')=0$. Hence $SW(X',\ss')$ is defined by counting points in $ \M(X',\ss)$. The main theorem of this project is 
\begin{theorem}
$SW^\Theta(X,\ss) = SW(X',\ss')$.
\end{theorem}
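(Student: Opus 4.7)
The plan is to interpret $\Theta$ concretely as the holonomy around $\gamma$ and then use neck-stretching to identify a level set of this holonomy with $\M(X',\ss')$. For the first step, the function $[(A,\phi)] \mapsto \exp(i\oint_\gamma A)$ on $\mathcal{B}(X)$ is well-defined because any gauge transformation $u\colon X\to S^1$ shifts $\oint_\gamma A$ by $2\pi$ times an integer. Since $[\gamma]$ generates $H_1(X;\Z)/\text{torsion}$, the resulting holonomy map $h\colon \mathcal{B}(X)\to S^1$ is a homotopy equivalence onto the base $T^{b_1(X)}=S^1$ of the configuration space bundle, so $\Theta = h^*[d\theta/2\pi]$. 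After a generic perturbation making $h|_{\M(X,\ss)}$ transverse to a chosen regular value $\theta_0\in S^1$, the invariant $SW^\Theta(X,\ss)$ equals the signed count $\#\bigl(h^{-1}(\theta_0)\cap\M(X,\ss)\bigr)$, and the theorem reduces to matching this count with $SW(X',\ss')$.

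For the matching, I set up the surgery via neck-stretching. Let $N$ denote the complement of an open tubular neighborhood of $\gamma$, so that $X = N\cup_{S^1\times S^2}(S^1\times D^3)$ and $X' = N\cup_{S^1\times S^2}(D^2\times S^2)$. Insert a neck $[-T,T]\times S^1\times S^2$ in both manifolds and let $T\to\infty$. Equip each handle piece with a metric of positive scalar curvature (round $S^2$ together with a suitably curved metric on $D^3$ for the $S^1\times D^3$ side), so that a Weitzenb\"ock argument forces every finite-energy SW solution on either handle piece with cylindrical end to be reducible. The reducibles on $S^1\times D^3$ form an $S^1$-family indexed by asymptotic holonomy around the $S^1$ factor; on $D^2\times S^2$, since $H^1(D^2\times S^2)=0$, the asymptotic $S^1$-holonomy must be trivial, and there is a unique reducible up to gauge. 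Standard SW compactness and gluing then identifies $\M(X,\ss)$ and $\M(X',\ss')$ in the limit with moduli spaces of finite-energy solutions on $N$ carrying prescribed asymptotic holonomy on the end.

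Combining the above, a point of $h^{-1}(\theta_0)\cap\M(X,\ss)$ corresponds to a finite-energy solution on $N$ whose asymptotic holonomy is $\theta_0$, while a point of $\M(X',\ss')$ corresponds to one with trivial asymptotic holonomy. A constant $S^1$-valued gauge rotation on the cylindrical end, extended over $N$, gives a canonical oriented bijection between these two spaces of $N$-solutions and hence between $h^{-1}(\theta_0)\cap\M(X,\ss)$ and $\M(X',\ss')$, producing the claimed equality. The hard part is the neck-stretching and gluing package: one must establish compactness (no energy escapes into the long neck), bijectivity of gluing, and most delicately the coherence of orientations so that the signed counts genuinely agree. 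The positive scalar curvature of the $S^2$ factor and the vanishing $H^1(D^2\times S^2)=0$ are the two decisive geometric inputs, reducing the handle-side contribution to a single well-behaved reducible and making the matching work cleanly.
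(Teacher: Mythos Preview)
Your overall architecture is the same as the paper's: interpret $\Theta$ via holonomy around $\gamma$, stretch the neck along $S^1\times S^2$, use positive scalar curvature on the handle pieces to force reducibles, and identify both moduli spaces with moduli on $X_0$ under boundary conditions coming from $\M(S^1\times S^2)\cong S^1$. That is exactly the paper's route (Corollary on fiber products, then the main theorem).

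However, your final identification step has a genuine gap. You claim that a ``constant $S^1$-valued gauge rotation on the cylindrical end, extended over $N$'' gives a bijection between $N$-solutions with asymptotic holonomy $\theta_0$ and those with trivial holonomy. But holonomy around $\gamma$ is gauge-invariant: a gauge transformation $u$ shifts $\oint_\gamma A$ by $2\pi i$ times the winding number of $u|_\gamma$, so $\exp(i\oint_\gamma A)$ is unchanged. There is no gauge transformation on $X_0$ that moves one asymptotic holonomy value to another. (Adding a closed $1$-form would shift holonomy, but that alters the Dirac operator and does not preserve solutions.)

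The paper fills this gap differently. It proves that for a generic perturbation each monopole on $X_0$ is \emph{strongly regular}, meaning $H^2(F_{\hat{\mathsf C}_0})=0$; via the long exact sequence of the boundary complex this forces
\[
\partial_\infty:\M(X_0)\longrightarrow \M(S^1\times S^2)\cong S^1
\]
to be a \emph{submersion}. Hence $\M(X_0)$ is a finite disjoint union of circles covering $S^1$, and both $SW^\Theta(X,\ss)$ (the winding number of $\M(X)\cong\M(X_0)$ in $\mathcal{B}_X$) and $SW(X',\ss')$ (the signed count of the fiber over the single point $\M(D^2\times S^2)\hookrightarrow S^1$) equal the total degree $\sum_i d_i$ of this covering. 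So the equality of the two fibers is a consequence of the submersion/degree argument, not of any gauge symmetry. You also need to verify that the gluing obstruction on the $D^2\times S^2$ side vanishes despite $L^2_{top}\neq 0$ there; the paper handles this by a separate diagram chase (their Proposition on the obstruction space for $X'$), which your sketch elides.
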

This is proved by applying the classical gluing result in Nicolaescu's book \cite{Nicolaescu2000NotesOS} twice. Let $S^1 \times D^3$ be a neighborhood of $\gamma$, and let $X_0 = X - S^1 \times D^3$. Then gluing $X_0$ with $S^1 \times D^3$ produces $X$, while gluing $X_0$ with $D^2 \times S^2$ produces $X'$. The classical gluing result says, if a certain ``obstruction space'' is trivial on $X_0$, then $\M(X)$ is the fiber product $\M(X_0)\times_{\M(S^1 \times S^2)} \M(S^1 \times D^3)$ while $\M(X')$ is the fiber product $\M(X_0)\times_{\M(S^1 \times S^2)} \M(D^2 \times S^2)$. We prove that since $\gamma$ is homologically nontrivial, for generic parameters such obstruction space is trivial. Furthermore, we can choose suitable metrics such that $\M(S^1 \times D^3) \to \M(S^1 \times S^2)$ is the identity map of a circle, and $\M(D^2 \times S^2) \to \M(S^1 \times S^2)$ is the inclusion of one point into a circle. Hence if we cut $\M(X')$, we get $\M(X)$, and the theorem follows.

As lots of exotic smooth structures are detected by $SW$, we can now generalize those results to nonsimply connected manifolds, for example:
\begin{corollary}
$E(n) \# S^1 \times S^3$ admits infinitely many exotic smooth structures.
\end{corollary}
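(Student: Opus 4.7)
The plan is to transfer a known infinite family of exotic smooth structures on $E(n)$ to $E(n)\#S^1\times S^3$ via the main theorem of the project. First I would fix an infinite sequence $\{E(n)_k\}_{k\ge 1}$ of smooth structures on $E(n)$ that are pairwise non-diffeomorphic but all homeomorphic to the standard $E(n)$; such families are classical, produced for instance by Fintushel--Stern knot surgery, with pairwise distinctness detected by pairwise distinct values of the ordinary Seiberg-Witten invariant on a common basic class of the underlying topological manifold.

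Next I would set $X_k := E(n)_k \# S^1 \times S^3$. All $X_k$ are mutually homeomorphic to the fixed topological manifold $E(n) \# S^1 \times S^3$, since the connected sum is a topological operation and $\pi_1 = \Z$ is a good group for topological surgery. In particular $H^1(X_k;\Z) = \Z$, with a generator represented by the loop $\gamma_k = S^1 \times \{pt\} \subset S^1 \times S^3$. A surgery along $\gamma_k$ replaces the $S^1 \times S^3$ summand by $S^4$, so the surgered manifold $X_k'$ is canonically diffeomorphic to $E(n)_k$.

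For the Seiberg-Witten computation, I would observe that forming the connected sum with $S^1\times S^3$ leaves the intersection form and the signature unchanged while lowering $\chi$ by $2$; hence for any $\text{Spin}^c$-structure $\ss_k$ on $X_k$ whose extension $\ss_k'$ on $X_k' \cong E(n)_k$ is a basic class of $E(n)_k$, the expected dimension of $\M(X_k, \ss_k)$ equals $\dim \M(E(n)_k,\ss_k') + 1 = 1$, so $SW^\Theta(X_k,\ss_k)$ is defined. The main theorem then yields $SW^\Theta(X_k, \ss_k) = SW(E(n)_k, \ss_k')$, and since the right sides are pairwise distinct across $k$ while $SW^\Theta$ is a diffeomorphism invariant, the $X_k$ are pairwise non-diffeomorphic.

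The main obstacle I anticipate is the $\text{Spin}^c$ bookkeeping: I must verify that each distinguishing basic class on $E(n)_k$ is realized by some $\ss_k$ under the extension map from $\text{Spin}^c$-structures on $X_k$ guaranteed by the theorem, and that this identification is compatible with the isomorphism $H^2(X_k;\Z) \cong H^2(E(n);\Z)$ so that the basic classes can be chosen uniformly in $k$. Once this naturality is in place, the corollary follows immediately from the main theorem combined with the existence of infinitely many Fintushel--Stern-type exotic smooth structures on $E(n)$.
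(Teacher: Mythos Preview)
Your approach is essentially the paper's: produce infinitely many exotic $E(n)_k$ via Fintushel--Stern knot surgery, form $X_k = E(n)_k \# S^1 \times S^3$, and invoke the main theorem $SW^\Theta(X_k,\ss_k)=SW(E(n)_k,\ss_k')$ to distinguish the $X_k$.

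The obstacle you flag is the right one, but your proposed resolution is not sufficient. Even if the basic classes are chosen uniformly under a common homeomorphism to $E(n)$, a putative diffeomorphism $f\colon X_k\to X_l$ need not respect that identification; you only get $SW^\Theta(X_k,\ss_k)=SW^\Theta(X_l,f_*\ss_k)$, and $f_*\ss_k$ may differ from your chosen $\ss_l$. So ``distinct SW on a common basic class'' does not by itself rule out a diffeomorphism. The paper closes this gap by first proving a lemma (its Theorem on $X_1\#(S^1\times S^3)$ vs.\ $X_2\#(S^1\times S^3)$): if there exists $\ss_1$ on $X_1$ with $SW(X_1,\ss_1)\neq SW(X_2,\ss_2)$ for \emph{every} $\ss_2$ on $X_2$, then the two connected sums are not diffeomorphic. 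It then chooses the knots so that the quantity $\max_{n\in\Z} SW(X_K,\ss+n[T])$ --- a function of the manifold alone, equal to the largest Alexander coefficient --- takes infinitely many values. This max is exactly the kind of $\text{spin}^c$-independent aggregate you need; once you replace ``distinct SW at a uniform basic class'' by ``distinct maximum SW value'', your argument goes through.
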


The method developed in this project also works for the homologically trivial case. Let $\gamma \subset X$ be a loop that represents $0\in H_1(X;\Z)$. Suppose a surgery along $\gamma$ produces $X'$. We show that for any extension $\ss'$ of any $\text{Spin}^c$-structure $\ss$ on $X$ with $\dim \M(X,\ss)=0$, we have $\dim \M(X',\ss')=0$. Since $\gamma$ is homologically trivial, we will have
\begin{theorem}\label{thm:vanishing}
$ SW(X',\ss')=0$.
\end{theorem}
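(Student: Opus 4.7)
The plan is to reapply the gluing strategy from the proof of Theorem~1.1, now in the nullhomologous setting, and then to exploit a dimension mismatch to show that the Seiberg--Witten moduli space on $X'$ is generically empty. Let $N(\gamma) \cong S^1 \times D^3$ be a tubular neighborhood of $\gamma$ and set $X_0 = X \setminus N(\gamma)$, so that $X = X_0 \cup_{S^1 \times S^2} (S^1 \times D^3)$ and $X' = X_0 \cup_{S^1 \times S^2} (D^2 \times S^2)$. I would equip $X_0$ with the same sort of cylindrical-end metric used in Theorem~1.1, with the product positive scalar curvature metric on $S^1 \times S^2$. As before $\M(S^1 \times S^2) \cong S^1$, the restriction map $\M(S^1 \times D^3) \to S^1$ is a diffeomorphism, and the restriction map $\M(D^2 \times S^2) \to S^1$ is the inclusion of a single point $p$.

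The first step is to verify that the gluing obstruction on $X_0$ vanishes for a generic admissible parameter. In Theorem~1.1 this was deduced from the homological nontriviality of $\gamma$; in the nullhomologous case, I would instead use a Seifert surface for $\gamma$, which sits in $X_0$ with boundary a longitude on $S^1 \times S^2$, to build perturbations that kill the cokernel of the linearised Seiberg--Witten operator. Establishing this obstruction vanishing is the step I expect to be the main difficulty, since the hypothesis $[\gamma]\neq 0$ that powered the earlier argument is no longer available and I must find a replacement using the Seifert surface data.

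Granted the obstruction vanishes, Nicolaescu's gluing theorem yields
\[
\M(X, \ss) \cong \M(X_0) \times_{S^1} \M(S^1 \times D^3) \cong \M(X_0), \qquad \M(X', \ss') \cong \M(X_0) \times_{S^1} \{p\}.
\]
The hypothesis $\dim \M(X, \ss) = 0$ forces $\dim \M(X_0) = 0$, so the evaluation map $\M(X_0) \to S^1$ has finite image. For a generic perturbation on $X_0$ this finite image is disjoint from $p$, making the fibered product empty. Hence $\M(X', \ss') = \emptyset$ for the corresponding stretched-neck parameter on $X'$. Since this parameter is admissible for the ordinary Seiberg--Witten invariant (by standard transversality in the cylindrical-end formalism), parameter-independence of $SW$ gives $SW(X', \ss') = 0$.
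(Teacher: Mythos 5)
Your plan founders at the step you yourself flag as the main difficulty, and the obstacle is not a technical inconvenience but a structural impossibility that the paper itself records. When $\gamma$ is nullhomologous, the image $L^1_{top}=\mathrm{im}\bigl(H^1(X_0;\R)\to H^1(\S^1\times\S^2;\R)\bigr)$ is zero (a Seifert surface for $\gamma$ shows the boundary circle bounds in $X_0$), hence $L^2_{top}\cong\R$, and the remark following Proposition \ref{prop:noncompactTransversality} shows that then \emph{every} monopole $\hat{\mathsf{C}}_0$ on the cylindrical-end $X_0$ satisfies $\dim H^2(F(\hat{\mathsf{C}}_0))=\dim H^2(\widehat{\mathcal K}_{\hat{\mathsf{C}}_0})+1\geq 1$, for \emph{every} admissible perturbation: the extra cokernel direction is the harmonic part of $d^+(\beta(t)\,d\theta)$, detected by its nonzero boundary limit in $L^2_{top}$, and no perturbation supported on $X_0$ (Seifert surface or not) can hit it without violating the transversality count. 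So strong regularity on $X_0$ (assumptions $\mathbf{A_3}$/$\mathbf{A_5}$ of Theorem \ref{thm:ggt}) cannot be achieved, and the unobstructed fiber-product description $\M(X')\cong\M(X_0)\times_{\S^1}\{p\}$ that your argument "grants" is exactly what is unavailable in the nullhomologous case. This is why the paper abandons the clean gluing of the first project here and instead runs the obstructed, parameterized gluing of Sections 5.1--5.3, where the obstruction bundle is absorbed by base directions (whence the hypothesis $\dim B>0$ in Theorem \ref{thm:nullhomology-orientable}).

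The endgame is also not the right mechanism. For every extension $\ss'\in\mathcal{S}(X',\ss)$ one has $c_1(\L')^2=c_1(\L)^2$, $\chi(X')=\chi(X)+2$, $\sigma(X')=\sigma(X)$, so $\dim\M(X',\ss')=\dim\M(X,\ss)-1$. If you really take $\dim\M(X,\ss)=0$, then $\M(X',\ss')$ has negative formal dimension and is generically empty by ordinary transversality, so the entire gluing apparatus proves nothing new; in the non-vacuous case $\dim\M(X',\ss')=0$ one has $\dim\M(X_0)=1$, the boundary-value map $\M(X_0)\to\M(\S^1\times\S^2)=\S^1$ has one-dimensional image, and "a finite image generically misses $p$" is simply false. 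The paper's vanishing is a signed cancellation, not emptiness: because $\gamma$ bounds in $X_0$, the holonomy along $\gamma$ lifts from $\S^1$ to $\R$ on the configuration space, so the holonomy map on the moduli space has winding number zero and the signed count of the fiber over $p$ vanishes; the intersections are genuinely present in general, as Theorem \ref{thm:nullhomology-nonorientable} shows by producing a nonzero mod $2$ count for a twisted family. So both the obstruction-vanishing step and the "generic emptiness" conclusion need to be replaced before this proposal can be salvaged.
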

This generalizes the vanishing result of the connected sum with $S^2\times S^2$. Theorem \ref{thm:vanishing} can also be obtained by the generalized adjunction formula (\cite{KM94surfaces}), but the method in this project fits in the proof of family surgery formula below, where a homologically trivial loop has nontrivial higher exotic phenomena.

\subsection{Family surgery formula for homologically nontrivial loop}
The motivation for this work is the following question: 

\textbf{Q:} If $X$ is a smooth manifold with an exotic diffeomorphism, can we find an exotic diffeomorphism on $X \# (\S^1\times \S^3)$? 

Here, an exotic diffeomorphism $f$ is a self diffeomorphism of $X$ such that $f$ is continously isotopic to the identity, but $f$ is not smoothly isotopic to the identity. Ruberman \cite{Ruberman1998AnOT} proves that $\C P^2 \# 2\overline{\C P}^2 \# E(2)$ admits an exotic diffeomorphism by the family Seiberg-Witten invariant (which would be explained later). Baraglia and Konno show that 
\[ n (\S^2 \times \S^2) \# (nK3)\] for $n \leq 2$ and \[2n\C P^2 \#(m \overline{\C P}^2)\] for $n\leq 2$ and $m\leq 10n +1$ admit exotic diffeomorphisms, by a gluing formula of the family Seiberg-Witten invariant. All these examples are simply connected. 

In this project we consider a nonsimply connected manifold $X$ with $H_1(X;\Z) = \Z$ and a smooth family $E_X$ of $X$ indexed by the parameter space $B$. Let $E_{S^1}$ be a subbundle such that each fiber of $E_{S^1}$ is a loop that represents a generator of $H_1(X;\Z) = \Z$. Suppose a family of surgeries along $E_{S^1}$ produces $E_{X'}$. Suppose $\ss$ is a $\text{Spin}^c$-structure such that $\dim \M(X,\ss)=\dim B +1$. As before any $\text{Spin}^c$-structure $\ss$ on $X$ can be extended to a unique $\text{Spin}^c$-structure $\ss'$ on $X'$, and we are able to define $\Theta$ similarly. Since the surgery kills the first cohomology group, $H^1(X';\Z)=0$ and therefore the parameterized moduli space on $X'$ has dimension $\dim \FM(X',\ss')=0$. Hence $FSW(X',\ss')$ is defined by counting points in $ \FM(X',\ss)$. The main theorem of this project is 
\begin{theorem}
$FSW^\Theta(E_X,\ss) = FSW(E_{X'},\ss')$.
\end{theorem}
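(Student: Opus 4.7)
The plan is to parallel the proof of the non-family surgery formula by applying the gluing theorem of \cite{Nicolaescu2000NotesOS} fiberwise over the base $B$. The fiberwise surgery decomposition globalizes to
\[
E_X = E_{X_0}\cup_{E_{S^1\times S^2}} E_{S^1\times D^3}, \qquad E_{X'} = E_{X_0}\cup_{E_{S^1\times S^2}} E_{D^2\times S^2},
\]
where $E_{X_0}$ is the subbundle with fiber $X_0=X\setminus(S^1\times D^3)$. I would choose a $B$-family of parameters --- fiberwise metrics with a long neck near $E_{S^1}$ together with a $B$-family of perturbing self-dual two-forms --- so that the parametrized obstruction space on $E_{X_0}$ vanishes and the parametrized moduli spaces on each piece are cut out transversely.

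The key technical point is the obstruction vanishing in families. Fiberwise, nontriviality of parallel transport around $\gamma$ forces vanishing by the same argument used in the non-family case. Since the obstruction is a closed condition on a finite-rank bundle over the family parameter space, fiberwise vanishing persists under small perturbations; a parametric (Sard-Smale) transversality argument then upgrades this to generic vanishing for a single $B$-family of parameters (restricted to compact subsets if $B$ is noncompact).

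With these choices, fiberwise gluing gives
\[
\FM(E_X,\ss) = \FM(E_{X_0})\times_{\FM(E_{S^1\times S^2})}\FM(E_{S^1\times D^3}),
\]
\[
\FM(E_{X'},\ss') = \FM(E_{X_0})\times_{\FM(E_{S^1\times S^2})}\FM(E_{D^2\times S^2}).
\]
As in the non-family case, I would choose the metric so that fiberwise $\M(S^1\times D^3)\to \M(S^1\times S^2)$ is the identity of $S^1$ while $\M(D^2\times S^2)\to\M(S^1\times S^2)$ is the inclusion of a point. This realizes $\FM(E_{X'},\ss')$ as a smooth submanifold of $\FM(E_X,\ss)$ that is Poincar\'e dual to $\Theta$ along the $E_{S^1}$-direction. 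Since $FSW^\Theta(E_X,\ss)$ is by construction the result of cutting down $\FM(E_X,\ss)$ by $\Theta$, and this cut-down recovers the signed count of $\FM(E_{X'},\ss')$, one obtains $FSW^\Theta(E_X,\ss) = FSW(E_{X'},\ss')$.

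The hardest part will be reconciling parametric transversality, parametric obstruction vanishing, and the special metric needed to trivialize the model moduli spaces, all with a single $B$-family of parameters. This is an openness-plus-genericity argument introducing no new conceptual phenomena beyond those in the non-family case, but it demands careful bookkeeping because the parameter space is enlarged by $\dim B$ and the transversality defect has to be traded against the Baire-generic choice of fiberwise two-form across all of $B$ at once.
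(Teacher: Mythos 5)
There is a genuine gap, and it sits exactly at the step you call the key technical point. You claim that the obstruction space on $E_{X_0}$ can be made to vanish by a parametric Sard--Smale argument, so that the ordinary (unobstructed) gluing theorem of Nicolaescu can simply be applied fiberwise. This is impossible in the family setting. Under the family dimension assumption the fiberwise virtual dimension on $X_0$ is $1-\dim B$, and for any monopole $\hat{\mathsf{C}}_1$ on $X_0$ the long exact sequence \ref{equ:longExact} forces $\dim H^1(F_{\hat{\mathsf{C}}_1})\ge 1$ (the reducible boundary contributes $H^0(B_{\hat{\mathsf{C}}_1})\cong\R$ injecting into $H^1(F_{\hat{\mathsf{C}}_1})$ because all monopoles on $X_0$ are irreducible), so the Euler characteristic identity gives $\dim H^2(F_{\hat{\mathsf{C}}_1})\ge \dim B$. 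Genericity of the family of perturbations only yields the opposite inequality $\dim H^2(F_{\hat{\mathsf{C}}_1})\le \dim B$; the conclusion (Theorem \ref{thm:cylindricalTransversality}) is that the obstruction space has dimension exactly $\dim B$, never zero, as soon as $\dim B>0$. So ``fiberwise vanishing persists under small perturbations'' has no content here: there are no fibers where the obstruction vanishes, and no amount of perturbation within a $B$-family can remove it, because the deficit is an index constraint, not a transversality defect.

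Consequently the fiber-product descriptions of $\FM(E_X,\ss)$ and $\FM(E_{X'},\ss')$ cannot be obtained from the unparametrized global gluing theorem; they are the content of Theorem \ref{Thm:Local-gluing-theory-for-type1}, whose proof is a Baraglia--Konno style finite-dimensional reduction in which the obstruction is cancelled by the base directions: one needs the derivative $d_b\eta(T_bB)$ of the perturbation family to surject onto $H^2(F_{\hat{\mathsf{C}}_1})$, quantitative control of the projections $P^r_-$, $Q^r_-$ after gluing (the analogues of Lemmas 7.15--7.22 of \cite{BK2020}, including the rescaling of the spinor norm to control $\pi^\bot_{\hat{\mathsf{C}}_1}\eta(b)$), and two contraction-mapping arguments producing the solution $(\underline{\hat{\mathsf{C}}}^\bot,b)$ for each element of $\H^+_r$. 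Your final cut-down step also glosses over a point the paper needs: for a generic family the moduli space on $X_0$ is concentrated in finitely many fibers of $B$ (Proposition \ref{discreteCircle}), and the identification of the $\Theta$-count with the signed point count of $\FM(E_{X'},\ss')$ goes through the holonomy/forgetful diagrams and a transversality statement for the intersection with the section of reducibles, not through a naive Poincar\'e duality in $\FM(E_X,\ss)$. Without the obstruction-bundle analysis none of this machinery is available, so the proposal as written does not prove the theorem.
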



The main issue here is that the parameterized moduli space on $X$ is $1$-dimensional. Then locally there would be two cases:
\begin{enumerate}
\item[1)]  For an isolated parameter the solution is $1$-dimensional, and there is no other nearby parameter such that the equation has solutions; 
\item[2)]  There exists a $1$-dimensional family of parameters such that the solutions are $0$-dimensional for each of them.
\end{enumerate}
By analysing Hodge star operator and an exact sequence, it turns out that these cases depend purely on topological properties of $X_0$. When $\gamma$ is homologically nontrivial, we prove that for a generic parameter, the parameterized moduli space on $X_0$ is of case $1$, and the dimension of the obstruction space on $X_0$ is equal to $\dim B$, and therefore we can apply a method developed by Baraglia-Konno\cite{BK2020}.


This cut-down family invariant generalizes exotic diffeomorphisms found by Ruberman\cite{Ruberman1998AnOT} and Baraglia-Konno\cite{BK2020}. For example:
\begin{corollary}
Let $X$ be one of the following manifolds:
\begin{enumerate}
\item[\textbullet] $\mathbb{C}\mathbb{P}^2\# (\#^2\overline{\mathbb{C}\mathbb{P}}^2) \# Y$ and $b^+_2(Y) >2$.
\item[\textbullet]  $\#^n(\mathbb{S}^2\times \mathbb{S}^2) \# (\#^nK3)$ for $n \geq 2$.
\item[\textbullet] $\#^{2n}\mathbb{C}\mathbb{P}^2  \# (\#^m\overline{\mathbb{C}\mathbb{P}}^2)$ for $n \geq 2$ and $m \geq 10n+1$.
\end{enumerate}
Then $X\#( \S^1 \times \S^3)$ admits an exotic diffeomorphism.
\end{corollary}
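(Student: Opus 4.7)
The plan is to apply the family surgery formula above to the mapping torus of $\tilde f = f \# \text{id}_{\S^1\times\S^3}$, where $f$ is a known exotic diffeomorphism of $X$, and to transfer the non-vanishing of $FSW$ for the mapping torus of $f$ to non-vanishing of $FSW^\Theta$ for that of $\tilde f$.

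For each case, one first fixes $f:X\to X$ topologically isotopic to $\text{id}_X$ through an isotopy supported away from a small $4$-ball, together with a $\text{Spin}^c$-structure $\ss'$ satisfying $\dim\M(X,\ss') = 1$ and $FSW(E,\ss')\neq 0$ on the mapping torus $E\to S^1$. Case 1 is Ruberman's construction extended from $E(2)$ to arbitrary $Y$ with $b_2^+(Y)>2$; cases 2 and 3 start from Baraglia-Konno's base examples at $n=2$ and extend to larger $n$ (and $m$) by iterated connect sum with further $\S^2\times\S^2\#K3$ or $\C P^2\#10\overline{\C P}^2$ summands, extending $f$ by the identity and invoking a family stabilization/gluing argument that preserves $FSW\neq 0$.

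Now set $\tilde X := X\#(\S^1\times\S^3)$ and $\tilde f = f\#\text{id}_{\S^1\times\S^3}$; the isotopy of $f$ fixing a $4$-ball induces a topological isotopy of $\tilde f$ to $\text{id}_{\tilde X}$. Let $\tilde E\to S^1$ be the mapping torus of $\tilde f$, and let $E_{S^1}\subset \tilde E$ be the subbundle whose fiber over each point is $\S^1\times\{pt\}\subset \S^1\times\S^3$; it is well-defined since $\tilde f$ restricts to the identity on this loop, and its fibers generate $H_1(\tilde X)=\Z$. Fiberwise surgery along $E_{S^1}$ replaces $\S^1\times\S^3$ by $\S^4$ in each fiber and leaves the $X$-factor monodromy unchanged, so the output is precisely $E$. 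Since $H^2(\S^1\times\S^3)=0$, the natural map $\text{Spin}^c(\tilde X)\to \text{Spin}^c(X)$ is a bijection, lifting $\ss'$ to a unique $\ss$ on $\tilde X$ with $\dim\M(\tilde X,\ss)=2=\dim B+1$. The family surgery formula then yields
\[
FSW^\Theta(\tilde E,\ss) \;=\; FSW(E,\ss') \;\neq\; 0.
\]
If $\tilde f$ were smoothly isotopic to $\text{id}_{\tilde X}$, then $\tilde E$ would be smoothly the product family $\tilde X\times S^1$, on which $FSW^\Theta$ vanishes (a constant family of parameters makes the parameterized moduli space split as $\M\times S^1$, and $\Theta$, pulled back from the internal $\S^1$ of the configuration space, integrates to zero over the $S^1$ factor). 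This contradicts the above, so $\tilde f$ is exotic.

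The hardest part is the input step: producing $f$ with $FSW\neq 0$ in the full generality claimed, since Ruberman and Baraglia-Konno cover only base cases. Establishing the family stabilization formula under connect sum with $\S^2\times\S^2\#K3$ or $\C P^2\#10\overline{\C P}^2$ (for cases 2 and 3) and extending Ruberman's argument to arbitrary $Y$ with $b_2^+(Y)>2$ (for case 1) is the main technical work needed before the surgery formula delivers the corollary.
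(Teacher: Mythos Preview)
Your overall strategy matches the paper's exactly: pass to the mapping torus of $\tilde f=f\#\text{id}$, do fiberwise surgery along the obvious $S^1$-subbundle to recover the mapping torus of $f$, apply the family surgery formula $FSW^\Theta(\tilde E,\ss)=FSW(E,\ss')$, and contrast with the vanishing on the trivial family. That is precisely Theorem~\ref{thm:add-s1*s3-preserves-exotic} plus its corollary.

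Two things need fixing. First, your dimension bookkeeping is off by two throughout. For $FSW(E,\ss')$ to be a count over $B=S^1$ you need $\dim\M(X,\ss')=-1$, not $+1$; and correspondingly $\dim\M(\tilde X,\ss)=0$ (so that $\dim\FM(\tilde E,\ss)=1$), not $2$. This is consistent with $\chi(\tilde X)=\chi(X)-2$ and the surgery decreasing the virtual dimension by one.

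Second, and more substantively, your final paragraph misidentifies the ``hardest part''. Ruberman's construction and Baraglia--Konno's gluing formula already produce the diffeomorphism $f$ with $FSW\neq 0$ for \emph{all} the listed manifolds, not just base cases; the paper simply cites \cite{Ruberman1998AnOT} and \cite{BK2020} for this input and moves on. No further stabilization argument is required. What the paper does spend a little care on --- and you assert without justification --- is arranging that $f$ has a fixed point and that the continuous isotopy to $\text{id}_X$ can be taken to fix that point (so that $\tilde f$ is well-defined and still topologically isotopic to the identity). The paper handles this by first smoothly isotoping $f$ to acquire a fixed point $x$, then composing the given continuous isotopy with an ambient isotopy dragging the trajectory of $x$ back to $x$. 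This is elementary but not automatic, and it is the only place your sketch leaves a genuine (if small) gap.
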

Ruberman\cite{ruberman2002positive} gives examples of simply connected manifolds for which the space of positive scalar curvature (psc) metrics is disconnected. This is demonstrated using family Seiberg-Witten invariant. We can generalize these results by the family surgery formula:
\begin{corollary}
Let $X$ be one of the following manifolds:
\begin{enumerate}
\item[\textbullet] $\mathbb{C}\mathbb{P}^2\# (\#^2\overline{\mathbb{C}\mathbb{P}}^2) \# Y$ for $b^+_2(Y)\ge 3$ .
\item[\textbullet] $\#^{2n}\mathbb{C}\mathbb{P}^2  \# (\#^m\overline{\mathbb{C}\mathbb{P}}^2)$ for $n \geq 2$ and $m \geq 10n+1$.
\end{enumerate}
Then the space of psc metrics on $X\# (\S^1 \times \S^3)$ has infinite many path components.
\end{corollary}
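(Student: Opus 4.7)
The plan is to mirror Ruberman's original argument in \cite{ruberman2002positive}, but replace the family Seiberg--Witten invariant of a mapping torus on a simply connected manifold by the cut-down family invariant $FSW^\Theta$ on the connect-sum, and transport the nonvanishing across surgery using the family surgery formula proved in this paper.

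First I would use the previous corollary to get an exotic diffeomorphism $\widetilde{f}$ of $X \# (\S^1 \times \S^3)$. In fact, unwinding the construction in the previous corollary, $\widetilde{f}$ is obtained by extending an exotic diffeomorphism $f$ of $X$ (the one built by Ruberman or Baraglia--Konno on the simply connected piece) by the identity on the $\S^1 \times \S^3$ summand. Let $E$ denote the mapping torus of $\widetilde{f}$, viewed as a smooth family of $X \# (\S^1 \times \S^3)$ over $B = S^1$. Inside $E$ we have the canonical fibrewise subbundle $E_{\S^1}$ coming from the $\S^1$ factor of $\S^1 \times \S^3$ in each fiber; fibrewise surgery along $E_{\S^1}$ converts each fiber back to $X \# \S^4 \cong X$ and produces a family $E'$ which is exactly the mapping torus of $f$. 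For a $\text{Spin}^c$-structure $\ss$ on $X \# (\S^1 \times \S^3)$ with $\dim \M = \dim B + 1 = 2$, the family surgery formula from the preceding theorem gives
\[
FSW^\Theta(E,\ss) \;=\; FSW(E',\ss').
\]
Ruberman \cite{ruberman2002positive} (respectively Baraglia--Konno \cite{BK2020} in the $\#^{2n}\CP^2\#(\#^m\overline{\CP}^2)$ case) shows that the right-hand side is nonzero for suitable $\ss'$, so $FSW^\Theta(E,\ss)\neq 0$.

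Next I would argue by contradiction, following Ruberman's scheme. Suppose the space of psc metrics on $X \# (\S^1 \times \S^3)$ has only finitely many path components. Pick a psc metric $g$; since $\widetilde{f}$ preserves the psc property, the pullbacks $\widetilde{f}^{*n} g$ are all psc, and by the pigeonhole principle two of them lie in the same component, so some power $\widetilde{f}^{N}$ fixes a path component of psc metrics. Concatenating a path of psc metrics with the one induced by $\widetilde{f}^N$ produces a family of psc metrics over $S^1$ realizing the mapping torus of $\widetilde{f}^{N}$. By the Weitzenböck formula applied fibrewise, and after choosing the $2$-form perturbation small enough to stay inside the chamber, the parameterized SW moduli space on this family is empty, so $FSW^\Theta$ vanishes for this family. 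On the other hand, by multiplicativity of $FSW^\Theta$ under covers of the base and the computation of the previous paragraph, $FSW^\Theta$ for the mapping torus of $\widetilde{f}^N$ equals $N \cdot FSW^\Theta(E,\ss) \neq 0$, a contradiction.

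The main obstacle, and the step I would spend the most care on, is arranging the parameter (metric and self-dual $2$-form perturbation) so that both computations make sense simultaneously. On the one hand I need to stay in the chamber where $FSW^\Theta(E,\ss)$ is computed by the surgery formula, and on the other hand I need the perturbed equations on the psc family to have empty moduli space (i.e.\ the perturbation must be small relative to the positive scalar curvature lower bound along the whole $S^1$-family). This is done by choosing the $2$-form small, noting that wall-crossing is controlled because $b^+_2$ is large in each listed example (which ensures $\ss$ can be taken with $c_1(\ss)^2$ in the correct range so that no reducible solutions appear on the psc family, exactly as in \cite{ruberman2002positive}). Once this chamber-consistency is established, the rest of the argument is formal from the family surgery formula.
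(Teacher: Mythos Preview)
Your overall strategy is correct and close to the paper's, but the packaging differs in one essential way. The paper does \emph{not} work with the single-$\text{spin}^c$ invariant $FSW^\Theta$ of the mapping torus and a multiplicativity-under-covers argument. Instead it defines a cut-down \emph{total} invariant $SW_{tot}^\Theta(f,\ss)$, obtained by summing $FSW^\Theta$ over the $f$-orbit of $\ss$ (the direct analogue of Ruberman's $SW_{tot}$), proves a surgery formula $SW_{tot}^\Theta(f,\ss\#\ss_0)=SW_{tot}(g,\ss)$ for $f=g\#\mathrm{id}_{\S^1\times\S^3}$, and proves a vanishing theorem: if some $f^{k-l}$ moves a psc metric to one in the same path component then $SW_{tot}^\Theta(f^{k-l},\cdot)=0$, hence $SW_{tot}^\Theta(f,\cdot)=0$ by the additivity built into the total invariant (deferring to Ruberman's Theorem~3.4). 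The paper's route avoids your multiplicativity step entirely and works even when the diffeomorphism permutes $\text{spin}^c$ structures nontrivially; your route needs $\tilde f^*\ss\cong\ss$, which does hold in these particular examples but is an extra hypothesis you are silently using.

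Two small points to clean up. First, your dimension count ``$\dim\M=\dim B+1=2$'' is off: on $X\#(\S^1\times\S^3)$ the unparameterized expected dimension is $0$ (one more than on $X$, where it is $-1$), so the parameterized moduli space is $1$-dimensional, matching the hypothesis of the family surgery formula. Second, the multiplicativity $FSW^\Theta(M(\tilde f^N),\ss)=N\cdot FSW^\Theta(M(\tilde f),\ss)$ is true (the $N$-fold base cover pulls back both $\FM$ and $\Theta$, and integration over the covered $1$-cycle picks up the degree $N$), but it is not stated anywhere in the paper, so you should include the one-line justification rather than invoke it as known.
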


Konno proves that $\pi_0(\text{Diff}(X))$ is not finitely generated for some simply connected $4$-manifold. We can generalize his result to nonsimply connected $4$-manifolds:
\begin{corollary}\label{coro:inf-gen}
There exists a simply connected $4$-manifold $X$ that is not a sphere, such that 
\[
\pi_0(\text{Diff}(X\# (\S^1 \times \S^3)))
\]
is not finitely generated.
\end{corollary}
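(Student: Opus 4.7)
The plan is to use the family surgery formula as a black box to transport an infinitely generated set of classes in $\pi_0(\text{Diff}(X))$, detected by $FSW$ of mapping tori, to classes in $\pi_0(\text{Diff}(X\#(\S^1\times\S^3)))$, detected by $FSW^\Theta$. Take $X$ to be Konno's simply connected $4$-manifold (not a sphere). Then there exist diffeomorphisms $f_1, f_2, \dots\in\text{Diff}(X)$ whose classes in $\pi_0(\text{Diff}(X))$ generate a non-finitely-generated subgroup, each detected by $FSW(E_i,\ss_i)$, where $E_i\to\S^1$ is the mapping torus of $f_i$ and $\ss_i$ is a $\text{Spin}^c$-structure on $X$ with $\dim \M(X,\ss_i)=\dim\S^1+1=2$.

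First, after a smooth isotopy (harmless for $\pi_0$), I may assume each $f_i$ restricts to the identity on a fixed smoothly embedded $4$-disk $D\subset X$. Forming the connected sum $X':=X\#(\S^1\times\S^3)$ at a point of $D$, I extend each $f_i$ by the identity on the $\S^1\times\S^3$ summand to get $f'_i\in\text{Diff}(X')$. The mapping torus $E'_i\to\S^1$ of $f'_i$ contains a canonical fiberwise loop subbundle $E_{\S^1}$, namely the loop $\S^1\times\{\text{pt}\}\subset\S^1\times\S^3$ sitting constantly in each fiber (which $f'_i$ fixes pointwise), representing a generator of $H_1(X';\Z)=\Z$. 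Because $f'_i$ is the identity on the $\S^1\times\S^3$ summand, fiberwise surgery on $E'_i$ along $E_{\S^1}$ tautologically undoes the connected sum in each fiber and recovers the original family $E_i\to\S^1$. The $\text{Spin}^c$-structure $\ss_i$ extends uniquely to $\ss'_i$ on $X'$ by the result recalled earlier in the paper.

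Applying the family surgery formula then yields
\[
FSW^\Theta(E'_i,\ss'_i)=FSW(E_i,\ss_i),
\]
so the left-hand sides are $\Z$-linearly independent. Since $FSW^\Theta$ on mapping tori descends to a group homomorphism from a quotient of $\pi_0(\text{Diff}(X'))$ to an abelian target, the subgroup generated by the classes $[f'_i]$ is itself infinitely generated, and therefore so is $\pi_0(\text{Diff}(X'))$.

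The hard part will be executing the preliminary isotopy cleanly — verifying that each orientation-preserving $f_i$ can be smoothly isotoped to the identity on a small disk (standard but requires invoking connectivity of $SO(4)$ together with a jet-isotopy argument) — and checking that the spin-c extensions and generic family parameter choices required by the family surgery formula are compatible with the mapping torus structure uniformly in $i$; the latter reduces to a transversality-in-families argument over the base $\S^1$.
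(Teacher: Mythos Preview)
Your approach---transport Konno's infinitely many independent diffeomorphism classes from $X$ to $X\#(\S^1\times\S^3)$ via the family surgery formula---is exactly the paper's (implicit) argument; the paper does not spell out a proof beyond ``we can generalize his result''. Two points to tighten:

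First, the dimension bookkeeping is off. In Konno's setting the \emph{unparameterized} moduli space on $X$ has formal dimension $-1$, so that the parameterized moduli over the mapping-torus base $\S^1$ is $0$-dimensional and $FSW(E_i,\ss_i)$ counts points. After $\#(\S^1\times\S^3)$, $b_1$ jumps by one, so $\dim\M(X',\ss'_i)=0$ and the parameterized moduli over $\S^1$ becomes $1$-dimensional---which is precisely the hypothesis under which $FSW^\Theta$ is defined and the family surgery formula applies. Your statement ``$\dim\M(X,\ss_i)=\dim\S^1+1=2$'' does not match either side.

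Second, the assertion that ``$FSW^\Theta$ on mapping tori descends to a group homomorphism'' needs care. The raw family invariant of a mapping torus is not additive on $\pi_0(\text{Diff})$ because of wall-crossing; one needs a Ruberman-type total invariant (summing over the orbit of the spin$^c$ structure), which the paper develops as $SW_{tot}^\Theta$ and shows equals $SW_{tot}$ via the surgery formula. Konno's infinite-generation argument uses exactly such homomorphisms, so you should phrase the transport at the level of $SW_{tot}^\Theta = SW_{tot}$ rather than $FSW^\Theta = FSW$ of individual mapping tori. Once that is said, the conclusion follows as you indicate.
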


\subsection{Family surgery formula for homologically trivial loops}
In this project, we suppose each fiber of $E_{S^1}$ is a homologically trivial loop. Then we have 
\begin{theorem}\label{thm:generalize-stabilization}
Use the notation as before and assume the following:
\begin{itemize}
\item $\dim B > 0$;
\item $E_{\S^1} $ is an orientable $\S^1$-subbundle of $E_X$.
\end{itemize}
Then
\[
FSW(E_{X'}, s')=0.
\]
\end{theorem}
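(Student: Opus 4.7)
The plan is to adapt the parameterized gluing strategy used in the homologically nontrivial family surgery formula, now combined with the vanishing mechanism behind Theorem \ref{thm:vanishing}. Working fiberwise over $B$, decompose
\[
E_X = E_{X_0}\cup_{E_{\S^1\times \S^2}} E_{\S^1\times D^3}, \qquad E_{X'} = E_{X_0}\cup_{E_{\S^1\times \S^2}} E_{D^2\times \S^2},
\]
where $E_{X_0}$ is the fiberwise complement of a tubular neighborhood of $E_{\S^1}$. The orientability assumption on $E_{\S^1}$ is needed precisely here, to guarantee that these decompositions and the surgery assemble into a globally well-defined smooth family $E_{X'}\to B$.

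Next, I would apply a parameterized version of Nicolaescu's gluing theorem to identify
\[
\FM(E_{X'},s') \cong \FM(E_{X_0},s_0) \times_{\M(E_{\S^1\times \S^2})} \M(E_{D^2\times \S^2}),
\]
for a generic choice of family parameters, provided the fiberwise obstruction spaces are controlled. The crucial topological input is the dichotomy sketched after the nontrivial family formula: because each fiber of $E_{\S^1}$ is homologically trivial in the ambient $X$, the Hodge-star and exact-sequence analysis on $E_{X_0}$ lands in Case~2, in which for generic parameters the SW moduli space on the complement persists along an extra one-parameter direction in parameter space. Concretely, since the restriction $H^1(X;\R)\to H^1(\gamma;\R)$ vanishes, the long exact sequence of the pair $(X,X_0)$ together with Poincar\'e--Lefschetz duality furnishes an additional harmonic $1$-form on $X_0$ that lies in the kernel of the linearized SW operator and so enlarges the obstruction bundle by one dimension after gluing.

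Consequently, $\FM(E_{X'},s')$, which is of formal dimension zero, is realized as the boundary of a compact $1$-dimensional parameterized moduli space traced out by this extra parameter direction. A cobordism argument --- either by identifying the opposite boundary component of this $1$-manifold with an empty moduli space via an orientation-reversing symmetry, or by exploiting the fiberwise rotational $\S^1$-action on $E_{D^2\times \S^2}$, which is globally defined thanks to the orientability of $E_{\S^1}$ --- then forces the signed count to vanish, giving $FSW(E_{X'},s')=0$. I expect the main obstacle to be verifying that the parameterized obstruction space on $E_{X_0}$ has dimension exactly $\dim B + 1$ in the homologically trivial setting and that the induced orientation on the cobordism is compatible with the orientations used to define $FSW$; the assumption $\dim B > 0$ enters crucially here, since it is what promotes the extra one-parameter deformation into a nontrivial cobordism rather than an isolated point.
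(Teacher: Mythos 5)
Your setup (fiberwise decomposition, parameterized gluing to a fiber product over $\M(\S^1\times\S^2)$, and the observation that the homologically trivial case puts $X_0$ in the "extra parameter direction" regime) matches the paper, but the actual vanishing mechanism is missing, and the places where you invoke it would not work. The paper does not realize $\FM(E_{X'},s')$ as the boundary of some auxiliary compact $1$-manifold killed by an unspecified orientation-reversing symmetry or by a rotational $\S^1$-action on $E_{D^2\times\S^2}$ (neither of these acts on the Seiberg--Witten data in a way that pairs up solutions, and you never construct the claimed cobordism). Instead, the count $FSW(E_{X'},s')$ is identified, via the fiber-product description, with the algebraic intersection number inside the $\S^1$-bundle $\FM(\S^1\times\S^2)\cong\S^1\times B$ of the $1$-dimensional family moduli space $\partial_\infty\FM(X_0)$ with the section $\FM(D^2\times\S^2)\cong B$. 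The key point --- which your proposal never supplies --- is that because the surgery loop is homologically trivial (and $H^1(X)=0$), the holonomy along it defines a map $\mathcal{F}\mathcal{B}_{X_0}\to\R\times B$ lifting the boundary-value map through the covering $\R\times B\to\S^1\times B$; hence $\FM(X_0)$ has zero winding around the $\S^1$-factor and the signed intersection with $\{1\}\times B$ vanishes (transversality being supplied by the type-$0$ analysis of the boundary-value differential).

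This also exposes a conceptual error in how you use the hypotheses. Orientability of $E_{\S^1}$ is not needed merely to make the family surgery well defined; it is exactly what makes the holonomy bundle the trivial $\R\times B$ (rather than a M\"obius band over $B$), and without it the conclusion is false: the paper's Klein-bottle case gives $FSW^{\Z/2}(E_{X'},s')\equiv SW(X,s)\bmod 2$, which is generally nonzero. Under your reading of the hypotheses the vanishing would follow in that case too, so your argument cannot be repaired without locating orientability in the holonomy/winding step. Finally, your dimension bookkeeping is off: the genericity theorem for the cylindrical-end family gives $\dim H^2(F_{\hat{\mathsf{C}}_1})=\dim B$ (with the glued obstruction space $\H^-_r$ of dimension $\dim B$ for the $D^2\times\S^2$ gluing), and the extra direction produced by the failure of $H^1(X_0;\R)\to H^1(\S^1\times\S^2;\R)$ to surject enters through $d^+\alpha$ hitting the cokernel of the linearized map restricted to fixed boundary values, not as "an additional harmonic $1$-form in the kernel of the linearized SW operator" as you assert.
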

As we remark above, a surgery along a homologically trivial loop can preoduce nontrivial exotic phenomena:
\begin{theorem}\label{thm:nullhomology-nonorientable}
Use the notation as before and assume the following:
\begin{itemize}
\item $B$ is a circle;
\item $E_{\S^1} $ is an $\S^1$-subbundle of $E_X$, and it is a Klein bottle;
\end{itemize}
Then
\[
FSW^{\Z/2}(E_{X'}, s')\equiv SW(X, s) \mod 2.
\]
(Here the family invariant is defined by counting the points mod $2$.)
\end{theorem}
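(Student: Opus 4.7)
The plan is to adapt the fibrewise gluing strategy used for the orientable case (Theorem~\ref{thm:generalize-stabilization}) to the Klein-bottle setting, and to extract a surviving mod-$2$ intersection count matching $SW(X, s)$. First, I would apply the fibrewise gluing formula to decompose
\[
\FM(E_{X'}, s') \;\cong\; \FM(E_{X_0}) \;\times_{\FM(E_{S^1 \times S^2})}\; \FM(E_{D^2 \times S^2}),
\]
and analogously $\FM(E_X, s) \cong \FM(E_{X_0}) \times_{\FM(E_{S^1 \times S^2})} \FM(E_{S^1 \times D^3})$, for suitable families of metrics and perturbations.

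The key identification is to read off the Klein-bottle structure of the boundary moduli. Fibrewise, $\M(S^1 \times S^2)$ is the Picard circle, and the Klein-bottle monodromy on $E_{\S^1}$ induces the involution $y \mapsto -y$, so $\FM(E_{S^1 \times S^2})$ is homotopy equivalent to a Klein bottle $K$. The fibre $\M(D^2 \times S^2)$ is a single point, fixed by this involution, so $\FM(E_{D^2 \times S^2})$ embeds in $K$ as a section $C$; similarly $\FM(E_{S^1 \times D^3}) \to K$ is the fibrewise identity, a homotopy equivalence. Consequently $\FM(E_X, s) \simeq \FM(E_{X_0})$, and $FSW^{\Z/2}(E_{X'}, s')$ is the $\Z/2$-intersection number of $\rho(\FM(E_{X_0}))$ with $C$ in $K$, where $\rho$ is the boundary-restriction map. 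In $H_1(K; \Z/2)$ the intersection pairing satisfies $[b] \cdot [b] = [b] \cdot [f] = 1$, $[f] \cdot [f] = 0$; combined with the fibrewise identification $\M(X_0) \cong \M(X, s)$ (of signed count $SW(X, s)$), this yields $[\rho(\FM(E_{X_0}))] = SW(X, s) \cdot [b]$ in $H_1(K; \Z/2)$, so intersecting with $[C] = [b]$ gives $SW(X, s) \cdot (b \cdot b) \equiv SW(X, s) \pmod 2$.

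The hard part will be controlling the nontrivial obstruction bundle on $X_0$ that arises because $\gamma$ is null-homologous -- the same subtlety that required a Baraglia-Konno-type framework in Theorem~\ref{thm:generalize-stabilization}. One must arrange the perturbation of the SW equations on $E_{X_0}$ compatibly with the Klein-bottle monodromy on $E_{\S^1}$, and verify that after perturbation $[\rho(\FM(E_{X_0}))]$ lies purely in the $[b]$-class rather than acquiring a $[f]$-component (which would correspond to ``twisted'' sections whose contributions cancel in the intersection with $C$). Once this homological identification is established, the $\Z/2$-intersection computation above yields the stated formula.
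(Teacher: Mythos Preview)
Your overall architecture matches the paper's: both reduce to computing a mod-$2$ intersection number in the Klein bottle $K=\FM(E_{S^1\times S^2})$ between the image of $\FM(E_{X_0})$ and the section $C=\FM(E_{D^2\times S^2})$, after establishing the fibrewise gluing isotopies. The paper carries out this intersection by lifting to the universal cover $\R\times\R$ of $K$ and observing that a component of $\FM(X_0)$ winding once around $B$ crosses the lattice $\Z\times\R=(\text{preimage of }C)$ an odd number of times; your $H_1(K;\Z/2)$ computation is the algebraic reformulation of the same fact ($[b]\cdot[b]=1$).

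There is, however, a genuine gap in your justification of $[\rho(\FM(E_{X_0}))]=SW(X,s)\cdot[b]$. Your parenthetical that an $[f]$-component would ``cancel in the intersection with $C$'' is wrong: with your own intersection form $[f]\cdot[b]=1$, so an $[f]$-component would \emph{contribute}, and you must genuinely rule it out. The correct reason it vanishes is not a perturbation issue at all, but a topological factoring: the boundary map $\rho$ factors through the holonomy map $hol:\mathcal{F}\mathcal{B}_{X_0}\to\R\,\widetilde{\times}\,B$ to the M\"obius band (holonomy of the connection around the fibre of $E_{\S^1}$ is $\R$-valued, and the Klein-bottle monodromy makes the $\R$-bundle over $B$ nonorientable). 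Since $H_1(\R\,\widetilde{\times}\,B;\Z/2)=\Z/2$ is generated by the core circle, which maps to $[b]\in H_1(K;\Z/2)$ under the covering $p$, any class coming from the M\"obius band has vanishing $[f]$-component. The paper's lift-to-$\R\times\R$ computation is exactly this factoring made explicit. You should separate this from the obstruction-bundle analysis: the latter is needed to make the gluing isotopy $\FM(X)\cong\FM(X_0)$ hold (Section~\ref{sec:local-gluing-theory-for-case-0}), after which the degree of $\FM(X_0)\to B$ is $\#\M(X)$ and the $\beta=0$ claim is the purely topological observation above.
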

When $\gamma$ is homologically trivial, we prove that for a generic parameter, the parameterized moduli space on $X_0$ is of case $2$: there exists a $1$-dimesional family of parameters such that the solutions are $0$-dimensional for each of them. The dimension of the obstruction space on $X_0$ is one higher than $\dim B$, and therefore we have to generalize the method developed by Baraglia-Konno and estimate the errors by some inequalities.

A special example of these theorems is that each fiber of $E_{S^1}$ is a homotopically trivial loop. In this case $X'$ is $X\#( S^2\times S^2)$ or $X\#  \CP^2 \# \overline{\CP^2}$, and the results for $X\# (S^2\times S^2)$ were previously obtained by Baraglia-Konno\cite{BK2020}. But Theorem \ref{thm:nullhomology-nonorientable} works also for a homotopically nontrivial loop, so it has the potential to produce exotic diffeomorphisms on a irreducible manifold.

\section{Setup for the $1$-surgery formula}
\subsection{$\text{Spin}^\C$ structure }\label{subsection:Spin}
The definition of the Seiberg-Witten moduli space depends on a choice of the $\text{Spin}^\C$ structure, so we first review the theory of the $\text{Spin}^\C$ structure. 
Definitions in this subsection can be found in section 1.4.2 and 2.4.1 of \cite{gompf19994}. We also provide some auxilary examples (Example \ref{example:spin} and Remark \ref{remark:spincline}). The main theorem in this subsection is Theorem \ref{thm:changeOfSpinc}. It deals with the change of $\text{Spin}^\C$ structures by a $1$-surgery.

To understand the $\text{Spin}^\C$ structure, we first review the theory of the $\text{spin}$ structure. 
\begin{definition}
    \[
        \text{Spin}(4) =  SU(2) \times SU(2) 
    \]
    is called the $\text{spin}$ group of dimension $4$.
\end{definition}
Note that, $\text{Spin}(4)$ is the connective double cover of $SO(4) = SU(2) \times SU(2) / \{\pm (I,I)\}$. 

\begin{remark}\label{remark:spin}
    Double covers of $X$ correspond to $H^1(X;\Z_2) =[X, \RP^\infty]$. The correspondence is given by the sphere bundle of pull back of the universal line 
    bundle (tautological line bundle over $\RP^\infty$). For $SO(4)$, $H^1(SO(4);\Z_2) =[SO(4), \RP^\infty] = \Z_2$. So the double covers of $SO(4)$ are charecterized by the homotopy class of the image of the nontrivial loop of $SO(4)$ in $\RP^\infty$. 
    If that loop is homotopic to a constant loop in $\RP^\infty$, then the corresponding double cover is $SO(4)\sqcup SO(4)$. 
    If that loop is homotopic to the $1$-cell of $\RP^\infty$, then the double cover is $\text{Spin}(4)$.
\end{remark}

\begin{definition}
    A $\text{spin}$ structure $\ss$ on a $4$-manifold $M$ is a principal $\text{Spin}(4)$-bundle 
    $P_{\text{Spin}(4)} \to M$, with a bundle map from $P_{\text{Spin}(4)}$ to the frame bundle $P_{SO(4)}$ of $M$, which restricts
    to the double cover $\rho: \text{Spin}(4) \to SO(4)$ on each fiber.
    \end{definition}

Note that $P_{\text{Spin}(4)}$ is a double cover of $P_{SO(4)}$, which restricts to the double cover $\rho: \text{Spin}(4) \to SO(4)$ on each fiber. By Remark \ref{remark:spin}, 
this corresponds to an element in $H^1(P_{SO(4)};\Z_2)=[P_{SO(4)}, \RP^\infty]$ which restricts to the nontrivial element in $H^1(SO(4);\Z_2)=[SO(4), \RP^\infty]$ on each fiber. 
From the Leray-Serre spectral sequence, we have the following exact sequence:
\[
    0\to H^1(M, \Z_2) \to H^1(P_{SO(4)}, \Z_2) \stackrel{i^*}{\to} H^1(SO(4), \Z_2) \stackrel{\delta}{\to} H^2(M, \Z_2).
\]
Here $\delta(1) = w_2(P_{SO(4)})$, and $i^*$ is the restriction map. By the discussion above, the set of $\text{spin}$ structures on $M$ 
is in one-to-one correspondence with $(i^*)^{-1}(1)$. When $\delta(1) = w_2(P_{SO(4)}) = 0$, $(i^*)^{-1}(1)$ is nonempty, 
and 
\begin{align*}
\#(i^*)^{-1}(1) &=\#(i^*)^{-1}(0)\\
& = \# \im (H^1(M, \Z_2) \to H^1(P_{SO(4)}, \Z_2)) \\
&= \# H^1(M, \Z_2) .
\end{align*}
So the set of $\text{spin}$ structures on $M$ 
is in noncanonical one-to-one correspondence with $H^1(M, \Z_2)$. 
When $\delta(1) = w_2(P_{SO(4)}) \neq 0$, $(i^*)^{-1}(1)$ is empty.

\begin{example}\label{example:spin}
    Let $M = \S^1 \times \R^3$. Then $w_2(TM) = 0$ and $H^1(M, \Z_2) = \Z_2$. Hence there are two $\text{spin}$ structures on $M$. 
    They are principal $\text{Spin}(4)$-bundles that cover the trivial bundle $P_{SO(4)} = M \times SO(4)$, and the covering maps are nontrivial on each fiber. 
    Namely, the preimage of the nontrivial loop of $SO(4)$ is $\S^1$, and the covering maps restrict to this preimage are both 
    \begin{align*}
    \S^1 &\stackrel{2}{\to} \S^1\\
     z &\mapsto z^2.
     \end{align*}
    These two $\text{spin}$ structures are distinguished by the covering maps on the $\S^1$ factor of $M$. They are nontrivial double cover 
    $\S^1 \stackrel{2}{\to} \S^1 \subset SO(4)$ and trivial double cover $\S^1\sqcup \S^1 {\to} \S^1 \subset SO(4)$, respectively. 
    
    We can construct these principal $\text{Spin}(4)$-bundles explicitly. Let $\{U_\alpha, U_\beta\}$ 
    be a good cover of $M$ such that $U_\alpha$ and $U_\beta$ are diffeomorphic to $\R \times \R^3$. Let $U_0 \sqcup U_1 = U_\alpha \cap U_\beta$. 
    Let $P_{SO(4)}$ be the frame bundle of $M$ with local trivialization on $\{U_\alpha, U_\beta\}$ and transition functions $g_i: U_i \to SO(4)$ for $i = 0,1$. 
    Fix $m_i \in U_i$ and a lift 
    \[\widetilde{g_i(m_i)} \in \text{Spin}(4)\] 
    for ${g_i}(m_i)$ respectively. Since $P_{\text{Spin}(4)} \to P_{SO(4)}$ is a fibration and $U_i$ is contractible, 
    we can lift $g_i$ to a map $\tilde{g_i}: U_i \to \text{Spin}(4)$ such that \[\tilde{g_i}(m_i) = \widetilde{g_i(m_i)}.\] 
    This gives the transition functions for a principal $\text{Spin}(4)$-bundle $P_{\text{Spin}(4)}$ over $M$ which is locally trivial on $\{U_\alpha, U_\beta\}$. 
    To construct another principal $\text{Spin}(4)$-bundle, we choose the same lift of $g_0(m_0)$ but a different lift of $g_1(m_1)$.

    For example, if 
    \[g_i(m) = [I,I] \in SO(4) = SU(2) \times SU(2) / \{\pm (I,I)\}\] 
    and 
    \[\tilde{g_i}(m) = (I,I)\in \text{Spin}(4) =  SU(2) \times SU(2)\]
    for any $i$ and $m\in U_i$, then the principal $\text{Spin}(4)$-bundle is trivial. For the loop $l = \S^1 \times \{0\} \times \{I\} \subset M \times  SO(4) =  P_{SO(4)}$, 
    the preimage of $l$ under the double cover $P_{\text{Spin}(4)} \to P_{SO(4)}$ is $\S^1\sqcup \S^1 \subset P_{\text{Spin}(4)}$. On the other hand, if
    \begin{align*}
        \tilde{g_0}(m) &= (I,I), m\in U_0\\
        \tilde{g_1}(m) &= (-I,-I), m\in U_1,
    \end{align*}
    then geometrically, when a particle runs along $l$, it's preimage under the double cover $P_{\text{Spin}(4)} \to P_{SO(4)}$ changes to another orbit when this particle passes $U_1$. Thus the preimage of $l$ is a single $\S^1 \subset P_{\text{Spin}(4)}$. 
    This example shows that the set of $\text{spin}$ structures on $M$ 
is in one-to-one correspondence with $H^1(M, \Z_2)$. Moreover, such correspondence is noncanonical: There is not a priori choice of the lift of $g_i(m_i)$.

\end{example}

Now we introduce the $\text{spin}^\C$ structure.

\begin{definition}
    \[
        \text{Spin}^\C(4) = \{(A,B)\in U(2) \times U(2) ; \text{det}(A) = \text{det}(B) \}
    \]
    is called the $\text{spin}^\C$ group of dimension $4$. 
\end{definition}

Note that, $\text{Spin}^\C(4)$ is isomorphic to $S^1 \times SU(2) \times SU(2) / \{\pm (1,I,I)\}$, while $SO(4)$ is isomorphic to 
$ SU(2) \times SU(2) / \{\pm (I,I)\}$. Hence we have an $S^1$-fiberation
\begin{align}\label{equ:rhoc}
    \rho^c:\text{Spin}^\C(4) &\to SO(4)\\
    [(z,A,B)] &\mapsto [(A,B)].
\end{align}

\begin{definition}\label{def:spinc}
A $\text{spin}^\C$ structure $\ss$ on a manifold $M$ is a principal $\text{Spin}^\C(4)$-bundle 
$P_{\text{Spin}^\C(4)} \to M$, with a bundle map from $P_{\text{Spin}^\C(4)}$ to the frame bundle $P_{SO(4)}$ of $M$, which restricts
to $\rho^c$ on each fiber.
\end{definition}

Looking at the definition of $\rho^c$, we find that a $\text{spin}^\C$ structure contains one more infomation than the frame bundle:
\begin{definition}
Let 
\begin{align}
    \det: \text{Spin}^\C(4) &\to S^1 \\
    \label{equ:determinant}[(z,A,B)] &\mapsto z^2.
\end{align}
The line bundle $\L = P_{\text{Spin}^\C(4)} \times_{\det} \C $ is called the determinant line bundle associated to the $\text{spin}^\C$ structure $\ss$. 
\end{definition}

A $\text{spin}^\C$ structure is actually a double cover of the frame bundle tensor the determinent line bundle. We have 
an exact sequence
\begin{align}\label{equ:doublespinc}
    1\to \Z_2 \to \text{Spin}^\C(4) &\stackrel{\rho'}{\to} S^1 \times SO(4) = SO(2) \times SO(4) \to 1\\
    [(z,A,B)] &\mapsto (z^2,[(A,B)]).
\end{align}
The double cover $\rho'$ can be extended to a double cover of $SO(6)$ (see page 56 of \cite{gompf19994}). Hence the $\text{spin}^\C$ structure exists if and only if
the second Stiefel-Whitney class $w_2(P_{\S^1\times SO(4)})$ vanishes, by the theory of the existence of spin structures metioned above. Namely, 
\begin{align}
    w_2(P_{\S^1\times SO(4)}) &= w_2(P_{\S^1}) + w_2(P_{SO(4)}) \\
    &= w_2(\L) + w_2(TM)\\
    &= 0 \in \Z/2.
\end{align}
Namely, $ w_2(TM) \equiv c_1(\L) \mod 2$. An integral cohomology class congruent to $w_2(TM)$ is called characteristic element. The set of characteristic elements is nonempty for any $4$-manifold (see Proposition 5.7.4 of \cite{gompf19994}). Thus the $\text{spin}^\C$ structure always exists.

\begin{remark}\label{remark:spincline}
Different choices of the double covers of $P_{\S^1\times SO(4)}$ (with the covering map $\rho'$ fiberwise) do not always give different $\text{spin}^\C$ structures. 
Indeed, the set of $\text{spin}^\C$ structures over $M$ is in (non-canonical) one-to-one correspondence with the isomorphism classes of complex line bundles over $M$. 
Recall that,
\begin{align}
    SO(4)  &= SU(2) \times SU(2) / \{\pm (I,I)\}\\
    \text{Spin}(4) &=  SU(2) \times SU(2) \\
    \text{Spin}^\C(4) &=  S^1 \times SU(2) \times SU(2) / \{\pm (1,I,I)\} = S^1 \times  \text{Spin}(4) / \{\pm (1,I)\}. \label{equ:spinc}
\end{align}
Thus the transition functions of a principal $\text{Spin}^\C(4)$-bundle over $M$ are given by $[z_{\alpha\beta},g_{\alpha\beta}]$ 
where $z_{\alpha\beta}: U_{\alpha\beta}\to S^1$ and $g_{\alpha\beta}: U_{\alpha\beta}\to \text{Spin}(4)$ for a good cover $\{U_{\alpha}\}$. 
Suppose we have two $\text{spin}^\C$ structures 
\begin{align*}
&P^{(1)}_{\text{Spin}^\C(4)} \to M\\ 
&P^{(2)}_{\text{Spin}^\C(4)} \to M
\end{align*}
with transition functions 
$[z_{\alpha\beta}^{(1)},g_{\alpha\beta}^{(1)}]$ and $[z_{\alpha\beta}^{(2)},g_{\alpha\beta}^{(2)}]$ respectively. 
Note that by the definition of the $\text{spin}^\C$ structure, 
\[
\rho^c([z_{\alpha\beta}^{(i)},g_{\alpha\beta}^{(i)}])=[g_{\alpha\beta}^{(i)}] \in SO(4)
\] 
would be the transition functions of the frame bundle $P_{SO(4)}$. 
Hence we have either 
\[g_{\alpha\beta}^{(1)} = g_{\alpha\beta}^{(2)}\in  \text{Spin}(4)\]
or 
\[g_{\alpha\beta}^{(1)} = -g_{\alpha\beta}^{(2)} \in  \text{Spin}(4).\] 
If it's the latter case, we can always choose a different representative of $[z_{\alpha\beta}^{(2)},g_{\alpha\beta}^{(2)}]$. Thus we can assume that 
$g_{\alpha\beta}^{(1)} = g_{\alpha\beta}^{(2)}$. Then 
\[
\theta_{\alpha\beta} =  z_{\alpha\beta}^{(2)}/z_{\alpha\beta}^{(1)}
\]
would give 
the transition functions of a complex line bundle $\L$ over $M$, such that 
\[
P^{(1)}_{\text{Spin}^\C(4)}\otimes \L \cong P^{(2)}_{\text{Spin}^\C(4)}.
\]
(This shows that the action of $H^2(M;\Z) = [M,\C P^\infty]$ on the set of $\text{spin}^c$-structures is transitive. Actually this action is also free.)

By the definition of the determinant line bundle, 
\[
\det (P^{(1)}_{\text{Spin}^\C(4)}\otimes \L) = \det(P^{(1)}_{\text{Spin}^\C(4)}) \otimes \L^2.
\]
Hence 
\[
c_1(\det (P^{(1)}_{\text{Spin}^\C(4)}\otimes \L)) = c_1( \det(P^{(1)}_{\text{Spin}^\C(4)})) + 2c_1(\L).
\]
When $H^2(M;\Z)$ has no $2$-torsion, $2c_1(\L) = 0$ iff $c_1(\L) = 0$, iff $\L$ is trivial. Hence $c_1\circ \det$ is injective. If $P^{(1)}_{\text{Spin}^\C(4)}$ and $P^{(2)}_{\text{Spin}^\C(4)}$ are two different choices of the double covers of $P_{\S^1\times SO(4)}$ (with the covering map $\rho'$ fiberwise), then the difference line bundle $\L$ has transition functions $\theta_{\alpha\beta} = \pm 1$ such that $\L^2$ is trivial. Hence 
\[
\det(P^{(2)}_{\text{Spin}^\C(4)}) = \det (P^{(1)}_{\text{Spin}^\C(4)}\otimes \L) = \det(P^{(1)}_{\text{Spin}^\C(4)}) \otimes \L^2 =  \det (P^{(1)}_{\text{Spin}^\C(4)})
\]
and therefore $c_1\circ \det$ sends them to the same element. Hence they are isomorphic $\text{spin}^\C$ structures.

In conclusion, although it seems that by (\ref{equ:doublespinc}) and (\ref{equ:spinc}) a $\text{spin}^\C$ structure encodes some infomation of the spin structure, and by Example \ref{example:spin}, each element of $H^1$ would produce a different spin structure, but that difference comes from the different choice of the lift of $\text{Spin}(4) \to SO(4)$, which can be passed to the difference of the complex line bundle in $P_{\text{Spin}^\C(4)}$.
\end{remark}

For a $1$-surgery along a nontrivial loop, all $\text{spin}^\C$ structures can be extended to the new manifold. The extension is not unique. However, it would not change the index of Dirac operator.
\begin{theorem}\label{thm:changeOfSpinc}
Let $X$ be any $4$-manifold with $H^1(X;\Z) = \Z$. Let $\alpha$ be a generator of $H^1(X;\Z)$. Let $\gamma$ be the loop we choose to do the surgery, with $ \langle \alpha, \gamma \rangle = 1$. Let $N = \S^1 \times D^3$ be a small enough tubular neighborhood of $\gamma$.
    Let $X_0$ be the complement of $N$. Let $X'= X_0 \cup_{\S^1 \times \S^2} (D^2 \times \S^2)
    $ be the manifold obtained by doing the surgery on $X$ along $\gamma$. Let $\ss$ be any $\text{Spin}^\C$ structure over $X$ and $\L$ be the corresponding determinant line bundle. Let $\mathcal{S}(X')$ be the set of $\text{spin}^\C$ structures on $X'$, and
    \[
    \mathcal{S}(X', \ss) := \{\Gamma \in \mathcal{S}(X') ; \left.\Gamma\right|_{X_0} =\left. \ss\right|_{X_0}\}.
    \]
    Then $ \mathcal{S}(X', \ss) $ contains a unique (up to an isomorphism) $\text{Spin}^\C$ structure $\ss'$ over $X'$,
    and the determinant line bundles $\L'$ associated to $\ss'$ satisfies
    \[
    \langle c_1(\L')^2,X' \rangle = \langle c_1(\L)^2 ,X \rangle.
    \]
In particular, above results do not depend on the framing of the $1$-surgery.
\end{theorem}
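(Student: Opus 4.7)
The plan is to split the proof into three steps: existence of an extension $\ss'$, uniqueness up to isomorphism, and the Chern-number identity. For existence, I would first observe that $\ss|_{X_0}$ restricts to a $\text{Spin}^\C$ structure $\tau$ on $\partial X_0 = \S^1 \times \S^2$. Because $\mathcal{S}(\cdot)$ is a torsor over $H^2(\cdot;\Z)$ and the restriction $H^2(D^2 \times \S^2;\Z) \to H^2(\S^1 \times \S^2;\Z)$ is an isomorphism $\Z \to \Z$ (both generated by the $\S^2$ factor, and both $2$-torsion free so that $c_1$ distinguishes isomorphism classes by Remark \ref{remark:spincline}), the induced map $\mathcal{S}(D^2 \times \S^2) \to \mathcal{S}(\S^1 \times \S^2)$ is a bijection on isomorphism classes. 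Thus $\tau$ extends uniquely up to isomorphism to some $\tilde{\ss}$ on $D^2 \times \S^2$, and gluing $\ss|_{X_0}$ and $\tilde{\ss}$ along $\tau$ produces $\ss' \in \mathcal{S}(X',\ss)$.

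Next, for uniqueness, I would reduce the claim to the computation $\ker(H^2(X';\Z) \to H^2(X_0;\Z)) = 0$: any two extensions differ by tensoring with a line bundle $L \to X'$ whose restriction to $X_0$ is trivial, so $c_1(L)$ lives in this kernel. The kernel computation uses Mayer--Vietoris for both $X = X_0 \cup N$ and $X' = X_0 \cup (D^2 \times \S^2)$. The hypothesis $\langle \alpha, \gamma \rangle = 1$ forces $\alpha|_N$ to be the generator of $H^1(N;\Z)$, so exactness of the Mayer--Vietoris sequence for $X$ at $H^1(X_0) \oplus H^1(N)$ makes $\alpha|_{X_0}$ restrict to the generator of $H^1(\S^1 \times \S^2;\Z)$. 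Hence $H^1(X_0;\Z) \to H^1(\S^1 \times \S^2;\Z)$ is surjective. Feeding this (together with $H^1(D^2 \times \S^2) = 0$) into the Mayer--Vietoris sequence for $X'$ makes the coboundary $H^1(\S^1 \times \S^2) \to H^2(X')$ vanish, so $H^2(X';\Z) \to H^2(X_0) \oplus H^2(D^2 \times \S^2)$ is injective; combined with the isomorphism $H^2(D^2 \times \S^2) \to H^2(\S^1 \times \S^2)$, any class killed on $X_0$ must also vanish on $D^2 \times \S^2$, giving the kernel statement.

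For the Chern-number identity, I would note that $H^2(N;\Z) = 0$ makes $\L|_N$ trivial, and by the same restriction-isomorphism argument $\L'|_{D^2 \times \S^2}$ is also trivial. Choose a Hermitian connection $A_0$ on $\L|_{X_0} \cong \L'|_{X_0}$ that is flat in a collar of $\partial X_0$, and extend trivially by flat connections on $N$ and $D^2 \times \S^2$ to obtain connections $A$ on $\L$ and $A'$ on $\L'$. The curvatures $F_A$ and $F_{A'}$ vanish on $N$ and $D^2 \times \S^2$ respectively and coincide on $X_0$, so Chern--Weil theory gives
\[
\langle c_1(\L)^2, X \rangle = -\frac{1}{4\pi^2} \int_{X_0} F_{A_0} \wedge F_{A_0} = \langle c_1(\L')^2, X' \rangle.
\]
Framing independence is then automatic, since no part of the argument uses the gluing diffeomorphism beyond the existence of a compatible identification of boundaries. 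The main obstacle is the uniqueness step, specifically the careful Mayer--Vietoris bookkeeping that converts the pairing condition $\langle \alpha, \gamma \rangle = 1$ into the vanishing of $\ker(H^2(X';\Z) \to H^2(X_0;\Z))$.
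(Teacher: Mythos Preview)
Your argument is correct. The uniqueness step is essentially identical to the paper's: both reduce to the Mayer--Vietoris computation that $H^2(X';\Z)\to H^2(X_0;\Z)$ is injective, and both trace this back to the surjectivity of $H^1(X_0;\Z)\to H^1(\S^1\times\S^2;\Z)$, which in turn comes from $\langle\alpha,\gamma\rangle=1$. Where you diverge is in existence and in the Chern-number identity. For existence, the paper does not glue: it takes an \emph{arbitrary} $\text{Spin}^\C$ structure on $X'$, measures its discrepancy from $\ss|_{X_0}$ as a line bundle $L_0$ on $X_0$, and then uses the surjectivity of $H^2(X';\Z)\to H^2(X_0;\Z)$ (the same Mayer--Vietoris) to extend $L_0$ over $X'$ and twist. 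Your gluing approach is cleaner conceptually but relies implicitly on the standard fact that $\text{Spin}^\C$ structures patch along a common boundary; the paper's approach sidesteps that by working entirely with the $H^2$-torsor structure. For the Chern-number identity, the paper argues geometrically via a zero-locus surface $\Sigma$ representing $\text{PD}(c_1(\L))$, pushed off $N$ into $X_0$, and computes both sides as the self-intersection $[\Sigma]^2$. Your Chern--Weil argument with a connection flat near the boundary is an equally valid analytic substitute; it is slightly more direct and avoids transversality, while the paper's version has the advantage of making the intersection-form interpretation explicit.
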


\begin{proof}
We first show that $\mathcal{S}(X', \ss)$ is nonempty. Let $\ss'$ be any $\text{Spin}^\C$ structure over $X'$. By Remark \ref{remark:spincline}, 
the difference between $\ss'|_{X_0}$ and $\ss|_{X_0}$ is a complex line bundle $L_0$ over $X_0$, namely, $\ss'|_{X_0} \otimes L_0 = \ss|_{X_0}$. 

We claim that $L_0$ can be extended to a complex line bundle $L'$ over $X'$. Indeed, for the inclusions 
\begin{align*}
    i_\partial: \partial X_0 = \S^1 \times \S^2 &\to D^2 \times \S^2\\
    i: X_0 &\to X',
\end{align*} 
the induced homomorphisms
\begin{align*}
    i_\partial^*: H^2(D^2 \times \S^2) &\to H^2(\S^1 \times \S^2)\\  
    i^*: H^2(X') &\to H^2(X_0)
\end{align*}
are all isomorphisms. This follows from the following Mayer-Vietoris sequence (the last three terms form a split short exact sequence):

\begin{center}
\begin{tikzpicture}[commutative diagrams/every diagram]
\node (P0) at (0cm, 0cm) {$H^1(X_0)\oplus H^1(D^2 \times \S^2)$};
\node (P1) at (-1cm, -0.1cm) {};
\node (P2) at (3.3cm, 0cm) {$H^1(\S^1 \times \S^2)$} ;
\node (P6) at (3.3cm, -1cm) {$\Z$} ;
\node (P3) at (5.5cm, 0cm) {$H^2(X')$};
\node (P4) at (-1cm, -1cm) {$\Z$};
\node (P5) at (8.5cm, 0cm) {$H^2(X_0)\oplus H^2(D^2 \times \S^2)$};
\node (P7) at (8.7cm, -0.1cm) {};
\node (P8) at (8.7cm, -1cm) {$\Z$};
\node (P9) at (12cm, 0cm) {$H^2(\S^1 \times \S^2)$};
\node (P10) at (12cm, -1cm) {$\Z$};
\path[commutative diagrams/.cd, every arrow, every label]
(P0) edge node {} (P2)
(P2) edge node {$0$} (P3)
(P1) edge node {$\cong$} (P4)
(P3) edge node {$i^*$} (P5)
(P2) edge node {$\cong$} (P6)
(P4) edge node {$\cong$} (P6)
(P7) edge node {$\cong$} (P8)
(P5) edge node {} (P9)
(P8) edge node {$ i_\partial^*$} (P10)
(P9) edge node {$\cong$} (P10);
\end{tikzpicture}
\end{center}
Topologically, the dual of $c_1(L_0)|_{\partial X_0}$ is some copies of $\S^1 \times \{pt\} \subset \S^1 \times \S^2 = \partial X_0$, and they can be extended to $D^2 \times \{pt\} \subset D^2 \times \S^2$. Anyway, there exists a cohomology class in $H^2(X',\Z)= [X' , \CP^\infty]$ such that it restricts to $c_1(L_0) \in H^2(X_0,\Z)$, and by the property of the universal complex line bundle over $\CP^\infty$, the pullback $L'$ is a complex line bundle over $X'$ that restricts to $L_0$. Therefore, we have
\[
\left.(\ss'\otimes L')\right|_{X_0}  =\left.\ss'\right|_{X_0} \otimes L_0 =  \left.\ss\right|_{X_0}.
\]
So $\ss'\otimes L' \in \mathcal{S}(X', \ss)$.

Next, we prove that all elements in $\mathcal{S}(X', \ss)$ are isomorphic. Let $\ss'_{(1)}, \ss'_{(2)} \in \mathcal{S}(X', \ss)$. Let $L'$ be a complex line bundle on $X'$ such that 
\[
 \ss'_{(1)} \otimes L' = \ss'_{(2)}.
 \]
Then
\begin{align*}
\left.\ss'_{(2)}\right|_{X_0}  &=  \left.(\ss'_{(1)} \otimes L')\right|_{X_0}\\
 &= \left.\ss'_{(1)}\right|_{X_0} \otimes \left.L'\right|_{X_0} \\
 &= \left.\ss\right|_{X_0} \otimes \left.L'\right|_{X_0}\\
 &= \left.\ss'_{(2)}\right|_{X_0}\otimes \left.L'\right|_{X_0}.
\end{align*}
Remark \ref{remark:spincline} shows that the action of $H^2(X_0)= [X_0,\C P^\infty]$ on $\mathcal{S}(X_0)$ is transitive. Actually this action is also free. Hence $c_1( L'|_{X_0})= 0\in H^2(X_0,\Z)$. Note that $i^*(c_1( L')) = c_1( L'|_{X_0})$ and $i^*$ is an isomorphism. Therefore $c_1( L')=0\in H^2(X',\Z)$. So $L'$ is trivial and $\ss'_{(1)}= \ss'_{(2)}$.

Lastly, we show that 
\[
    \langle c_1(\L')^2,X' \rangle = \langle c_1(\L)^2 ,X \rangle.
    \]
The intersection between a generic section of $\L$ and the zero section is a $2$-manifold $\Sigma \subset X$. For dimension reason we can assume $\gamma \cap \Sigma  = \emptyset$. By choosing a small enough neighborhood of $\gamma$ we can further assume $\Sigma \subset X_0$. $ \langle c_1(\L)^2 ,X \rangle$ is the self-intersection $[\Sigma]^2$ of $\Sigma$.

Since $\ss'|_{X_0} = \ss|_{X_0}$, $\L'|_{X_0} = \det(\ss')|_{X_0} =\det(\ss)|_{X_0} =\L|_{X_0}$. As a complex line bundle, $\L|_{\S^1\times D^3}$ must be trivial. Hence it's a trivial line bundle over $\partial X_0$. Since $ i^*: H^2(X') \to H^2(X_0)$ is an isomorphism, $\L'$ is the unique extension of $\L'|_{X_0}=\L|_{X_0}$, and therefore it must extend $\L|_{\partial X_0}$ trivially. Hence the generic section of $\L|_{X_0}$ mentioned above can be extended to $X'$ without additional zeros. Hence $\langle c_1(\L')^2,X' \rangle = [\Sigma]^2 = \langle c_1(\L)^2 ,X \rangle$.
\end{proof}

In the gluing theory of Seiberg-Witten monopoles, the Seiberg-Witten equations and thus the $\text{spin}^c$ structure of the boundary $\partial X_0 = \partial N = \S^1\times\S^2$ would be considered. Hence one has to consider how to restrict the $\text{spin}^c$ structure of the $4$-manifold $X_0$ to the $3$-manifold $\S^1\times\S^2$.

Let $X$ be any $4$-manifold with boundary $\partial X$. Identify $TX|_{\partial X}$ with $T\partial X \oplus \nu$ where $\nu$ is the normal bundle of $\partial X\subset X$. 
Let $P_{SO(4)}$, $P_{SO(3)}$ be the frame bundles of $T\partial X \oplus \nu$ and $T\partial X$, Let $g^{(4)}\in SO(4)$, $g^{(3)}\in SO(3)$ be corresponding transition functions on a point $x\in \partial X$. The following diagram commutes:
\begin{center}
\begin{tikzpicture}[commutative diagrams/every diagram]
\node (P0) at (0cm, 0cm) {Fr(3)};
\node (P1) at (0cm,  -1.6cm) {Fr(3)};
\node (P2) at (1.7cm, 0cm) {Fr(4)};
\node (P3) at (1.7cm, -1.6cm) {Fr(4)};
\node (P4) at (0.4cm, -0.8cm) {$g^{(3)}$};
\node (P5) at (1.4cm, -0.8cm) {$g^{(4)}$};
\node (P5) at (0.9cm, -0.9cm) {$\mapsto$};
\node (P6) at (0.9cm, -0.7cm) {$i$};
\path[commutative diagrams/.cd, every arrow, every label]
(P0) edge node {} (P1)
(P0) edge node {} (P2)
(P1) edge node {} (P3)
(P2) edge node {} (P3);
\end{tikzpicture}
\end{center}
where the top and bottom horizontal arrows are given by adding an inner vector. Then the map $i$ between transition functions is given by the natural embedding of $SO(3)\to SO(4)$.

Let $\mathbb H$ be quaternions and $\SU(2)=\S^3$ be the group of unit quaternions. $q\in \SU(2)$ acts on $\imaginary\mathbb H$ by
\[
x \mapsto qxq^{-1},
\]
which gives the double cover $\rho_3:\SU(2) = \text{Spin}(3) \to \SO(3)$. $(p,q)\in \SU(2)  \times \SU(2) =\text{Spin}(4) $ acts on $\mathbb H$ by
\[
x \mapsto pxq^{-1},
\]
which gives the double cover $\rho :\text{Spin}(4) \to \SO(4)$. Regard the real axis of $\mathbb{H}$ as the normal space of $x\in \partial X$, then 
\begin{align*}
i: \text{Spin}(3)  &\to \text{Spin}(4) \\
q&\mapsto (q,q)
\end{align*}
covers the embedding $i:SO(3)\to SO(4)$. Similarly we have a map
\begin{align*}
i^c: \text{Spin}^c(3):=S^1 \times  \text{Spin}(3) / \{\pm (1,I)\} &\to \text{Spin}^c(4)\\
[z,q]&\mapsto [z,q,q]
\end{align*}
that covers $i:SO(3)\to SO(4)$. Hence a $\text{spin}$($\text{spin}^c$) structure of $X$ induces a $\text{spin}$($\text{spin}^c$) structure of $\partial X$. Moreover, from the definition of $i^c$, the restriction of a $\text{spin}^c$ structure is compatible with the restriction of its determinant line bundle.

\begin{prop}\label{prop:spincRestriction}
Use the notations in Theorem \ref{thm:changeOfSpinc}. Then $\ss|_{\partial X_0}$ is the only $\text{spin}^c$ structure of $ \S^1 \times \S^2 $ such that the first Chern class of the determinant line bundle is zero, and $\ss'|_{D^2 \times \S^2}$ is the only $\text{spin}^c$ structure of $ D^2 \times \S^2 $ such that the first Chern class of the determinant line bundle is zero.
\end{prop}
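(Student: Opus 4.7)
The plan is to reduce the statement to two triviality claims about complex line bundles, by invoking the injectivity of $c_1 \circ \det$ from Remark \ref{remark:spincline}. Since $H^2(\S^1 \times \S^2;\Z) \cong H^2(D^2 \times \S^2;\Z) \cong \Z$ has no $2$-torsion, that remark implies the map sending a $\text{spin}^c$ structure on either space to $c_1$ of its determinant line bundle is injective. Hence at most one $\text{spin}^c$ structure has $c_1(\det) = 0$, and it suffices to exhibit $\ss|_{\partial X_0}$ and $\ss'|_{D^2 \times \S^2}$ as such structures.

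For $\ss|_{\partial X_0}$, the explicit formula for $i^c$ given just before the proposition shows that restriction of a $\text{spin}^c$ structure commutes with taking determinants, so $\det(\ss|_{\partial X_0}) = \L|_{\partial X_0}$. Now $\partial X_0 = \partial N$ where $N = \S^1 \times D^3$ deformation retracts to $\S^1$; hence $H^2(N;\Z) = 0$ and $\L|_N$ is trivial as a complex line bundle. Restricting further to $\partial N$ yields that $\L|_{\partial X_0}$ is trivial, so its first Chern class vanishes.

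For $\ss'|_{D^2 \times \S^2}$, I would appeal to the construction of $\L'$ in the last paragraph of the proof of Theorem \ref{thm:changeOfSpinc}: $\L'$ extends $\L|_{X_0}$ uniquely across $D^2 \times \S^2$, and since $\L|_{\partial X_0}$ is trivial by the previous step, the extension $\L'|_{D^2 \times \S^2}$ is forced to be trivial. Concretely, the Mayer-Vietoris diagram in that proof shows $i_\partial^*:H^2(D^2 \times \S^2;\Z) \to H^2(\partial X_0;\Z)$ is an isomorphism, and $\L'|_{D^2 \times \S^2}$ restricts on the boundary to the trivial bundle $\L|_{\partial X_0}$, so $\L'|_{D^2 \times \S^2}$ itself is trivial. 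I expect no real obstacle here; the only mildly delicate ingredient is the compatibility of $i^c$ with the determinant map, which is immediate from the formulas $\det[z,q] = z^2$ and $i^c([z,q]) = [z,q,q]$ recalled just before the statement.
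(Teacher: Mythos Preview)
Your proposal is correct and follows essentially the same route as the paper: show the determinant line bundles are trivial by restricting from $N = \S^1 \times D^3$ (where $H^2 = 0$) and by using the isomorphism $i_\partial^*: H^2(D^2 \times \S^2) \to H^2(\S^1 \times \S^2)$, then invoke the injectivity of $c_1 \circ \det$ from Remark~\ref{remark:spincline}. In fact your version is slightly more careful than the paper's, which at one point writes ``$H^2(\partial X_0;\Z) = 0$'' (a slip, since $H^2(\S^1 \times \S^2;\Z) = \Z$); your phrasing via ``no $2$-torsion, hence $c_1 \circ \det$ injective'' is the correct way to get uniqueness there.
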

\begin{proof}
$\det(\ss|_{\partial X_0}) = \det(\ss)|_{\partial X_0}$ is the restriction of the trivial line bundle $ \det(\ss)|_{\S^1 \times D^3}$.  So $\det(\ss|_{\partial X_0})$ is trivial. $H^2(\partial X_0;\Z) = 0$ so by Remark \ref{remark:spincline} $\ss|_{\partial X_0}$ is the only $\text{spin}^c$ structure of $ \S^1 \times \S^2 $.

$\det(\ss'|_{D^2 \times \S^2})|_{\partial (D^2 \times \S^2)}=\det(\ss)|_{\S^1 \times \S^2} $ is trivial. Since the restriction $H^2(D^2 \times \S^2) \cong H^2( \S^1 \times\S^2) $ is an isomorphism, $c_1(\det(\ss'|_{D^2 \times \S^2})) = 0$. 
$H^2(D^2 \times \S^2;\Z)$ has no torsion so by Remark \ref{remark:spincline} $c_1\circ \det$ is injective. Hence $\ss'|_{D^2 \times \S^2}$ is the only $\text{spin}^c$ structure of $D^2 \times \S^2$ such that the first Chern class of the determinant line bundle is zero.
\end{proof}




\subsection{Seiberg-Witten equation, transversality results, and ASD operator}

\subsection{Positive scalar curvature}\label{subsection:PSCmetric}

A positive scalar curvature will give two desired properties: First, by the Weitzenb{\"o}ck formula, a non-negative scalar curvature on $3$- or $4$-manifolds leads solely to reducible solutions of the Seiberg-Witten equation (see \cite{kronheimer_mrowka_2007} (4.22)). Second, by the Weitzenb{\"o}ck formula and integration by parts, we have (see page 105 of \cite{Nicolaescu2000NotesOS})
\[
\int_M|\D_A\psi|^2dv_g = \int_M (|\nabla^A\psi|^2  + \frac{s}{4}|\psi|^2 +\frac{1}{2}\langle {\mathbf{c}}(F_A^+)\psi,\psi\rangle ) dv_g 
\]
where $A$ is a connection, $\D_A$ is the twisted Dirac operator, $s$ is the scalar curvature, and $\mathbf{c}$ is Clifford multiplication. So if $s$ is everywhere positive and $A$ is flat, the twisted Dirac operator would have trivial kernel.

It turns out that we can construct bullet metrics on $\S^1 \times D^3$ and $D^2\times \S^2$ such that the corresponding Levi-Civita connections have positive scalar curvature everywhere.

To construct the bullet metric on $D^2 \times S^2$, embed it in $\R^3\times \R^3$ such that the component $\S^2$ is standard sphere, and $D^2$ is the union of a standard semi-sphere $\S^2_+$ and a cylinder $\partial D^2 \times I$, which is the collar neighborhood of $\partial D^2$. One can perturb this  embedding to make it smooth, and the metric $g$ of $D^2\times S^2$ induced by the standard metric of $\R^3\times \R^3$ is so-called bullet metric.

One can compute the scalar curvature of this metric using the following formula:
\[
s= \sum_{i\neq j}\mathsf{sec}(e_i,e_j)
\]
where $\mathsf{sec}$ is the sectional curvature and $\{e_i\}$ is a set of orthonormal basis. The sectional curvature of $\S^2$ and $\S^2_+$ is positive. If two vectors lie in different copies of $\R^3$ in $\R^3\times \R^3$, the sectional curvature of the plane identified by these vectors is zero. This means that
\[
s({D^2\times S^2})= s({D^2}) + s({S^2}).
\]
Therefore, the scalar curvature is everywhere positive.

For $\S^1 \times D^3$, embed it in  $\R^2\times \R^4$ such that $\S^1$ is standard circle and $D^3$ is the union of a standard semi-sphere $\S^3_+$ and a cylinder $\partial D^3 \times I$. By the same reasoning and the fact that $\partial D^3=\S^3$ also has positive scalar curvature, the scalar curvature of $\S^1 \times D^3$ is everywhere positive.

\section{Apply ordinary gluing theory to $1$-surgery}
In ordinary gluing theory, one obtain the union ${N}_r$ of two manifolds ${N}_1$ and ${N}_2$ by gluing along their boundaries $N$, and consider the relation between monopoles over ${N}_1$ and ${N}_2$ and monopoles over the union ${N}_r$.

Given a pair of monopoles on ${N}_1$ and ${N}_2$, respectively, if they are compatible over boundaries, one can glue them to obtain a point of configuration space over the union ${N}_r$. It turns out that there exists a genuine monopole of ${N}_r$ near this point. Moreover, the space of genuine monopoles over the union ${N}_r$ is actually isotopic to the manifold of configurations obtained by gluing in this way.
 
 The proof of the global gluing theorem is divided to four steps: The \textbf{linear gluing theorem} will give an approximation of the kernel of boundary difference map. The \textbf{local gluing theorem} will describe the set of genuine monopoles in a neighborhood of each glued configuration point. The \textbf{local surjectivity theorem} will prove that, the set of such neighborhoods is a cover of the manifold of genuine monopoles. The \textbf{global gluing theorem} will prove that, the moduli space of genuine monopoles is homeomorphic to the moduli space of glued configuration points, if the obstruction space is trivial.
 
 In this section, we will follow the strategy in Nicolaescu's book \cite{Nicolaescu2000NotesOS}.  In our case, i.e, $N = \S^1 \times \S^2$, $N_2 = \S^1 \times D^3$ or $D^2\times \S^2$, one can just apply the linear gluing theorem and the local surjectivity theorem in charpter 4 of \cite{Nicolaescu2000NotesOS}, and prove the condition of the local gluing theorem is satisfied. However, the global gluing theorem in this situation is slightly different from what Nicolaescu presented.

\subsection{Abstract linear gluing results}
In this subsection, we review the abstract linear gluing results in section 4.1 of \cite{Nicolaescu2000NotesOS}.

It's natural to expect that, a longer neck of ${N}_r$ will narrow the difference between genuine monopoles and configurations obtained by gluing, since there should be no difference when the length of the neck $r=\infty$. So we first consider manifolds with necks of infinite length, say, $\hat{N}_1= {N}_1\cup_N N\times [0,\infty)$ and  $\hat{N}_2= {N}_2\cup_N N\times [0,\infty)$. Such manifolds are called cylindrical manifolds.

Suppose $\beta(t)$ is a smooth cutoff function such that $\beta(t)=0$ on $(-\infty,1/2]$ and $\beta(t) = 1$ on $[1,\infty)$. Set $\alpha(t)=1-\beta(t)$. These functions will be used to glue a pair of sections.

Denote by $\hat{E}$ a cylindrical bundle over a cylindrical manifold $\hat{N}$, that is, a vector bundle $\hat{E} \to \hat{N}$ together with a vector bundle ${E} \to {N}$ and a bundle isomorphism
\[
\hat{E}|_{N\times [0,\infty)} \to \pi^*E,
\]
where $ \pi: N\times [0,\infty) \to N$ is the projection map. Let $L^p(\hat{E})$ be the space of $L^p$-sections of $\hat{E}$. 
Let $L^p_{loc}(\hat{E})$ be the space of measurable sections $u$ such that $u\varphi \in L^p(\hat{E})$ for any smooth, compactly supported function $\varphi$ on $\hat{N}$. 
Denote by $\hat{u}$ an $L^2_{loc}$-section of $\hat{E}$. If there exists an $L^2_{loc}$-cylindrical section $\hat{u}_0$ such that
\[
\hat{u}-\hat{u}_0 \in L^2(\hat{E}),
\]
then $\hat{u}$ is called \textbf{asymptotically cylindrical} (or \textbf{a-cylindrical}). Define the asymptotic value of $\hat{u}$ to be
\[
\partial_\infty \hat{u} := \partial_\infty \hat{u}_0.
\]
Let $ L^2_\mu(\hat{E}) = \{ u\in  L^2(\hat{E}); \| u|_{\hat{N} \setminus N\times [0,\infty)}\|_{L^2} + \| u|_{N\times [0,\infty)} \cdot \e^{\mu t} \|_{L^2} < \infty\}$. The supremum of all $\mu \geq 0$ such that 
\[
\hat{u} -\hat{u}_0 \in  L^2_\mu(\hat{E})
\] 
is called the \textbf{decay rate} of the a-cylindrical section $\hat{u}$.

The norm on the space of a-cylindrical sections is defined by
\[
\|\hat{u}\|_{ex} = \|\hat{u}-\hat{u}_0\|_{L^2} + \|\partial_\infty \hat{u}\|_{L^2}
\]
The resulting Hilbert space is called $L_{ex}^2$.


Given a pair of compatible cylindrical sections $\hat{u}_i$ of $\hat{E}_i$, i.e they share the same constant value over the neck, they can be glued to form a section $ \hat{u}_1  \#_r  \hat{u}_2$ of $\hat{E}_1  \#_r  \hat{E}_2$. If $\hat{u}_i$ are just compatible $L_{ex}^2$-sections, i.e they are a-cylindrical sections with identical asymptotic values $\partial_\infty \hat{u}_1 = \partial_\infty \hat{u}_2$, they should be modified by cutoff functions first. Let $\hat{u}_i(r)$ be the same section as $\hat{u}_i$ outside the neck, and on the neck
\begin{equation}\label{equ:cut}
\hat{u}_i(r)(t) = \alpha(t-r)\hat{u}_i + \beta(t-r)\partial_\infty \hat{u}_i.
\end{equation}
When $t<r$, $\hat{u}_i(r) = \hat{u}_i$, and when $t>r+1$, $\hat{u}_i(r)$ is just the asymptotic value of $ \hat{u}_i$. Thus $ \hat{u}_i(r)$ is an approximation of $ \hat{u}_i$ as $r\to \infty$. Now these genuine cylindrical sections can be glued along the neck, so we define
\begin{equation}\label{equ:paste}
\hat{u}_1  \#_r  \hat{u}_2 := \hat{u}_1(r)  \#_r  \hat{u}_2(r)
\end{equation}

In the following description, all verifications of smoothness, Fredholmness and exactness are obmitted. See Section 4.3 of Nicolaescu's book for details.

Let $L^{m,p}$ be the space of sections with finite Sobolev norm $\|\cdot\|_{m,p}$.
Let $\hat{\sigma}$ be a $\text{spin}^c$ structure of $\hat{N}$ such that it induces a $\text{spin}^c$ structure $\sigma$ of $N$. Denote by $\mathcal{C}_\sigma$ the space of configurations in $L^{2,2}$ over the $3$-manifold $N$, by
\[
\mathcal{Z}_\sigma \subset \mathcal{C}_\sigma
\]
the set of monopoles (solutions of Seiberg-Witten equations) on $N$, and by
\[
\mathfrak{M}_\sigma = \mathcal{Z}_\sigma/ \mathcal{G}_\sigma
\]
the moduli space of monopoles on $N$.

Define
\begin{equation}\label{equ:defCsw}
\hat{\mathcal{C}}_{\mu ,sw} := \partial_\infty^{-1}(\mathcal{Z}_\sigma)
\end{equation}
and 
\[
\hat{\mathcal{Y}}_\mu := L_\mu^{1,2}(\hat{S}_{\hat{\sigma}}^- \oplus \mathbf{i}\Lambda_+^2T^*\hat{N}).
\]
The Seiberg-Witten equations give the Seiberg-Witten map
\begin{align*}
\widehat{SW}: \hat{\mathcal{C}}_{\mu ,sw} &\to \hat{\mathcal{Y}}_\mu,\\
(\hat{\psi},\hat{A}) &\mapsto \D_{\hat{A}}\hat{\psi}\oplus (\sqrt{2}(F^+_{\hat{A}} - \frac{1}{2}\hat{\mathbf{c}}^{-1}(q(\hat{\psi}))),
\end{align*}
where $\D_{\hat{A}}$ is the Dirac operator twisted by the connection $\hat{A}$, and $ \hat{\mathbf{c}}$ is the Clifford multiplication on $\hat{N}$.

We will use the following notation:
\[
\widehat{\mathcal{G}}_{\mu,ex} := \{ \hat{u} \in L^{ 3,2}_{\mu,ex}(\hat{N},\C); | \hat{u}(p)|=1\text{  } \forall p\in \hat{N}\}
\]
\[
\widehat{\M}_\mu := \widehat{SW}^{-1}(0) / \widehat{\mathcal{G}}_{\mu,ex}.
\]

$\hat{\mathsf{C}}_0=(\hat{\psi}_0 , \hat{A}_0)$: A fixed smooth finite energy monopole on $\hat{N}$. $\hat{\mathsf{C}}_0$ modulo a gauge transformation is in $\hat{\mathcal{C}}_{\mu ,sw}$  (see section 4.2.4 of Nicolaescu's book \cite{Nicolaescu2000NotesOS}). So in this paper we always assume that $\hat{\mathsf{C}}_0 \in \hat{\mathcal{C}}_{\mu ,sw}$.

${\mathsf{C}}_\infty$: A fixed smooth finite energy monopole on ${N}$.

$\widehat{\underline{SW}}_{\hat{\mathsf{C}}_0}$: The linearization of $\widehat{SW}$ at $\hat{\mathsf{C}}_0$.

As a Lie group, the component of $\mathbf{1}$ of $\hat{\mathcal{G}}_{\mu,ex}$ consists of elements that can be written as $\e^{\mathbf{i}f}$ where $f\in  L_{\mu,ex}^{3,2}(\hat{N},\mathbf{i}\R)$. Recall that we have fixed $\hat{\mathsf{C}}_0$, so the gauge action gives a map
\begin{align*}
\hat{\mathcal{G}}_{\mu,ex} &\to \hat{\mathcal{C}}_{\mu,sw} \\
\hat{u} &\mapsto \hat{u}  \cdot \hat{\mathsf{C}}_0.
\end{align*}
Denote the stabilizer of $\hat{\mathsf{C}}_0$ under the gauge action by $\hat{G}_0$. The differential of the above map is 
\begin{align*}
\mathfrak{L}_{\hat{\mathsf{C}}_0} :T_{\mathbf{1}}\hat{\mathcal{G}}_{\mu,ex} &\to T_{\hat{\mathsf{C}}_0}\hat{\mathcal{C}}_{\mu,sw}\\
\mathbf{i}f &\mapsto (\mathbf{i}f\hat{\psi}_0,-2\mathbf{i} df)
\end{align*}

We have three differential complexes:
\[\label{equ:complexF}
0\to T_1 \hat{\mathcal{G}}_{\mu} 
\xrightarrow{\mathfrak{L}_{\hat{\mathsf{C}}_0}} T_{\hat{\mathsf{C}}_0} \partial_\infty^{-1}(\mathsf{C}_\infty) 
\xrightarrow{\widehat{\underline{SW}}_{\hat{\mathsf{C}}_0}} T_0\mathcal{Y}_\mu
\to 0
\tag{$F_{\hat{\mathsf{C}}_0}$}
\]
\[\label{equ:complexK}
0
\to T_1 \hat{\mathcal{G}}_{\mu,ex} 
\xrightarrow{\frac{1}{2}\mathfrak{L}_{\hat{\mathsf{C}}_0}} T_{\hat{\mathsf{C}}_0} \hat{\mathcal{C}}_{\mu,sw} 
\xrightarrow{\widehat{\underline{SW}}_{\hat{\mathsf{C}}_0}} T_0\mathcal{Y}_\mu
\to 0
\tag{$\widehat{\mathcal{K}}_{\hat{\mathsf{C}}_0}$}
\]
\[\label{equ:complexB}
0\to T_1\mathcal{G}_\sigma 
\xrightarrow{\frac{1}{2}\mathfrak{L}_{{\mathsf{C}}_\infty}} T_{\mathsf{C}_\infty}\mathcal{Z}_\sigma
\to 0
\to 0
\tag{$B_{\hat{\mathsf{C}}_0}$}
\]
In the category of differential complexes, it's easy to verify that
\begin{equation}\label{equ:shortExact}
0\to \text{\ref{equ:complexF}}
\stackrel{i}{\hookrightarrow} \text{\ref{equ:complexK}} 
\stackrel{\partial_\infty}{\twoheadrightarrow} \text{\ref{equ:complexB}} 
\to 0
\tag{\textbf E}
\end{equation}
is an exact sequence. Namely, each column of the diagram 
\begin{equation}\label{equ:diagram} 
\xymatrix{
&
0\ar[d] &
0\ar[d] & 
0 \ar[d]& 
\\
0\ar[r] &
T_1 \hat{\mathcal{G}}_{\mu}  \ar[d] \ar[r]^(0.4){\frac{1}{2}\mathfrak{L}_{\hat{\mathsf{C}}_0}} &
T_{\hat{\mathsf{C}}_0} \partial_\infty^{-1}(\mathsf{C}_\infty) \ar[d] \ar[r]^(0.6){\widehat{\underline{SW}}_{\hat{\mathsf{C}}_0}} &
T_0\mathcal{Y}_\mu \ar[d]^{=}\ar[r] & 
0\\
0\ar[r] &
T_1 \hat{\mathcal{G}}_{\mu,ex}  \ar[d] \ar[r]^(0.5){\frac{1}{2}\mathfrak{L}_{\hat{\mathsf{C}}_0}} &
T_{\hat{\mathsf{C}}_0} \hat{\mathcal{C}}_{\mu,sw}  \ar[d]^{\partial_\infty^0} \ar[r]^(0.5){\widehat{\underline{SW}}_{\hat{\mathsf{C}}_0}} &
T_0\mathcal{Y}_\mu \ar[d]\ar[r] &
 0 \\
0\ar[r] &
T_1\mathcal{G}_\sigma  \ar[d] \ar[r]^(0.4){\frac{1}{2}\mathfrak{L}_{{\mathsf{C}}_\infty}} &
T_{\mathsf{C}_\infty}\mathcal{Z}_\sigma \ar[d] \ar[r]&
0 \ar[d]\ar[r] & 
0 \\
&
0 &
0& 
0& 
\\
}
\tag{\textbf{D}}
\end{equation}
is exact.
Set 
\[
H^i_{\hat{\mathsf{C}}_0} := H^i( \widehat{\mathcal{K}}_{\hat{\mathsf{C}}_0}).
\]
For $i=0$, observe that 
\[
H^0_{\hat{\mathsf{C}}_0}  \cong T_1\hat{G}_0
\]
is the tangent space of the stabilizer of $\hat{\mathsf{C}}_0$ under gauge action. It is one dimensional if $\hat{\mathsf{C}}_0$ is reducible and trivial otherwise. For $i=1$, observe that $\dim_\R(H^1_{\hat{\mathsf{C}}_0})$ is the dimension of the formal tangent space of $\widehat{\M}_\mu$ at $[\hat{\mathsf{C}}_0]$. For $i=2$, $H^2_{\hat{\mathsf{C}}_0}$ is called the \textbf{obstruction space} at $\hat{\mathsf{C}}_0$.

From the diagram \ref{equ:diagram} 
we obtain a long exact sequece
\begin{equation}\label{equ:longExact}
\begin{tikzcd}
& H^0(F_{\hat{\mathsf{C}}_0}) \arrow[d]
& H^1(F_{\hat{\mathsf{C}}_0}) \arrow[d]
& H^2(F_{\hat{\mathsf{C}}_0}) \arrow[d] & \\
\arrow[r, phantom, ""{coordinate, name=Y}] & H^0_{\hat{\mathsf{C}}_0}  \arrow[d]\arrow[r, phantom, ""{coordinate, name=Z}]
&  H^1_{\hat{\mathsf{C}}_0}  \arrow[d]\arrow[r, phantom, ""{coordinate, name=T}]
& H^2_{\hat{\mathsf{C}}_0} \arrow[d] &\\
0   \arrow[ruu,
"",
rounded corners,
to path={
-| (Y) [near end]\tikztonodes
|-  (\tikztotarget)}]  
& H^0(B_{\hat{\mathsf{C}}_0})  \arrow[ruu,
"",
rounded corners,
to path={
-| (Z) [near end]\tikztonodes
|-  (\tikztotarget)}]  
& H^1(B_{\hat{\mathsf{C}}_0})\arrow[ruu,
"",
rounded corners,
to path={
-| (T) [near end]\tikztonodes
|-  (\tikztotarget)}]  
& 0 & 
\end{tikzcd}
\tag{\textbf L}
\end{equation}

$\hat{\mathsf{C}}_0$ is called \textbf{regular} if $H^2_{\hat{\mathsf{C}}_0}= 0$, and \textbf{strongly regular} if $H^2(F_{\hat{\mathsf{C}}_0}) = 0$. Note that by the long exact sequance, strong regularity implies regularity.

The integer
\[
d(\hat{\mathsf{C}}_0) := -\chi(\widehat{\mathcal{K}}_{\hat{\mathsf{C}}_0}) =- \dim_\R H^0_{\hat{\mathsf{C}}_0}+  \dim_\R H^1_{\hat{\mathsf{C}}_0} - \dim_\R H^2_{\hat{\mathsf{C}}_0}
\]
is called the \textbf{virtual dimension} at $[\hat{\mathsf{C}}_0]$ of the moduli space $\widehat{\M}_\mu$. If $\hat{\mathsf{C}}_0$ is regular irreducible, $\widehat{\M}_\mu$ is smooth at $\hat{\mathsf{C}}_0$, and
\[
d(\hat{\mathsf{C}}_0)=-0+ \dim_\R H^1_{\hat{\mathsf{C}}_0}-0
\]
is indeed the dimension of the tangent space of $\widehat{\M}_\mu$ at $[\hat{\mathsf{C}}_0]$. On the other hand, if $\hat{\mathsf{C}}_0$ is regular reducible, we have 
\[
d(\hat{\mathsf{C}}_0)=-1+ \dim_\R H^1_{\hat{\mathsf{C}}_0}-0
\]
So $\dim_\R H^1_{\hat{\mathsf{C}}_0} = d(\hat{\mathsf{C}}_0)+1$. The difference between irreducibles and reducibles, comes from the fact that the orbit of irreducible $\hat{\mathsf{C}}_0$ is $1$-dimensional in $\hat{\mathcal{C}}_{\mu,sw}$, given by the action of constant gauge, while the constant gauge acs on reducibles trivially.

The $L^2_\mu$-adjoint of $\mathfrak{L}_{\hat{\mathsf{C}}_0}$ is
\begin{equation}\label{equ:L*}
\mathfrak{L}^{*_\mu}_{\hat{\mathsf{C}}_0} : (\dot\psi ,\mathbf{i}\dot a) \mapsto -2
\mathbf{i} d^{*_\mu} \dot a - \mathbf{i} \imaginary \langle\psi,\dot \psi\rangle_\mu.
\end{equation}
Now define 
\[
\hat{\mathcal{T}}_{\hat{\mathsf{C}}_0,\mu} := \widehat{\underline{SW}}_{\hat{\mathsf{C}}_0} \oplus \frac{1}{2}\mathfrak{L}^{*_\mu}_{\hat{\mathsf{C}}_0} :  L_\mu^{2,2}(\hat{\S}_{\hat{\sigma}}^+ \oplus \mathbf{i}T^*\hat{N}) \to \hat{\mathcal{Y}}_\mu \oplus L_\mu^{1,2}(N,\mathbf{i}\R).
\]
We can deduce that (see the proof of Lemma 4.3.19 of Nicolaescu's book)
\begin{equation}\label{equ:Tinf}
\vec{\partial}_\infty\hat{\mathcal{T}}_{\hat{\mathsf{C}}_0,\mu} =
\mathcal{T}_{{\mathsf{C}}_\infty,\mu } =
\begin{bmatrix}
\underline{SW}_{{\mathsf{C}}_\infty}  & -\frac{1}{2}\mathfrak{L}_{{\mathsf{C}}_\infty} \\[10pt]
\frac{1}{2}\mathfrak{L}^{*}_{{\mathsf{C}}_\infty} & -2\mu
\end{bmatrix}
\end{equation}
It turns out that we can remove the dependence on the choice of $\mu$, such that everything is independant of $\mu$ (Page 387 of \cite{Nicolaescu2000NotesOS}). Set $\mu = 0$ formally:
\begin{equation}\label{defOfTWithoutMu}
\mathcal{T}_{\hat{\mathsf{C}}_0} := \widehat{\underline{SW}}_{\hat{\mathsf{C}}_0} \oplus \frac{1}{2}\mathfrak{L}^*_{\hat{\mathsf{C}}_0}
\end{equation}
From the description \ref{equ:Tinf} above of $\mathcal{T}_{{\mathsf{C}}_\infty,\mu }$ ($\mu=0$), we have a decomposition
\[
\ker\mathcal{T}_{{\mathsf{C}}_\infty} = T_{{\mathsf{C}}_\infty}\M_\sigma \oplus T_1G_\infty,
\]
where $G_\infty$ is the stabilizer of $\hat{\mathsf{C}}_\infty$ under gauge action.
Denote the two components of the boundary map
\[
\partial_\infty: \ker_{ex}\hat{\mathcal{T}}_{\hat{\mathsf{C}}_0} \to \ker\mathcal{T}_{{\mathsf{C}}_\infty} = T_{{\mathsf{C}}_\infty}\M_\sigma \oplus T_1G_\infty
\]
by
\begin{align*}
\partial_\infty^0:  \ker_{ex}\hat{\mathcal{T}}_{\hat{\mathsf{C}}_0} &\to T_1G_\infty \\
\partial_\infty^c:  \ker_{ex}\hat{\mathcal{T}}_{\hat{\mathsf{C}}_0} &\to  T_{{\mathsf{C}}_\infty}\M_\sigma.
\end{align*}
Explictly, for $(\hat{\psi},\hat{\alpha}) \in L^{2,2}_{ex}(\hat{\S}_{\hat{\sigma}}^+ \oplus \mathbf{i}T^*\hat{N})$, if $\hat \alpha =\mathbf i \alpha + \mathbf i fdt$ on the neck $\R \times {N}$, where $\alpha(t)$ is a $1$-form on $N$ for each $t$, then
\begin{align}
\partial_\infty^0(\hat{\psi},\hat{\alpha}) &= \mathbf i\partial_\infty f \in T_1G_\infty \label{equ:partial0}\\
\partial_\infty^c(\hat{\psi},\hat{\alpha}) &=  (\partial_\infty \hat{\psi},{\partial_\infty \alpha}) \in T_{{\mathsf{C}}_\infty}\M_\sigma.
\end{align}

\subsection{Local gluing theorem}
Now we discuss how to apply the results in section 4.5 of Nicolaescu's book \cite{Nicolaescu2000NotesOS} to our cases.

Let's define
\[
\mathfrak{X}^k_+:= L^{k,2}(\hat{\S}^+_{\hat \sigma} \oplus \mathbf{i} T^*\hat{N}(r)), 
\mathfrak{X}^k_-:= L^{k,2}(\hat{\S}^-_{\hat \sigma} \oplus \mathbf{i} \Lambda^2_+T^*\hat{N}(r)), 
\]
\[
\mathfrak{X}^k := \mathfrak{X}^k_+ \oplus \mathfrak{X}^k_-.
\]
Define
\[
\hat{L}_r:= 
\begin{bmatrix}
0 &\hat{\mathcal T}^*_r \\
\hat{\mathcal T}_r & 0
\end{bmatrix}
:\mathfrak{X}^0 \to  \mathfrak{X}^0.
\]
We want to use the eigenspace corresponds to very small eigenvalues to approximate the kernel of this operator. Let $\H_r$ be the subspace of $\mathfrak{X}^0$ spanned by
\[
\{ v; \hat{L}_r v = \lambda v, |\lambda| < r^{-2}\}.
\]
Let $\mathcal{Y}_r$ be the orthogonal complement of $\H_r$ in $\mathfrak{X}^0$. Let $\H_r^\pm$ be the orthogonal projection of $\H_r$ to $\mathfrak{X}^0_\pm$.  Let $\mathcal{Y}_r^\pm$ be the orthogonal projection of $\mathcal{Y}_r$ to $\mathfrak{X}^0_\pm$.

Each row and column of the following diagrams is asymptotically exact (see page 434 of Nicolaescu's book \cite{Nicolaescu2000NotesOS}).

Virtual tangent space diagram:
\begin{equation}\label{equ:3T} 
\xymatrix{
&
0\ar[d] &
0\ar[d] & 
0 \ar[d]& 
\\
0\ar[r] &
\ker\Delta_+^c \ar[d] \ar[r]^{S_r} &
H^1_{\hat{\mathsf{C}}_1}\oplus H^1_{\hat{\mathsf{C}}_2}\ar[d] \ar[r]^{\Delta_+^c} &
L_1^+ + L_2^+ \ar[d]\ar[r] & 
0\\
0\ar[r] &
{\H}_r^+ \ar[d] \ar[r]^(0.3){S_r} &
\ker_{ex}\hat{\mathcal{T}}_{\hat{\mathsf{C}}_1}\oplus \ker_{ex}\hat{\mathcal{T}}_{\hat{\mathsf{C}}_2} \ar[d]^{\partial_\infty^0} \ar[r]^(0.64){\Delta_+^c} &
\hat{L}_1^+ + \hat{L}_2^+ \ar[d]\ar[r] &
 0 \\
0\ar[r] &
\ker\Delta_+^0 \ar[d] \ar[r]^{S_r} &
\mathfrak{C}_1^+\oplus \mathfrak{C}_2^+ \ar[d] \ar[r]^{\Delta_+^0} &
\mathfrak{C}_1^+ + \mathfrak{C}_2^+ \ar[d]\ar[r] & 
0 \\
&
0 &
0& 
0& 
\\
}
\tag{\textbf{T}}
\end{equation}

Obstruction space diagram:
\begin{equation}\label{equ:3O} 
\xymatrix{
&
0\ar[d] &
0\ar[d] & 
0 \ar[d]& 
\\
0\ar[r] &
\ker\Delta_-^c \ar[d] \ar[r]^(0.35){S_r} &
H^2(F_{\hat{\mathsf{C}}_1}) \oplus H^2(F_{\hat{\mathsf{C}}_2}) \ar[d] \ar[r]^(0.62){\Delta_-^c} &
L_1^- + L_2^- \ar[d]\ar[r] & 
0\\
0\ar[r] &
{\H}_r^- \ar[d] \ar[r]^(0.3){S_r} &
\ker_{ex}\hat{\mathcal{T}}^*_{\hat{\mathsf{C}}_1}\oplus \ker_{ex}\hat{\mathcal{T}}^*_{\hat{\mathsf{C}}_2} \ar[d] \ar[r]^(0.64){\Delta_+^c} &
\hat{L}_1^- + \hat{L}_2^- \ar[d]\ar[r] &
 0 \\
0\ar[r] &
\ker\Delta_-^0 \ar[d] \ar[r]^{S_r} &
\mathfrak{C}_1^-\oplus \mathfrak{C}_2^- \ar[d] \ar[r]^{\Delta_-^0} &
\mathfrak{C}_1^- + \mathfrak{C}_2^- \ar[d]\ar[r] & 
0 \\
&
0 &
0& 
0& 
\\
}
\tag{\textbf{O}}
\end{equation}
where 
\[
L_i^+ := \partial^c_\infty \ker_{ex}\hat{\mathcal{T}}_{\hat{C}_i}\subset T_{C_\infty}\M_\sigma
\]
\[
 \mathfrak{C}_i^+ := \partial^0_\infty \ker_{ex}\hat{\mathcal{T}}_{\hat{C}_i}\subset T_1G_\infty
\]
\[
L_i^- := \partial^c_\infty \ker_{ex}\hat{\mathcal{T}}^*_{\hat{C}_i}\subset T_{C_\infty}\M_\sigma
\]
\[
 \mathfrak{C}_i^- := \partial^0_\infty \ker_{ex}\hat{\mathcal{T}}^*_{\hat{C}_i}\subset T_1G_\infty
\]

Here is a short explanation of the middle column of the diagram \ref{equ:3T}: We can first look at the beginning of the long exact sequence \ref{equ:longExact}:
\[
\cdots
\to H^0_{\hat{\mathsf{C}}_i} = T_1G_i
\stackrel{\partial_\infty}{\to} H^0(B_{\hat{\mathsf{C}}_0}) = T_1G_\infty
\stackrel{\delta}{\to} H^1(F_{\hat{\mathsf{C}}_0}) = \ker_{\mu}\hat{\mathcal{T}}_{\hat{\mathsf{C}}_i}
\stackrel{\phi}{\to} H^1_{\hat{\mathsf{C}}_i}
\to H^1(B_{\hat{\mathsf{C}}_0})
\to \cdots
\]
Consider $\ker_{ex}\hat{\mathcal{T}}_{\hat{\mathsf{C}}_i} \supset \ker_{\mu}\hat{\mathcal{T}}_{\hat{\mathsf{C}}_i}$. 
Intuitively, $\ker_{\mu}\hat{\mathcal{T}}_{\hat{\mathsf{C}}_i}$ is the tangent space of ``monopoles in $L_{\mu}$ modulo the action of the gauge group in $L_\mu$'', $\ker_{ex}\hat{\mathcal{T}}_{\hat{\mathsf{C}}_i} $ is the tangent space of ``monopoles in $L_{ex}$ modulo the action of the gauge group in $L_\mu$'', and $H^1_{\hat{\mathsf{C}}_i}$ is the tangent space of ``monopoles in $L_{ex}$ modulo the action of the gauge group in $L_{ex}$''. 
Thus the map from $\ker_{ex}\hat{\mathcal{T}}_{\hat{\mathsf{C}}_i}$ to $H^1_{\hat{\mathsf{C}}_i}$ is surjective with the same kernel as $\ker \phi = T_1(G_\infty/\partial_\infty \hat{G}_i)$ (see Lemma 4.3.25 of Nicolaescu's book \cite{Nicolaescu2000NotesOS} for details), and this kernel is $ \mathfrak{C}_i^+$ (see the proof of Propsition \ref{prop:trivialObs}).

\begin{remark}\label{rem:imL(kerLinfty)}
$\delta$ is nontrivial if and only if $\hat{\mathsf{C}}_i$ is irreducible and ${\mathsf{C}}_\infty$ is reducible. We assume this is the case. Then $\ker \phi =T_1(G_\infty/\partial_\infty \hat{G}_0) = \R$ is generated by constant function $\mathbf{i}f \in  T_1G_\infty$.

Now consider the definition of the connecting homomorphism $\delta$. We can choose the preimage of $\mathbf{i}f$ in $T_1 \hat{\mathcal{G}}_{\mu,ex} $ to be the constant function $\mathbf{i}\hat{f}$, or we can choose the preimage to be $ \mathbf{i} \beta(t-r)\hat f$. 
In first case, it's sent to $(\mathbf{i}\hat{f}\hat{\psi},0) \in T_{\hat{\mathsf{C}}_0} \partial_\infty^{-1}(\mathsf{C}_\infty) $, while in the second case, it's sent to $(\mathbf{i}\beta(t-r)\hat{f}\hat{\psi},2\mathbf{i} g dt)$, where $gdt =d (\beta(t-r)\hat{f})$ is a bump function aroud $t=r$. 
These two certainly represent the same class in $H^1(F)$, but only the first one is harmonic and hence in $\ker_{\mu}\hat{\mathcal{T}}_{\hat{\mathsf{C}}_i}$ (By (4.2.2) and Example 4.1.24 of Nicolaescu's book \cite{Nicolaescu2000NotesOS}, elements in $\ker_{\mu}\hat{\mathcal{T}}_{\hat{\mathsf{C}}_i}$ must be harmonic without any $dt$-terms). 
However, the second one, $(\mathbf{i}\beta(t-r)\hat{f}\hat{\psi},2\mathbf{i} g dt)$, shows explicitly that the map $\partial^0_\infty$ in \ref{equ:partial0} is the inverse of $\delta$.
\end{remark}

Here is a short explanation of the middle column of the diagram
\ref{equ:3O}:
$H^2(F_{\hat{\mathsf{C}}_i}) = \ker_{\mu}\hat{\mathcal{T}}^*_{\hat{\mathsf{C}}_i}$ since every self dual $2$-form on $\hat{N}_i$ is in $L_\mu$. On the other hand, the kernel of $\mathfrak{L}_{\hat{\mathsf{C}}_i} $ is exactly $T_1G_i$ which is not in $L_\mu$ (they are constant functions). Hence 
\[
\ker_{ex}\hat{\mathcal{T}}^*_{\hat{C}_i}= \ker_{ex} (\widehat{\underline{SW}}^*_{\hat{\mathsf{C}}_i} \oplus \frac{1}{2}\mathfrak{L}_{\hat{\mathsf{C}}_i})\]
decomposes to the direct sum of $H^2(F_{\hat{\mathsf{C}}_i}) $ and $\mathfrak{C}_i^- = T_1G_i$.

The virtual tangent space and obstruction space will give all monopoles of $\hat{N}(r)$ in a small neighborhood of $\hat{\mathsf{C}}_r$ in its slice:
\begin{theorem}[\cite{Nicolaescu2000NotesOS} Theorem 4.5.7]\label{thm:lgt}
For large enough $r$, the set
\[
\{\hat{\mathsf{C}}; \hat{\mathsf{C}} \text{ are monopoles on } \hat{N}(r), \mathfrak{L}^{*}_{\hat{\mathsf{C}}_r} (\hat{\mathsf{C}} - \hat{\mathsf{C}}_r) = 0, \|\hat{\mathsf{C}} - \hat{\mathsf{C}}_r\|_{2,2} \leq r^{-3}\}
\]
is in one-to-one correspondence with the set 
\[
\{
\hat{\mathsf{C}}_r + \underline{\hat{\mathsf{C}}}_0\oplus \underline{\hat{\mathsf{C}}}^\bot; \|\underline{\hat{\mathsf{C}}}_0\|_{2,2}\leq r^{-3}, \kappa_r (\underline{\hat{\mathsf{C}}}_0) = 0, \underline{\hat{\mathsf{C}}}^\bot = \Phi(\underline{\hat{\mathsf{C}}}_0)
\}
\]
where
\begin{align*}
\hat{\mathsf{C}}_r  &= \hat{\mathsf{C}}_1 \#_r \hat{\mathsf{C}}_2\\
\underline{\hat{\mathsf{C}}}_0 &\in \H_r^+ \\
\underline{\hat{\mathsf{C}}}^\bot &\in \mathcal{Y}_r^-\\
\kappa_r : B_0(r^{-3})\subset \H_r^+ &\to \H_r^- \\
\Phi :B_0(r^{-3})\subset \H_r^+ &\to \mathcal{Y}_r^-
\end{align*}
\end{theorem}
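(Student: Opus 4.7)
The plan is to recast this as a Kuranishi-type implicit function argument around the approximately-glued monopole $\hat{\mathsf{C}}_r = \hat{\mathsf{C}}_1 \#_r \hat{\mathsf{C}}_2$. First, I would encode the Seiberg-Witten equation together with the Coulomb-type slice condition $\mathfrak{L}^{*}_{\hat{\mathsf{C}}_r}(\hat{\mathsf{C}} - \hat{\mathsf{C}}_r) = 0$ as a single nonlinear map
\[
\mathcal{F}_r : \mathfrak{X}^2 \to \mathfrak{X}^0, \qquad \mathcal{F}_r(\underline{\hat{\mathsf{C}}}) = e_r + \hat L_r \underline{\hat{\mathsf{C}}} + Q(\underline{\hat{\mathsf{C}}}),
\]
where $e_r := \mathcal{F}_r(0)$ is the error of the approximate monopole $\hat{\mathsf{C}}_r$, the linear part is precisely $\hat L_r$ from the abstract linear gluing setup, and $Q$ is the spinor-quadratic term coming from $q(\psi)$. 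Solving $\widehat{SW}=0$ in the slice around $\hat{\mathsf{C}}_r$ is equivalent to solving $\mathcal{F}_r(\underline{\hat{\mathsf{C}}}) = 0$.

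Next I would control the two crucial small parameters. The residual $e_r$ is supported only on the cutoff collar $[r, r+1] \times N$ (where the gluing formula \eqref{equ:paste} interpolates between $\hat{\mathsf{C}}_i$ and ${\mathsf{C}}_\infty$), and since both $\hat{\mathsf{C}}_i$ are genuine monopoles that converge to ${\mathsf{C}}_\infty$ with exponential rate $\mu > 0$, one gets $\|e_r\|_{\mathfrak{X}^0} \lesssim e^{-\mu r}$, hence in particular $\|e_r\| \ll r^{-N}$ for every $N$. Simultaneously, the definition of $\mathcal{Y}_r$ as the orthogonal complement of the low-eigenspace $\H_r$ gives a uniform spectral gap: $\hat L_r|_{\mathcal{Y}_r}$ is invertible with $\|\hat L_r^{-1}|_{\mathcal{Y}_r}\| \lesssim r^{2}$, and the quadratic term satisfies $\|Q(u) - Q(v)\| \lesssim (\|u\|+\|v\|)\|u-v\|$ by the Sobolev multiplication $L^{2,2}\cdot L^{2,2} \hookrightarrow L^{0,2}$.

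Decomposing $\underline{\hat{\mathsf{C}}} = \underline{\hat{\mathsf{C}}}_0 \oplus \underline{\hat{\mathsf{C}}}^\bot$ along $\mathfrak{X}^2_+ = \H_r^+ \oplus \mathcal{Y}_r^+$, I would then fix $\underline{\hat{\mathsf{C}}}_0 \in \H_r^+$ with $\|\underline{\hat{\mathsf{C}}}_0\|_{2,2} \leq r^{-3}$ and project $\mathcal{F}_r(\underline{\hat{\mathsf{C}}}_0 + \underline{\hat{\mathsf{C}}}^\bot) = 0$ onto $\mathcal{Y}_r^-$. This produces the fixed-point equation
\[
\underline{\hat{\mathsf{C}}}^\bot = -(\hat L_r|_{\mathcal{Y}_r})^{-1} P_{\mathcal{Y}_r^-}\bigl( e_r + Q(\underline{\hat{\mathsf{C}}}_0 + \underline{\hat{\mathsf{C}}}^\bot) \bigr),
\]
and the estimates in the previous paragraph show the right-hand side is a contraction of the ball of radius $r^{-3}$ in $\mathcal{Y}_r^-$ to itself (the factor $r^2$ from the inverse is swallowed by the $r^{-6}$ quadratic gain). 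The Banach fixed point theorem yields a unique solution $\underline{\hat{\mathsf{C}}}^\bot = \Phi(\underline{\hat{\mathsf{C}}}_0)$ depending smoothly on $\underline{\hat{\mathsf{C}}}_0$. Substituting back and projecting onto $\H_r^-$ defines
\[
\kappa_r(\underline{\hat{\mathsf{C}}}_0) := P_{\H_r^-}\mathcal{F}_r\bigl(\underline{\hat{\mathsf{C}}}_0 + \Phi(\underline{\hat{\mathsf{C}}}_0)\bigr),
\]
and the full equation $\mathcal{F}_r = 0$ becomes exactly $\kappa_r(\underline{\hat{\mathsf{C}}}_0) = 0$ together with $\underline{\hat{\mathsf{C}}}^\bot = \Phi(\underline{\hat{\mathsf{C}}}_0)$, giving the stated bijection.

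The hard part is the uniform spectral estimate $\|(\hat L_r|_{\mathcal{Y}_r})^{-1}\| = O(r^2)$, which by construction is automatic at the cost of throwing the ``dangerously small'' eigenvalues into $\H_r$; this is precisely why the threshold $r^{-2}$ appears in the definition of $\H_r$, and why the ball radius $r^{-3}$ is the correct one to close up the contraction. One secondary subtlety is that $\hat L_r$ mixes the gauge-fixing block and the linearized SW block, so the $\mathcal{Y}_r^-$ projection genuinely sees both the elliptic SW residue and the Coulomb slice constraint; this is handled cleanly by working with $\hat L_r$ as a single self-adjoint operator, as already arranged in the setup.
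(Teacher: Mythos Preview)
Your proposal is correct and is exactly the Lyapunov--Schmidt/Kuranishi contraction argument that underlies the cited result: the paper does not reprove this theorem but simply quotes it from \cite{Nicolaescu2000NotesOS}, and in its later family-version arguments (the decomposition \eqref{decompose-F=0}--\eqref{expand-decompose-F=0}, the estimates from Lemmas 4.5.5--4.5.6 and (4.4.5) of \cite{Nicolaescu2000NotesOS}, and the contraction map $\tilde F$ in Lemma~\ref{lemma:F-contraction-1}) it follows precisely the scheme you outline. One cosmetic slip: your $\underline{\hat{\mathsf{C}}}^\bot$ lives in $\mathcal{Y}_r^+$ (domain side), not $\mathcal{Y}_r^-$, so the contraction should be on a ball in $\mathcal{Y}_r^+$; this does not affect the argument.
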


We can also prove that, in the slice of $\hat{\mathsf{C}}_r$, any pair of configurations in small enough neighborhood of $\hat{\mathsf{C}}_r$, are gauge inequivalent (see Lemma 4.5.9 of Nicolaescu's book \cite{Nicolaescu2000NotesOS}). Thus we have
\begin{theorem}[\cite{Nicolaescu2000NotesOS} Corollary 4.5.10]\label{thm:lgc}
For large enough $r$,
\[
\{
\hat{\mathsf{C}}_r + \underline{\hat{\mathsf{C}}}_0\oplus \underline{\hat{\mathsf{C}}}^\bot; \|\underline{\hat{\mathsf{C}}}_0\|_{2,2}\leq r^{-3}, \kappa_r (\underline{\hat{\mathsf{C}}}_0) = 0, \underline{\hat{\mathsf{C}}}^\bot = \Phi(\underline{\hat{\mathsf{C}}}_0), \mathfrak{L}^{*}_{\hat{\mathsf{C}}_r} (\underline{\hat{\mathsf{C}}}_0\oplus \underline{\hat{\mathsf{C}}}^\bot) = 0
\}
\]
is an open set of moduli space $\M(\hat{N}_r, \hat{\sigma}_1\#\hat{\sigma}_2)$.
\end{theorem}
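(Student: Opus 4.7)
The plan is to combine the local gluing theorem (Theorem \ref{thm:lgt}) with the slice-theorem-type result (Lemma 4.5.9 of \cite{Nicolaescu2000NotesOS}) to upgrade the parametrized set from a description of monopoles in the Coulomb slice of $\hat{\mathsf{C}}_r$ to an open set in the moduli space $\M(\hat{N}_r, \hat{\sigma}_1\#\hat{\sigma}_2)$.

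First, I would recall that by Theorem \ref{thm:lgt} the set in question is precisely the image of those monopoles $\hat{\mathsf{C}}$ on $\hat{N}(r)$ which lie in the Coulomb slice $\{\mathfrak{L}^{*}_{\hat{\mathsf{C}}_r}(\hat{\mathsf{C}} - \hat{\mathsf{C}}_r) = 0\}$ and satisfy $\|\hat{\mathsf{C}} - \hat{\mathsf{C}}_r\|_{2,2} \le r^{-3}$. So the set naturally injects into $\hat{\mathcal{C}}$. To see it descends injectively into the moduli space, I would invoke Lemma 4.5.9 of \cite{Nicolaescu2000NotesOS} (cited in the excerpt): any two configurations lying in the Coulomb slice of $\hat{\mathsf{C}}_r$ and sufficiently close to $\hat{\mathsf{C}}_r$ are gauge inequivalent unless they coincide. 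This gives injectivity of the composite into $\M(\hat{N}_r, \hat{\sigma}_1\#\hat{\sigma}_2)$.

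Next, I would establish openness, i.e.\ that every monopole $[\hat{\mathsf{C}}]$ in a sufficiently small neighborhood of $[\hat{\mathsf{C}}_r]$ in $\M$ has a representative lying in the Coulomb slice of $\hat{\mathsf{C}}_r$ within the $r^{-3}$-ball. This is the standard Coulomb gauge fixing argument: the map $\hat{u}\mapsto \mathfrak{L}^{*}_{\hat{\mathsf{C}}_r}(\hat{u}\cdot\hat{\mathsf{C}} - \hat{\mathsf{C}}_r)$ defined on the identity component of $\widehat{\mathcal{G}}_{\mu,ex}$ has differential at $\hat{u}=\mathbf{1}$ equal to $\mathfrak{L}^{*}_{\hat{\mathsf{C}}_r}\mathfrak{L}_{\hat{\mathsf{C}}_r}$, which by the elliptic theory developed in Chapter 4 of \cite{Nicolaescu2000NotesOS} is Fredholm and (modulo the finite-dimensional stabilizer direction) invertible on the relevant Sobolev spaces. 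The implicit function theorem then produces, for each $\hat{\mathsf{C}}$ close to $\hat{\mathsf{C}}_r$, a gauge transformation $\hat{u}$ close to $\mathbf{1}$ such that $\hat{u}\cdot\hat{\mathsf{C}}$ lies in the Coulomb slice and still inside the $r^{-3}$-ball, once $r$ is large enough that the estimates controlling both radii are compatible.

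The main obstacle is bookkeeping the two smallness scales: Theorem \ref{thm:lgt} only describes monopoles within the $r^{-3}$-slice ball, while the Coulomb gauge-fixing implicit function theorem produces representatives whose distance to $\hat{\mathsf{C}}_r$ is controlled by the original distance in $\M$. One must check that, shrinking the neighborhood of $[\hat{\mathsf{C}}_r]$ in $\M$ if necessary, the gauge-fixed representative still lies inside the ball where Theorem \ref{thm:lgt} applies. This is standard once the slice-theorem estimates are in place, but it is the step where the constants from the linear gluing theorem, the elliptic Coulomb estimate, and the contraction mapping in Theorem \ref{thm:lgt} all interact, and it is the only place where one needs to pick $r$ possibly larger than in Theorem \ref{thm:lgt}. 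Apart from this, the proof is a direct concatenation of Theorem \ref{thm:lgt} and Lemma 4.5.9.
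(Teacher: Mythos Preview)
Your proposal is correct and follows the same approach as the paper: combine Theorem~\ref{thm:lgt} with the gauge-inequivalence statement of Lemma~4.5.9 in \cite{Nicolaescu2000NotesOS} to pass from the slice description to an open set in the moduli space. The paper's own treatment is in fact just a one-line citation of Lemma~4.5.9 before stating the corollary, so your write-up is more detailed but not different in substance.
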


Moreover, this collection of open sets is an open cover of moduli space $\M(\hat{N}_r, \hat{\sigma}_1\#\hat{\sigma}_2)$:
\begin{theorem}[\cite{Nicolaescu2000NotesOS} Theorem 4.5.15]\label{thm:ls}
Let 
\[
\hat{\mathcal{Z}}_{\Delta} := \{ (\hat{\mathsf{C}}_1,\hat{\mathsf{C}}_2)\in \hat{\mathcal{Z}}_1\times\hat{\mathcal{Z}}_2; \partial_\infty \hat{\mathsf{C}}_1 =  \partial_\infty \hat{\mathsf{C}}_2\}
\]
be the space of compatible monopoles. Then
\[
\bigcup_{\mathsf{C}_r = \hat{\mathsf{C}}_1 \#_r \hat{\mathsf{C}}_2,(\hat{\mathsf{C}}_1,\hat{\mathsf{C}}_2)\in\hat{\mathcal{Z}}_{\Delta}} \{
\hat{\mathsf{C}}_r + \underline{\hat{\mathsf{C}}}_0\oplus \underline{\hat{\mathsf{C}}}^\bot; \|\underline{\hat{\mathsf{C}}}_0\|_{2,2}\leq r^{-3}, \kappa_r (\underline{\hat{\mathsf{C}}}_0) = 0, \underline{\hat{\mathsf{C}}}^\bot = \Phi(\underline{\hat{\mathsf{C}}}_0), \mathfrak{L}^{*}_{\hat{\mathsf{C}}_r} (\underline{\hat{\mathsf{C}}}_0\oplus \underline{\hat{\mathsf{C}}}^\bot) = 0
\}
\]
is $ \M(\hat{N}_r, \hat{\sigma}_1\#\hat{\sigma}_2)$.
\end{theorem}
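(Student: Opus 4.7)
The plan is to prove this by contradiction and compactness. Containment of each neighborhood in $\M(\hat{N}_r, \hat{\sigma}_1\#\hat{\sigma}_2)$ is already supplied by Theorem \ref{thm:lgc}, so what must be shown is surjectivity: every gauge class of monopoles on $\hat{N}(r)$ (for $r$ large) arises from a compatible pair by the construction of Theorem \ref{thm:lgt}. Suppose not. Then there is a sequence $r_n \to \infty$ and monopoles $\hat{\mathsf{C}}^{(n)}$ on $\hat{N}(r_n)$ whose gauge classes avoid every set of the form
\[
\{\hat{\mathsf{C}}_{r_n} + \underline{\hat{\mathsf{C}}}_0\oplus\underline{\hat{\mathsf{C}}}^\bot : \|\underline{\hat{\mathsf{C}}}_0\|_{2,2} \leq r_n^{-3},\ \kappa_{r_n}(\underline{\hat{\mathsf{C}}}_0)=0,\ \underline{\hat{\mathsf{C}}}^\bot = \Phi(\underline{\hat{\mathsf{C}}}_0)\}
\]
as $(\hat{\mathsf{C}}_1,\hat{\mathsf{C}}_2)$ ranges over $\hat{\mathcal{Z}}_\Delta$. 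The aim is to extract subsequential limits of $\hat{\mathsf{C}}^{(n)}$ on the two cylindrical pieces and show the limit produces a glued configuration so close to $\hat{\mathsf{C}}^{(n)}$ that Theorem \ref{thm:lgt} applies, contradicting the assumption.

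First I would establish uniform energy bounds. The Seiberg-Witten energy on $\hat{N}(r_n)$ is controlled by a topological quantity built from $c_1(\L)^2$, which is independent of $r_n$ by Theorem \ref{thm:changeOfSpinc}; combined with the Weitzenb\"ock identity this gives $C^0$ bounds on the spinor and $L^2$ bounds on the curvature. Restricting $\hat{\mathsf{C}}^{(n)}$ to each cylindrical piece $\hat{N}_i$ (extended by the cylindrical neck of length $r_n$ to an exhausting sequence of subdomains of $\hat{N}_i$), standard Seiberg-Witten compactness for finite-energy monopoles on cylindrical manifolds (as in Chapter 4 of \cite{Nicolaescu2000NotesOS}) yields, after a diagonal gauge transformation, subsequential limits $\hat{\mathsf{C}}_i^\infty \in \hat{\mathcal{C}}_{\mu,sw}$ that are finite-energy monopoles on $\hat{N}_i$, with $\hat{\mathsf{C}}^{(n)}|_{\hat{N}_i} \to \hat{\mathsf{C}}_i^\infty$ in $L^{2,2}_{loc}$.

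Next I would verify compatibility: $\partial_\infty \hat{\mathsf{C}}_1^\infty = \partial_\infty \hat{\mathsf{C}}_2^\infty$, so that $(\hat{\mathsf{C}}_1^\infty, \hat{\mathsf{C}}_2^\infty) \in \hat{\mathcal{Z}}_\Delta$. On the middle slice of the neck of length $r_n$, translation-invariant energy estimates together with exponential decay of finite-energy monopoles toward their asymptotic values force $\hat{\mathsf{C}}^{(n)}$ to converge uniformly to the common asymptotic monopole on $N = \S^1 \times \S^2$; by Proposition \ref{prop:spincRestriction} this asymptotic $\text{spin}^c$ structure is the unique one with trivial determinant, so the asymptotic values of the two limits must coincide as elements of $\mathcal{Z}_\sigma$. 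After a further gauge adjustment made possible by the stabilizer analysis behind the diagrams \ref{equ:3T} and \ref{equ:3O}, the limit $\hat{\mathsf{C}}^{(n)}$ is arbitrarily $L^{2,2}$-close to the pasted configuration $\hat{\mathsf{C}}_1^\infty \#_{r_n} \hat{\mathsf{C}}_2^\infty$, in fact closer than $r_n^{-3}$ for $n$ large; placing the difference in the slice of Theorem \ref{thm:lgt} then exhibits $[\hat{\mathsf{C}}^{(n)}]$ inside the forbidden neighborhood, a contradiction.

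The main obstacle I anticipate is the compatibility step: showing that the restrictions to the long neck actually converge to a single limit monopole on $N$, uniformly enough to identify $\partial_\infty \hat{\mathsf{C}}_1^\infty$ with $\partial_\infty \hat{\mathsf{C}}_2^\infty$. This requires combining uniform energy decay along the neck with translation-invariant elliptic estimates and carefully patching the gauge transformations used on the two sides so that the limits live in compatible bundles. The bound $\|\hat{\mathsf{C}}^{(n)} - \hat{\mathsf{C}}_1^\infty \#_{r_n} \hat{\mathsf{C}}_2^\infty\|_{2,2} = o(r_n^{-3})$ needed to trigger Theorem \ref{thm:lgt} is also quantitatively delicate; it hinges on the exponential convergence rate of finite-energy monopoles to the reducible asymptotic monopole on $\S^1 \times \S^2$, which is strong enough to dominate the polynomial threshold $r_n^{-3}$ built into the local gluing statement.
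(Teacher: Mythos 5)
The paper does not give its own proof of this statement: it is quoted directly from Nicolaescu's book (Theorem 4.5.15) and used as a black box, with only its hypotheses verified elsewhere in the paper. Your neck-stretching compactness argument — a contradiction sequence $r_n\to\infty$, finite-energy limits on the two cylindrical pieces, matching of asymptotic values via energy decay along the neck, and then invoking the local gluing theorem to place the monopole in one of the forbidden neighborhoods — is essentially the standard proof given in that reference, so your outline agrees with the argument the paper relies on.
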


\subsection{Computation of virtual tangent space and obstruction space}
Now we have stated all results we need. Next we compute the dimension of the moduli space $\dim H^1_{\hat{\mathsf{C}}_0}$ and the dimension of the obstruction space $\dim H^2(F_{\hat{\mathsf{C}}_0})$ for any monopole $\hat{\mathsf{C}}_0$ on $X_0$, $D^3\times \S^1$, and $\S^2\times D^2$.

\begin{prop}\label{prop:dimOfReducible}
Let matrics $g_{bullet}$ be the ones chosen in subsection \ref{subsection:PSCmetric}. Let $\ss(\S^1 \times D^3)$ be the unique $\text{spin}^c$ structure of $\S^1 \times D^3$, and $\ss(D^2 \times \S^2)$ be the unique $\text{spin}^c$ structure of $ D^2 \times \S^2 $ such that the first Chern class of the determinant line bundle is zero. 
Then the moduli space of SW equations without perturbation $\M(\S^1 \times D^3,g_{bullet} ,\ss(\S^1 \times D^3))$ is a circle and $\M(D^2 \times \S^2,g_{bullet} ,\ss(D^2 \times \S^2))$ is a point. 
\end{prop}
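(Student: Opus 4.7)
The plan is to use the positive scalar curvature of $g_{bullet}$ (subsection~\ref{subsection:PSCmetric}) to force every monopole to be a flat abelian connection, reducing both moduli spaces to $\Hom(\pi_1(\cdot),U(1))$. First I would apply the Weitzenb{\"o}ck identity recalled at the start of subsection~\ref{subsection:PSCmetric}: for any SW solution $(A,\psi)$ on the cylindrical-end manifold $\hat N$, positivity of $s$ yields
\[
0 \;=\; \int_{\hat N}\bigl(\,|\nabla^A\psi|^2 + \tfrac{s}{4}|\psi|^2 + \tfrac{1}{4}|\psi|^4\,\bigr),
\]
so $\psi\equiv 0$ and hence $F_A^+=0$. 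Under the hypothesis $c_1(\L)=0$, together with $H^2(\S^1\times D^3;\Z)=0$ in one case and Remark~\ref{remark:spincline} in the other, the determinant line bundle $\L$ is trivial, so $A$ is an ASD $U(1)$-connection on the trivial bundle and $F_A=da$ for a globally defined $1$-form $a$.

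Next I would show $A$ is flat. Since $A$ has finite energy, $F_A$ is $L^2$; combined with $F_A^+=0$ and $dF_A=0$ one gets $d^* F_A = -*d*F_A = *dF_A = 0$, so $F_A$ is $L^2$-harmonic, anti-self-dual, and exact. The cylindrical-end Hodge theory in Chapter~4 of \cite{Nicolaescu2000NotesOS} shows any such form must vanish on $\hat N=\widehat{\S^1\times D^3}$ or $\widehat{D^2\times\S^2}$: $L^2$-harmonic $2$-forms on a cylindrical end $\R_{\geq 0}\times\S^1\times\S^2$ decay exponentially to a harmonic form on the cross-section, and the product geometry of the neck combined with exactness rules out a nonzero limit. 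Hence $F_A=0$, i.e., $A$ is flat, and the moduli space of flat $U(1)$-connections on a manifold $N$ modulo gauge is $\Hom(\pi_1 N,U(1))$. Computing:
\[
\M(\widehat{\S^1\times D^3}) = \Hom(\Z,U(1)) = \S^1,\qquad \M(\widehat{D^2\times\S^2}) = \Hom(1,U(1)) = \text{pt}.
\]

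The main obstacle is the second step: ensuring that no nonzero $L^2$-harmonic ASD $2$-form representing the trivial cohomology class exists on these cylindrical-end $4$-manifolds. This is an analytic assertion about the asymptotic behavior of harmonic forms at the end, and I would verify it by a direct check using the product structure of $\R_{\geq 0}\times\S^1\times\S^2$, following Nicolaescu. Once flatness is in hand, the remaining content is the purely topological computation of $\Hom(\pi_1,U(1))$.
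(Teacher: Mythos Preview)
Your proposal is correct and follows essentially the same route as the paper: positive scalar curvature forces $\psi\equiv 0$, the vanishing first Chern class makes the curvature exact, anti-self-duality plus exactness gives flatness via Hodge theory, and the moduli space is then the Jacobian torus $H^1(M;\R)/H^1(M;\Z)\cong\Hom(\pi_1,U(1))$. The paper dispatches your ``main obstacle'' in one line by invoking $\im d \cap \im *d = 0$ directly rather than analyzing $L^2$-harmonic forms on the cylindrical end, but the content is the same.
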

\begin{proof}
By the Weitzenb{\"o}ck formula, a non-negative scalar curvature on $3$- or $4$-manifolds leads solely to reducible solutions of the Seiberg-Witten equations (see \cite{kronheimer_mrowka_2007} (4.22)). Hence all monopoles are of the form $(A,0)$, and the Seiberg-Witten equations degenerate to one equation
\[
F_{A}^+ = 0.
\]
Since $F_{A}^+ = \frac{1}{2}(dA + *dA)$ and $\im d\cap \im d^* = \im d\cap \im *d = 0$, $F_{A}^+ = 0$ is equivalent to $dA = 0$.

Fix any $U(1)$-connection $A_0$ of the determinant line bundle of the chosen $\text{spin}^c$ structure. 
In Proposition \ref{prop:spincRestriction} we showed that the first Chern class of the determinant line bundle is zero. Hence $F_{A_0}$ is exact. Let $da_0 = -F_{A_0}$. Then $(A,0)$ is a monopole iff 
\[
A =  A_0 + a_0 + a
\]
for some closed imaginary $1$-form $a$. Hence the space of monopoles is the coset of the space of closed forms.

Now consider the action by the gauge group $\mathscr{G}=Map(M,\S^1)$. Elements in the identity component $I$ of $\mathscr{G}$ can be written as $\e^{\mathbf{i}f}$ where $f$ can be any smooth function ($0$-form), and it changes $A$ by the addition of $\mathbf{i}df$. Also $\mathscr{G}/I = H^1(M;\Z)$. Hence for $M =D^3\times \S^1$ or $\S^2\times D^2$, the moduli space of monopoles can be identified with the torus $H^1(M;\R)/H^1(M;\Z)$.
\end{proof}

By Proposition \ref{prop:spincRestriction} and Proposition \ref{prop:dimOfReducible}, we have 
\begin{corollary}\label{cor:dimOfReducible}
Let $\ss$ be any $\text{spin}^c$ structure of $X$ and $\ss'$ be its unique extension to $X'$ as in Theorem \ref{thm:changeOfSpinc}.  Let matrics $g_{bullet}$ be the ones chosen in subsection \ref{subsection:PSCmetric}. Then the moduli space of SW equations without perturbation $\M(D^3\times \S^1,g_{bullet} ,\ss|_{D^3\times \S^1})$ is a circle and $\M(\S^2\times D^2,g_{bullet} ,\ss'|_{\S^2\times D^2})$ is a point. All monopoles are reducible.
\end{corollary}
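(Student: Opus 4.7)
The plan is to match the spin$^c$ structures named in the hypotheses of Proposition \ref{prop:dimOfReducible} with the restrictions $\ss|_{\S^1 \times D^3}$ and $\ss'|_{D^2 \times \S^2}$, and then quote that proposition. So the argument is mostly bookkeeping and relies on no new computation.

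First I would deal with $\ss|_{\S^1 \times D^3}$. Since $\S^1 \times D^3$ deformation retracts onto $\S^1$, we have $H^2(\S^1 \times D^3;\Z) = 0$, so by Remark \ref{remark:spincline} there is a unique spin$^c$ structure on $\S^1 \times D^3$ up to isomorphism. Hence $\ss|_{\S^1 \times D^3}$ must agree with the $\ss(\S^1 \times D^3)$ of Proposition \ref{prop:dimOfReducible}. In particular its determinant line bundle is trivial, which is consistent with $\ss|_{\S^1 \times D^3}$ being the (trivial, up to isomorphism) restriction of $\ss$ to the tubular neighborhood $N$ of $\gamma$.

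Next, for $\ss'|_{D^2 \times \S^2}$, Proposition \ref{prop:spincRestriction} has already identified this as the unique spin$^c$ structure on $D^2 \times \S^2$ whose determinant line bundle has vanishing first Chern class. That is precisely the $\ss(D^2 \times \S^2)$ appearing in Proposition \ref{prop:dimOfReducible}.

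With these identifications in place, Proposition \ref{prop:dimOfReducible} applied to $(\S^1 \times D^3,\, g_{bullet},\, \ss|_{\S^1 \times D^3})$ gives that the moduli space is a circle, and applied to $(D^2 \times \S^2,\, g_{bullet},\, \ss'|_{D^2 \times \S^2})$ gives a point. The assertion that all monopoles are reducible is exactly the opening observation in the proof of Proposition \ref{prop:dimOfReducible}: the bullet metrics constructed in Subsection \ref{subsection:PSCmetric} have positive scalar curvature everywhere, and the Weitzenb\"ock formula then forces the spinor component of any solution to vanish. There is no hard step here; the corollary is essentially just Proposition \ref{prop:dimOfReducible} repackaged for the spin$^c$ structures produced by the surgery setup of Theorem \ref{thm:changeOfSpinc}, with Proposition \ref{prop:spincRestriction} supplying the uniqueness needed to name the restriction on the $D^2 \times \S^2$ side.
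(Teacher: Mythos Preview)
Your proposal is correct and follows essentially the same approach as the paper: the paper's proof simply cites Proposition~\ref{prop:spincRestriction} and Proposition~\ref{prop:dimOfReducible}, and you have spelled out exactly how those two results combine. If anything, you have supplied more detail than the paper does in handling the $\S^1 \times D^3$ side via $H^2(\S^1 \times D^3;\Z)=0$.
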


\begin{prop}\label{prop:virtualDimX0}
Let $g(X)$ be a metric of $X$ such that $g|_{\partial X_0}$ is the product of canonical metrics on $\S^1$ and $\S^2$. Let $\ss$ be any $\text{spin}^c$ structure of $X$ satisfying the dimension assumption (\ref{equ:dimAssumption}). Let $\hat{\ss}$ be the restriciton of $\ss$ on $X_0$. Then the virtual dimension 
\[
d(\hat{\mathsf{C}}_0)= 1
\]
for any monopole $\hat{\mathsf{C}}_0$ on $X_0$.
\end{prop}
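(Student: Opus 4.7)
The plan is to compute $d(\hat{\mathsf{C}}_0)$ by recognizing that it is the Fredholm index of the operator $\hat{\mathcal{T}}_{\hat{\mathsf{C}}_0}$ from \eqref{defOfTWithoutMu}, so it depends only on the topology of $(\hat{X}_0,\hat{\ss})$ and on the asymptotic boundary data on $\S^1\times\S^2$, not on the particular monopole. In particular, $d$ is constant over the (connected, affine) space of configurations, and it suffices to evaluate it at one convenient $\hat{\mathsf{C}}_0$ obtained by restricting an irreducible monopole from the closed manifold $X$.

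First I would collect the dimensions of the three relevant moduli spaces. By the dimension hypothesis (\ref{equ:dimAssumption}), $\dim\M(X,\ss)=1$, and at an irreducible monopole this coincides with the virtual dimension $d(X,\ss)=1$. By Corollary \ref{cor:dimOfReducible}, on $\hat{N}=\S^1\times D^3$ with cylindrical-end metric extending $g_{bullet}$, the moduli space consists entirely of reducible monopoles and is a circle; reducibility contributes $\dim H^0=1$ while regularity (via positive scalar curvature and vanishing of $H^2$) gives $\dim H^1=1$, so $d(\hat{N})=-1+1=0$. The same positive scalar curvature argument on the 3-manifold $\S^1\times\S^2$ identifies the boundary moduli space $\M_\sigma$ with the Jacobian torus $H^1(\S^1\times\S^2;\R)/H^1(\S^1\times\S^2;\Z)\cong T^1$ of dimension $1$.

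Next I would apply the global gluing description (Theorem \ref{thm:ls}): once the obstruction spaces $H^2(F_{\hat{\mathsf{C}}_i})$ on both pieces vanish, which will be established in the next subsection (compare the obstruction diagram \eqref{equ:3O}), the moduli space on $X$ is locally a transverse fiber product
\[
\M(X,\ss)\;\cong\;\M(\hat{X}_0,\hat{\ss})\times_{\M_\sigma}\M(\hat{N},\ss|_{\hat{N}})
\]
along the asymptotic evaluation maps $\partial_\infty^c$. The fiber-product dimension count then reads
\[
\dim\M(X)\;=\;\dim\M(\hat{X}_0)+\dim\M(\hat{N})-\dim\M_\sigma,
\]
so $\dim\M(\hat{X}_0)=1+1-1=1$. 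Since the chosen monopole on $X$ is irreducible, its restriction $\hat{\mathsf{C}}_0$ to $\hat{X}_0$ is also irreducible (the spinor vanishes on $\hat{N}$ by the psc argument but cannot vanish on all of $X$), so $d(\hat{\mathsf{C}}_0)=\dim\M(\hat{X}_0)=1$. By the topological invariance of the index, the same equality holds at every monopole on $\hat{X}_0$.

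The hard part is justifying the transversality of the two asymptotic evaluation maps in the fiber product, which amounts to verifying that the obstruction spaces $H^2(F_{\hat{\mathsf{C}}_i})$ vanish on both $\hat{X}_0$ and $\hat{N}$. For $\hat{N}$ the vanishing will follow from the explicit reducible structure and positive scalar curvature, whereas for $\hat{X}_0$ it will require a genericity argument for the metric and perturbation that exploits the homological non-triviality of $\gamma$; these verifications are the content of the next subsection.
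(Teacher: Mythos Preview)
Your approach is genuinely different from the paper's. The paper computes $d(\hat{\mathsf{C}}_0)$ directly from the virtual dimension formula for cylindrical manifolds,
\[
d(\hat{\mathsf{C}}_0)= \frac{1}{4}\Big(\int_{\hat X_0} c_1(\hat A_0)^2 - 2(\chi_{\hat X_0}+3\sigma_{\hat X_0})\Big) + \beta(\mathsf{C}_\infty),
\]
and then checks term by term that the interior contribution equals the closed-manifold expression for $X$ (using $c_1(\hat{\L})^2 = c_1(\L)^2$ from Theorem~\ref{thm:changeOfSpinc}, together with $\chi(X_0)=\chi(X)$ and $\sigma(X_0)=\sigma(X)$ via Mayer--Vietoris), while the boundary correction $\beta(\mathsf{C}_\infty)$ vanishes because $b_1(\S^1\times\S^2)=1$ and both $\eta_{sign}$ and $\eta_{Dir}$ are zero for the round product metric on $\S^1\times\S^2$. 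This is a self-contained index computation that precedes and does not depend on any gluing.

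Your route inverts the logic: you read $d$ off from $\dim\M(\hat X_0)$, which you get from the fiber-product identification $\M(X)\cong\M(\hat X_0)\times_{\M_\sigma}\M(\S^1\times D^3)$. In the paper's order that identification (Corollary~\ref{fiberProduct}) is proved \emph{after} the present proposition, so your argument rests on forward references to the obstruction-vanishing results. This is not circular---those results do not themselves invoke Proposition~\ref{prop:virtualDimX0}---but it makes the proposition depend on much heavier machinery than needed. There is also a wording issue: you cannot literally ``restrict an irreducible monopole from $X$'' to obtain a monopole on the cylindrical-end manifold $\hat X_0$; what the gluing theorem actually provides is that monopoles on $X$ correspond to compatible \emph{pairs}, and it is the $\hat X_0$-component of such a pair that serves as your $\hat{\mathsf{C}}_0$. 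Finally, passing from $\dim\M(\hat X_0)=1$ to $d(\hat{\mathsf{C}}_0)=1$ requires $H^0=H^2=0$ (irreducibility plus regularity for a generic perturbation) and then an appeal to constancy of the Fredholm index to cover all monopoles---which is precisely what the direct index formula delivers in one stroke.
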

\begin{proof}
Let $\hat{N}$ be a cylindrical manifold with boundary $N = \partial_\infty \hat{N}$. Let $\hat{g}$ be a metric on $\hat{N}$ and $\hat{A}_0$ be a connection on $\hat{N}$. Let $A_0 = \partial_\infty\hat{A}_0$ and $g =\partial_\infty \hat{g}$. Define
\[
\mathbf{F}(g,A_0) := 4\eta_{Dir}({A_0})+ \eta_{sign}(g),
\]
where $\eta_{Dir}({A_0})$ is the eta invariant of the Dirac operator $\mathfrak{D}_{A_0}$, and $\eta_{sign}(g)$ is the eta invariant of the metric $g = \partial_\infty \hat{g}$.

Let ${\mathsf{C}}_\infty = \partial_\infty \hat{\mathsf{C}}_0$. Recall that we always assume that $\hat{\mathsf{C}}_0 \in \hat{\mathcal{C}}_{\mu ,sw}$. Hence ${\mathsf{C}}_\infty$ is a monopole on $\hat{N}$. By Corollary \ref{cor:dimOfReducible}, ${\mathsf{C}}_\infty$ is reducible. 
Then the formula of virtual dimension for the cylindrical manifold $\hat{N}$ is (see page 393 of Nicolaescu's book \cite{Nicolaescu2000NotesOS})
\[
d(\hat{\mathsf{C}}_0)= \frac{1}{4}\left(\int_{\hat{N}} c_1(\hat{A}_0)^2 - 2(\chi_{\hat{N}}+3\sigma_{\hat{N}})\right) + \beta({\mathsf{C}}_\infty),
\]
where
\[
\beta({\mathsf{C}}_\infty) := \frac{1}{2}(b_1(N) - 1) - \frac{1}{4}\mathbf{F}({\mathsf{C}}_\infty).
\]
The integral term is the same as the compact case, and the second term $ \beta({\mathsf{C}}_\infty)$ is called boundary correction term. In our case $N = \partial_\infty\hat{N} =\S^1\times \S^2$, and the metric $\hat{g} = g(X)|_{X_0}$ ensures that $g = \partial_\infty \hat{g}$ is the product of canonical metrics on $\S^1$ and $\S^2$. 
In this situation $\eta_{sign}(g) = 0$ (\cite{Komuro84}) and $\eta_{Dir}(\partial_\infty\hat{C}_0) = 0$ (\cite{Nicolaescu98} Appendix C). Hence $\mathbf{F}(\partial_\infty\hat{C}_0) = 0$. Moreover $b_1(\S^1\times \S^2) = 1$, so $\beta({\mathsf{C}}_\infty) = 0$.

Let $\L$ be the determinant line bundle of $\ss$ and $\hat{\L}$ be the determinant line bundle of $\hat{\ss}$. In the proof of Theorem \ref{thm:changeOfSpinc}, we see that 
\[
c_1(\hat{A}_0)^2 = \langle c_1(\hat{\L})^2, X_0 \rangle = \langle c_1({\L})^2, X \rangle = c_1({\L})^2.
\]
From the triangulation of the boundary sum one can compute that 
\begin{align*}
\chi(X) &=\chi(X_0)+ \chi(\S^1\times D^3) - \chi(\S^1\times \S^2)\\
&=  \chi(X_0)+ (1-1)- (1-1+1-1)\\
&=  \chi(X_0).
\end{align*}
To compute $\sigma(X_0)$ consider the following Mayer-Vietoris sequence
\begin{center}
\begin{tikzpicture}[commutative diagrams/every diagram]
\node (P0) at (0cm, 0cm) {$H^1(X_0)\oplus H^1(\S^1\times D^3)$};
\node (P1) at (-1cm, -0.1cm) {};
\node (P2) at (3.3cm, 0cm) {$H^1(\S^1 \times \S^2)$} ;
\node (P6) at (3.3cm, -1cm) {$\Z$} ;
\node (P3) at (5.5cm, 0cm) {$H^2(X)$};
\node (P4) at (-1cm, -1cm) {$\Z$};
\node (P5) at (8.5cm, 0cm) {$H^2(X_0)\oplus H^2(\S^1\times D^3)$};
\node (P7) at (7.2cm, -0.1cm) {};
\node (P8) at (7.2cm, -1cm) {?};
\node (P9) at (12cm, 0cm) {$H^2(\S^1 \times \S^2)$};
\node (P10) at (12cm, -1cm) {$\Z$};
\path[commutative diagrams/.cd, every arrow, every label]
(P0) edge node {} (P2)
(P2) edge node {$0$} (P3)
(P1) edge node {$\cong$} (P4)
(P3) edge node {$i^*$} (P5)
(P2) edge node {$\cong$} (P6)
(P4) edge node {$\cong$} (P6)
(P7) edge node {$\cong$} (P8)
(P5) edge node {} (P9)
(P8) edge node {$ i_\partial^*$} (P10)
(P9) edge node {$\cong$} (P10);
\end{tikzpicture}
\end{center}
From the assumption of the loop $\gamma$ we choose to do the surgery (the pairing of $\gamma$ and the generator of $H^1(X)=\Z$ is $1$), the dual of $\gamma$ is a $3$-manifold $M\subset X$ and $M\setminus (\S^1\times D^3) \subset X_0$ has the boundary $\{*\}\times \S^2 \subset \S^1\times \S^2 = \partial X_0$. Hence $ i_\partial^* = 0$ and therefore $i^*:H^2(X)\to H^2(X_0)$ is an isomorphism. 
For $2$-manifolds $\Sigma_1,\Sigma_2 \subset X$, we can assume $\gamma \cap \Sigma_i  = \emptyset$ for dimension reason. By choosing a small enough neighborhood of $\gamma$ we can further assume $\Sigma_i \subset X_0$. Hence the pairing of $\Sigma_1$ and $\Sigma_2$ is the same in $X$ and $X_0$. Therefore
\[
\sigma(X_0)= \sigma(X).
\]
Hence $d(\hat{\mathsf{C}}_0)= 1$.
\end{proof}

It turns out that our cases are simple: the obstruction space is trivial.
\begin{prop}\label{prop:noncompactTransversality}
Let $\hat{N} = X_0$ and $N= \partial X_0 = \S^1\times \S^2$. Let $\ss$ be any $\text{spin}^c$ structure of $X$. Let $\hat{\ss}$ be the restriciton of $\ss$ on $X_0$. We can choose a generic perturbation $\eta$ on $X_0$ such that if $\hat{\mathsf{C}}_0$ is an $\eta$-monopole, it is irreducible and $H^2(F(\hat{\mathsf{C}}_0))=0$.
\end{prop}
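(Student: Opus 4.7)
The plan is to combine a parametric Sard--Smale argument with the topological input coming from the homological nontriviality of $\gamma$. The argument breaks into three parts: ruling out reducible monopoles, obtaining ordinary transversality $H^2_{\hat{\mathsf{C}}_0}=0$ at irreducibles, and finally upgrading this to the vanishing of $H^2(F_{\hat{\mathsf{C}}_0})$.

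First I would rule out reducibles. A reducible monopole on $X_0$ has vanishing spinor, so the perturbed Seiberg--Witten equations collapse to $F_A^+ = \eta^+$ with asymptotic value a reducible monopole in $\mathfrak{M}_\sigma$. The gauge-equivalence classes of such pairs $(A, A_\infty)$ form a finite-dimensional family, cut out by the circle $\mathfrak{M}_\sigma$ together with a subspace of $H^1(X_0;\mathbf{i}\R)$; hence the locus of perturbations $\eta$ admitting a reducible solution has codimension at least $b^+_2(X_0)$ in the Banach space of self-dual two-form perturbations. The Mayer--Vietoris argument used in the proof of Proposition~\ref{prop:virtualDimX0} gives $H^2(X)\cong H^2(X_0)$, so $b^+_2(X_0)=b^+_2(X)\ge 1$, and a generic $\eta$ admits no reducibles.

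Next I apply parametric transversality to the universal Seiberg--Witten map $\widehat{SW}:\hat{\mathcal{C}}_{\mu,sw}\times\mathcal{P}\to\hat{\mathcal{Y}}_\mu$, where $\mathcal{P}$ is a suitable Banach space of perturbations. At any irreducible solution, variation of $\eta$ alone already covers the cokernel of $\widehat{\underline{SW}}_{\hat{\mathsf{C}}_0}$, since any target element can be represented by a smooth compactly supported form that is added to $\eta^+$. Hence the full linearization is surjective, and Sard--Smale applied to the projection to $\mathcal{P}$ yields $H^2_{\hat{\mathsf{C}}_0}=0$ for every irreducible $\eta$-monopole.

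The main work, and the main obstacle, is upgrading $H^2_{\hat{\mathsf{C}}_0}=0$ to $H^2(F_{\hat{\mathsf{C}}_0})=0$. From the long exact sequence~\ref{equ:longExact}, under $H^0_{\hat{\mathsf{C}}_0}=0$ and $H^2_{\hat{\mathsf{C}}_0}=0$ one extracts
\[
H^1_{\hat{\mathsf{C}}_0}\xrightarrow{\,d\partial_\infty\,}H^1(B_{\hat{\mathsf{C}}_0})\to H^2(F_{\hat{\mathsf{C}}_0})\to 0,
\]
and $H^1(B_{\hat{\mathsf{C}}_0})=T_{\mathsf{C}_\infty}\mathfrak{M}_\sigma$ is one-dimensional. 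Thus $H^2(F_{\hat{\mathsf{C}}_0})=0$ is equivalent to asking that the asymptotic value map $\partial_\infty:\widehat{\M}_\mu^\eta\to\mathfrak{M}_\sigma$ be a submersion at every irreducible monopole. I would prove this by enlarging the Sard--Smale setup so that the asymptotic reducible $A_\infty\in\mathfrak{M}_\sigma$ is treated as an additional parameter, producing a universal moduli space over $\mathcal{P}\times\mathfrak{M}_\sigma$. The crucial topological input is the surjectivity of the restriction map $H^1(X_0;\R)\to H^1(\partial X_0;\R)$, which follows from $\gamma$ being homologically nontrivial: the proof of Proposition~\ref{prop:virtualDimX0} exhibits a $3$-manifold dual to $\gamma$ that meets $\partial X_0$ in $\{pt\}\times\S^2$. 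This surjectivity lets every infinitesimal change of boundary holonomy be realized by a variation of the interior connection, and together with the parametric transversality forces $d\partial_\infty$ to be surjective for generic $\eta$.
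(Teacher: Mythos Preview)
Your Parts 1 and 2 match the paper's argument: the wall of codimension $b^+$ rules out reducibles, and a first Sard--Smale pass gives $H^2_{\hat{\mathsf{C}}_0}=0$ at irreducibles.

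In Part 3 you correctly reduce strong regularity to the surjectivity of $d\partial_\infty$ and correctly isolate the surjectivity of $H^1(X_0;\R)\to H^1(N;\R)$ as the topological input. But the mechanism you propose---enlarging Sard--Smale so that $A_\infty\in\mathfrak{M}_\sigma$ is an additional parameter---does not yield what you need. Projecting a universal moduli space over $\mathcal{P}\times\mathfrak{M}_\sigma$ down to $\mathcal{P}$ only tells you that for generic $\eta$ the \emph{total} $\eta$-moduli space is smooth, which is $H^2_{\hat{\mathsf{C}}_0}=0$ again; projecting to $\mathcal{P}\times\mathfrak{M}_\sigma$ tells you that a generic \emph{pair} $(\eta,A_\infty)$ is regular. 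Neither gives that a single generic $\eta$ makes $\widehat{SW}_\eta|_{\partial_\infty^{-1}(A_\infty)}$ transverse for \emph{every} $A_\infty$, which is what $H^2(F_{\hat{\mathsf{C}}_0})=0$ for every $\eta$-monopole requires. Intersecting Baire sets indexed by the circle $\mathfrak{M}_\sigma$ is not automatic.

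The paper closes this differently: it fixes a \emph{single} $\mathsf{C}_\infty\in\mathfrak{M}_\sigma$, runs Sard--Smale once on $\partial_\infty^{-1}(\mathsf{C}_\infty)/\hat{\mathcal{G}}_\mu\times\mathcal{Z}\to\hat{\mathcal{Y}}_\mu$ to get a Baire set $\mathcal{Z}^1_{reg}$, and then \emph{transfers} the conclusion to every other boundary value. The transfer is where the $H^1$-surjectivity enters: given a monopole $(\hat\Phi,\hat A)$ with arbitrary boundary, pick a \emph{closed} $\hat a\in\Omega^1(X_0)$ with $\partial_\infty(\hat A+\hat a)$ gauge-equivalent to $\mathsf{C}_\infty$. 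Because $\hat a$ is closed, the shifted pair still solves the curvature equation $\mathcal{F}_{1,\eta}$, and the explicit linearization
\[
(\alpha,\phi)\mapsto d^+\alpha-\rho^{-1}\bigl(\sigma(\hat\Phi,\phi)+\sigma(\phi,\hat\Phi)\bigr)
\]
is \emph{independent of the connection}. Hence the surjectivity obtained at $(\hat\Phi,\hat A+\hat a)$ via $\mathcal{Z}^1_{reg}$ is literally the same statement at $(\hat\Phi,\hat A)$. This independence-of-$A$ observation is the piece your sketch is missing; without it, ``lifting infinitesimal boundary changes to interior connection variations'' only produces a tangent vector hitting the right boundary value, not one that also lies in the kernel of $\widehat{\underline{SW}}$.
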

\begin{proof}
To mimic the definition of the wall in the compact case, define
\begin{equation*}\label{equ:noncampactWall}
\mathcal{W}^{k-1}_{\mu} := \{\eta\in L_\mu^{k-1,2}(i\Lambda^+(X_0));  \exists A\in \mathscr{A}(s),  F^{+_g}_A + i\eta = 0\}.
\end{equation*}
By the computation of the \textbf{ASD} operator $d^+ \oplus d^*
$, one can show that $\mathcal{W}^{k-1}_{\mu}$ is an affine space of codimension $b^+$ (see \cite{Nicolaescu2000NotesOS} Page 404) just as in the compact case. For each $\eta$ outside $\mathcal{W}^{k-1}_{\mu}$, all $\eta$-monopoles are irreducible. Consider the configuration space
\[
\hat{\mathcal{C}}^*_{\mu,sw}/\hat{\mathcal{G}}_{\mu,ex}.
\]
Here $\hat{\mathcal{C}}_{\mu,sw}$ is the space of configurations on $X_0$ that restrict to monopoles on $\partial X_0 = \S^1\times \S^2$, as defined in (\ref{equ:defCsw}).
Let $s=\hat{\ss}|_{\partial X_0}$ and 
\[
\M_s = \M(\S^1\times \S^2,s, g_{round}).
\]
Exactly as in the proof of Proposition \ref{prop:dimOfReducible}, one can show that $\M_s = \S^1$. Let
\[
\mathcal{Z} := \mathcal{Z}^{k-1}_{\mu} := L_\mu^{k-1,2}(i\Lambda^+(X_0)) \setminus \mathcal{W}^{k-1}_{\mu}
\]
be the space of nice perturbations. Consider
\begin{align*}
\mathcal{F} : \hat{\mathcal{C}}^*_{\mu,sw}/\hat{\mathcal{G}}_{\mu,ex}\times \M_s\times \mathcal{Z} &\to \hat{\mathcal{Y}}_\mu \times \M_s\times \M_s\\
( \hat{\mathsf{C}},\mathsf{C},\eta) &\mapsto ( \widehat{{SW}}_{\eta}(\hat{\mathsf{C}}),\partial_\infty \hat{\mathsf{C}}, \mathsf{C}).
\end{align*}
Let $\Delta$ be the diagonal of $\M_s\times \M_s$. One can show that $\mathcal{F}$ is transversal to $0\times \Delta \subset  \hat{\mathcal{Y}}_\mu \times \M_s\times \M_s$ by the diffenrential
\begin{align*}
D_{( \hat{\mathsf{C}}_0, \mathsf{C}_\infty, \eta)}\mathcal{F}: 
 T_{\hat{\mathsf{C}}_0} \mathcal{B}^*_{\mu,sw}\oplus T_{\mathsf{C}_\infty}\M_s\oplus T_\eta \mathcal{Z} &\to T_0\hat{\mathcal{Y}}_{g(b),\mu}\oplus T_{\mathsf{C}_\infty}\M_s \oplus T_{\mathsf{C}_\infty}\M_s\\
(\underline{\hat{\mathsf{C}}}_0,\underline{{\mathsf{C}}}_\infty,\zeta) &\mapsto  (\widehat{\underline{SW}}_{\eta}(\underline{\hat{\mathsf{C}}}_0) +\zeta, \partial_\infty \underline{\hat{\mathsf{C}}}_0,\underline{{\mathsf{C}}}_\infty).
\end{align*} 
Then apply Sard-Smale to the projection
\[
\pi: \mathcal{F}^{-1}(0\times \Delta) \to \mathcal{Z}
\]
to show that $\mathcal{Z}^0_{reg}$, the set of regular values of $\pi$, is of the second category in the sense of Baire (a countable intersection of open dense sets). 

For each $\eta \in \mathcal{Z}^0_{reg}$, the map 
\begin{align*}
\mathcal{F}_\eta : \hat{\mathcal{C}}^*_{\mu,sw}/\hat{\mathcal{G}}_{\mu,ex}\times \M_s&\to \hat{\mathcal{Y}}_\mu \times \M_s\times \M_s\\
( \hat{\mathsf{C}},\mathsf{C}) &\mapsto ( \widehat{{SW}}_{\eta}(\hat{\mathsf{C}}),\partial_\infty \hat{\mathsf{C}}, \mathsf{C}).
\end{align*}
is transversal to $0\times \Delta \subset  \hat{\mathcal{Y}}_\mu \times \M_s\times \M_s$. Let $pr_1$ be the projection to the first summand:
\[
pr_1: \hat{\mathcal{Y}}_\mu \times \M_s\times \M_s \to  \hat{\mathcal{Y}}_\mu.
\]
Then $Dpr_1\circ D\mathcal{F}_\eta$ must be surjective since  $Dpr_1(0\times \Delta)$ is zero. Hence 
\begin{align*}
D_{( \hat{\mathsf{C}}_0, \mathsf{C}_\infty)}(pr_1 \circ \mathcal{F}_\eta): 
 T_{\hat{\mathsf{C}}_0} \mathcal{B}^*_{\mu,sw}\oplus T_{\mathsf{C}_\infty}\M_s &\to T_0\hat{\mathcal{Y}}_{g(b),\mu}\\
(\underline{\hat{\mathsf{C}}}_0,\underline{{\mathsf{C}}}_\infty) &\mapsto  (\widehat{\underline{SW}}_{\eta}\underline{\hat{\mathsf{C}}}_0) .
\end{align*} 
is surjective. This means that $ \widehat{\underline{SW}}_{\eta}$ is surjective, i.e. $H^2_{\hat{\mathsf{C}}_0}= 0$. By the last several terms of the long exact sequence \ref{equ:longExact}
\begin{equation}
\cdots
\to H^1_{\hat{\mathsf{C}}_0}
\stackrel{\partial_\infty}{\to} H^1(B_{\hat{\mathsf{C}}_0})
\to H^2(F_{\hat{\mathsf{C}}_0}) 
\to H^2_{\hat{\mathsf{C}}_0}=0
\to H^2(B_{\hat{\mathsf{C}}_0}) = 0
\to 0,
\tag{\textbf L}
\end{equation}
$H^2(F_{\hat{\mathsf{C}}_0}) =0$ if and only if $\partial_\infty$ is surjective. This is equivalent to say that $\partial_\infty: \widehat{\M}(X_0,\eta) \to \M_s$ is a submersion at $\hat{\mathsf{C}}_0$.

Recall that
\begin{align}
\mathcal{F}_0\begin{pmatrix}
        A\\
          \Phi\\
    \end{pmatrix}
    &=
    \begin{pmatrix}
         d^* A\\
          \D_A \Phi \\
    \end{pmatrix}, \\
\label{equ:0defF1}\mathcal{F}_{1,\eta}\begin{pmatrix}
        A\\
          \Phi\\
    \end{pmatrix}
    &=     F^{+}_A + i\eta - \rho^{-1}(\sigma(\Phi, \Phi)).
\end{align}
Fix a $\mathsf{C}_\infty \in \M_s$, then
\begin{align*}
\mathcal{F}_{\mathsf{C}_\infty }:  \partial_\infty^{-1}(\mathsf{C}_\infty)/\hat{\mathcal{G}}_{\mu}  \times \mathcal{Z} &\to \hat{\mathcal{Y}}_\mu\\
( \hat{\mathsf{C}},\eta) &\mapsto  \mathcal{F}_{1,\eta}(\hat{\mathsf{C}})
\end{align*}
is transversal to $0 \in  \hat{\mathcal{Y}}_\mu $. As above, we can find a set $\mathcal{Z}_{reg}^1$ of the second category in the sense of Baire, such that for each $\eta \in \mathcal{Z}_{reg}$, $\mathcal{F}_{\mathsf{C}_\infty,\eta } =\mathcal{F}_{1,\eta}$ is transversal to $0 \in  \hat{\mathcal{Y}}_\mu $. This means that
\begin{equation}\label{equ:surjective-for-solution}
 H^2(F_{\hat{\mathsf{C}}_0}) =0
\end{equation} 
for any $\hat{\mathsf{C}}_0 \in ( \partial_\infty^{-1}(\mathsf{C}_\infty)/\hat{\mathcal{G}}_{\mu}  )\cap \mathcal{F}_{1,\eta}^{-1}(0)$. 


Let $(0, A)$ be a representative of $ {\mathsf{C}}_\infty$. Choose any $(\hat\Phi, \hat A)\in \hat{\mathcal{C}}^*_{\mu,sw}$, then $\partial_\infty (\hat\Phi, \hat A)$ is an $(\eta|_N)$-monopole on ${N}$. We want to show that even if $\partial_\infty(\hat\Phi, \hat A)$ does not represent $ {\mathsf{C}}_\infty$, $d_{(\hat\Phi, \hat A)}\mathcal{F}_1|_{\partial_\infty^{-1}(\partial_\infty(\hat A))/\hat{\mathcal{G}}_{\mu}} $ is still surjective.

Since $\eta$ is zero on the neck, $\partial_\infty \hat A$ is closed and $\partial_\infty \hat\Phi =0$  (see the proof of Proposition \ref{prop:dimOfReducible}). 
Hence $ \partial_\infty(\hat A)-A$ is closed. Since $H^1(\hat{N};\R)\to H^1(N;\R)$ is surjective, 
one can find a closed form $\hat a$ on $\hat N$ such that $ \partial_\infty(\hat A)-A =\partial_\infty(\hat a) +df $ for some function $f$ on $N$. Hence $ \partial_\infty(\hat A + \hat a) = A +df$, which belongs to the gauge equivalence class of $A$. This means $\partial_\infty(\hat A + \hat a) = {\mathsf{C}}_\infty$. 
Because $\hat a$ is closed, if $(\hat\Phi, \hat A)$ is a solution of $\mathcal{F}_{1,\eta}$, $(\hat\Phi, \hat A + \hat a)$ is also a solution of $\mathcal{F}_{1,\eta}$. By (\ref{equ:surjective-for-solution}),  
\begin{align}
 d_{(\hat\Phi, \hat A + \hat a)}(\mathcal{F}_1|_{\partial_\infty^{-1}(\mathsf{C}_\infty)/\hat{\mathcal{G}}_{\mu}}): T_{(\hat\Phi, \hat A + \hat a)} \partial_\infty^{-1}(\mathsf{C}_\infty)/\hat{\mathcal{G}}_{\mu} &\to \hat{\mathcal{Y}}_\mu\\
 (\alpha,\phi) &\mapsto d^+ \alpha  -\rho^{-1}(\sigma(\hat\Phi, \phi)+\sigma(\phi, \hat\Phi)) \label{equ:differentialOfF1}
\end{align}
is surjective. Note that $d_{(\hat\Phi, \hat A)}\mathcal{F}_1$ does not depend on $\hat A$. Also an element of either $T_{(\hat\Phi, \hat A )} \partial_\infty^{-1}(\partial_\infty(\hat A))/\hat{\mathcal{G}}_{\mu}$ or $T_{(\hat\Phi, \hat A + \hat a)} \partial_\infty^{-1}(\mathsf{C}_\infty)/\hat{\mathcal{G}}_{\mu}$ can be written as $(\alpha,\phi)$ such that $\partial_\infty \alpha $ represents $0\in H^1(N;\R)$.
Hence
\[
d_{(\hat\Phi, \hat A)}\mathcal{F}_1|_{\partial_\infty^{-1}(\partial_\infty(\hat A))/\hat{\mathcal{G}}_{\mu}} = d_{(\hat\Phi, \hat A + \hat a)}\mathcal{F}_1|_{\partial_\infty^{-1}(\mathsf{C}_\infty)/\hat{\mathcal{G}}_{\mu}}
\]
is surjective.

Let
\[
\mathcal{Z}_{reg} = \mathcal{Z}^0_{reg} \cap \mathcal{Z}^1_{reg}.
\]
For any $\eta \in \mathcal{Z}_{reg}$, if $\hat{\mathsf{C}}_0$ is an $\eta$-monopole, it is irreducible and $H^2(F(\hat{\mathsf{C}}_0))=0$. Moreover, $\mathcal{Z}_{reg}$ is still a countable intersection of open and dense sets, so it is of the second category in the sense of Baire.
\end{proof}

\begin{remark}
The statement of Proposition \ref{prop:noncompactTransversality} is not true in general. If the boundary $N = \S^1\times \S^2$ and $L^1_{top} =0$, we must have 
\[
\dim H^2(F(\hat{\mathsf{C}}_0)) = \dim H^2( \widehat{\mathcal{K}}_{\hat{\mathsf{C}}_0}) + 1.
\] 
To prove this, it suffices to find an element in $T_{\hat{\mathsf{C}}_0} \hat{\mathcal{C}}_{\mu,sw} $, such that its image is not in the image of $T_{\hat{\mathsf{C}}_0} \partial_\infty^{-1}(\mathsf{C}_\infty) $. Indeed, there exists a $1$-form $\alpha \in \Omega^1(\hat N)$ (constructed explicitly in  (\ref{equ:baseDirectionVector})), such that $\partial_\infty \alpha $ generates $H^1(N)$ (namely $(\alpha,0)\notin T_{\hat{\mathsf{C}}_0} \partial_\infty^{-1}(\mathsf{C}_\infty) $), and $d^+ \alpha$ is a nonzero element in $H^2(\hat{N})$. 
Conversely, if $d^+ \alpha'$ is nonzero in $H^2(\hat{N})$, then it's not compactly supported, otherwise it would be orthogonal to any self dual harmornic $2$-forms. Hence $\partial_\infty \alpha' $ is nonzero in $H^1(N)$ (c.f. Figure \ref{fig:BoundaryExactStructure2}). Therefore 
\begin{equation}\label{extraVector}
\left. d\mathcal{F}_1 (\alpha,0) \neq d\mathcal{F}_1\right|_{T ( \partial_\infty^{-1}(\mathsf{C}_\infty) )} (\beta,0)
\end{equation}
for any $(\beta,0) \in T ( \partial_\infty^{-1}(\mathsf{C}_\infty) )$. When the virtual dimension of the moduli space is less then $1$, for a generic perturbation such that for any solution $(\hat{A},\hat{\Phi})$,
\begin{equation}\label{extraVector2}
d_{(\hat{A},\hat{\Phi})}\mathcal{F}_1 (\alpha,0) \notin \text{im}\left. d_{(\hat{A},\hat{\Phi})}\mathcal{F}_1 \right|_{T\partial_\infty^{-1}(\mathsf{C}_\infty) },
\end{equation}
even though $\hat{b}_+ > 0$. This is because in this case the connection part is not able to kill $d_{(\hat{A},\hat{\Phi})}\mathcal{F}_1 (\alpha,0) $ by (\ref{extraVector}), and the spinor part is responsible to kill the other complement, instead of $d_{(\hat{A},\hat{\Phi})}\mathcal{F}_1 (\alpha,0) $, otherwise it will produce one more dimension of the cokernel and one more dimension of the moduli space, which would not happen by the classical transversality argument. Hence $\dim H^2(F(\hat{\mathsf{C}}_0)) = \dim H^2( \widehat{\mathcal{K}}_{\hat{\mathsf{C}}_0}) + 1$ for any solution $\hat{\mathsf{C}}_0$.

In fact, the condition on the virtual dimension can be omitted. $d^+ \alpha$ is not compactly supported , and the harmonic projection $\H(d^+ \alpha)$ satisfies
\[
\partial_\infty^0 \H(d^+ \alpha) \neq 0
\]
where $\partial_\infty^0$ is defined in (\ref{L2exsequence}). 
On the other hand, the second term 
\[
-\rho^{-1}(\sigma(\hat\Phi, \phi)+\sigma(\phi, \hat\Phi))
\]
of (\ref{equ:differentialOfF1}) is in $L_\mu$ since $\partial_\infty \hat{\Phi} = 0$. Hence (\ref{extraVector2}) is true as long as all solutions on the boundary are reducible.

This example is a counter example of \cite{Nicolaescu2000NotesOS} Proposition 4.4.1. The equation
\[
\dim H^2( \widehat{\mathcal{K}}_{\hat{\mathsf{C}}_0})  = \hat{b}_+
\]
for $\hat \Phi=0$ computed in \cite{Nicolaescu2000NotesOS} Page 404, combined with the equation
\[
\left. \dim \ker_{ex} (\textbf{ASD}^*) \right|_{\Omega^2(\hat{N})} =  \hat{b}_+ + \dim L^2_{top}
\]
computed in \cite{Nicolaescu2000NotesOS} Page 312, also shows the the existence of $\alpha$ satisfying (\ref{extraVector}) without any explicit construction.
\end{remark}

\begin{prop}\label{prop:triObs}
For $\hat{N}= X_0$, $\widehat{\S^1\times \S^3}$ or $D^3\times \S^1$ with positive scalar curvature metric $\hat{g}$ chosen in subsection \ref{subsection:PSCmetric}, We can choose suitable perturbations $\eta = \eta(\hat{N})$ such that if $\hat{\mathsf{C}}_0$ is an $\eta$-monopole, $H^2(F(\hat{\mathsf{C}}_0))=0$.
\end{prop}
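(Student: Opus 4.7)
The case $\hat N = X_0$ is already settled by Proposition~\ref{prop:noncompactTransversality}, whose generic-perturbation / Sard--Smale argument produces irreducible monopoles with $H^2(F)=0$. My plan for the two remaining PSC caps is to use the bullet metric from subsection~\ref{subsection:PSCmetric} to force every monopole to be reducible, at which point the SW linearization decouples and the obstruction reduces to a topological question. Concretely, take $\eta$ small (indeed $\eta=0$ will work). The Weitzenb\"ock argument from subsection~\ref{subsection:PSCmetric}, applied just as in the proof of Proposition~\ref{prop:dimOfReducible}, forces every $\eta$-monopole $\hat{\mathsf C}_0 = (\hat A, \hat\Phi)$ to satisfy $\hat\Phi \equiv 0$ (the pointwise positivity of $s$ on the compact core extends to the product cylindrical end, which is isometric to $(\S^1 \times \S^2)\times [0,\infty)$ with its PSC product metric). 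At a reducible $(\hat A, 0)$, the quadratic cross-terms drop and
\[
\widehat{\underline{SW}}_{\hat{\mathsf C}_0}(\dot\Phi, \dot a) = \bigl( \D_{\hat A}\dot\Phi,\; \sqrt{2}\, d^+\dot a \bigr),
\]
so the complex $F_{\hat{\mathsf C}_0}$ splits as a direct sum of a Dirac complex and a $(d,d^+)$-ASD complex, and $H^2(F(\hat{\mathsf C}_0))$ splits correspondingly.

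For the Dirac summand, $\eta=0$ gives $F_{\hat A}^+ = 0$ and the Weitzenb\"ock identity collapses to
\[
\|\D_{\hat A}\psi\|_{L^2}^2 = \|\nabla^{\hat A}\psi\|_{L^2}^2 + \int \tfrac{s}{4}|\psi|^2 \, dv_g,
\]
which (with $s > 0$ pointwise) forces the $L^2_\mu$-kernel of $\D_{\hat A}$ and, by the same identity applied to $\D_{\hat A}^*$ on $\hat S^-$, the $L^2_\mu$-cokernel to vanish; for small nonzero $\eta$ the extra term $\tfrac12 \langle \mathbf{c}(F_{\hat A}^+)\psi,\psi\rangle$ is $O(\|\eta\|_\infty \|\psi\|^2)$ and is absorbed. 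For the ASD summand, I would identify the $L^2_\mu$-cokernel of $d^+$ on variations with vanishing asymptotic value with the space of $L^2_\mu$-harmonic self-dual 2-forms on $\hat N$ that decay along the neck. A Mayer--Vietoris computation parallel to the one in the proof of Theorem~\ref{thm:changeOfSpinc} then handles each case: for $\S^1 \times D^3$ (cylindrical completion $\S^1 \times \R^3$) one has $H^2(\S^1 \times D^3) = 0$, hence $b^+ = 0$ and there is nothing to kill; for the other PSC cap, although $H^2 \cong \Z$, the intersection form on $H_2$ vanishes and the image of $H^2_{cpt} \to H^2$ is trivial, so no $L^2$-harmonic self-dual representative exists.

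The main obstacle is the ASD step: one has to carefully match the algebraically defined cokernel $H^2(F(\hat{\mathsf C}_0))$, built from variations with a prescribed asymptotic value on the cylindrical neck, with a purely topological harmonic-form count, separating genuine global contributions from exponentially decaying ``cylinder-only'' modes that live in the end. Once this weighted-Sobolev bookkeeping is in place, the reducibility (via PSC) and the decoupling at the reducible locus have reduced the whole problem to a linear, topological vanishing statement, and the remaining Weitzenb\"ock and Mayer--Vietoris computations close the argument uniformly for both PSC caps.
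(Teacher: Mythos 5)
Your treatment of $X_0$ (citing Proposition \ref{prop:noncompactTransversality}) and your reduction to reducibles via PSC match the paper, and your Dirac step is a legitimate, even more elementary, alternative to the paper's route: since $\hat{A}_0$ is flat, Weitzenb\"ock on both $\hat S^+$ and $\hat S^-$ kills both kernels directly, whereas the paper computes $\dim\ker_{L^2}\D^*_{\hat{A}_0}$ via the APS index theorem and vanishing eta invariants; either way one must also use invertibility of the boundary operator (PSC on $\S^1\times\S^2$, flat $A_0$) to identify $\ker_{ex}$ with $\ker_{L^2}$, which you leave implicit. The genuine gap is in your ASD step, and it is not just bookkeeping. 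You identify the form-part of $H^2(F(\hat{\mathsf{C}}_0))$ with \emph{decaying} $L^2_\mu$-harmonic self-dual forms and then test it against $b^+$ and the image of $H^2_{cpt}\to H^2$. On a cylindrical-end manifold the obstruction space sits inside the \emph{extended} kernel, and the correct identification (the paper's (\ref{equ:obstructionF=kerexD*}), from Nicolaescu Prop.\ 4.3.30 and Example 4.1.24) is $H^2(F(\hat{\mathsf{C}}_0)) = \ker_{ex}\D^*_{\hat{A}_0}\oplus H^2_+(\hat{N})\oplus L^2_{top}$, where $L^2_{top}=\im\bigl(H^2(\hat{N})\to H^2(\partial_\infty\hat{N})\bigr)$ consists of non-decaying modes that your criterion cannot see.

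This matters because you have also misread which manifolds the proposition covers. The two caps in the statement are $\widehat{\S^1\times\S^3}$ and $D^3\times\S^1$, both with $H^2=0$, so for them $H^2_+(\hat N)=0$ and $L^2_{top}=0$ and the conclusion does hold. But your ``other PSC cap with $H^2\cong\Z$'' is $D^2\times\S^2$, which is deliberately excluded from this proposition: there $b^+=0$ and the intersection form is trivial, exactly as you say, yet $L^2_{top}\cong\R$ (the restriction $H^2(D^2\times\S^2)\to H^2(\S^1\times\S^2)$ is an isomorphism), so $H^2(F(\hat{\mathsf{C}}_2))$ is one-dimensional --- this is precisely why the paper needs the separate gluing argument of Proposition \ref{prop:trivialObs}. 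So as written your argument would ``prove'' a false vanishing for $D^2\times\S^2$; the missing idea is the $L^2_{top}$ summand in the obstruction space, which you flag as ``the main obstacle'' but then resolve in the wrong direction. Restricting your argument to the caps actually listed in the statement, and replacing the decaying-harmonic-form identification by the extended-kernel formula above, would close the gap.
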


\begin{proof}
As in the usual argument of transversality, we just need to take care of the boundary term to prove that, if $b_+(\hat{N})>0$, we can choose a pertubation $\eta \in H^2_+(\hat{N})$ such that all $\eta$-monopoles are strongly regular (and irreducible) (Proposition \ref{prop:noncompactTransversality}). Since $H^2_+(X_0)$ is assumed to be nontrivial, the statement is true for $X_0$.

For $\hat{N} \simeq \S^1\times \S^3$, $D^3\times \S^1$ or $\S^2\times D^2$, all monopoles are reducible. Let $\hat{\mathsf{C}}_0 = (\hat{A}_0,0)$ be a reducible monopole for the SW equations without perturbation. The connection $\hat{A}_0$ on the cylindrical manifold $\hat{N}$ gives an asymptotically cylindrical Dirac operator $\D^*_{\hat{A}_0}$ with
\[
\partial_\infty \D^*_{\hat{A}_0} = \mathfrak{D}^*_{{A}_0}.
\]
The middle column of the Obstruction space diagram \ref{equ:3O} comes from the exact sequence (\cite{Nicolaescu2000NotesOS} Proposition 4.3.30)
\[
0\to H^2(F(\hat{\mathsf{C}}_0)) \to \text{ker}_{\text{ex}}\hat{\mathcal{T}}^*_{\hat{C}_0} \stackrel{\partial_\infty^0}{\to} \im(T_1\hat{G}_0 \stackrel{\partial_\infty}{\to} T_1G_\infty) \to 0.
\]
Recall that in (\ref{defOfTWithoutMu}) we define
\[
\hat{\mathcal{T}}_{\hat{\mathsf{C}}_0} := \widehat{\underline{SW}}_{\hat{\mathsf{C}}_0} \oplus \frac{1}{2}\mathfrak{L}^*_{\hat{\mathsf{C}}_0}.
\]
If $\mathbf{i}f\in T_1\hat{G}_0$, then it's in the kernel of $\mathfrak{L}_{\hat{\mathsf{C}}_0}$, and therefore in $\ker_{ex}\hat{\mathcal{T}}^*_{\hat{C}_0}= \ker_{ex} (\widehat{\underline{SW}}^*_{\hat{\mathsf{C}}_0} \oplus \frac{1}{2}\mathfrak{L}_{\hat{\mathsf{C}}_0})$. On the other hand, if
\[
(\Psi, \mathbf{i}f)\in  L_{ex}^{1,2}(\hat{\S}_{\hat{\sigma}}^- \oplus \mathbf{i}\Lambda_+^2T^*\hat{N}) \oplus L_{ex}^{1,2}( \mathbf{i}\Lambda^0 T^*\hat{N})
\]
is in $\ker_{ex}\hat{\mathcal{T}}^*_{\hat{C}_i}$, then $\mathbf{i}f\in T_1G_0$. Thus
\[
  \partial^0_\infty \ker_{ex}\hat{\mathcal{T}}^*_{\hat{C}_0}
 \cong \partial_\infty T_1G_0.
\]
Namely, $H^2(F(\hat{\mathsf{C}}_0))$ doesn't contain constant functions. Hence 
\[H^2(F(\hat{\mathsf{C}}_0)) = \ker_{\text{ex}}\D^*_{\hat{A}_0} \oplus \ker_{\text{ex}}(d^+ \oplus d^*)^*|_{\Lambda_+^2(T^*\hat{N}) \oplus \Lambda^0_0 (T^*\hat{N})}.
\]
Then by the computation of the \textbf{ASD} operator $d^+ \oplus d^*
$ (\cite{Nicolaescu2000NotesOS} Example 4.1.24), 
\begin{equation}\label{equ:obstructionF=kerexD*}
H^2(F(\hat{\mathsf{C}}_0)) = \text{ker}_{\text{ex}}\D^*_{\hat{A}_0}\oplus H^2_+(\hat{N})\oplus L^2_{top}, 
\end{equation}
where $L^2_{top}=\im( i^*:H^2(\hat{N})\to H^2(\partial\hat{N}))$ for inclusion map $i:N\to \hat{N}$. Thus the second and the third components are trivial for $\hat{N}=\widehat{\S^1\times \S^3} $ or ${D^3\times \S^1}$. Now compute the dimension of $\text{ker}_{\text{ex}}\D^*_{\hat{A}_0} $. Since each of them has a positive scalar curvature metric, by the Weitzenb{\"o}ck formula, the twisted Dirac operater is invertible since $A_0$ is flat. This means that $\ker \mathfrak{D}^*_{A_0} = 0$ and therefore
\begin{equation}\label{equ:kerex=ker}
\ker_{ex}\D^*_{\hat{A}_0} = \ker_{L^2}\D^*_{\hat{A}_0}.
\end{equation}
Hence
\[
I_{APS}(\D_{\hat{A}_0} ) = \dim_{\C} \ker_{L^2} \D_{\hat{A}_0} -  \dim_{\C} \ker_{L^2} \D^*_{\hat{A}_0},
\]
where $I_{APS}(\hat{L})$ is the Atiyah-Patodi-Singer index of the APS operator $\hat{L}$. One can also prove that $ \ker_{L^2} \D_{\hat{A}_0}$ is trivial by the Weitzenb{\"o}ck formula (see \cite{Nicolaescu2000NotesOS} Page 323). Hence 
\[
-\dim \text{ker}_{\text{ex}}\D^*_{\hat{A}_0}= I_{APS}(\D_{\hat{A}_0} ).
\]
By the Atiyah-Patodi-Singer index theorem (\cite{atiyah_patodi_singer_1975}) we have 
\[
I_{APS}(\D_{\hat{A}_0} ) = \frac{1}{8} \int_{\hat{N}} (p_1(\hat{\nabla}^{\hat{g}}) + c_1(\hat{A}_0)^2) -\frac{1}{2}(\dim\ker\mathfrak{D}_{A_0}  +\eta_{Dir}({A_0})),
\]
where $\hat{\nabla}^{\hat{g}}$ is the Levi-Civita connection of $\hat{g}$, $p_1(\hat{\nabla}^{\hat{g}})$ and $c_1(\hat{A}_0)$ are the first Pontryagin class and the first Chern class determined by the Chern-Weil construction, and $\eta_{Dir}({A_0})$ is the eta invariant of the Dirac operator $\mathfrak{D}_{A_0}$. 
For any $4$-manifold with boundary, one has ``signature defect'' (see \cite{Nicolaescu2000NotesOS} (4.1.34), see also \cite{atiyah_patodi_singer_1975}, \cite{atiyah_patodi_singer_1975II} and \cite{atiyah_patodi_singer_1976} for the motivation)
\[
\eta_{sign}(g) = \frac{1}{3}\int_{\hat{N}} p_1(\hat{\nabla}^{\hat{g}}) - \sigma(\hat{N}) 
\]
where $\eta_{sign}(g)$ is the eta invariant of the metric $g = \partial_\infty \hat{g}$. Also recall that
\[
\mathbf{F}(g,A_0) := 4\eta_{Dir}({A_0})+ \eta_{sign}(g).
\]
Combine all of these, one has 
\[
8\dim \text{ker}_{\text{ex}}\D^*_{\hat{A}_0} =\mathbf{F}(\partial_\infty\hat{C}_0)  + \sigma(\hat{N})  - \int_{\hat{N}} c_1(\hat{A}_0)^2.
\]
For $\hat{N} \simeq \S^1\times \S^3$, $D^3\times \S^1$ or $\S^2\times D^2$, $\sigma(\hat{N})  = 0$. 
For $\hat{N} \simeq D^3\times \S^1$ or $\S^2\times D^2$, $N = \partial_\infty\hat{N} =\S^1\times \S^2$, and the metric $\hat{g}$ chosen in subsection \ref{subsection:PSCmetric} ensures that $g = \partial_\infty \hat{g}$ is the product of canonical metric on $\S^1$ and $\S^2$. 
In this situation $\eta_{sign}(g) = 0$ (\cite{Komuro84}) and $\eta_{Dir}(\partial_\infty\hat{C}_0) = 0$ (\cite{Nicolaescu98} Appendix C). Hence $\mathbf{F}(\partial_\infty\hat{C}_0) = 0$. 
For $\hat{N} \simeq D^3\times \S^1$ or $\S^2\times D^2$, as shown in Proposition \ref{prop:dimOfReducible}, $\hat{A}_0$ is a flat connection. Hence for $\hat{N} \simeq D^3\times \S^1$ or $\S^2\times D^2$, $\dim \text{ker}_{\text{ex}}\D^*_{\hat{A}_0}=0$. 
So the first component of $H^2(F(\hat{\mathsf{C}}_0)) $ is also trivial. Thus $\hat{C}_0$ is strongly regular for $\hat{N}=\widehat{\S^1\times \S^3} $ or ${D^3\times \S^1}$ without perturbations.
\end{proof}

For $\hat{N}=\S^2\times D^2$, unfortunately, $L^2_{top}$ is $1$-dimensional ($ i^*:H^2({\S^2\times D^2})\to H^2(\S^2\times \S^1)$ is an isomorphism between two copies of $\Z$), so $H^2(F(\hat{\mathsf{C}}_2))$ is $1$-dimensional in the obstruction diagram for $\hat{\mathsf{C}}_1$ on $X_0$ and $\hat{\mathsf{C}}_2$ on $\S^2\times D^2$. However, we have
\begin{prop}\label{prop:trivialObs}
When $r$ is large enough, the obstruction space $\mathcal{H}^-_r$ for $X'=X_0 \cup_{\S^1\times \S^2}D^2\times  \S^2$ is still $0$.
\end{prop}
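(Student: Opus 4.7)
The plan is to apply the obstruction space diagram (\ref{equ:3O}) with $\hat{N}_1 = X_0$ and $\hat{N}_2 = D^2 \times \S^2$, and conclude $\H_r^- = 0$ by a dimension count on its middle row. The key point is that although $H^2(F(\hat{\mathsf{C}}_2))$ is one-dimensional (coming from $L^2_{top}$), the corresponding obstruction has nonvanishing asymptotic value on the neck $\S^1\times\S^2$, and is therefore cancelled in the gluing against the $X_0$ side.

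First I would tabulate the dimensions on each side. On the $X_0$ side, Proposition \ref{prop:noncompactTransversality} yields, for a generic perturbation, an irreducible monopole $\hat{\mathsf{C}}_1$ with $H^2(F(\hat{\mathsf{C}}_1)) = 0$, and irreducibility forces $T_1 G_1 = 0$, hence $\mathfrak{C}_1^- = 0$; the middle column of (\ref{equ:3O}) then gives $\ker_{ex}\hat{\mathcal{T}}^*_{\hat{\mathsf{C}}_1} = 0$ and $\hat{L}_1^- = 0$. On the $\hat{N}_2 = D^2\times\S^2$ side, Proposition \ref{prop:triObs} together with (\ref{equ:obstructionF=kerexD*}) gives $H^2(F(\hat{\mathsf{C}}_2)) = L^2_{top}$, which is one-dimensional since $i^*: H^2(D^2\times\S^2) \to H^2(\S^1\times\S^2)$ is an isomorphism between two copies of $\Z$. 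Because $\hat{\mathsf{C}}_2$ is reducible (Proposition \ref{prop:dimOfReducible}), $T_1 G_2 = \mathbf{i}\R$ and the restriction $T_1 G_2 \to T_1 G_\infty$ is an isomorphism, so $\mathfrak{C}_2^-$ is one-dimensional. Therefore $\dim \ker_{ex}\hat{\mathcal{T}}^*_{\hat{\mathsf{C}}_2} = 2$ by the middle column of (\ref{equ:3O}).

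Next I would show that $\dim \hat{L}_2^- = 2$ by proving that $\partial_\infty : \ker_{ex}\hat{\mathcal{T}}^*_{\hat{\mathsf{C}}_2} \to T_{C_\infty}\M_\sigma \oplus T_1 G_\infty$ is injective. Its kernel is the genuine $L^2$-kernel $\ker_{L^2}\hat{\mathcal{T}}^*_{\hat{\mathsf{C}}_2}$, which vanishes by the Weitzenb{\"o}ck argument in the proof of Proposition \ref{prop:triObs}: the twisted Dirac operator with flat connection has trivial $L^2$-kernel under positive scalar curvature, and the $d^+\oplus d^*$ summand on $\Lambda_+^2\oplus\Lambda^0_0$ contributes only $H^2_+(D^2\times\S^2) = 0$. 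Concretely, the nontrivial $\mathfrak{C}_2^-$ direction lies in the $T_1 G_\infty$-factor (as a constant gauge direction), while the nontrivial $L^2_{top}$ direction lies in the $T_{C_\infty}\M_\sigma$-factor: its harmonic self-dual representative decomposes on the neck as $\alpha + dt\wedge\beta$ with $\alpha = *_3\beta$, so nontrivial $\alpha \in H^2(\S^1\times\S^2)$ forces nontrivial $\beta \in H^1(\S^1\times\S^2) = T_{C_\infty}\M_\sigma$. These two directions are therefore linearly independent, confirming injectivity.

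Finally, the middle row of (\ref{equ:3O}) is asymptotically exact for large $r$:
\[
0 \to \H_r^- \to \ker_{ex}\hat{\mathcal{T}}^*_{\hat{\mathsf{C}}_1} \oplus \ker_{ex}\hat{\mathcal{T}}^*_{\hat{\mathsf{C}}_2} \to \hat{L}_1^- + \hat{L}_2^- \to 0.
\]
With $\hat{L}_1^- = 0$, both the middle term and the right term have dimension $2$, forcing $\H_r^- = 0$ for all sufficiently large $r$. The main subtlety is the injectivity of $\partial_\infty$ on $\ker_{ex}\hat{\mathcal{T}}^*_{\hat{\mathsf{C}}_2}$, which is where the self-duality identification $\alpha = *_3 \beta$ on the cylindrical neck plays its decisive role, translating cohomological nontriviality in $L^2_{top} \subset H^2(\S^1\times\S^2)$ into a nonvanishing tangent direction on the reducible moduli circle $\M_\sigma = \S^1$.
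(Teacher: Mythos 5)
Your proof is correct, and it closes the argument by a different route through the same obstruction diagram (\ref{equ:3O}). You work with the middle row $0\to\H_r^-\to\ker_{ex}\hat{\mathcal{T}}^*_{\hat{\mathsf{C}}_1}\oplus\ker_{ex}\hat{\mathcal{T}}^*_{\hat{\mathsf{C}}_2}\to\hat{L}_1^-+\hat{L}_2^-\to0$: after the (shared) dimension counts $\ker_{ex}\hat{\mathcal{T}}^*_{\hat{\mathsf{C}}_1}=0$ and $\dim\ker_{ex}\hat{\mathcal{T}}^*_{\hat{\mathsf{C}}_2}=\dim(L^2_{top}\oplus T_1G_2)=2$, your key step is that $\partial_\infty$ is injective on $\ker_{ex}\hat{\mathcal{T}}^*_{\hat{\mathsf{C}}_2}$ because its kernel is the decaying ($L^2_\mu$) kernel, which vanishes by the Weitzenb\"ock/APS computation of Proposition \ref{prop:triObs} together with $b^+(D^2\times\S^2)=0$; the self-duality relation $\alpha=*_3\beta$ on the neck then exhibits the $L^2_{top}$ direction as a nontrivial tangent direction along $\M_\sigma=\S^1$, independent of the constant-gauge direction $\mathfrak{C}_2^-\subset T_1G_\infty$. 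The paper instead never touches the middle row: it uses the bottom row to see $\Delta_-^0$ is an isomorphism, column exactness to get $\H_r^-\cong\ker\Delta_-^c$, and then determines $L_2^-=\R$ from the Lagrangian complementarity $L_2^+\oplus L_2^-=T_{C_\infty}\M_\sigma$ together with $L_2^+=\Delta_+^c(H^1_{\hat{\mathsf{C}}_2})=0$ (the $D^2\times\S^2$ moduli space being a single reducible point), so that the top row is between two one-dimensional spaces and $\ker\Delta_-^c=0$. The trade-off: the paper's route gets $L_2^-$ ``for free'' from the Lagrangian condition of Section 4.1.5 of Nicolaescu and the tangent diagram \ref{equ:3T}, while yours replaces that input by an explicit identification of the boundary value of the obstruction class (essentially the computation the paper only carries out later, around (\ref{equ:baseDirectionVector}) and Figure \ref{fig:BoundaryExactStructure2}), which makes the cancellation mechanism more transparent but requires the extra injectivity/$L^2$-vanishing check. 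Both are valid readings of the asymptotically exact diagram, and your dimension bookkeeping is consistent with the paper's.
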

\begin{proof}
Let $\hat{N}_1 = X_0$, $\hat{N}_2 = \S^2\times D^2$. Then $N=\partial_\infty \hat{N}_i= \S^2\times \S^1$. The method is to trace the Obstruction diagram.

First, by Propsition \ref{prop:triObs}, $H^2(F(\hat{\mathsf{C}}_1))=0$, and $H^2(F(\hat{\mathsf{C}}_2))\cong \R$.

Next, we identify $\mathfrak{C}_i^-$. Recall that
\[
\hat{\mathcal{T}}_{\hat{\mathsf{C}}_i} := \widehat{\underline{SW}}_{\hat{\mathsf{C}}_i} \oplus \frac{1}{2}\mathfrak{L}^*_{\hat{\mathsf{C}}_i}.
\]
If $\mathbf{i}f\in T_1G_i$, then it's in the kernel of $\mathfrak{L}_{\hat{\mathsf{C}}_i}$, and therefore in $\ker_{ex}\hat{\mathcal{T}}^*_{\hat{C}_i}= \ker_{ex} (\widehat{\underline{SW}}^*_{\hat{\mathsf{C}}_i} \oplus \frac{1}{2}\mathfrak{L}_{\hat{\mathsf{C}}_i})$. On the other hand, if
\[
(\Psi, \mathbf{i}f)\in  L_{ex}^{1,2}(\hat{\S}_{\hat{\sigma}}^- \oplus \mathbf{i}\Lambda_+^2T^*\hat{N}_i) \oplus L_{ex}^{1,2}( \mathbf{i}\Lambda^0 T^*\hat{N}_i)
\]
is in $\ker_{ex}\hat{\mathcal{T}}^*_{\hat{C}_i}$, then $\mathbf{i}f\in T_1G_i$. Thus
\[
\mathfrak{C}_i^- =  \partial^0_\infty \ker_{ex}\hat{\mathcal{T}}^*_{\hat{C}_i}
 \cong \partial_\infty T_1G_i.
\]

For manifolds with cylindrical end, we can choose a generic perturbation in a $b^+$-dimensional space just as in the compact case (see page 404 of Nicolaescu's book \cite{Nicolaescu2000NotesOS} for a proof). 
Since $b^+( X_0)>0$, we can choose a compactly supported $2$-form $\eta$ such that all monopoles on $\hat{N}_1 = X_0$ are irreducible. 
Since $\hat{N}_2 = \S^2\times D^2$ and $N= \S^2\times \S^1$ admit PSC metric, all monopoles on $\hat{N}_2 = \S^2\times D^2$ and $N$ are reducible. So $\mathfrak{C}_1^- = 0$ and $ \mathfrak{C}_2^- \cong \R$. So $\Delta_-^0$ is an isomorphism in the obstruction diagram.  
Since each row of the diagram is asymptotically exact, any unit vector of $S_r (\ker\Delta_-^0) $ approaches $0$ as $r\to \infty$. So $S_r (\ker\Delta_-^0)=0$ and thus $\ker\Delta_-^0$ must be trivial when $r$ is large enough. Since each column of the diagram \ref{equ:3O} is exact, ${\H}_r^-  \cong \ker\Delta_-^c$.

Next we identify $L_i^-$. We have assumed $\M(X_0)$ is $1$-dimensional, and since $D^2\times  \S^2$ has a PSC metric and $H^1(D^2\times  \S^2) = 0$, $\M(D^2\times  \S^2)$ is only one reducible point. $\S^1\times \S^2$ also has a PSC metric and $H^1(\S^1\times \S^2) = 0$, so $\M(\S^1\times \S^2)$ is a circle of reducible solutions. So 
\begin{align*}
\dim_\R H^1_{\hat{\mathsf{C}}_1} &= 1, \\
\dim_\R H^1_{\hat{\mathsf{C}}_2} &= 0, \\
\dim_\R T_{C_\infty}\M_\sigma &= 1. 
\end{align*}
In the first row of diagram \ref{equ:3T}, $L^+_2 = \Delta_+^c (H^1_{\hat{\mathsf{C}}_2})$. Hence $L^+_2$ is certainly $0$. By complementarity equations from the Lagrangian condition (see (4.1.22) of Section 4.1.5 of Nicolaescu's book), we have 
\[
L_i^+\oplus L_i^- =  T_{C_\infty}\M_\sigma.
\]
So $L_2^-$ is $\R$. Thus in the first row of obstruction diagram \ref{equ:3O}, $L_1^- + L_2^- = \R$. Since $H^2(F(\hat{\mathsf{C}}_1))\oplus H^2(F(\hat{\mathsf{C}}_2)) = \R$, $\Delta_-^c $ is an isomorphism and ${\H}_r^-  \cong \ker\Delta_-^c=0$.
\end{proof}

\subsection{Global gluing theorem}\label{section:global-gluing}
We already have local gluing results. Now we can combine them to prove that, the moduli space of solutions of the new manifold is the fiber product of two old moduli spaces. 

We assume the following:

$\mathbf{A_1}$  $(N,g)$ is $\S^3$ or $\S^1\times \S^2$ with a positive scalar metric.

$\mathbf{A_2}$  $b_+(\hat{N}_1) > 0$, $b_+(\hat{N}_2)=0$.

$\mathbf{A_3}$  All the finite energy monopoles on $\hat{N}_1$ are irreducible and strongly regular.

$\mathbf{A_4}$  Any finite energy ${\hat{\sigma}_2}$-monople $\hat{\mathsf{C}}_2$ is reducible and $\dim_{\R} H^1_{\hat{\mathsf{C}}_2}$ is $0$ or $1$.

$\mathbf{A_5}$ The obstruction space $\mathcal{H}^-_r$ is $0$ when $r$ is large enough.

Recall that
\[
\hat{\mathcal{Z}}_{\Delta} := \{ (\hat{\mathsf{C}}_1,\hat{\mathsf{C}}_2)\in \hat{\mathcal{Z}}_1\times\hat{\mathcal{Z}}_2; \partial_\infty \hat{\mathsf{C}}_1 =  \partial_\infty \hat{\mathsf{C}}_2\}
\]
is the space of compatible monopoles, and $\widehat{\mathcal{G}}_i$ is the gauge group on $\hat{N}_i$. Define
\[
\widehat{\mathcal{G}}_\Delta := \{(\hat{\gamma}_1,\hat{\gamma}_2) \in \widehat{\mathcal{G}}_1\times\widehat{\mathcal{G}}_2 ; \partial_\infty\hat{\gamma}_1 =  \partial_\infty\hat{\gamma}_2\}.
\]

Let
\[
\hat{\mathfrak{N}}:= \hat{\mathcal{Z}}_{\Delta}  /\widehat{\mathcal{G}}_\Delta .
\]
The cutoff trick described before (see \ref{equ:cut} and \ref{equ:paste}) gives gluing maps
\[
\#_r: \widehat{\mathcal{G}}_\Delta\to \widehat{\mathcal{G}}_{\hat{N}_r}
\]
and
\[
\#_r: \hat{\mathcal{Z}}_{\Delta} \to \mathcal{C}_{\hat{N}_r}.
\]
The second one is  $(\widehat{\mathcal{G}}_\Delta , \widehat{\mathcal{G}}_{\hat{N}_r})$-equivariant, since these gluing maps share the same parameter $r$. So we can mod out by the $(\widehat{\mathcal{G}}_\Delta , \widehat{\mathcal{G}}_{\hat{N}_r})$-action, and get
\[
\hat{\#}_r: \hat{\mathfrak{N}} \to \hat{\mathcal{B}}_{\hat{N}_r}
\]
We also denote the image of this map by $\hat{\mathfrak{N}}$.

\begin{theorem}\label{thm:ggt}
Under assumptions ($\mathbf{A_1}$) - ($\mathbf{A_5}$), for large enough $r$, $\hat{\#}_r\hat{\mathfrak{N}}$ is isotopic to the moduli space of genuine monopoles ${\M}(\hat{N}_r)$ as submanifolds of $\hat{\mathcal{B}}_{\hat{N}_r}$.

\end{theorem}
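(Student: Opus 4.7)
The plan is to promote the pointwise local gluing picture (Theorems~\ref{thm:lgt}, \ref{thm:lgc}, and \ref{thm:ls}) into an ambient isotopy in $\hat{\mathcal{B}}_{\hat{N}_r}$ from the image of $\hat{\#}_r$ to the genuine moduli space $\M(\hat{N}_r)$. The key observation is that, under $\mathbf{A_5}$, the obstruction map $\kappa_r : \H_r^+ \to \H_r^- = 0$ vanishes identically, so the local model of $\M(\hat{N}_r)$ near each glued configuration is the full ball in $\H_r^+$; and under the remaining assumptions this ball has precisely the dimension of the fiber-product space $\hat{\mathfrak{N}}$.

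First I would verify $\dim \hat{\mathfrak{N}} = \dim \M(\hat{N}_r)$ by tracing the diagram~(\ref{equ:3T}). By $\mathbf{A_1}$ together with the Weitzenb\"ock argument of Proposition~\ref{prop:dimOfReducible}, every ${\mathsf{C}}_\infty$ is reducible and $G_\infty \cong S^1$; combined with the irreducibility of $\hat{\mathsf{C}}_1$ from $\mathbf{A_3}$, Remark~\ref{rem:imL(kerLinfty)} gives $\mathfrak{C}_1^+ \cong \R$, while $\mathbf{A_4}$ gives $\mathfrak{C}_2^+ = 0$. The bottom row of~(\ref{equ:3T}) then forces $\ker \Delta_+^0 = 0$, and its middle column collapses to $\H_r^+ \cong \ker \Delta_+^c$. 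The left-hand side is the local model of $\M(\hat{N}_r)$ (via Theorem~\ref{thm:lgt} with $\kappa_r \equiv 0$), and the right-hand side is the virtual tangent space of $\hat{\mathfrak{N}}$ (top row of~(\ref{equ:3T})), so the two spaces have matching dimensions.

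Next I would construct the isotopy. For each compatible pair, Theorem~\ref{thm:lgt} parameterizes the slice monopoles near $\hat{\mathsf{C}}_r := \hat{\mathsf{C}}_1 \#_r \hat{\mathsf{C}}_2$ by the ball $B_0(r^{-3}) \subset \H_r^+$ via
\[
\underline{\hat{\mathsf{C}}}_0 \;\longmapsto\; \hat{\mathsf{C}}_r + \underline{\hat{\mathsf{C}}}_0 + \Phi_{\hat{\mathsf{C}}_r}(\underline{\hat{\mathsf{C}}}_0),
\]
Theorem~\ref{thm:lgc} promotes this to an open inclusion into $\M(\hat{N}_r)$, and Theorem~\ref{thm:ls} makes these opens cover $\M(\hat{N}_r)$. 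I would then set
\[
F : [0,1] \times \hat{\mathfrak{N}} \;\longrightarrow\; \hat{\mathcal{B}}_{\hat{N}_r}, \qquad F\bigl(t, [\hat{\mathsf{C}}_1, \hat{\mathsf{C}}_2]\bigr) = \bigl[\,\hat{\mathsf{C}}_r + t\, \Phi_{\hat{\mathsf{C}}_r}(0)\,\bigr],
\]
so that $F_0 = \hat{\#}_r$, while $F_1 : \hat{\mathfrak{N}} \to \M(\hat{N}_r)$ sends each glued configuration to the unique genuine monopole at the slice origin. Surjectivity of $F_1$ follows from Theorem~\ref{thm:ls} after recentering each slice, injectivity from the dimension match in the previous step together with the slice uniqueness in Theorem~\ref{thm:lgt}, and the embedding property of each intermediate $F_t$ from the estimate $\|\Phi_{\hat{\mathsf{C}}_r}(0)\|_{2,2} = O(r^{-3})$ produced by the contraction-mapping construction behind Theorem~\ref{thm:lgt}.

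The main obstacle is parametric smoothness: one must verify that $\Phi_{\hat{\mathsf{C}}_r}(0)$ varies smoothly with $[\hat{\mathsf{C}}_1, \hat{\mathsf{C}}_2] \in \hat{\mathfrak{N}}$ so that $F$ is a genuine smooth isotopy rather than a point-by-point family. This is handled by running the Banach fixed-point argument underlying Theorem~\ref{thm:lgt} with the compatible pair as a parameter, using the uniform $r^{-2}$-spectral gap that defines $\H_r$ to ensure uniform invertibility of the linearization on $\mathcal{Y}_r$; descent to the quotient $\hat{\mathfrak{N}}$ is then automatic from the $(\widehat{\mathcal{G}}_\Delta, \widehat{\mathcal{G}}_{\hat{N}_r})$-equivariance already built into the gluing recipe. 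Once these smooth dependencies are in hand, $F$ is a smooth isotopy of submanifolds of $\hat{\mathcal{B}}_{\hat{N}_r}$ interpolating between $\hat{\#}_r \hat{\mathfrak{N}}$ and $\M(\hat{N}_r)$.
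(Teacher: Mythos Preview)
Your proposal is correct and follows essentially the same route as the paper: both chase diagram~(\ref{equ:3T}) to identify $\H_r^+$ with the tangent space of $\hat{\mathfrak{N}}$ (via the \emph{left} column, not the middle one as you wrote), then use $\kappa_r \equiv 0$ from $\mathbf{A_5}$ to realize $\M(\hat{N}_r)$ as a graph over $\hat{\mathfrak{N}}$. The paper phrases the last step as ``$\M(\hat{N}_r)$ is a section of the normal bundle of $\hat{\mathfrak{N}}$'' rather than writing out your explicit isotopy $F_t$, and it does not separately discuss the parametric smoothness of $\Phi_{\hat{\mathsf{C}}_r}$ that you flag, but the content is the same.
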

\begin{proof}
For any point $ (\hat{\mathsf{C}}_1,\hat{\mathsf{C}}_2)$ in $\hat{\mathcal{Z}}_{\Delta}$, let 
\[
\mathsf{C}_r = \#_r  (\hat{\mathsf{C}}_1,\hat{\mathsf{C}}_2) = \hat{\mathsf{C}}_1 \#_r \hat{\mathsf{C}}_2.
\]
By assumption $\mathbf{A_1}$, all monopoles on $N$ are reducible. Thus $T_1G_\infty = \R$. By assumption $\mathbf{A_4}$, $\mathfrak{C}_2^- = \R$, so that $\mathfrak{C}_2^+ = 0$. Hence $\Delta^0_+$ must be an isomorphism in the last row of diagram \ref{equ:3T}. So 
\begin{equation}\label{equ:H+=kerDelta}
\H_r^+ \cong^a \ker\Delta_+^c,
\end{equation}
where $ \cong^a$ means that the isomorphism is given by an asymptotic map in the sense of \cite{Nicolaescu2000NotesOS} page 301.

Now we want to show 
\begin{equation}\label{equ:kerDelta=Tangent}
\ker\Delta_+^c \cong T_{[\mathsf{C}_r]}\hat{\mathfrak{N}}.
\end{equation}
By the definition of $H^1_{\hat{\mathsf{C}}_i}$ and boundary difference map $\Delta_+^c $, a point in $\ker\Delta_+^c $ is a pair $(\underline{\hat{\mathsf C}}_1, \underline{\hat{\mathsf C }}_2)\in \mathcal{S}_{\hat{\mathsf{C}}_1} \times \mathcal{S}_{\hat{\mathsf{C}}_2}$ in the local slice of monopoles, such that $\partial_\infty \underline {\hat{\mathsf{C}}}_1 =  \partial_\infty \underline{\hat{\mathsf{C}}}_2$. 
On the other hand, any point of $T_{[\mathsf{C}_r]}\hat{\mathfrak{N}}$ can be represented by $(\hat{\gamma}_1\underline{\hat{\mathsf C}}_1, \hat{\gamma}_2\underline{\hat{\mathsf C }}_2)\in T\hat{\mathcal{Z}}_{\Delta}$ for  $(\underline{\hat{\mathsf C}}_1, \underline{\hat{\mathsf C }}_2)\in\ker\Delta_+^c$ and $(\hat{\gamma}_1,\hat{\gamma}_2) \in \widehat{\mathcal{G}}_1 \times\widehat{\mathcal{G}}_2$, by the definition of slice. Since $\underline{\hat{\mathsf C}}_1$ and $\underline{\hat{\mathsf C}}_2$ have the same boundary value, and $(\hat{\gamma}_1\underline{\hat{\mathsf C}}_1, \hat{\gamma}_2\underline{\hat{\mathsf C }}_2) \in T\hat{\mathcal{Z}}_{\Delta}$, $\hat{\gamma}_1$ and $\hat{\gamma}_2$ must coincide on the boudary. Thus $(\hat{\gamma}_1,\hat{\gamma}_2) \in T\widehat{\mathcal{G}}_\Delta$. Therefore, $\ker\Delta_+^c \cong T_{[\mathsf{C}_r]}\hat{\mathfrak{N}}$.

By (\ref{equ:H+=kerDelta}) and (\ref{equ:H+=kerDelta}), the family of $\H_r^+$ indexed by $\mathsf{C}_r$ forms the tangent bundle of $\hat{\mathfrak{N}}$ when $r$ is sufficiently large. We again denote it by $\H_r^+$. By the definition of $\mathcal{Y}_r^+$, it's the normal bundle of $\hat{\mathfrak{N}}$ in $\hat{\mathcal{B}}_{\hat{N}_r}$. By condition $\mathbf{A_5}$, the map $\kappa_r$ in theorem \ref{thm:lgc} must be zero. We conclude that ${\M}(\hat{N}_r) $ is a section of the normal bundle of $ \hat{\mathfrak{N}}$ locally. Thus for each $\mathsf{C}_r$, there exists an open neighborhood $U_r$, such that ${\M}(\hat{N}_r) \cap U_r \cong  \hat{\mathfrak{N}}\cap U_r $. By theorem \ref{thm:ls}, this fact is globally true.
\end{proof}

Now we can show that $\hat{\mathfrak{N}}$ above is desired fiber product of moduli space.
\begin{lemma}\label{lem:fibProd}
Let $\mathcal{Z}$ be monopoles on $N$. Define
\[
\mathcal{G}^{\partial_\infty}:= \partial_\infty\widehat{\mathcal{G}}_1 \cdot  \partial_\infty\widehat{\mathcal{G}}_2,
\]
\[
\M^{\partial_\infty} := \mathcal{Z}/\mathcal{G}^{\partial_\infty},
\]
\[
\hat{\mathcal{Z}} := \{ (\hat{\mathsf{C}}_1,\hat{\mathsf{C}}_2)\in \hat{\mathcal{Z}}_1\times\hat{\mathcal{Z}}_2; \partial_\infty \hat{\mathsf{C}}_1 \equiv  \partial_\infty \hat{\mathsf{C}}_2 \mod \mathcal{G}^{\partial_\infty} \}.
\]
Then we have
\[
\hat{\mathcal Z}/\widehat{\mathcal{G}}_1\times\widehat{\mathcal{G}}_2 = 
 \{ ([\hat{\mathsf{C}}_1],[\hat{\mathsf{C}}_2])\in \hat{\M}_1\times\hat{\M}_2; \partial_\infty [\hat{\mathsf{C}}_1] =  \partial_\infty [\hat{\mathsf{C}}_2] \in \M^{\partial_\infty}\}
\]
and
\[
\hat{\mathcal Z}/\widehat{\mathcal{G}}_1\times\widehat{\mathcal{G}}_2 \cong \hat{\mathfrak{N}}.
\]
\end{lemma}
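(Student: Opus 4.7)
The plan is to verify the two claimed identifications separately. The first is essentially tautological once one checks that $\partial_\infty$ descends to the quotients, and the second requires a short but delicate orbit-by-orbit argument that uses the reducibility assumption $\mathbf{A_1}$.

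\medskip
\textbf{Step 1 (the first equality).} I will unpack the definition of $\hat{\mathcal Z}$. Since $\partial_\infty \widehat{\mathcal G}_i \subset \mathcal{G}^{\partial_\infty}$, the restriction map $\partial_\infty$ descends to a well-defined map $\hat{\M}_i \to \M^{\partial_\infty}$. A pair $([\hat{\mathsf C}_1],[\hat{\mathsf C}_2])\in \hat{\M}_1\times \hat{\M}_2$ lies in the fiber product over $\M^{\partial_\infty}$ iff $\partial_\infty \hat{\mathsf C}_1 \equiv \partial_\infty \hat{\mathsf C}_2 \pmod{\mathcal G^{\partial_\infty}}$, which is exactly the defining condition of $\hat{\mathcal Z}$. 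The quotient $\hat{\mathcal Z}/(\widehat{\mathcal G}_1\times\widehat{\mathcal G}_2)$ therefore equals the stated fiber product.

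\medskip
\textbf{Step 2 (surjectivity of $\hat{\mathfrak N}\to \hat{\mathcal Z}/(\widehat{\mathcal G}_1\times\widehat{\mathcal G}_2)$).} The inclusion $\hat{\mathcal Z}_\Delta \hookrightarrow \hat{\mathcal Z}$ induces a well-defined map on quotients because $\widehat{\mathcal G}_\Delta \subset \widehat{\mathcal G}_1\times\widehat{\mathcal G}_2$. Given $(\hat{\mathsf C}_1,\hat{\mathsf C}_2)\in \hat{\mathcal Z}$, by definition of $\mathcal{G}^{\partial_\infty}$ there exist $\hat\gamma_i\in \widehat{\mathcal G}_i$ with $\partial_\infty \hat{\mathsf C}_1 = (\partial_\infty \hat\gamma_1)(\partial_\infty \hat\gamma_2)\cdot\partial_\infty \hat{\mathsf C}_2$. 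Replacing $(\hat{\mathsf C}_1,\hat{\mathsf C}_2)$ by $(\hat\gamma_1^{-1}\cdot\hat{\mathsf C}_1,\hat\gamma_2\cdot\hat{\mathsf C}_2)$ (which lies in the same $\widehat{\mathcal G}_1\times\widehat{\mathcal G}_2$-orbit) produces a representative with matching boundary values, hence in $\hat{\mathcal Z}_\Delta$.

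\medskip
\textbf{Step 3 (injectivity, the main obstacle).} Suppose $(\hat{\mathsf C}_1,\hat{\mathsf C}_2),(\hat{\mathsf C}_1',\hat{\mathsf C}_2')\in \hat{\mathcal Z}_\Delta$ with $\hat{\mathsf C}_i' = \hat\gamma_i\cdot\hat{\mathsf C}_i$ for some $(\hat\gamma_1,\hat\gamma_2)\in \widehat{\mathcal G}_1\times\widehat{\mathcal G}_2$. Set
\[
h := (\partial_\infty \hat\gamma_1)(\partial_\infty \hat\gamma_2)^{-1}\in \mathcal{G}_\sigma.
\]
The equality $\partial_\infty \hat{\mathsf C}_1' = \partial_\infty \hat{\mathsf C}_2'$ forces $h\in \mathrm{Stab}(\partial_\infty \hat{\mathsf C}_2)$. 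The key input is assumption $\mathbf{A_1}$: under a PSC metric every monopole on $N$ is reducible, so this stabilizer consists of constant gauges $\cong S^1$. Let $\hat h$ be the constant extension of $h$ to $\hat N_2$, and set $\hat\gamma_2' := \hat h\cdot \hat\gamma_2\in \widehat{\mathcal G}_2$. Then $\partial_\infty \hat\gamma_2' = \partial_\infty \hat\gamma_1$, so $(\hat\gamma_1,\hat\gamma_2')\in \widehat{\mathcal G}_\Delta$. Moreover, by assumption $\mathbf{A_4}$ the monopole $\hat{\mathsf C}_2'$ is reducible, so the constant $\hat h$ fixes it, giving $\hat\gamma_2'\cdot\hat{\mathsf C}_2 = \hat h\cdot \hat{\mathsf C}_2' = \hat{\mathsf C}_2'$. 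Thus $(\hat\gamma_1,\hat\gamma_2')\in \widehat{\mathcal G}_\Delta$ witnesses equivalence in $\hat{\mathfrak N}$, completing injectivity.

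\medskip
Combining Steps 1--3 yields both identifications. The main obstacle is Step~3, where the mismatch of the two gauges on the boundary must be absorbed into the stabilizer; this is exactly where the PSC hypothesis on $N$ and the reducibility hypothesis $\mathbf{A_4}$ on $\hat N_2$ enter essentially.
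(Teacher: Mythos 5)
Your proof is correct and follows essentially the same route as the paper: the first identification is unwound directly from the definitions, and your Step 2 is exactly the paper's argument (write the boundary discrepancy as $\partial_\infty g_1\cdot\partial_\infty g_2$ and absorb it into the two gauge groups to produce a representative in $\hat{\mathcal{Z}}_\Delta$). The one place you add something is Step 3: the paper simply asserts that $\hat{\mathfrak{N}}$ ``is certainly a subset'' of $\hat{\mathcal Z}/\widehat{\mathcal{G}}_1\times\widehat{\mathcal{G}}_2$, i.e.\ it takes injectivity of the natural map $\hat{\mathcal{Z}}_\Delta/\widehat{\mathcal{G}}_\Delta\to\hat{\mathcal Z}/\widehat{\mathcal{G}}_1\times\widehat{\mathcal{G}}_2$ for granted, whereas you prove it by pushing the mismatch $h$ into the stabilizer of the reducible boundary monopole (constant gauges, by $\mathbf{A_1}$) and using that this constant acts trivially on the reducible $\hat{\mathsf{C}}_2'$ (by $\mathbf{A_4}$); that argument is valid and fills in a step the paper leaves implicit.
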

\begin{proof}
The first equility is just by definition. We prove the second one:

$\hat{\mathfrak{N}}$ is certainly a subset of $\hat{\mathcal Z}/\widehat{\mathcal{G}}_1\times\widehat{\mathcal{G}}_2$. For any $([\hat{\mathsf{C}}_1],[\hat{\mathsf{C}}_2])$ in $\hat{\mathcal Z}/\widehat{\mathcal{G}}_1\times\widehat{\mathcal{G}}_2$, suppose it's represented by $(\hat{\mathsf{C}}_1,\hat{\mathsf{C}}_2)\in \hat{\mathcal{Z}}$. Then there exists $g\in \mathcal{G}^{\partial_\infty}$ such that $g\cdot \partial_\infty \hat{\mathsf{C}}_1 =  \partial_\infty \hat{\mathsf{C}}_2$. Suppose $g=\partial_\infty g_1\cdot \partial_\infty g_2$, where $g_i\in \widehat{\mathcal{G}}_i$. Now $([\hat{\mathsf{C}}_1],[\hat{\mathsf{C}}_2])=([g_1\cdot\hat{\mathsf{C}}_1],[g_2^{-1}\cdot\hat{\mathsf{C}}_2])\in \hat{\mathcal Z}/\widehat{\mathcal{G}}_1\times\widehat{\mathcal{G}}_2$ and $(g_1\cdot\hat{\mathsf{C}}_1,g_2^{-1}\cdot\hat{\mathsf{C}}_2)\in \hat{\mathcal{Z}}_\Delta$. So $\hat{\mathcal Z}/\widehat{\mathcal{G}}_1\times\widehat{\mathcal{G}}_2 \subset \hat{\mathfrak{N}}$.
\end{proof}

\begin{corollary}\label{fiberProduct}
\begin{align}
\M(X)&\cong\M(X_0) \times_{\M(\S^1\times \S^2)} \M(\S^1\times D^3)\\
\M(X')&\cong\M(X_0) \times_{\M(\S^1\times \S^2)} \M(D^2\times \S^2)
\end{align}
\end{corollary}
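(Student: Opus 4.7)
The plan is to verify assumptions $(\mathbf{A_1})$--$(\mathbf{A_5})$ of Theorem~\ref{thm:ggt} for the two decompositions $\hat{N}_r = X$ with $\hat{N}_2 = \S^1 \times D^3$, and $\hat{N}_r = X'$ with $\hat{N}_2 = D^2 \times \S^2$. Once these are in place, Theorem~\ref{thm:ggt} identifies $\M(X)$ and $\M(X')$ (for $r$ sufficiently large) with $\hat{\#}_r \hat{\mathfrak{N}}$ up to isotopy inside the configuration space, and Lemma~\ref{lem:fibProd} rewrites $\hat{\mathfrak{N}}$ as the required fiber product.

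Assumptions $(\mathbf{A_1})$--$(\mathbf{A_4})$ are packaged in the preceding subsections. For $(\mathbf{A_1})$, the boundary $N = \S^1 \times \S^2$ carries the product PSC metric chosen in Subsection~\ref{subsection:PSCmetric}. For $(\mathbf{A_2})$, the Mayer--Vietoris computation in the proof of Proposition~\ref{prop:virtualDimX0} shows $b^+(X_0) = b^+(X) > 0$, while both $\S^1 \times D^3$ and $D^2 \times \S^2$ have $b^+ = 0$. Assumption $(\mathbf{A_3})$ is exactly Proposition~\ref{prop:noncompactTransversality}: for a generic perturbation on $X_0$, every finite-energy monopole is irreducible with $H^2(F) = 0$, which is strong regularity. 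Assumption $(\mathbf{A_4})$ is Corollary~\ref{cor:dimOfReducible}: the bullet PSC metrics force all finite-energy monopoles on $\hat{N}_2$ to be reducible, and the corresponding moduli space is a circle ($\dim_\R H^1 = 1$) for $\S^1 \times D^3$ and a point ($\dim_\R H^1 = 0$) for $D^2 \times \S^2$.

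The step requiring the most care is $(\mathbf{A_5})$, the vanishing of the obstruction space $\H_r^-$. For $\hat{N}_2 = \S^1 \times D^3$, Propositions~\ref{prop:noncompactTransversality} and~\ref{prop:triObs} together give $H^2(F(\hat{\mathsf{C}}_1)) = H^2(F(\hat{\mathsf{C}}_2)) = 0$, making the top-middle entry of diagram~\ref{equ:3O} vanish; combined with the fact that $\Delta_-^0$ is an isomorphism (since $\mathfrak{C}_1^- = 0$ from irreducibility on $X_0$ and $\mathfrak{C}_2^- \cong \R$ from reducibility on $\hat{N}_2$), asymptotic exactness of the columns forces $\H_r^- = 0$ for $r$ large. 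For $\hat{N}_2 = D^2 \times \S^2$ the term $H^2(F(\hat{\mathsf{C}}_2))$ no longer vanishes, because $L^2_{\mathrm{top}}$ contributes a copy of $\R$; this is exactly the content of Proposition~\ref{prop:trivialObs}, where dimension counting in the top row of diagram~\ref{equ:3O} shows $\Delta_-^c$ is a $\R \to \R$ isomorphism, so $\ker \Delta_-^c = 0$ and hence again $\H_r^- = 0$. I expect the diagram-chase in the $D^2 \times \S^2$ case to be the main subtle point, but it has already been carried out in the preceding subsection.

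With $(\mathbf{A_1})$--$(\mathbf{A_5})$ verified, Theorem~\ref{thm:ggt} produces an isotopy $\hat{\#}_r \hat{\mathfrak{N}} \simeq \M(\hat{N}_r)$ inside $\hat{\mathcal{B}}_{\hat{N}_r}$ for large $r$, where $\hat{N}_r$ is $X$ or $X'$ accordingly. Lemma~\ref{lem:fibProd} then identifies $\hat{\mathfrak{N}}$ with the set of pairs $([\hat{\mathsf{C}}_0], [\hat{\mathsf{C}}_2]) \in \M(X_0) \times \M(\hat{N}_2)$ whose asymptotic values agree in $\M^{\partial_\infty}$. The distinction between $\M^{\partial_\infty}$ and $\M(\S^1 \times \S^2)$ disappears once one notes that $H^1(X_0) \to H^1(\S^1 \times \S^2)$ is surjective (this surjectivity appears already in the Mayer--Vietoris diagram inside the proof of Theorem~\ref{thm:changeOfSpinc}), so every boundary gauge transformation extends over $X_0$ and $\mathcal{G}^{\partial_\infty} = \mathcal{G}_\sigma$. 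Therefore $\hat{\mathfrak{N}} = \M(X_0) \times_{\M(\S^1 \times \S^2)} \M(\hat{N}_2)$, and both equalities of the corollary follow.
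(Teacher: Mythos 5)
Your proposal is correct and follows essentially the same route as the paper: verify the hypotheses $(\mathbf{A_1})$--$(\mathbf{A_5})$ of Theorem \ref{thm:ggt} via Propositions \ref{prop:noncompactTransversality}, \ref{prop:triObs}, \ref{prop:trivialObs} and Corollary \ref{cor:dimOfReducible}, apply Theorem \ref{thm:ggt} and Lemma \ref{lem:fibProd}, and then use the surjectivity of $H^1(X_0)\to H^1(\S^1\times\S^2)$ to replace $\M^{\partial_\infty}$ by $\M(\S^1\times\S^2)$. The only difference is that you spell out the verification of each assumption in more detail than the paper's brief proof, which simply cites the two obstruction propositions.
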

\begin{proof}
By Proposition \ref{prop:triObs} and \ref{prop:trivialObs}, all assumptions of Theorem \ref{thm:ggt} are satisfied. Thus $\M(X)\cong \hat{\mathfrak{N}}$. By Lemma \ref{lem:fibProd},
\[
\M(X)\cong\M(X_0) \times_{\M^{\partial_\infty}(\S^1\times \S^2)} \M(\S^1\times D^3).
\]
But in our case, $H^1(X_0)\to H^1(\S^1\times \S^2)$ is surjective. Thus $\partial_\infty\widehat{\mathcal{G}}_1 =  \mathcal{G}$. Therefore $\M^{\partial_\infty}(\S^1\times \S^2) = \M(\S^1\times \S^2)$. 

The proof of the second equation is similar.
\end{proof}

\subsection{The proof of $1$-surgery formula}\label{subsection:proofUnparemetrized}
Now we can investigate Seiberg-Witten invariants of $X$ and $X'$. According to section 2.2 of \cite{LL01}, for higher dimensional moduli space ${\M}(\hat{N}_r)$, given an integral cohomology class $\Theta$ of moduli space $\hat{\mathcal{B}}_{\hat{N}_r}$, the Seiberg-Witten invariant associate to this class is 
\[
SW^{ \Theta}(\hat{N}_r, \ss) := \langle\Theta,[ {\M}(\hat{N}_r,\ss) ] \rangle
\]

Since $H^1(X)= H^1(X_0) = \R$, $\hat{\mathcal{B}}_X\cong \hat{\mathcal{B}}_{X_0}\cong  \C P^\infty_+\times \S^1$. We choose $\Theta$ to be a generator of $H^1( \C P^\infty_+\times \S^1,\Z)$. 

We first show that the invariant $SW^{ \Theta}$ is well defined:
\begin{lemma}\label{lem:detect-exotic}
Suppose that $b^+(X)>1$ and that $f:X\to X$ is a diffeomorphism. Let $h$ and $k$ be generic paramters. Then $SW^\Theta(E_X, \ss, h) = SW^\Theta(E_X, \ss, k)$.
\end{lemma}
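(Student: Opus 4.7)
The plan is to run the standard cobordism argument for cut-down Seiberg--Witten invariants, upgraded slightly to handle the fact that the moduli space is $1$-dimensional rather than $0$-dimensional. Pick a smooth path $\gamma : [0,1] \to \mathcal{P}$ from $h$ to $k$ in the space of admissible parameters and form the parameterized moduli space
\[
\M(\gamma, \ss) \;=\; \bigcup_{t \in [0,1]} \{t\} \times \M(X, \ss, \gamma(t)) \;\subset\; [0,1] \times \mathcal{B}.
\]
By the usual transversality theorem applied to a generic path $\gamma$ (augmenting the perturbation space as in the standard SW setup), this total space is a smooth oriented $2$-manifold with boundary $\M(X, \ss, k) \sqcup -\M(X, \ss, h)$.

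Next I would realize $\Theta \in H^1(\mathcal{B}; \Z)$ geometrically. Since $\mathcal{B}$ fibers over $S^1 = T^{b_1(X)}$ with contractible fiber $\CP^\infty$, choose a smooth map $\Phi : \mathcal{B} \to S^1$ representing $\Theta$, and let $V = \Phi^{-1}(\mathrm{pt})$ for a regular value; then $V \subset \mathcal{B}$ is a cooriented codimension-$1$ submanifold. By a further small perturbation of $\gamma$ (and of the chosen regular value) we can arrange $V$ to meet $\M(\gamma, \ss)$ transversely. The intersection
\[
W \;:=\; \M(\gamma, \ss) \,\cap\, ([0,1]\times V)
\]
is then an oriented compact $1$-manifold whose signed boundary count is exactly
\[
\#\partial W \;=\; SW^\Theta(X, \ss, k) - SW^\Theta(X, \ss, h),
\]
so the equality follows from $\#\partial W = 0$ for a compact $1$-manifold with boundary.

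The main obstacle, and the place where the hypothesis $b^+(X) > 1$ is used, is verifying that $W$ is genuinely compact, equivalently that no reducible solution occurs along the path $\gamma$ and that no sequence of solutions escapes. Reducibles can only occur on the wall in parameter space given by the equation $2\pi c_1(\ss) = [\omega_g - \eta]$, which is a locally linear subspace of codimension $b^+(X) \geq 2$ in $\mathcal{P}$; hence a generic $1$-parameter path avoids it entirely, keeping every fiber of $\M(\gamma, \ss)$ irreducible and the gauge action free. Once this is secured, the usual a priori $C^0$-bound on spinors obtained from the Weitzenböck formula (exactly as in the background from Section~\ref{subsection:PSCmetric} but applied on $X$) and elliptic bootstrapping give compactness of $\M(\gamma, \ss)$, and hence of $W$.

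Finally, I would note that the existence of the diffeomorphism $f$ plays no role in the argument itself; it is recorded in the hypothesis only so that later applications can combine this parameter-independence statement with the tautology $SW^\Theta(X, \ss, h) = SW^\Theta(X, f^*\ss, f^*h)$ to deduce diffeomorphism invariance of $SW^\Theta$.
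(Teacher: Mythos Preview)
Your proposal is correct and follows essentially the same approach as the paper: form a generic one-parameter family to obtain a $2$-dimensional cobordism between $\M(X,\ss,h)$ and $\M(X,\ss,k)$, then cut by $\Theta$ to produce a compact $1$-dimensional cobordism and conclude. One small slip: $\CP^\infty$ is not contractible, but this is irrelevant since all you need is the projection $\mathcal{B}^*_X \simeq \CP^\infty \times S^1 \to S^1$ to represent $\Theta$, which the paper already records.
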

\begin{proof}
Since $b^+(X)>1$, by a generic argement (similar to the one in the proof of \ref{prop:noncompactTransversality-general}), there exists a generic path $K$ from $h$ to $k$. Hence there exists a cobordism from $\M(E_X,\ss,h)$ to $\M(E_X,\ss,k)$. This cobordism is a $2$-dimensional manifold with $1$-dimensional boundary, so after cutting it by the class $\Theta$, we obtain a $1$-dimensional cobordism which gives $SW^\Theta(E_X, \ss, h) = SW^\Theta(E_X, \ss, k)$ (see Figure \ref{fig:infinite-cobordism}).
\begin{figure}[ht!]
    \begin{Overpic}{\includegraphics[scale=0.6]{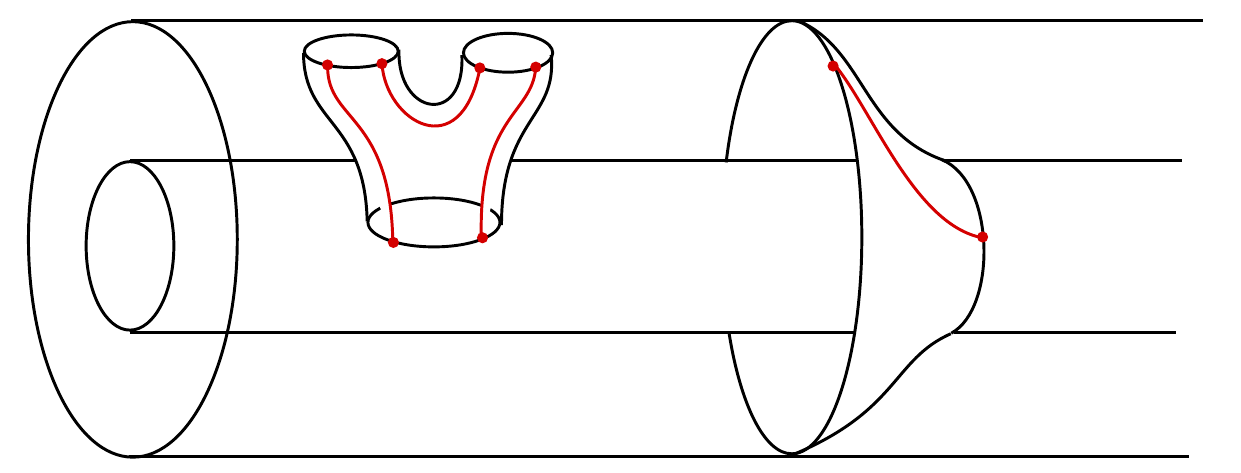}}
     \put(10,33){$h$}
    \put(10,20){$k$}
         \put(44.5,34){$\M(E_X,\ss,h)$}
        \put(41,18){$\M(E_X,\ss,k)$}
        
    \end{Overpic}
    \caption{The cobordism in $\CP^\infty \times \S^1 \times I$.}
    \label{fig:infinite-cobordism}
\end{figure}
\end{proof}

\begin{theorem}\label{main-thm}
$SW^\Theta(X,\ss)=SW(X',\ss')$.
\end{theorem}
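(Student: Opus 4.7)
The plan is to combine the fiber-product description in Corollary \ref{fiberProduct} with the explicit computation of the endpoint moduli spaces under the bullet metrics of Subsection \ref{subsection:PSCmetric}. By Proposition \ref{prop:dimOfReducible} and Corollary \ref{cor:dimOfReducible}, equipping $\S^1 \times D^3$, $D^2 \times \S^2$, and their common boundary $\S^1 \times \S^2$ with the bullet (and round product) metrics yields
\[
\M(\S^1 \times D^3) \cong \S^1, \qquad \M(D^2 \times \S^2) = \{pt\}, \qquad \M(\S^1 \times \S^2) \cong \S^1,
\]
all consisting of reducible monopoles. Moreover, the metrics can be chosen so that the asymptotic boundary map $\partial_\infty : \M(\S^1 \times D^3) \to \M(\S^1 \times \S^2)$ is the identity of $\S^1$ (since both sides are canonically identified with $H^1(\cdot;\R)/H^1(\cdot;\Z)$ and the restriction map between them is an isomorphism), while $\partial_\infty : \M(D^2 \times \S^2) \to \M(\S^1 \times \S^2)$ is the inclusion of a single point.

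With these identifications, Corollary \ref{fiberProduct} gives
\[
\M(X, \ss) \;\cong\; \M(X_0, \hat{\ss}) \times_{\S^1} \S^1 \;\cong\; \M(X_0, \hat{\ss}),
\]
a compact oriented $1$-manifold (of virtual dimension $d(\hat{\mathsf{C}}_0)=1$ by Proposition \ref{prop:virtualDimX0}), and
\[
\M(X', \ss') \;\cong\; \M(X_0, \hat{\ss}) \times_{\S^1} \{pt\} \;=\; \partial_\infty^{-1}(pt) \;\subset\; \M(X_0, \hat{\ss}),
\]
which is $0$-dimensional, so $SW(X', \ss')$ is the signed count of the fiber $\partial_\infty^{-1}(pt)$ of the asymptotic map $\partial_\infty : \M(X_0, \hat{\ss}) \to \S^1$.

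It remains to identify the restriction of $\Theta$ to $\M(X, \ss)$. The configuration space $\hat{\mathcal{B}}_X$ is homotopy equivalent to $\CP^\infty \times \S^1$, the $\S^1$-factor being the Jacobian $H^1(X;\R)/H^1(X;\Z)$ of flat $U(1)$-connections modulo gauge. The Mayer--Vietoris computation in the proof of Proposition \ref{prop:virtualDimX0} shows that the restriction maps $H^1(X;\Z) \to H^1(X_0;\Z) \to H^1(\S^1 \times \S^2;\Z)$ are all isomorphisms between copies of $\Z$, which induce compatible identifications of the $\S^1$-factors of $\hat{\mathcal{B}}_X$, $\hat{\mathcal{B}}_{X_0}$, and $\hat{\mathcal{B}}_{\S^1 \times \S^2}$. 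Under the isomorphism $\M(X,\ss) \cong \M(X_0, \hat{\ss})$ established above, the restriction of $\Theta$ therefore equals $\partial_\infty^* \theta$, where $\theta$ is the standard generator of $H^1(\S^1;\Z)$. Consequently,
\[
SW^\Theta(X, \ss) \;=\; \langle \partial_\infty^*\theta, \, [\M(X_0, \hat{\ss})] \rangle \;=\; \deg\bigl(\partial_\infty : \M(X_0, \hat{\ss}) \to \S^1\bigr) \;=\; \# \partial_\infty^{-1}(pt) \;=\; SW(X', \ss'),
\]
the penultimate equality being the standard identification of the degree of a circle-valued map with its signed point count over a regular value.

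The main technical point to be verified is the orientation bookkeeping: the identifications above must respect the orientations on the moduli spaces induced by the family index of the Dirac and \textbf{ASD} operators, and the standard generator of $H^1(\S^1;\Z)$ used to define $\Theta$ must match the one pulled back by $\partial_\infty$. Together with Seiberg-Witten compactness for cylindrical-end $4$-manifolds whose asymptotic moduli space is compact and whose obstruction space is trivial (Propositions \ref{prop:triObs} and \ref{prop:trivialObs}), these checks are the only nontrivial inputs beyond the structural results already established.
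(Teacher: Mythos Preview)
Your argument is essentially the paper's own proof, phrased via the total degree of $\partial_\infty:\M(X_0)\to\S^1$ rather than the paper's component-by-component sum $\sum_i d_i$; the pullback diagrams of configuration spaces and the identification of $\Theta$ with $\partial_\infty^*\theta$ are the same. The one point you should make explicit is that the \emph{specific} point $pt\in\S^1$ arising as the image of $\M(D^2\times\S^2)$ is a regular value of $\partial_\infty$, since otherwise the fiber product $\M(X',\ss')=\partial_\infty^{-1}(pt)$ need not be a transverse $0$-manifold with signed count equal to the degree; the paper secures this by showing $\partial_\infty$ is a \emph{submersion}, which follows from strong regularity $H^2(F_{\hat{\mathsf{C}}_0})=0$ (Proposition~\ref{prop:noncompactTransversality}) together with the tail of the long exact sequence~\eqref{equ:longExact}, not merely from the gluing-obstruction Propositions~\ref{prop:triObs} and~\ref{prop:trivialObs} that you cite.
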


\begin{proof}
Since each of $\M(\S^1\times \S^2)$ and $\M(\S^1\times D^3)$ is a circle of reducibles, and these circles are given by the monodromy of connections around their $\S^1$ factor, it's clear that
\[
\partial_\infty: \M(\S^1\times D^3) \to \M(\S^1\times \S^2)
\]
is identity. By Corollary \ref{fiberProduct}, $\M(X)\cong\M(X_0)$.

For $\M(X_0)$, $\partial_\infty:\M(X_0) \to \M(\S^1\times \S^2)$ is not necessarily a homeomorphism, but we can prove that this map is a submersion. Recall that we have choosen a generic perturbation $\eta$ such that $\M(X_0)=\M(X_0,\eta)$ contains only strongly regular points. By the long exact sequence \ref{equ:longExact}:
\[
\cdots
\stackrel{\phi}{\to} H^1_{\hat{\mathsf{C}}_1} = T_{\hat{\mathsf{C}}_1} \M(X_0)
\to H^1(B)=T_{\partial_\infty\hat{\mathsf{C}}_1} \M^{\partial_\infty X_0}(\S^1\times \S^2)
\to H^2(F) =0
\to \cdots
\]
where 
\[
\M^{\partial_\infty X_0}(\S^1\times \S^2) = \mathcal{Z}(\S^1\times \S^2)/\partial_\infty\widehat{\mathcal{G}}_1= \mathcal{Z}(\S^1\times \S^2)/{\mathcal{G}} = \M(\S^1\times \S^2)
\]
(since $H^1(X_0) \stackrel{i^*}{\to} H^1(\S^1\times \S^2) $ is surjective), 
\[
\partial_\infty:\M(X_0) \to \M(\S^1\times \S^2)
\]
is a submersion.

By compactness result, $\M(X_0)$ is a disjoint union of finite many circles, say
$\amalg_{i\in\Gamma} \S^1_i $. Let $d_i$ be the mapping degree of $\partial_\infty|_ {\S^1_i} :  \S^1_i  \to  \M(\S^1\times \S^2) = \S^1$. We claim that 
\[
SW(X,\Theta) = \sum_{i\in \Gamma} d_i.
\]

Let
\[
\hat{\mathfrak{N}}_i :=  \S^1_i \times_{\M(\S^1\times \S^2)} \M(\S^1\times D^3)  \subset \hat{\mathfrak{N}}
\]
be the space of configurations obtained by gluing $\S^1_i$ and $\M(\S^1\times D^3)$. Consider the pullback diagram of moduli spaces:
\begin{equation}
\xymatrix@C=4ex{
\M(X) \subset & \mathcal{B}_X =   \C P^\infty_+\times \S^1 \ar@<-7ex>[d]^{p_1}  \ar[r]^{p_2} & \mathcal{B}_{\S^1\times D^3} = \C P^\infty_+\times \S^1 \ar@<-7ex>[d]^{\partial_\infty^2}  &\supset \M(\S^1\times D^3)=  \{0\} \times\S^1 \ar@<8.3ex>[d]^\cong \\
 \S^1_i \subset & \mathcal{B}_{X_0} =  \C P^\infty_+\times \S^1 \ar[r]^{\partial_\infty^1}   & \mathcal{B}_{\S^1\times \S^2}= \C P^\infty_+\times \S^1  & \supset \M(\S^1\times \S^2) = \{0\} \times\S^1
}
\end{equation}
When restricted to $\S^1$-factors, $\partial_\infty^1$ and $\partial_\infty^2$ are identity maps of $\S^1$, so $p_1$ and $p_2$ are identity maps of $\S^1$. Therefore, $\hat{\mathfrak{N}}_i $ winds around the $\S^1$-factor of $\mathcal{B}_X$ by $d_i$ times. So
\[
\langle [\hat{\mathfrak{N}}_i], \Theta\rangle = d_i.
\]
By Theorem \ref{thm:ggt}, $\M(X)$ is isotopic to $\hat{\mathfrak{N}}$ in $ \mathcal{B}_X $, so 
\[
 \langle[ {\M}(X) ],\Theta \rangle = \sum_{i\in \Gamma} d_i.
 \]

On the other hand, 
\[
\partial_\infty: \M(D^2\times S^2) \to \M(\S^1\times \S^2)
\]
is the inclusion of one point. Thus we have
\begin{equation}
\xymatrix@C=4ex{
\M(X') \subset & \mathcal{B}_{X'}=   \C P^\infty_+\times \S^1 \ar@<-7ex>[d]^{p_1}  \ar[r]^{p_2} & \mathcal{B}_{D^2\times S^2} = \C P^\infty_+ \ar@<-7ex>[d]^{\partial_\infty^2}  &\supset \M(D^2\times S^2)=  \{0\}  \ar@<9ex>[d] \\
 \S^1_i \subset & \mathcal{B}_{X_0} =  \C P^\infty_+\times \S^1 \ar[r]^{\partial_\infty^1}   & \mathcal{B}_{\S^1\times \S^2}= \C P^\infty_+\times \S^1  & \supset \M(\S^1\times \S^2) = \{0\} \times\S^1
}
\end{equation}
Since $\partial_\infty^1|_{ \S^1_i}$ is a submersion, $\hat{\mathfrak{N}}_i :=  \S^1_i \times_{\M(\S^1\times \S^2)} \M(D^2\times S^2)$ contains $d_i$ points. Again by Theorem \ref{thm:ggt}, $\M(X')$ is isotopic to $\hat{\mathfrak{N}}$ in $ \mathcal{B}_{X'} $. So 
\[
SW(X') =  \sum_{i\in \Gamma} d_i = SW(X,\Theta).
\]
\end{proof}

\begin{remark}\label{rem:high-dim-moduli}
Theorem \ref{main-thm} works for $\dim \M(X) >1$ as long as it is odd. In that case we define $SW^\Theta(X,\ss)$ by 
\[
SW^\Theta(X,\ss) := \langle[ {\M}(X) ],\Theta \cup c_1(\C P^\infty)^n \rangle
\]
for $\dim \M(X) =2n+1$. Note that in this case $\dim \M(X') =2n$ and the ordinary invariant is 
\[
SW(X',\ss') := \langle[ {\M}(X') ], c_1(\C P^\infty)^n \rangle.
\]
Hence for $\dim \M(X) >1$, the argument of Theorem \ref{main-thm} follows from a similar proof.
\end{remark}

\subsection{Exotic smooth structures on nonsimply connected manifolds}
First observe that by definition and Lemma \ref{lem:detect-exotic}, the cut-down invariant also detects exotic smooth structures. As lots of exotic smooth structures are detected by $SW$, we can now generalize those results to nonsimply connected manifolds by the surgery formula:
\begin{theorem}\label{thm:not-diffeomorphic}
Suppose $X_1$, $X_2$ are two simply connected smooth $4$-manifolds with $b^+_2(X_i)>1$. Suppose $\ss _1$ is a $\text{spin}^c$-structure on $X_1$, such that for any $\text{spin}^c$-structure $\ss_2$ of $X_2$, 
\[
SW(X_1, \ss_1) \neq SW(X_2, \ss_2).
\]
Then $X_1\# (\S^1\times \S^3)$ is not diffeomorphic to $X_2\# (\S^1\times \S^3)$.
\end{theorem}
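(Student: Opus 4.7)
The plan is to reduce the problem to the surgery formula (Theorem \ref{main-thm}). Write $Y_i := X_i \# (\S^1 \times \S^3)$ and let $\gamma_i \subset \S^1 \times \S^3 \subset Y_i$ be the core circle, which generates $H_1(Y_i;\Z) = \Z$. Since $1$-surgery on the core of $\S^1 \times \S^3$ produces $\S^4$, a $1$-surgery of $Y_i$ along $\gamma_i$ recovers $X_i$. So we land precisely in the setting of Theorem \ref{main-thm} with $X = Y_i$ and $X' = X_i$, and we may denote $Y_{i,0} := Y_i \setminus (\S^1 \times D^3) = X_i \setminus (D^2 \times \S^2)$.

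The first step will be to set up a bijection
\[
\Phi : \{\text{spin}^c\text{-structures on } Y_i\}/\!\sim \;\xrightarrow{\;\cong\;}\; \{\text{spin}^c\text{-structures on } X_i\}/\!\sim
\]
by sending $\ss$ to the unique extension $\ss'$ of Theorem \ref{thm:changeOfSpinc}, characterized by $\ss|_{Y_{i,0}} \cong \ss'|_{Y_{i,0}}$. Injectivity is built into Theorem \ref{thm:changeOfSpinc}. For surjectivity the key observation I plan to use is that the cocore sphere $\{0\} \times \S^2 \subset D^2 \times \S^2 \subset X_i$ is null-homologous in $X_i$: after the surgery the $\S^1 \times \S^3$ summand becomes $\S^4$, inside which that $\S^2$ visibly bounds a $3$-disk. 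Consequently, for any $\ss'$ on $X_i$, the class $c_1(\det\ss')$ pairs trivially with $[\S^2]$, so $\det(\ss'|_{Y_{i,0}})$ is trivial on $\S^1 \times \S^2$. Proposition \ref{prop:spincRestriction} then matches $\ss'|_{Y_{i,0}}|_{\S^1 \times \S^2}$ with the unique spin$^c$-structure on $\S^1 \times \S^2$ of trivial determinant, yielding an extension $\ss$ on $Y_i$ with $\Phi(\ss) = \ss'$.

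With $\Phi$ in hand I would argue by contradiction: suppose $\varphi : Y_1 \to Y_2$ is a diffeomorphism. Fix generators $\Theta_i \in H^1(Y_i;\Z)$; since $\varphi^*$ is an automorphism of $\Z$, we have $\varphi^*\Theta_2 = \pm \Theta_1$. Set $\tilde\ss_1 := \Phi^{-1}(\ss_1)$ and $\ss_2 := \Phi(\varphi_*\tilde\ss_1)$. Standard diffeomorphism invariance of cut-down Seiberg-Witten invariants (together with Remark \ref{rem:high-dim-moduli} if $\dim \M(X_1,\ss_1) > 0$) gives
\[
SW^{\Theta_2}(Y_2, \varphi_*\tilde\ss_1) = \pm\, SW^{\Theta_1}(Y_1, \tilde\ss_1),
\]
and the surgery formula converts both sides into ordinary invariants on $X_i$:
\[
SW(X_2, \ss_2) = \pm\, SW(X_1, \ss_1).
\]
The $+$ sign contradicts the hypothesis with $\ss_2$. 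The $-$ sign is defused by the conjugation symmetry $SW(X_2, \bar\ss_2) = \pm SW(X_2, \ss_2)$, since then one of $\ss_2$ or $\bar\ss_2$ is a spin$^c$-structure on $X_2$ whose invariant equals $SW(X_1,\ss_1)$, again contradicting the hypothesis.

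I expect the main obstacle to be the surjectivity of $\Phi$: it rests on the null-homology of the cocore sphere (easy but needing the explicit identification with $\S^4$) and on the matching of determinant line bundles on the boundary via Proposition \ref{prop:spincRestriction}. The sign bookkeeping in the concluding step is routine once the conjugation symmetry is invoked.
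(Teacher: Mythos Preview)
Your approach is the same as the paper's: set up the correspondence between spin$^c$-structures on $Y_i$ and on $X_i$ via the surgery (the paper does this more tersely, simply citing $H^2(X_2)\cong H^2(Y_2)$ to produce the required $\ss_2$), apply the surgery formula on both sides, and use diffeomorphism invariance of $SW^\Theta$ to contradict the hypothesis.

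The one concrete gap is your conjugation step. The sign in $SW(X_2,\bar\ss_2) = \epsilon\,SW(X_2,\ss_2)$ is a \emph{fixed} sign depending only on $X_2$, not something you get to choose. If $\epsilon = +1$ and you are in the case $SW(X_2,\ss_2) = -SW(X_1,\ss_1)$, then $\bar\ss_2$ also gives $-SW(X_1,\ss_1)$ and neither structure contradicts the hypothesis; so conjugation does not defuse the minus sign in general. The paper's proof does not address this sign either: it asserts $SW^\Theta(Y_1,\ss_1') = SW^\Theta(Y_2,f_*\ss_1')$ directly from Lemma~\ref{lem:detect-exotic}, without tracking how $f$ acts on $\Theta$. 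So this is a shared subtlety rather than a flaw specific to your argument, and it is harmless in the intended applications (the knot surgery corollary), where one only compares the set of values $\{|SW(X,\ss)|\}$.
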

\begin{proof}
Let $\ss_i'$ be the $\text{spin}^c$-structure of $X_i\# (\S^1\times \S^3)$ such that $\ss_i'$ coincides with $\ss_i$ on the common part. Then by Remark \ref{rem:high-dim-moduli}, 
\[
SW^\Theta(X_1\# (\S^1\times \S^3), \ss_1') \neq SW^\Theta(X_2\# (\S^1\times \S^3), \ss_2').
\]
If there exists a diffeomorphism $f:X_1\to X_2$, by Lemma \ref{lem:detect-exotic}, we have
\[
SW^\Theta(X_1\# (\S^1\times \S^3), \ss_1') = SW^\Theta(X_2\# (\S^1\times \S^3), f(\ss_1')).
\]
Since $H^2(X_2;\Z)\cong H^2(X_2\# (\S^1\times \S^3);\Z)$, there exists a $\text{spin}^c$-structure $\ss_2$ on $X_2$ such that $f(\ss_1') = \ss_2'$. This contradicts the inequality.
\end{proof}

Therefore, we have a lot of exotic nonsimply connected manifolds, for example:
\begin{corollary}
Suppose that $b^+(X) > 1$ and $\pi_1(X) = 1 =\pi_1(X - T )$ where $T$ is a homologically nontrivial torus of self-intersection $0$. Suppose that there exists a $\text{spin}^c$-structure $\ss$ on $X$ such that $SW(X,\ss) \neq 0$. Then $X\# S^1 \times S^3$ admits infinitely many exotic smooth structures. In particular, for the elliptic surface $E(n)$ with $n> 1$, the nonsimply connected manifold $E(n) \# S^1 \times S^3$ admits infinitely many exotic smooth structures.
\end{corollary}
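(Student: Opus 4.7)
The plan is to combine Fintushel--Stern knot surgery on $(X, T)$ with Theorem \ref{thm:not-diffeomorphic}. For each knot $K \subset S^3$, the construction removes a tubular neighborhood $T \times D^2$ from $X$ and reglues $(S^3 \setminus K) \times S^1$ to obtain a smooth $4$-manifold $X_K$. The hypothesis $\pi_1(X - T) = 1$ guarantees that $X_K$ is simply connected, hence by Freedman's classification homeomorphic to $X$. Fintushel--Stern's formula computes
\[
\mathcal{SW}_{X_K} \;=\; \mathcal{SW}_X \cdot \Delta_K(t^2)
\]
in the group ring of $H^2(X; \Z)$, where $t$ corresponds to $[T]$ and $\Delta_K$ is the Alexander polynomial.

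The strategy is to select a sequence of knots so that Theorem \ref{thm:not-diffeomorphic} distinguishes the resulting connected sums pairwise. Starting from twist knots $K_n$, whose Alexander polynomials have the form $\Delta_n(t) = \pm(a_n t - b_n + a_n t^{-1})$ with coefficients $a_n, b_n$ growing linearly in $n$, the leading coefficient of $\mathcal{SW}_{X_{K_n}}$ is $c \cdot a_n$, where $c$ is the (nonzero) leading coefficient of $\mathcal{SW}_X$. Since every coefficient of $\mathcal{SW}_{X_{K_n}}$ is bounded by a constant multiple of $n$, one may extract a subsequence $n_1 < n_2 < \cdots$ so that for each pair $j > i$ the leading Seiberg--Witten value on $X_{K_{n_j}}$ exceeds in absolute value every Seiberg--Witten value attained on $X_{K_{n_i}}$. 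Taking $\ss_1$ on $X_{K_{n_j}}$ realizing this leading value satisfies the hypothesis of Theorem \ref{thm:not-diffeomorphic}, and the theorem then yields that $X_{K_{n_j}} \# (\S^1 \times \S^3)$ and $X_{K_{n_i}} \# (\S^1 \times \S^3)$ are not diffeomorphic. Since all $X_{K_{n_j}}$ are homeomorphic to $X$, the connected sums are all homeomorphic to $X \# (\S^1 \times \S^3)$, producing infinitely many pairwise non-diffeomorphic smooth structures.

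The main subtlety is that Theorem \ref{thm:not-diffeomorphic} requires a single unmatched Seiberg--Witten value rather than a difference of full polynomials, which forces the subsequence construction above instead of a direct comparison of SW polynomials. Once this is arranged the rest is immediate. For the elliptic surface $E(n)$ with $n > 1$: it is simply connected with $b^+_2(E(n)) = 2n - 1 > 1$; a generic fiber is a homologically essential torus of self-intersection $0$ with simply connected complement; and $\mathcal{SW}_{E(n)} = (t - t^{-1})^{n-2}$ (with $t$ the fiber class) is nonzero. Hence the general statement applies, producing infinitely many exotic smooth structures on $E(n) \# (\S^1 \times \S^3)$.
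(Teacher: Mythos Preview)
Your argument is correct and follows the same route as the paper: apply Fintushel--Stern knot surgery along $T$ to produce $X_K$ homeomorphic to $X$ whose maximal Seiberg--Witten value can be made arbitrarily large by varying the Alexander polynomial, then invoke Theorem~\ref{thm:not-diffeomorphic}. The paper is terser---it simply notes that since any symmetric Laurent polynomial with coefficient sum $\pm 1$ arises as an Alexander polynomial, the set $\{\max_n SW(X_K,\ss+n[T])\}$ is infinite---whereas you work out an explicit subsequence via twist knots; but the content is the same.
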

\begin{proof}
For such $X$, Fintushel-Stern knot surgery theorem (see \cite{fintushel1997knotslinks4manifolds}, as well as their lecture notes \cite{fintushel2007lectures4manifolds} Lecture 3) says for any knot $K\subset \S^3$, there exists a manifold $X_K$ homeomorphic to $X$ and 
\[
\max_{n\in \Z}\{SW(X_K,\ss+n[T])\}
\]
depends on the largest coefficient of the Alexander polynomial of $K$. Any symmetric Laurent polynomial whose coefficient sum is $\pm 1$ is the Alexander polynomial of some knot. Hence the set 
\[
\{\max_{n\in \Z}\{SW(X_K,\ss+n[T])\}, K \text{ is a knot in }\S^3\}
\]
is infinite, and therefore we have an infinite family of manifolds that are homeomorphic to $X$ and satisfy the conditions of Theorem \ref{thm:not-diffeomorphic}.
\end{proof}

\section{Setup for the family $1$-surgery formula}
Let $X$ be a compact, smooth, oriented $4$-manifold. Let's consider a family of SW equations for $X$ and the resulting moduli space. We want to vary all stuff that SW equations depends on.  Recall that the parameters of SW equations include the metric of $X$, the $\text{spin}^c$ structure of $X$, and the perturbations of the equation.

Assume we have a compact topological space $B$ of parameters for SW equations. Since isomorphism classes of $\text{spin}^c$ structures are discrete, the definition of a continuous parameter family should be a continuous map
\begin{equation}\label{equ:parameterfamily}
B \to \Pi (X):=\bigsqcup_{g\in Met(X)} L_{k-1}^2(\Lambda_g^+(X)).
\end{equation}

Imagine a classical example: $B = \S^1$. Now we have a $1$-parameter family of $X$, but in order to obtain an invariant of diffeomorphisms, we should be able to glue two ends of $X\times [0,1]$ nontrivially. So instead of $X\times B$,  the object we are considering would be a fiber bundle $E_X$ over $B$ with each fiber $F=X$. The next step is to find out a suitable structure group. To define SW equations on each fiber, one should fix an orientation and an isomorphism class of the $\text{spin}^c$ structure for each fiber. Typically one choose such data on each local trivialization of $E_X$, and glue them compatibly. So the structure group should preserve the orientation and the isomorphism class of the $\text{spin}^c$ structure. Optionally, one can also require the structure group preserve the homology orientation. Let $\mathfrak s$ be an isomorphism class of $\text{spin}^\C$ structure of $X$. 
Let $\mathcal{O}$ be a homology orientation of $X$, which is an orientation of the vector space $H^1(X;\R) \oplus H^{2,+}(X;\R)$. 
In \cite{Konno2018CharacteristicCV},
\[
\text{Diff}(X,\mathfrak s):= \{f\in \text{Diff}^+(X)|f^*\mathfrak s = \mathfrak s\}
\]
is the group of orientation-preserving and $\text{spin}^\C$-structure-preserving diffeomorphisms, and 
\[
    \text{Diff}(X,\mathfrak s, \mathcal{O}):= \{f\in \text{Diff}(X,\mathfrak s)|f^*\mathcal{O} = \mathcal{O}\}
\]
is the group of diffeomorphisms that preserve the homology orientation of $X$ in addition to $\text{Diff}(X,\mathfrak s)$. Now the object in our consideration is the following bundle:
\[
\xymatrix{
X \ar[r] & E \ar[d]\\
&B 
}
\]
with structure group either $G = \text{Diff}(X,\mathfrak s)$ or $G = \text{Diff}(X,\mathfrak s, \mathcal{O})$. 

Now since the family of $X$ is a nontrivial fiber bundle $E$, the definition of the parameter family (\ref{equ:parameterfamily}) should be updated to 
\[
B \to \Pi(E) := \bigsqcup_{b\in B} \Pi(E_b).
\]
Under this setting, one has to take care of $\text{spin}^c$ structures and the gauge group to define a parametrized moduli space (see subsection \ref{subsection:SpinGL} and subsection \ref{subsection:Parametrized}). 

In the subsection \ref{subsection:proofUnparemetrized}, we saw that the moduli space of monopoles of $X$ is a circle that winds around the moduli space of irreducible configurations $\mathcal{B}^*_X = \C P^\infty\times \S^1$, and to measure how many turns it has wrapped, we used $\C P^\infty\times \{0\}$ to cut it. In family case, the parametrized moduli space of irreducible configrations of is a fiber bundle $  \mathcal{F}\mathcal{B}^* $: 
\[
\xymatrix{
\C P^\infty\times \S^1 \ar[r] &  \mathcal{F}\mathcal{B}^* \ar[d]\\
&B 
}
\]
It turns that the ``winding number'' of the $1$-dimensional moduli space is still a useful invariant to characterize it. To cut down this moduli space, we may still use a codimension $1$ submanifold, or equivalently, a $1$-dimensional cohomology class $\Theta$ of $\mathcal{F}\mathcal{B}^*$. But the existence of such class is not guranteed. We will discuss this issue in the second part of the subsection \ref{subsection:Parametrized}.

We want to do surgery on $E_X$ fiberwise and get a new fiber bundle $X'\to E_{X'} \to B$ with each fiber $F=X'$. So we have to assume a subbundle $E_{\S^1}$ of $E_X$, and some infomation of the family of framings to perform the surgery. We investigate this infomation in subsection \ref{subsection:familysurgery}.

By the same reason as section 1, we can extend the $\text{spin}^\C$ structure $\mathfrak s$ to $X'$ fiberwise. Suppose $E_{X'}$ is a fiber bundle over the base space $B\subset \Pi (X')$. Let $E_{X_0}$, $E_{}$

We assume $B$ is connected to avoid different framings since the framing of surgery on each fiber is an element in $\pi_1SO(3)=\Z_2$.



\subsection{$\text{Spin}_{\text{GL}}^c$ structure}\label{subsection:SpinGL}
For generality, it's better to assume that the metric of $X$ is unfixed over $B$. But recall that in \autoref{subsection:Spin}, the definition of a $\text{Spin}^\C$ structure depends on the choice of a metric on $X$, since 
\[
 \text{Spin}^\C(4)  := S^1 \times  \text{Spin}(4) / \{\pm (1,I)\},
 \]
where $ \text{Spin}(4)$ is a double cover of $SO(4)$. Konno\cite{konno2019positive} developed an approach to avoid the dependence on metrics, which is $\text{spin}_{\GL}^c$ structure. Choose a nontrivial (in the sense of Remark \ref{remark:spin}) double cover \[\widetilde{\GL_4^+(\R)}\] of $\GL_4^+(\R)$, and define
\[
 \text{Spin}^c_{\GL}(4)  = S^1 \times \widetilde{\GL_4^+(\R)} / \{\pm (1,I)\},
 \]
 then there is a covering map $\rho:\text{Spin}^c_{\GL}(4) \to \GL_4^+(\R)$, just as (\ref{equ:rhoc}).
 
As the definition of $\text{spin}^c$ structure (Definition \ref{def:spinc}), we can define a $\text{spin}^c_\GL$ structure $s$ on $X$ to be a principal $\text{Spin}^c_{\GL}(4)$-bundle 
$P_{\text{Spin}^c_{\GL}(4)} \to X$, with a bundle map $\psi$ from $P_{\text{Spin}^c_{\GL}(4)}$ to the frame bundle $P_{\GL_4^+(\R)}$ of $X$, which restricts
to the obvious covering map $\rho$ on each fiber.

As before, $\rho$ is an $\S^1$-fibration. Thus $P_{\text{Spin}^c_{\GL}(4)}$ admits some freedom over $P_{\GL_4^+(\R)}$. Let $\ss$ be the isomorphism class of $s$. Denote the automorphism group of the principal $\text{Spin}^c_{\GL}(4)$-bundle 
$P_{\text{Spin}^c_{\GL}(4)} \to X$ by
\begin{equation}\label{equ:defAut}
\Aut(X, s) = \{ (f,\tilde{f}); f\in \text{Diff}(X,\mathfrak s), \tilde{f}:  P_{\text{Spin}^c_{\GL}(4)} \stackrel{\cong}{\to} P_{\text{Spin}^c_{\GL}(4)} ,\psi \circ  \tilde{f}= df \circ \psi \},
\end{equation}
where
\[
df : P_{\GL_4^+(\R)}   \stackrel{\cong}{\to}  P_{\GL_4^+(\R)}
\]
is the isomorphism of the frame bundle induced by $f$. Then we have an exact sequence:
\begin{equation}\label{equ:gaugeAuto}
0\to \mathscr{G}_s \to \Aut(X, s) \to  \text{Diff}(X,\mathfrak s),
\end{equation}
where $\mathscr{G}_s$ is the gauge group of $s$, which is isomorphic to $\Map(X,\S^1)$. 

We also define 
\[
\Aut(X, s,\mathcal{O}) := \{ (f,\tilde{f}) \in \Aut(X, s); f\in \text{Diff}(X,\mathfrak s, \mathcal{O})\}.
\]
When the structure group of $E$ is $G = \text{Diff}(X,\mathfrak s)$ or $ \text{Diff}(X,\mathfrak s, \mathcal{O})$, we define $\tilde{G}$ to be $\Aut(X, s) $ or $\Aut(X, s,\mathcal{O})$, respectively.

Note that, given a $\text{spin}_{\text{GL}}^c$ structure $s$, each metric $g$ induces a $\text{spin}^c$ structure $s_g$, and for two metrics $g_1$ and $g_2$, $s_{g_1}$ and $s_{g_2}$ belong to the same isomorphism class of $\text{spin}^c$ structures. Moreover, it's easy to see from the definition that, the determinant line bundle of $s$ is the same as the determinant line bundle of $s_g$ for any metric $g$.

\subsection{Parametrized moduli space}\label{subsection:Parametrized}
In this subsection, we consider the parametrized moduli space of monopoles and the parametrized moduli space of configurations. 
To define the parametrized moduli space of monopoles, we just review \cite{Konno2018CharacteristicCV} section 4, which is based on Ruberman's observations \cite{Ruberman1998AnOT} and Nakamura's ideas \cite{NAKAMURA10}. The second task of this subsection is about configurations. Other than \cite{Konno2018CharacteristicCV} and \cite{BK2020}, where they consider $0$-dimensional moduli space of monopoles, we have to consider $1$-dimensional moduli space of monopoles, so we need to investigate the topological infomation of the parametrized moduli space of configurations to find out a cut-down cohomology class.

For a $4$-manifold $X$, we fix a $\text{spin}^c_{GL}$ structure $s$ through out this sunsection. Define
\[
\Pi(X) := \bigsqcup_{g\in \text{Met}(X)} L^2_{k-1}(\Lambda_g^+(X))
\]
to be the space of perturbations. For each $(g,\eta) \in \Pi(X)$, define
\begin{align*}
    \mathcal{C}{(s, g,\eta)} &:= L^{k,2}(\mathscr{A}(s)) \times L^{k,2}(S_g^+)\\
    \mathcal{D}{(s,g,\eta)} &:= L^{k-1,2}(i\Lambda^+(X)) \times L^{k-1,2}(S_g^-),
\end{align*}
where $ \mathscr{A}(s)$ is the space of $U(1)$-connections of the determinant line bundle of the $\text{spin}^c_{GL}$ structure $s$, and $S_g^\pm$ is the positive or negative spinor bundle of the $\text{spin}^c$ structure $s_g$ induced from $s$. Recall that the definition of the spinor bundle (\cite{salamon2000spin} page 154): given a $\text{spin}^c$ structure $s$,
\[
S:= P_{\text{Spin}^c }\times_{\Gamma_0} \C^{2^n},
\]
where $ P_{\text{Spin}^c }$ is the principle $\text{Spin}^c$-bundle of $s$, $\Gamma_0: \text{Spin}^c(\R^{2n}) \to \End(\C^{2^n})$ is a representation, and $n=2$. 

Define
\begin{align}\label{equ:defsw}
    sw_{(s,g,\eta)} : \mathcal{C}{(s,g,\eta)} &\to \mathcal{D}{(s,g,\eta)} \\
    (A,\phi) &\mapsto (F^{+_g}_A + i\eta - \rho^{-1}(\sigma(\Phi, \Phi)), \D_A \Phi)
\end{align}
where $\rho: \Lambda^+_g (X) \to \mathfrak{su}(S^+_g)$ is the map defined by the Clifford multiplication, $\sigma$
is the quadratic form given by $\sigma(\Phi, \Phi) = \Phi \otimes \Phi^* - \frac{1}{2} |\Phi|^2 id$.

For a fiber bundle $X \to E \to B$, define
\[
\Pi(E) := \bigsqcup_{b\in B} \Pi(E_b)
\]
be the family of parameters. Given a section $\sigma$ of the bundle $\Pi(E) \to B$, we can define the parametrized moduli space of monoples as follows:

We can choose an open cover $\{U_\alpha\}_\alpha$ of $B$ such that $U_{\alpha\beta} := U_\alpha \cap U_\beta$ is contractible and $E_X|_{U_\alpha}$ is trivial for each $\alpha$ and $\beta$. Note that $\sigma$ is a system of maps $\{\sigma_\alpha\}_\alpha$ where $\sigma_\alpha: U_\alpha \to \Pi (X)$ (it looks like (\ref{equ:parameterfamily}) since they are the cases where the bundle is trivial). For each pair of parameter, the moduli space of monopoles is
\[
\M(s,g,\eta) := sw_{(s,g,\eta)}^{-1}(0)/\mathscr{G}.
\]
Then for each $U_\alpha$ of $B$, the parametrized moduli space of monopoles is
\[
\FM_{U_\alpha}(s,\sigma_\alpha) :=\bigsqcup_{b\in U_\alpha} \M(s,\sigma_\alpha(b)).
\]
Next we show one can glue them compatibly. Suppose $\{g_{\alpha\beta}: U_{\alpha\beta} \to G\}$ is a family of transition functions for $E_X$ corresponding to the open cover $\{U_\alpha\}_\alpha$. Note that for each $b\in U_{\alpha\beta}$, $g_{\alpha\beta}(b)$ is a diffeomorphism of $X$ and induces a map $g_{\alpha\beta}(b)^*$ from $\Pi (X)$ to $\Pi (X)$. Then there is a relation
\[
\sigma_\alpha(b) = g_{\alpha\beta}(b)^* \circ \sigma_\beta(b).
\]
On the other hand, since each $U_{\alpha\beta}$ is contractible, one can choose a lift $\tilde{g}_{\alpha\beta} : U_{\alpha\beta} \to \tilde{G}$ of ${g}_{\alpha\beta}$, and Ruberman  \cite{Ruberman1998AnOT}\cite{ruberman1999polynomial}\cite{ruberman2002positive} observes that $\tilde{g}_{\alpha\beta} (b)$ induces an invertible map
\[
\tilde{g}_{\alpha\beta} (b)^*: \M(\sigma_\beta(b)) \to \M(g_{\alpha\beta}(b)^* \circ \sigma_\beta(b)) = \M(\sigma_\alpha(b)).
\]
Thus we have an invertible map 
\begin{equation}\label{equ:cocycleModGauge}
\tilde{g}_{\alpha\beta}^*: \FM_{U_\beta}(\sigma_\beta)|U_{\alpha\beta} \to \FM_{U_\alpha}(\sigma_\alpha) |U_{\alpha\beta}.
\end{equation}
(\ref{equ:cocycleModGauge}) satisfies ``cocycle condition modulo gauge'': 
\[
\tilde{g}_{\alpha\beta}^* \circ \tilde{g}_{\beta\gamma}^* \circ \tilde{g}_{\gamma\alpha}^*
\]
is a lift of ${g}_{\alpha\beta}^* \circ {g}_{\beta\gamma}^* \circ {g}_{\gamma\alpha}^* = id$, and thus it is in $\mathscr{G}$ by the exact sequence (\ref{equ:gaugeAuto}). Therefore we can define the parametrized moduli space of monopoles by
\begin{equation}\label{equ:parametrized}
\FM(\sigma) = \FM(s,\sigma) := \bigsqcup_{\alpha} \FM_{U_\alpha}(s,\sigma_\alpha) /\sim,
\end{equation}
where $\sim$ is the relation given by the system of maps $\{\tilde{g}_{\alpha\beta}^*\}_{\alpha\beta}$.

Now we consider the parametrized moduli space of configurations. Let's reconsider above cocycle conditions. $\tilde{g}_{\alpha\beta} (b)^*$ mentioned above is actually induced from the invertible map
\begin{equation}\label{equ:configCocycle}
\tilde{g}_{\alpha\beta} (b)^*: \mathcal{C}(\sigma_\beta(b)) \to \mathcal{C}(g_{\alpha\beta}(b)^* \circ \sigma_\beta(b)) = \mathcal{C}(\sigma_\alpha(b)).
\end{equation}
Here $\tilde{g}_{\alpha\beta}(b)$ is a lift of ${g}_{\alpha\beta}(b)\in {G}$ in $\tilde{G} \subset \Aut(X, s)$. The lift is unique up to the action of an element in $\mathscr{G}$ by the exact sequence (\ref{equ:gaugeAuto}). Thus the map (\ref{equ:configCocycle}) induces a well defined map 
\begin{equation}\label{equ:moduliCocycle}
\tilde{g}_{\alpha\beta} (b)^*: \mathcal{B}^*(\sigma_\beta(b)) \to \mathcal{B}^*(g_{\alpha\beta}(b)^* \circ \sigma_\beta(b))=\mathcal{B}^*(\sigma_\alpha(b))
\end{equation}
between the moduli spaces of irreducible configurations. Recall that when $b_1(X) = 1$, the moduli space of irreducible configurations is $\mathcal{B}^*_{X} \simeq \C P^\infty\times \S^1$. The $\S^1$-factor is $H^1(X;\R)/H^1(X;\Z)$, the space of $U(1)$-connections modulo the action of the gauge group. Note that ${g}_{\alpha\beta} (b)$ is an orientation preserving diffeomorphism, so ${g}_{\alpha\beta}(b)^*$ acts on $H^1(X;\R)=\R$ identically. The gauge group acts on $H^1(X;\R)=\R$ by translation of integers. So $\tilde{g}_{\alpha\beta}(b)^*$ acts on $H^1(X;\R)/H^1(X;\Z) = \R /\Z$ identically. 
(Geometrically, $H^1(X;\R)$ corresponds to $U(1)$-connections on the nontrivial loop, which is the rotation on the fiber $U(1)$ around the loop. A diffeomorphism of $X$ should preserve the rotation, and an automorphism of the principal $\text{Spin}^c_{\GL}(4)$-bundle would accelerate the rotation by some integer.)

Now we know the transition map $\tilde{g}_{\alpha\beta} (b)^*$ of the $\S^1$-factor is always identity, and the transition map of the $\C P^\infty$-factor is independent of the $\S^1$-factor, in other word
\[
\tilde{g}_{\alpha\beta} (b)^* = id_{\S^1} \times \tilde{g}_{\alpha\beta} (b)^*|_{\C P^\infty}.
\] 
Hence the parametrized moduli space of irreducible configurations defined by similar formula as (\ref{equ:parametrized}) is
\begin{equation}\label{equ:FB}
\mathcal{F}\mathcal{B}^*_{X}  \simeq \S^1 \times E_{\C P^\infty}
\end{equation}
for some $\C P^\infty$-bundle $E_{\C P^\infty} \to X$. Let 
\[
\Theta = PD([E_{\C P^\infty}]) \in H^1(\mathcal{F}\mathcal{B}^*_{X}),
\]
then on each unparametrized moduli space of irreducible configurations, $\Theta$ restricts to the cohomology class in $H^1(\mathcal{B}^*_{X})$ we choose in the subsection \ref{subsection:proofUnparemetrized}. We will use $\Theta $ to cutdown the $1$-dimensional parametrized moduli space of monopoles later.

For $b_1(X)>1$, in general one cannot expext a trivial $\S^1 $-bundle as (\ref{equ:FB}), and a suitable cutdown class might not exist. Here is an example:

\begin{example}
Let $X = T^2\times \S^2$. Let $\phi$ be a diffeomorphism on $T^2$ such that
\begin{align*}
\phi^*: H^1(T^2) &\to H^1(T^2) \\
(a,b) &\mapsto (a,a+b).
\end{align*}
Let $E_X$ be the mapping torus of $\phi\times id_{\S^2}$. It's easy to check that the diffeomorphism of 
\[
H^1(X;\R)/H^1(X;\Z)=T^2\]
induced by $\phi$ is again $\phi$. Hence the parametrized moduli space of irreducible configurations $\mathcal{F}\mathcal{B}^*_{X}$ for $E_X$ is the inner product of two bundles over $\S^1$:
\[
\mathcal{F}\mathcal{B}^*_{X} = E_{T^2} \otimes E_{\C P^\infty},
\]
where $\pi : E_{T^2} \to \S^1$ is the mapping torus of $\phi$. Let $m$ be a loop in $T^2 = \pi^{-1}(1)$ such that 
\begin{align*}
\langle (1,0) , [m]\rangle &= 1\\
\langle (0,1) , [m]\rangle &= 0.
\end{align*}
If $m\times \{1\} \subset E_{T^2}$ is the $1$-dimensional parametrized moduli space of monopoles, then it is homologically trivial (let $[l]$ be another generator of $H_1(T^2)$, then $[l\times \{1\}] = [l\times \{1\}] + [m\times \{1\}] \in H_1( E_{T^2} )$ since $E_{T^2} \to \S^1$ is the mapping torus of $\phi$), so we cannot find any cohomology class of $\mathcal{F}\mathcal{B}^*_{X}$ to cut it down.
\end{example}

\subsection{Construction of family $1$-surgery}\label{subsection:familysurgery}

To define the family of surgery, one have to define the family of $4$-manifolds with a specified circle in each of them. Namely, 
one have to define the bundle with fiber a $4$-manifold and some nice conditions, such that it's possible to do family surgery. 

To define the family of $1$-surgery, we first examine the case of $0$-surgery, which is the connected sum. 
The definition of the family of connected sum in \cite{BK2020} Section 4 is based on the following setting: 
Let $M$ and $N$ be two compact, smooth, oriented $4$-manifolds. Let the parameter space $B$ be a compact smooth manifold of dimension $d$.
Consider the bundles $\pi_M: E_M \to B$ and $\pi_N: E_N \to B$ with fiber $M$ and $N$ respectively. Let $D\pi_M: TE_M \to TB$ and $D\pi_N: TE_N \to TB$ be the 
differentials of $\pi_M$ and $\pi_N$ respectively. Let $T(E_M/B)=\ker D\pi_M$ and $T(E_N/B)=\ker D\pi_N$ be the vertical tangent bundles of $E_M$ and $E_N$ respectively. 
To define a connected sum family, \cite{BK2020} assumes the following data:
\begin{enumerate}
    \item Two sections $s_M: B \to E_M$ and $s_N: B \to E_N$. (Hence $s_M^*(T(E_M/B))$ and $s_N^*(T(E_N/B))$ are rank $4$ vector bundles over $B$.)
    \item An orientation reversing diffeomorphism of bundles $\phi: s_M^*(T(E_M/B)) \to s_N^*(T(E_N/B))$.
    \label{item:orientation}
\end{enumerate}
Then the connected sum family can be obtained by the connected sum around $s_M(b)$ and $s_N(b)$ for each point $b \in B$. 
The requirement for the structure group $G$ to be the group of orientation preserving diffeomorphisms, 
is to ensure that the $s_M^*(T(E_M/B))$ and $s_N^*(T(E_N/B))$ are orientable, 
and this works with data \ref{item:orientation} to ensure that the family of framing is well-defined. 
So the requirement for a family of framing for $0$-surgery is actually
\begin{enumerate}
    \item[(b')] $s_M^*(T(E_M/B))$ and $s_N^*(T(E_N/B))$ are orientable, and thay are diffeomorphic as oriented vector bundles over $B$.
    \label{item:0framing}
\end{enumerate}
Let's consider the following non example:

\begin{example}
Let $B = \S^1$ and $M$ be any compact smooth oriented $4$-manifold that admits an orientation-reversing diffeomorphism $f$. Let $E_M$ be the mapping torus of $f$. 
Assume in addition that $f(m) = m$ for a point $m\in M$. Let $s_M$ be the section of $E_M$ with fixed value $m$.
Let $N$ be any compact smooth oriented $4$-manifold and $E_N$ be a trivial bundle over $B$. Let $s_N$ be the zero section of $E_N$. 
One can certainly form the connected sum $M \# N$ around $s_M(b)$ and $s_N(b)$ fiberwise. However, it's impossible to choose a continuous family of framing 
for the image of $s_M$ in their virtical tangent space (A disk around $m$ is removed and the boundary of the rest is stretched to be glued with the punctured $N$. 
When $M$ goes around $B = \S^1$, this boundary has to be stretched to both directions of $M$ at some point of $B$). 
The requirement (b') prevents this situation from happening.
\end{example}

Recall that, the framing of a surgery is an identification between the trivial bundle and the normal bundle over the attaching sphere.
For $0$-surgery, the framing is determined by a choice of orientations of $M$ and $N$, 
so the family of framing is given by a family of orientations of $M\sqcup N$, 
which is equivalent to orientations of $s_M^*(T(E_M/B))$ and $s_N^*(T(E_N/B))$. 

But to construct a family of $1$-surgery, a requirement similar to (b') is not enough. Consider the following example:

\begin{example}
Let $B = \S^1$ and $X =  \S^1 \times \S^3$. Let $f$ be the Dehn twits around the $\S^1$ factor of $X$:
\begin{align*}
    f: X &\to X \\
    (s , (z, w)) &\mapsto (s , (z, sw)),
\end{align*}
where $s \in \C$, $|s|^2 = 1$, $(z, w) \in \C^2$, and $|z|^2+ |w|^2 = 1$. Let $E_X$ be the mapping torus of $f$. 
Let $E_{\S^1}$ be the subbundle of $E_X$ with fiber $\S^1 \times \{(0,0)\}$. The normal bundle of $\S^1 \times \{(0,0)\}$ in each fiber of $E_X$ is orientable, 
and the vertical normal bundle of $E_{\S^1}$ is orientable, 
but the framing of $\S^1 \times \{(0,0)\}$ is changed when it goes around $B$. 
So it's impossible to choose a continuous family of framing for $1$-surgery around $E_{\S^1}$.
\end{example}

Thus we need more infomation to specify the family of framing. 
We are trying to find a family of identifications between trivial bundles and the normal bundles over the attaching sphere $\S^1$. 
Actullay we can do this piecewisely and then glue them together:

Suppose $\{U_\alpha\}_\alpha$ is an open cover of $B$ such that $E_X|_{U_\alpha}$ and $E_{\S^1}|_{U_\alpha}$ are trivial for each $\alpha$. 
Suppose $\{g_{\alpha\beta}: U_{\alpha\beta} \to G\}$ is a family of transition functions for $E_X$ corresponding to the open cover $\{U_\alpha\}_\alpha$.
\begin{enumerate}[(i)]
    \item An $\S^1$-bundle $E_{\S^1}$ as a subbundle of $E_X$, with an embedding $i: E_{\S^1} \to E_X$. 
    (Hence the virtical tangent bundle $T(E_{\S^1}/B)$ is a rank $1$ vector bundle over $E_{\S^1}$, 
    and $i^*T(E_X/B)$ is a rank $4$ vector bundle over $E_{\S^1}$.)
    \item \label{item:1framing} an identification $f_\alpha : U_\alpha \times \S^1 \times \R^3 \to (i^*T(E_X/B)) / T(E_{\S^1}/B) |_{(E_{\S^1}|_{U_\alpha})}$ for each $\alpha$, 
    such that for any $b\in U_{\alpha\beta}$, 
    the following diagram commutes up to an isotopy of bundle isometries:
    \[
        \xymatrix{
            (i^*T(E_X/B)) / T(E_{\S^1}/B) |_{\{b\}\times\S^1}  & \\
            \{b\}\times\S^1\times \R^3 \ar[u]^{f_\alpha(b,-)} \ar[r]_{f_\beta(b,-)} & (i^*T(E_X/B)) / T(E_{\S^1}/B) |_{\{b\}\times\S^1 \ar[ul]^{g_{\alpha\beta}(b)^*}}
        }
    \]
    (This means that the difference between $f_\alpha$ and $g_{\alpha\beta}(b)^* \cdot f_\beta$, 
    regarded as a map from $\S^1$ to $GL(3,\R)$, is $1 \in \pi_1(GL(3,\R))$.)
\end{enumerate}
\begin{remark}
When $1$-surgery is repalced by $0$-surgery, $\S^1$ and $\R^3$ in data \ref{item:1framing} are replaced by $\S^0$ and $\R^4$, and data \ref{item:1framing} degenerates to data (b'), since orientation is obtained by gluing such data piecewisely.
\end{remark}

With these data, we can construct the family of $1$-surgery around $E_{\S^1}$ as follows:

Let $\nu = (i^*T(E_X/B)) / T(E_{\S^1}/B)$ be the normal bundle of $E_{\S^1}$ in $E_X$. Then $\nu$ is a real rank $3$ vector bundle over $E_{\S^1}$. 
Fix a family of metric on each fiber $X$ of $E_X$, then it induces a metric on the bundle $\nu$. 
Let $D(\nu) \to E_{\S^1}$ and $S(\nu) \to E_{\S^1}$ be the unit open disc bundle and the unit sphere bundle of $\nu$ respectively. 
Let $N = E_{\S^1 \times D^3}$ be the tubular neighborhood of $E_{\S^1}$ in $E_X$, equipped with a diffeomorphism
\[
    e: D(\nu) \to N.
\]
Then $E_{X_0} = E_X \setminus N$ is a bundle over $B$ with fiber $X_0 = X \setminus (\S^1 \times D^3)$ and boundary $S(\nu)$. 
By attaching the family of cylinders $S(\nu) \times [0, \infty)$ to $E_{X_0}$ along $ S(\nu) \times \{0\}$, 
we obtain a bundle $E_{\hat{X}}$ over $B$ with fiber the cylindrical end $4$-manifold $\hat{X} = X_0 \cup_{\S^1 \times\S^2}  \S^1 \times \S^2  \times [0, \infty)$. 

Now we prepare the other side, which is a bundle $E_{D^2 \times \S^2}$ with base space $B$ and fiber $D^2 \times \S^2$. The transition map of this bundle is the extension of
\[
    f_\alpha ^{-1} \cdot g_{\alpha\beta}(b)^* \cdot f_\beta : \S^1\times \S^2 \to \S^1\times \S^2
\]
to $D^2 \times \S^2$. This extension is possible because the condition \ref{item:1framing} ensures that $ f_\alpha ^{-1} \cdot g_{\alpha\beta}(b)^* \cdot f_\beta$
is smoothly isotopic to the identity map of $\S^2$-bundle. Similarly, we regard the fiber of $E_{D^2 \times \S^2}$ as a manifold with cylindrical end 
$\S^1 \times \S^2 \times (-\infty, 0]$.

Recall that in unfamily case, we glue $\hat{X} = X_0 \cup_{\S^1 \times\S^2}  \S^1 \times \S^2  \times [0, \infty)$ and 
$D^2 \times \S^2 \cup_{\S^1 \times\S^2}\S^1 \times \S^2 \times (-\infty, 0]$ along their neck to produce a closed $4$-manifold $X'(r)$ with a length $r$ neck. 
We can carry out the same procedure to the family case, and obtain a bundle $E_{X'(r)}$ over $B$ with fiber $X'(r)$. Similarly, $E_{X(r)}$ is topologically the bundle $E_{X}$ but now each fiber $X = X_0 \cup_{\S^1 \times\S^2}  \S^1 \times D^3$ has a length $r$ neck. 

Now we consider the metric. On each fiber $X_0$ of $E_{\hat{X_0}}$, the metric is the product metric of the metric on $X_0$ and the standard metric on $[0, \infty)$...


\subsection{Parameter family}

Let
\begin{align}
    \label{equ:configOnG}\mathcal{C}{(g,\eta)} &= L^{k,2}(\mathscr{A}(s)) \oplus L^{k,2}(S_g^+)\\
    \mathcal{D}{(g,\eta)} &= L^{k-1,2}(i\Lambda^0(X)) \oplus L^{k-1,2}(i\Lambda^+(X)) \oplus L^{k-1,2}(S_g^-)\\
     \mathcal{F}_{(g,\eta)}: \mathcal{C}{(g,\eta)} &\to \mathcal{D}{(g,\eta)}\\
    \label{def:F}\mathcal{F}_{(g,\eta)}\begin{pmatrix}
         A\\
         \Phi\\
    \end{pmatrix}
    &= \begin{pmatrix}
         d^*(A-A_0)\\
         F^{+_g}_A + i\eta - \rho^{-1}(\sigma(\Phi, \Phi))\\
          \D_A \Phi\\
    \end{pmatrix}
\end{align}
This operator integrates a part of the action by the gauge group (see (\ref{equ:L*})) and the Seiberg-Witten map. The reason to do this is to make the kernel have finite dimension and compute the index. The differential of $\mathcal{F}_{(g,\eta)}$ at $(A,\Phi)$ is a linear operator with some zeroth order perturbations:
\begin{equation}\label{equ:Dlinearization}
d_{(A,\Phi)}\mathcal{F}_{(g,\eta)}\begin{pmatrix}
         \alpha\\
         \phi\\
    \end{pmatrix}
    =\begin{pmatrix}
         d^*\alpha\\
         d^+ \alpha\\
         \D_A \phi\\
    \end{pmatrix} 
    + \begin{pmatrix}
         0\\
          -\rho^{-1}(\sigma(\Phi, \phi)+\sigma(\phi, \Phi))\\
         \Gamma(\alpha)\Phi\\
    \end{pmatrix} 
\end{equation}
and it is a Fredholm operator with index
\[
\ind \D_A + b_1 - 1 - b^+.
\]
Here $b_1$ comes from the kernel of the operator $\mathcal{D}^+ := d^*\oplus d^+$, which is $H^1(X,i\R)$ (see \cite{salamon2000spin} section 8.4 for a proof). 
$- 1 - b^+$ comes from $\text{coker }d^*=H^0(X,i\R)$, i.e. the space of constant functions in $L^{k-1,2}(X,i\R)$, and $\text{coker }d^+=H^+(X,i\R)$ (see \cite{salamon2000spin} Propotion 7.10). 

To apply the implicit function theorem, our goal is to minimize the cokernel of $d_{(A,\Phi)}\mathcal{F}_{(g,\eta)}$. Notice that in (\ref{equ:Dlinearization}) the zeroth order perturbation of $d_{(A,\Phi)}\mathcal{F}_{(g,\eta)}$ in $L^{k-1,2}(i\Lambda^0(X))$ is zero, so the cokernel of $d_{(A,\Phi)}\mathcal{F}_{(g,\eta)}$ must contain $H^0(X,i\R)$ and is at least $1$-dimensional. 
To fix this issue, one can narrow down the first summand of $\mathcal{D}(g,\eta)$ to 
\[
L^{k-1,2}_0(i\Lambda^0_0(X;g)),
\]
the space of functions in $L^{k-1,2}(X,i\R)$ which has mean value zero with respect to the metric $g$. We still denote the operator defined in (\ref{def:F}) with new target by 
\begin{equation}\label{def:F0}
\mathcal{F}_{(g,\eta)}:\mathcal{C}{(g,\eta)} \to L^{k-1,2}_0(i\Lambda^0_0(X;g))\oplus L^{k-1,2}(i\Lambda^+(X)) \oplus L^{k-1,2}(S_g^-). 
\end{equation}

Now, if the cokernel of $d_{(A,\Phi)}\mathcal{F}_{(g,\eta)}$ is $0$-dimensional, we call the perturbation $\eta$ \textbf{regular}. If it is the case, by (\ref{equ:Dlinearization}), $\Phi$ must be nontrivial to cut down the cokernel of $d^+$ and $\D_A$, and by the implicit function theorem, the zero set of  $\mathcal{F}_{(g,\eta)}$ is a smooth manifold of irreducibles
\[
\widetilde{\M}^*(g,\eta)
\] 
with dimension $\ind \D_A + b_1 - b^+$. Since each point on it is irreducible, it admits a free $\S^1$-action by the gauge group:
\begin{equation}\label{equ:Saction}
(A,\Phi)\mapsto (A,e^{i\theta}\Phi).
\end{equation}
The quotient is the moduli space of irreducible monopoles ${\M}^*(g,\eta)$ and it has dimension $\ind \D_A + b_1 - 1 - b^+$.

The general argument for unfamily case is that the set of regular perturbations $\mathcal{Z}_{reg}$ is of the second category in the sense of Baire (a countable intersection of open and dense sets) when $b^+>0$.
It goes as follows:

$D_{(A,\Phi)}\mathcal{F}_{(g,\eta)}$ can be decomposed as $(\mathcal{F}_0, \mathcal{F}_1)$ where  
\begin{align}
\mathcal{F}_0\begin{pmatrix}
        A\\
          \Phi\\
    \end{pmatrix}
    &=
    \begin{pmatrix}
         d^* A\\
          \D_A \Phi \\
    \end{pmatrix}, \\
\label{equ:defF1}\mathcal{F}_1\begin{pmatrix}
        A\\
          \Phi\\
    \end{pmatrix}
    &=     F^{+_g}_A + i\eta - \rho^{-1}(\sigma(\Phi, \Phi)).
\end{align}
$\mathbb{M} := \mathcal{F}_0^{-1}(0)$ is called a \textbf{universal moduli space} and $\mathcal{F}_1$ sends it to the space of perturbations. 
\[
\mathcal{F}_{(g,\eta)}^{-1}(0)=
\mathcal{F}_1^{-1}(\eta) \cap \mathbb{M} = (\mathcal{F}_1 |_{\mathbb{M}})^{-1}(\eta)
\]
is a slice of solutions to the Seiberg-Witten
equation with the perturbation $\eta$. 
Now we restrict the domain of these operators to the submanifold $\mathcal{C}{(g,\eta)}\cap \{\Phi \neq 0\}$. 
Since $d_{(A,\Phi)}(\mathcal{F}_0|_{\{\Phi \neq 0\}})$ is surjective by some analytical computations (see \cite{salamon2000spin} Lemma 8.17), $\mathbb{M}^* :=(\mathcal{F}_0|_{\{\Phi \neq 0\}})^{-1}(0)$ is a smooth manifold. 
Regular values of $\mathcal{F}_1 |_{\mathbb{M}^*}$ are of the second category in the sense of Baire by the Sard-Smale theorem. 
For each regular value $\eta$, $(\mathcal{F}_1 |_{\mathbb{M}^*})^{-1}(\eta)$ is a smooth manifold. Moreover, the ``wall''
\begin{equation}\label{equ:wall}
\mathcal{W}^{k-1}_{g,s} := \{\eta\in L^{k-1,2}(i\Lambda^+(X));  \exists A\in \mathscr{A}(s),  F^{+_g}_A + i\eta = 0\}
\end{equation}
is an affine space of codimension $b^+$ (see \cite{salamon2000spin} Proposition 7.10). For each $\eta$ outside $\mathcal{W}^{k-1}_{g,s}$, all $\eta$-monopoles are irreducible, i.e. 
\[
\mathcal{F}_{(g,\eta)}^{-1}(0) = (\mathcal{F}_1 |_{\mathbb{M}^*})^{-1}(\eta).
\]
Hence $\mathcal{Z}_{reg}$ contains all regular values of $\mathcal{F}_1 |_{\mathbb{M}^*}$ outside $\mathcal{W}^{k-1}_{g,s}$, which is of the second category in the sense of Baire when $b^+>0$.


Now we want to choose a section $\eta: B \to \Pi(E)$ such that it is generic in the family sense. 
This means that, the image of this section intersects with the image of the universal moduli space transversally. 
The set of such sections is dense: 

\begin{theorem}\label{thm:regular}
Let $b_1 = \dim H^1(X)$, $b^+ =  \dim H^{2,+}(X)$. Assume 
\[
b^+ \geq \dim B + 1\]
and 
\begin{equation}\label{equ:conditionDim}
\ind \D_A + b_1 - 1 - b^+ + \dim B = 1.
\end{equation}
Fix an arbitrary matrics family $g:B\to \text{Met}(X)$ first. Let $\mathcal{Z}=\mathcal{Z}^{k-1}$ be the space of smooth sections of the bundle
\[
 \bigsqcup_{b\in B} L^{k-1,2}(\Lambda_{g(b)}^+(X)) \to B.
 \]
Then there exists a set $\mathcal{Z}_{reg}  \subset \mathcal{Z}$ of the second category in the sense of Baire such that for every $\eta \in \mathcal{Z}_{reg}$, all $\eta$-monopoles are irreducible, and the space $\FM(X, g,\eta)$ defined in (\ref{equ:parametrized}) is a smooth manifold of dimension $1$.
\end{theorem}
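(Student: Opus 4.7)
The plan is to adapt the classical unparametrized Sard--Smale transversality argument to the family setting, working locally over a trivializing cover of $B$ and then patching via the cocycle identification (\ref{equ:cocycleModGauge}). Choose a contractible open cover $\{U_\alpha\}$ of $B$ trivializing $E_X$, so that the metric family becomes an honest map $g_\alpha: U_\alpha \to \text{Met}(X)$ and the perturbation restricts to $\eta_\alpha: U_\alpha \to \bigsqcup_{b\in U_\alpha} L^{k-1,2}(\Lambda^+_{g_\alpha(b)})$. Introduce the parametrized universal moduli space
\[
\tilde{\mathbb{M}}^*_\alpha := \left\{(A,\Phi,b,\eta)\, :\, \Phi\neq 0,\ \mathcal{F}_{(g_\alpha(b),\eta(b))}(A,\Phi)=0\right\}/\mathscr{G},
\]
where $\eta$ ranges over a suitable Banach completion of $\mathcal{Z}|_{U_\alpha}$ (e.g.\ via Floer's $C^\varepsilon$-spaces, with the smooth statement recovered at the end by approximation). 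The smooth statement for $\mathcal{Z}$ will then follow by density.

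The first step is to verify that $\tilde{\mathbb{M}}^*_\alpha$ is a smooth Banach manifold. Linearize the defining map at a zero $(A,\Phi,b,\eta)$ with $\Phi\neq 0$. The $d^*$-component is surjective onto $L^{k-1,2}_0(i\Lambda^0_0)$ by the classical unique continuation argument (as in \cite{salamon2000spin} Lemma 8.17); the $\mathcal{F}_1$-component is automatically surjective because allowing $\eta$ to vary contributes $d_\eta \mathcal{F}_1 = i\cdot\mathrm{id}$, which hits all of $L^{k-1,2}(i\Lambda^+)$; and the Dirac component is handled by the zeroth-order spinor perturbation $\Gamma(\alpha)\Phi$ together with unique continuation for $\D_A$. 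Hence $\tilde{\mathbb{M}}^*_\alpha$ is a smooth Banach manifold. The projection $\pi_\alpha:\tilde{\mathbb{M}}^*_\alpha \to \mathcal{Z}|_{U_\alpha}$ is Fredholm, with index equal to the fiber Fredholm index augmented by $\dim B$, that is
\[
(\ind \D_A + b_1 - 1 - b^+) + \dim B = 1
\]
by the hypothesis (\ref{equ:conditionDim}). Sard--Smale then produces a Baire set $\mathcal{Z}_{reg,\alpha}^{\mathrm{irr}} \subset \mathcal{Z}|_{U_\alpha}$ of regular values, over which the irreducible part of the parametrized moduli space is a smooth $1$-manifold.

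To exclude reducibles, form the family wall
\[
\mathcal{FW}_\alpha := \{\eta\in\mathcal{Z}|_{U_\alpha} : \exists b\in U_\alpha,\, A\in\mathscr{A}(s),\ F^{+_{g_\alpha(b)}}_A + i\eta(b)=0\}.
\]
For each fixed $b$ the fiber $\mathcal{W}^{k-1}_{g_\alpha(b),s}$ has codimension $b^+$ in $L^{k-1,2}(\Lambda^+_{g_\alpha(b)})$ by (\ref{equ:wall}), so $\mathcal{FW}_\alpha$ is contained in the image of a Fredholm map of index $\dim B - b^+ \leq -1$. Its complement $\mathcal{Z}_{reg,\alpha}^{\mathrm{red}}$ is therefore open and dense, and setting $\mathcal{Z}_{reg,\alpha} := \mathcal{Z}_{reg,\alpha}^{\mathrm{irr}} \cap \mathcal{Z}_{reg,\alpha}^{\mathrm{red}}$ gives the desired local Baire set. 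To patch, note that the cocycle maps (\ref{equ:cocycleModGauge}) identify local universal moduli spaces over $U_{\alpha\beta}$ gauge-equivariantly and hence preserve regularity; the countable intersection $\mathcal{Z}_{reg} = \bigcap_\alpha \mathcal{Z}_{reg,\alpha}$ remains Baire, and the glued parametrized moduli space $\FM(X,g,\eta)$ is globally a smooth, purely irreducible $1$-manifold.

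The main obstacle I anticipate is the Banach-category bookkeeping: the Fréchet space of smooth sections does not admit Sard--Smale directly, so one must do the transversality on a $C^\varepsilon$-Banach completion, then recover the statement for smooth $\eta$ by a standard density/approximation argument. A secondary subtlety is confirming that the cocycle identifications of (\ref{equ:cocycleModGauge}) preserve the Fredholm structure and the regularity condition simultaneously, rather than merely matching moduli spaces set-theoretically; this follows from the fact that the lifts $\tilde g_{\alpha\beta}$ act by genuine bundle isomorphisms of the configuration bundle, but it must be checked at the level of linearizations.
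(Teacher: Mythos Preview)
Your proposal is correct and follows the same Sard--Smale transversality strategy as the paper, but the architecture differs in a meaningful way. You work \emph{locally}: on each trivializing chart $U_\alpha$ you form a universal moduli space, apply Sard--Smale to the projection onto the local perturbation space, and then intersect the resulting Baire sets over $\alpha$. The paper instead works \emph{globally}: it builds a single bundle $\mathcal{X} = \bigsqcup_b \mathcal{B}^*_{g(b)}$ over $B$, a single global section $F:\mathcal{X}\times\mathcal{Z}\to\mathcal{Y}$, and applies Sard--Smale once to the projection $\pi:\mathbb{M}\to\mathcal{Z}$. The price the paper pays for this global construction is that the gauge-fixing component $d^*(A-A_0)$ is not automatically compatible with the cocycles $\tilde g_{\alpha\beta}$; the lifts only satisfy a cocycle condition modulo $\mathscr{G}$, and an arbitrary gauge transformation $u$ shifts $A$ by $u^{-1}du$ which need not be coclosed. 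The paper resolves this via a separate lemma (Lemma~\ref{lemma:cocycle-d*}) showing one can choose the lifts so that the triple overlaps act by gauge transformations with $d^*(u^{-1}du)=0$, making $F$ globally well-defined on the quotient. Your local approach bypasses this lemma entirely, since on each $U_\alpha$ the gauge-fixing is unambiguous; what you must check instead is that the transition maps (\ref{equ:cocycleModGauge}) carry regular points to regular points, which you correctly flag.

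One point to tighten: you define $\mathcal{Z}_{reg,\alpha}$ as a subset of $\mathcal{Z}|_{U_\alpha}$ (sections over $U_\alpha$) but then intersect these inside $\mathcal{Z}$ (global sections). For the intersection to make sense you should either run Sard--Smale with the full $\mathcal{Z}$ as the parameter space from the start (only the restriction $\eta|_{U_\alpha}$ enters the local equation, but $\eta$ itself is global), or else invoke the open mapping theorem to pull the local Baire sets back along the surjective restriction $\mathcal{Z}\to\mathcal{Z}|_{U_\alpha}$. The paper avoids this issue by working with the global $\mathcal{Z}$ throughout. Your remark about the $C^\varepsilon$ Banach completion is a genuine technical point that the paper does not address explicitly; it simply works in the Sobolev space $\mathcal{Z}^{k-1}$ and treats it as Banach.
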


\begin{proof}
In the ordinary theory, in order to prove that the Seiberg-Witten moduli space is well defined up to a cobordism, one has to show the existence of a regular path in the perturbation space (see for example, \cite{salamon2000spin} Theorem 7.21, whose ``skeleton'' is \cite{salamon2000spin} Proposition B.17). We just replace the segment $[0,1]$ by $B$. 
However, some modification is needed, since here the $X$-bundle $E_X$ over $B$ is nontrivial and we allow the metric to vary.


Let
\begin{align*}
    \mathcal{C}_{g} &= L^{k,2}(\mathscr{A}(s)) \oplus L^{k,2}(S_g^+) \oplus L^{k-1,2}(i\Lambda^+(X))\\
    \mathcal{D}_{g} &= L^{k-1,2}(i\Lambda^0_0(X;g)) \oplus L^{k-1,2}(i\Lambda^+(X)) \oplus L^{k-1,2}(S_g^-)\\
     \mathcal{F}_{g}: \mathcal{C}_{g} &\to \mathcal{D}_{g}\\
    \mathcal{F}_{g}\begin{pmatrix}
         A\\
          \Phi\\ 
          \eta\\
    \end{pmatrix}
    &= \begin{pmatrix}
         d^*(A-A_0)\\
         F^{+_g}_A + i\eta - \rho^{-1}(\sigma(\Phi, \Phi))\\
          \D_A \Phi\\
    \end{pmatrix}
\end{align*}
As in the ordinary theory we consider $[0,1]\times \mathcal{C}$, here we wish to define a bundle using $g$:
\[
\mathcal{X} := \bigsqcup_{b\in B}\mathcal{C}_{g(b)}
\]
by the transition functions (\ref{equ:configCocycle}). However (\ref{equ:configCocycle}) doesn't satisfy the cocycle condition. Hence we have to consider the moduli space instead. But now above $\mathcal{F}_g$ incorporates $d^*$, not just $sw_g$ in (\ref{equ:defsw}). To make $\mathcal{F}_g$ well-defined on the parametrized moduli space, we have to reformulate its construction.

Use the notations in subsection \ref{subsection:Parametrized}, where $\{g_{\alpha\beta}: U_{\alpha\beta} \to G\}$ is a family of transition functions for $E_X$ corresponding to the open cover $\{U_\alpha\}_\alpha$. 
In subsection \ref{subsection:Parametrized} we choose a lift $\tilde{g}_{\alpha\beta} : U_{\alpha\beta} \to \tilde{G}$ of ${g}_{\alpha\beta}$. Now we want to refine our choice such that $\tilde{g}_{\alpha\beta}^* \circ \tilde{g}_{\beta\gamma}^* \circ \tilde{g}_{\gamma\alpha}^*$ preserves $(d^*)^{-1}(0)$. 
\begin{lemma}\label{lemma:cocycle-d*}
For each $U_{\alpha\beta}$, we can find a lift $\tilde{g}_{\alpha\beta}$ of ${g}_{\alpha\beta}$ such that if the cocycle $\tilde{g}_{\alpha\beta}^* \circ \tilde{g}_{\beta\gamma}^* \circ \tilde{g}_{\gamma\alpha}^*$ is the guage transformation by $u\in \mathscr{G}$, then $d^*(u^{-1}du) = 0$.
\end{lemma}
We prove Lemma \ref{lemma:cocycle-d*} later. With these lifts we define 
\[
\mathcal{X} := \bigsqcup_{b\in B}\mathcal{B}^*_{g(b)}
\]
by the transition functions (\ref{equ:moduliCocycle}). (An element of $\mathcal{X}$ can be written as $(b,A, \Phi)$.) Construct a bundle 
\[
\mathcal{Y} \to \mathcal{X} \times \mathcal{Z}
\]
whose fiber over $(b,A, \Phi, \eta)$ is $ \mathcal{D}_{g(b)}$ as follows. Consider the trivial bundle 
\[
\mathcal{U}_b: \mathcal{C}^*_{g(b)}\times \mathcal{D}_{g(b)} \to \mathcal{C}^*_{g(b)}
\]
The gauge group $\mathscr{G}$ acts on $\mathcal{U}_b$ by the diagonal action and this action makes $\mathcal{U}_b$ a $\mathscr{G}$-equivariant bundle. $\mathcal{F}_{g(b)}$ is a $\mathscr{G}$-equivariant section of $\mathcal{U}_b$. The quotient of $\mathcal{U}_b$ is a bundle $[\mathcal{U}_b]$ over $\mathcal{B}^*_{g(b)}$, and $\mathcal{F}_{g(b)}$ descends to a section $[\mathcal{F}_{g(b)}]$ of $[\mathcal{U}_b]$ (note that since $\mathscr{G}$ acts on $\mathcal{C}^*_{g(b)}$ freely, the fiber of $[\mathcal{U}_b]$ is still $\mathcal{D}_{g(b)} $, and $\dim \text{coker}[\mathcal{F}_{g(b)}]$ is still $\dim\text{coker} \D_A + b^+$ since the infinitesimal action of the gauge group is in the kernel of $d\mathcal{F}_{g(b)}$). 
For each $U_\alpha \subset B$, form the family $\bigsqcup_{b\in U_\alpha}\mathcal{U}_b$. Then glue all such families by the transition functions (\ref{equ:configCocycle}) and form the quotient family $\bigsqcup_{b\in B}[\mathcal{U}_b]$, which is a bundle over $\mathcal{X}$. Let $\mathcal{Y}$ be the bundle
\[
 (\bigsqcup_{b\in B}[\mathcal{U}_b])\times \mathcal{Z} \to \mathcal{X} \times \mathcal{Z}.
\]
Define a section of $\mathcal{Y}$
\[
{F}: \mathcal{X} \times \mathcal{Z} \to \mathcal{Y}
\]
by ${F}(b,A, \Phi,\eta) =[ \mathcal{F}_{g(b)}](A, \Phi,\eta(b))$. $F$ is well-defined because of Lemma (\ref{lemma:cocycle-d*}) and the fact that the Seiberg-Witten equations are $\mathscr{G}$-equivariant. We want to show that this map is transverse to the zero section. If $F(b,A, \Phi,\eta)=0$, then
\begin{align*}
D_{(b,A, \Phi,\eta)}F : T_bB \times T_c \mathcal{B}_{g(b)}\times T_\eta \mathcal{Z}^{k-1} &\to  \mathcal{D}_{g(b)}\\
(\tau, \alpha, \phi, \zeta) &\mapsto d_{(A, \Phi,\eta(b))}  \mathcal{F}_{g(b)}(\alpha, \phi,\zeta(b) + d_b\eta(\tau))
\end{align*}
Here $DF$ is the projection of the differential $dF$ to the vertical tangent space of the bundle $\mathcal{Y}$. From Equation (\ref{equ:Dlinearization}) we deduce,
\begin{equation}\label{equ:dFperturbation}
d_{(c,\eta(b))}\mathcal{F}_{g(b)}\begin{pmatrix}
         \alpha\\
          \phi\\
         \zeta(b) + d_b\eta(\tau)\\
    \end{pmatrix}
    =\begin{pmatrix}
         d^* \alpha\\
         d^+  \alpha\\
         \D_A \phi\\
    \end{pmatrix} 
    + \begin{pmatrix}
         0\\
          i(\zeta(b) + d_b\eta(\tau))-\rho^{-1}(\sigma(\Phi, \phi)+\sigma(\phi, \Phi))\\
         \Gamma( \alpha)\Phi\\
    \end{pmatrix} 
\end{equation}
The operator 
\begin{equation}
\begin{pmatrix}
         \alpha\\
          \phi\\
    \end{pmatrix}
    \mapsto
    \begin{pmatrix}
         d^* \alpha\\
          \D_A \phi + \Gamma( \alpha)\Phi\\
    \end{pmatrix} 
\end{equation}
is surjective (see \cite{salamon2000spin}) Lemma 8.17
. Note also that one can choose $\zeta$ arbitrarily. Therefore (\ref{equ:dFperturbation}) is surjective. Hence $F$ is transverse to the zero section and 
\[
\mathbb{M} := \{(b,A, \Phi, \eta)\in \mathcal{X}\times \mathcal{Z}; F(b,A, \Phi, \eta) = 0\}
\]
is therefore a Banach manifold. The projection
\[
\pi: \mathbb{M} \to \mathcal{Z}
\]
is a Fredholm map of separable Banach manifolds. By Sard-Smale theorem, the regular value of $\pi$ is of the second category in the sense of Baire.

By an argument of transversality theory (see \cite{salamon2000spin} Theorem B.16), $\eta\in \mathcal{Z}$ is a regular value of $\pi$ iff the restriction of $F$ over $\eta$: 
\begin{align}
\label{Feta} F_\eta: \mathcal{X} &\to \mathcal{Y}_\eta:= \bigsqcup_{b\in B}\mathcal{D}_{g(b)}\\
(b,A,\Phi) &\mapsto F(b,A,\Phi,\eta)=[\mathcal{F}_{(g(b),\eta(b))}](A,\Phi)
\end{align}
is transverse to the zero section. Here $\mathcal{F}_{(g(b),\eta(b))}$ is the operator defined in (\ref{def:F0}). Now choose $\eta$ to be a regular value of $\pi$, then $F_\eta$ is transverse to the zero section, and therefore by the implicit function theorem, the set
\begin{equation}\label{M(Feta)}
M(F_\eta)= \{(b,A,\Phi) \in \mathcal{X}; F_\eta(b,A,\Phi) =0\}
\end{equation}
is a submanifold of $\mathcal{X}$ of dimension 
\begin{align*}
\dim \ker [\mathcal{F}_{(g(b_0),\eta(b_0))}]+ \dim B - \dim\text{coker} [\mathcal{F}_{(g(b_0),\eta(b_0))}] &= \ind [\mathcal{F}_{(g(b_0),\eta(b_0))}] + \dim B\\
 &= \ind \D_A + b_1-1 - b^+ + \dim B\\
 &=1
\end{align*}
for any $b_0\in B$ in the projection of $M(F_\eta)$. (Figure \ref{fig:type0} and Figure \ref{fig:type1} justify this computation of the dimension. Here the index of $[\mathcal{F}_{(g(b_0),\eta(b_0))}]$ depends on some topological invariants of $X$ and determinant line bundles of the $\text{spin}_{\text{GL}}^c$ structures, and since the structure group of $E_X$ preserves the isomorphism class of the $\text{spin}_{\text{GL}}^c$ structures, $\ind [\mathcal{F}_{(g(b_0),\eta(b_0))}]$ is independent from the choice of $b_0$. 
However, $\dim \ker[ \mathcal{F}_{(g(b_0),\eta(b_0))}]$ and $\dim\text{coker} [\mathcal{F}_{(g(b_0),\eta(b_0))}]$ do depend on the choice of $b_0$. Since $F_\eta$ is transverse to the zero section, $\dim B \geq \dim \text{coker} [\mathcal{F}_{(g(b_0),\eta(b_0))} ]$. Hence $\dim \ker [\mathcal{F}_{(g(b_0),\eta(b_0))}]$ can only be $0$ or $1$.)

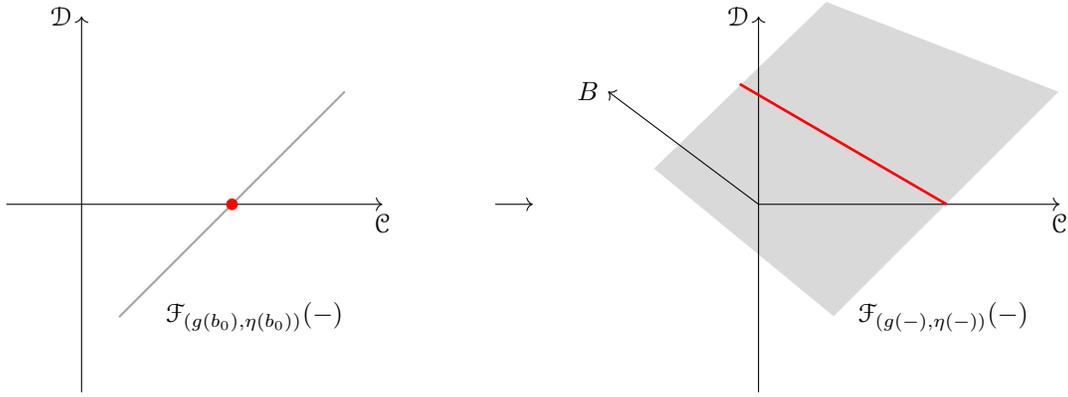
\begin{figure}
    \centering
    \begin{tikzpicture}
        \draw[->] (-1,0) -- (4,0) node[below] {$\mathcal{C}$};
        \draw[->] (0,-2.5) -- (0,2.5) node[left] {$\mathcal{D}$};
        
        
        
        \draw[gray!70, thick] (0.5,-1.5) -- (3.5,1.5);
        \node[anchor=west] at (1,-1.5) {$ \mathcal{F}_{(g(b_0),\eta(b_0))}(-)$};
        
        \filldraw[red] (2,0) circle (2pt);
        
        \draw[->] (5.5,0) -- (6,0);
        
    	\begin{scope}[xshift=9cm]
    	\filldraw[fill=gray!30,draw=white] (0.9,2.7) -- (4,1.5) -- (1,-1.5) -- (-1.4,0.475) -- cycle;
    	\node[anchor=west] at (1.2,-1.5) {$ \mathcal{F}_{(g(-),\eta(-))}(-)$};
        \draw[->] (0,0) -- (4,0) node[below] {$\mathcal{C}$};
        \draw[->] (0,-2.5) -- (0,2.5) node[left] {$\mathcal{D}$};
        \draw[->] (0,0) -- (-2,1.5) node[left] {$B$};
        
        \draw[red, line width=1pt] (2.5,0) -- (-0.25,1.6);

    	\end{scope}
    	
    \end{tikzpicture}
    \caption{Type 0: If the kernel of $ \mathcal{F}_{(g(b_0),\eta(b_0))}$ is $0$-dimensional, $B$ would extend the kernel.}
    \label{fig:type0}
\end{figure}
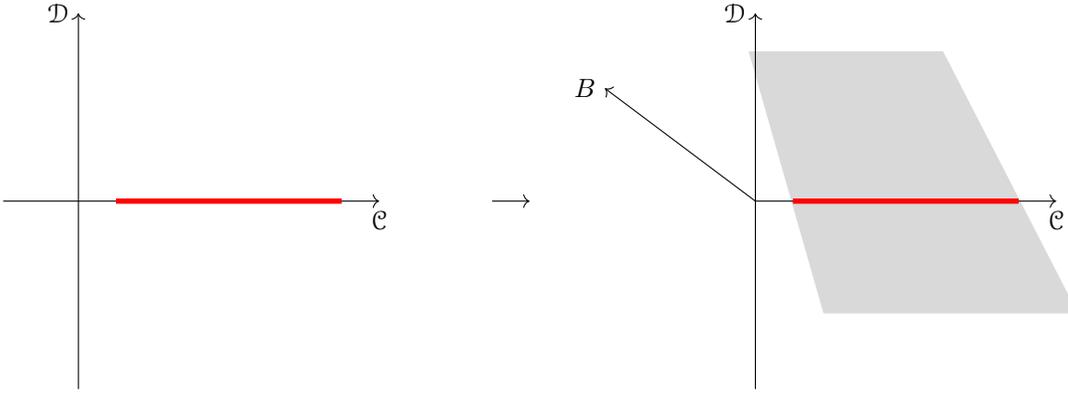
\begin{figure}
    \centering
    \begin{tikzpicture}
        \draw[->] (-1,0) -- (4,0) node[below] {$\mathcal{C}$};
        \draw[->] (0,-2.5) -- (0,2.5) node[left] {$\mathcal{D}$};
        
        
        
        \draw[red,  line width=2pt] (0.5,0) -- (3.5,0);

        \draw[->] (5.5,0) -- (6,0);
        
    	\begin{scope}[xshift=9cm]
    	\filldraw[fill=gray!30,draw=white] (-0.1,2) -- (2.5,2) -- (4.3,-1.5) -- (0.9,-1.5) -- cycle;
        \draw[->] (0,0) -- (4,0) node[below] {$\mathcal{C}$};
        \draw[->] (0,-2.5) -- (0,2.5) node[left] {$\mathcal{D}$};
        \draw[->] (0,0) -- (-2,1.5) node[left] {$B$};
        
        \draw[red, line width=2pt] (0.5,0) -- (3.5,0);
        
    	\end{scope}
    	
    \end{tikzpicture}
    \caption{Type 1: If the kernel of $ \mathcal{F}_{(g(b_0),\eta(b_0))}$ is $1$-dimensional, $B$ would push it off the zero set.}
    \label{fig:type1}
\end{figure}

Notice that $M(F_\eta) = \FM^*(g,\eta)$. The family version of the wall (unfamily version is defined in (\ref{equ:wall})) is
\begin{equation}\label{equ:familywall}
\mathcal{F}\mathcal{W}^{k-1}_{g,s} := \{\eta\in \mathcal{Z}^{k-1};  \exists b\in U_\alpha \subset B, A\in \mathscr{A}(s_\alpha), \text{ such that } F^{+_{g(b)}}_A + i\eta(b) = 0\}.
\end{equation}
If $\eta$ is a regular value of $\pi$ and $\eta\notin \mathcal{F}\mathcal{W}^{k-1}_{g,s}$, then 
\[
 \FM(g,\eta)= \FM^*(g,\eta)
\]
is a smooth manifold, and such $\eta$ belongs to $\mathcal{Z}_{reg}$. On each fiber, the othorgonal complement of the wall has dimension $b^+$. 
Because we have assumed $b^+ \geq \dim B + 1$, one can perturb any $\eta\in\mathcal{Z}^{k-1}$ slightly such that it doesn't meet the wall on every point. Hence $\mathcal{Z}^{k-1} \setminus \mathcal{F}\mathcal{W}^{k-1}_{g,s}$ is an open dense set in $\mathcal{Z}^{k-1}$. Because regular values of $\pi$ is of the second category in the sense of Baire, their intersection with $\mathcal{Z}^{k-1} \setminus \mathcal{F}\mathcal{W}^{k-1}_{g,s}$, contained in $\mathcal{Z}_{reg}$, is of the second category in the sense of Baire.
\end{proof}
\begin{remark}

Konno\cite{Konno2018CharacteristicCV} describes a way to find a generic perturbations family for $0$-dimensional moduli space, which is to put $B$ cell by cell into the space of parameters family $\Pi(E)$, and then highest dimensional cells of $B$ would intersect with the projection of the universal moduli space (i.e, image of $\mathcal{F}_0^{-1}(0)$ under $\mathcal{F}_1$ defined in (\ref{equ:defF1})) in discrete points for dimension reason. 
For $1$-dimensional moduli space, the situation is subtler. $\mathcal{F}_1$ can has either $1$-dimension kernel and $(\dim B)$-dimensional cokernel, or $0$-dimension kernel and $(\dim B-1)$-dimensional cokernel (see Figure \ref{fig:genericCell}). Moreover, in the proof of the following propostion (Proposition \ref{prop:perturbationCondition}), we need the fact that the generic perturbations family is dense.
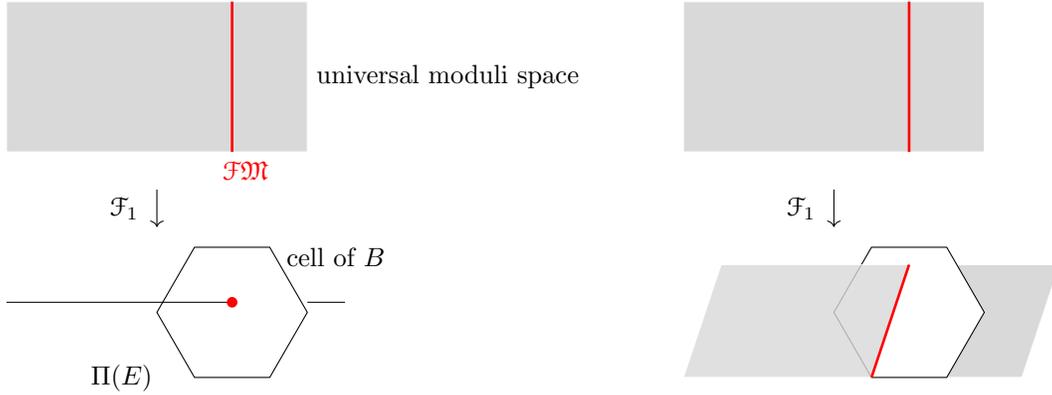
\begin{figure}
    \centering
\begin{tikzpicture}
    \filldraw[fill=gray!30,draw=white] (0,0) rectangle (4,2);
    
    \node[anchor=west] at (4,1) {universal moduli space};
    
    \draw[->] (2,-0.5) -- (2,-1);
     \node[anchor=west] at (1.25,-0.75) {$\mathcal{F}_1$};
    
    \draw (0,-2) -- (3,-2);
    \draw (4,-2) -- (4.5,-2);
    
    \draw[red, line width=1pt] (3,0) -- (3,2);
    \node[red, anchor=west] at (2.75,-0.25) {$\mathcal{F}\M$};
    
    \draw (2.5,-3) ++(0:1) -- ++(60:1) -- ++(120:1) -- ++(180:1) -- ++(240:1) -- ++(300:1) -- cycle;
    
    \node[anchor=west] at (3.6,-1.4) {cell of $B$};
    \node[anchor=west] at (1,-3) {$\Pi(E)$};
    \fill[red] (3,-2) circle (2pt);
    
    \begin{scope}[xshift=9cm]
        \filldraw[fill=gray!30,draw=white] (0,0) rectangle (4,2);
        
        
        \draw[->] (2,-0.5) -- (2,-1);
        \node[anchor=west] at (1.25,-0.75) {$\mathcal{F}_1$};
        
        
        \draw[red,line width=1pt] (3,0) -- (3,2);
        
        \filldraw[fill=gray!30,draw=white] (3,-1.5) -- (5,-1.5) -- (4.5,-3) -- (3.5,-3) -- cycle;
        \filldraw[fill=white,draw=black] (2.5,-3) ++(0:1) -- ++(60:1) -- ++(120:1) -- ++(180:1) -- ++(240:1) -- ++(300:1) -- cycle;
        \filldraw[fill=gray!30,draw=white,opacity=0.8] (0.5,-1.5) -- (3,-1.5) -- (2.5,-3) -- (0,-3) -- cycle;
        \draw[red,line width=1pt] (3,-1.5) -- (2.5,-3);
        
    \end{scope}
\end{tikzpicture}
\caption{These situations correspond to Type 1 (Figure \ref{fig:type1}) and Type 0 (Figure \ref{fig:type0}).}
    \label{fig:genericCell}
\end{figure}

\end{remark}

\begin{proof}[Proof of Lemma \ref{lemma:cocycle-d*}]
Recall the definition of the automorphism group $\tilde{G}$ in (\ref{equ:defAut}). Fix an $f \in G$. Let
\[
\Aut(f):= \{\tilde{f};(f,\tilde{f})\in \tilde{G}\}
\]
Each $\tilde{f} \in \Aut(f)$ is an isomorphism of $P_{\text{Spin}^c_{\GL}(4)}$ that adds one more infomation to the isomorphism $df$ of the frame bundle $P_{\GL_4^+(\R)}$: the map on the $\S^1$-factor of 
\[
 \text{Spin}^c_{\GL}(4)  = S^1 \times \widetilde{\GL_4^+(\R)} / \{\pm (1,I)\}.
 \]
Hence $\Aut(f)$ is in noncanonical one-to-one correspondence with the gauge group $\mathscr{G}$: the difference of two elements in $\Aut(f)$ is an element of $\mathscr{G}$. By Hodge theory each component of $\mathscr{G}$ contains a harmonic element $u = e^{i\theta}$ (see Proposition 5.30), which means that the $i\R$-value $1$-form $u^{-1}du$ satisfies
\[
d^*(u^{-1}du)=0.
\]
Geometrically this means that the rotation of the $\S^1$-factor is at a constant speed when going around every nontrivial loop of $X$. This motivates us to fix a ``reference rotation'' as follows:

In subsection \ref{subsection:Parametrized}, for each $U_\alpha$, we have assigned a $\text{spin}_{GL}^c$-structure $s_\alpha$ on $X$. 
Now we further fix a connection $1$-form $a_\alpha$ on the determinant line bundle $L_\alpha$ of $s_\alpha$. Recall that $g_{\alpha\beta}(b)$ preserves the isomorphism class of $\text{spin}_{GL}^c$-structures, so $\tilde{g}_{\alpha\beta}(b)^*$ induces an isomorphism between determinant line bundles $L_\alpha$ and $L_\beta$. 
The pullback $\tilde{g}_{\alpha\beta}(b)^*a_\beta$ is also an $i\R$-value $1$-form. Since each component of $\mathscr{G}$ contains a harmonic element and $U_{\alpha\beta}$ is contractible, we can choose a lift $\tilde{g}_{\alpha\beta}^*$ such that for every $b\in U_{\alpha\beta}$
\[
d^*(\tilde{g}_{\alpha\beta}(b)^*a_\beta -a_\alpha)=0.
\]
Since $d$ and $d^*$ commutes with $f^*$ for any diffeomorphism $f$, we have
\begin{align*}
d^*(\tilde{g}_{\alpha\beta}^* \circ \tilde{g}_{\beta\gamma}^* \circ \tilde{g}_{\gamma\alpha}^*a_\alpha -a_\alpha )
&= \tilde{g}_{\alpha\beta}^* \circ \tilde{g}_{\beta\gamma}^*  d^*(\tilde{g}_{\gamma\alpha}^*a_\alpha)- d^* a_\alpha\\
&= \tilde{g}_{\alpha\beta}^* \circ \tilde{g}_{\beta\gamma}^*  d^*(a_\gamma)- d^* a_\alpha\\
&= \tilde{g}_{\alpha\beta}^* d^*( \tilde{g}_{\beta\gamma}^*  a_\gamma)- d^* a_\alpha\\
&= \tilde{g}_{\alpha\beta}^* d^*( a_\beta)- d^* a_\alpha\\
&= d^*(\tilde{g}_{\alpha\beta}^* a_\beta)- d^* a_\alpha\\
&= d^*(a_\alpha)- d^* a_\alpha\\
&= 0.
\end{align*}
Now for any connection $A\in L^{k,2}(\mathscr{A}(s_\alpha))$ on $U_\alpha$, we have
\[
\tilde{g}_{\alpha\beta}^* \circ \tilde{g}_{\beta\gamma}^* \circ \tilde{g}_{\gamma\alpha}^*A = A + u^{-1}du
\]
where $2u^{-1}du = \tilde{g}_{\alpha\beta}^* \circ \tilde{g}_{\beta\gamma}^* \circ \tilde{g}_{\gamma\alpha}^*a_\alpha -a_\alpha $ (the coefficient $2$ comes from the definition of the determinant bundle in (\ref{equ:determinant})). Hence $d^* \tilde{g}_{\alpha\beta}^* \circ \tilde{g}_{\beta\gamma}^* \circ \tilde{g}_{\gamma\alpha}^*A  = d^*A$.
\end{proof}

\begin{theorem}\label{thm:cylindricalTransversality}
Let $X_0$ be a cylindrical manifold such that $\partial_\infty X_0= \S^1\times \S^2$. Let $\hat{s}$ be a $\text{spin}^c$ structure of $X_0$ such that it induces a $\text{spin}^c$ structure $s$ of $ \S^1 \times \S^2 $ such that the first Chern class of the determinant line bundle is zero. Let $b_1 = \dim H^1(X_0)$, $b^+ =  \dim H^{2,+}(X_0)$. Assume 
\[
b^+ \geq \dim B + 1\]
and 
\begin{equation}
\ind \D_A + b_1 - 1 - b^+ + \dim B = 1.
\end{equation}
Fix any matrics family $g:B\to \text{Met}(X_0)$ that restricts to the standard round metric on the boundary. Let $\mathcal{Z}=\mathcal{Z}^{k-1}$ be the space of smooth sections of the bundle
\[
 \bigsqcup_{b\in B} L^2_{k-1}(\Lambda_{g(b)}^+(X_0)) \to B.
 \]
Then there exists a set $\mathcal{Z}_{reg}  \subset \mathcal{Z}$ of the second category in the sense of Baire such that for every $\eta \in \mathcal{Z}_{reg}$, any $\eta$-monopole $\hat{\mathsf{C}}_0$

$\mathbf{A_1}$  is irreducible;


$\mathbf{A_2}$  $\dim H^2(F(\hat{\mathsf{C}}_0))=\dim B$;

$\mathbf{A_3}$  the space $\FM(X_0, \hat{s}, g,\eta)$ defined in (\ref{equ:parametrized}) is a smooth manifold of dimension $1$.
\end{theorem}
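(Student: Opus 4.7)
The plan is to fuse the cylindrical-end Sard--Smale argument of Proposition~\ref{prop:noncompactTransversality} with the family-parameter construction of Theorem~\ref{thm:regular}. Two structural facts from the unfamily case extend directly to the family setting: the boundary $N=\S^1\times\S^2$ carries a PSC metric, so $\M_s\cong\S^1$ is a fixed circle of reducibles independent of $b\in B$; and $H^1(X_0)\to H^1(N)$ is surjective, which was the key to the asymptotic-shift trick at the end of Proposition~\ref{prop:noncompactTransversality}. The cocycle normalisation of Lemma~\ref{lemma:cocycle-d*} applies verbatim to the cylindrical bundle over $B$, so the parametrized configuration space and perturbation bundle are well defined.

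First I would form the family universal map
\begin{align*}
\mathcal{F} : B \times \bigl(\hat{\mathcal{C}}^*_{\mu,sw}/\hat{\mathcal{G}}_{\mu,ex}\bigr) \times \M_s \times \mathcal{Z} &\longrightarrow \hat{\mathcal{Y}}_\mu \times \M_s \times \M_s, \\
(b,\hat{\mathsf{C}},\mathsf{C},\eta) &\longmapsto \bigl(\widehat{SW}_{\eta(b),g(b)}(\hat{\mathsf{C}}),\; \partial_\infty\hat{\mathsf{C}},\; \mathsf{C}\bigr),
\end{align*}
and verify transversality to $0\times\Delta$: the $\mathcal{Z}$-direction alone surjects onto $T\hat{\mathcal{Y}}_\mu$ by the argument of Proposition~\ref{prop:noncompactTransversality}, the two $\M_s$-factors are absorbed by varying $\mathsf{C}$ and $\partial_\infty\hat{\mathsf{C}}$, and the base direction $T_bB$ only adds to the surjectivity. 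Sard--Smale then produces a Baire-generic subset $\mathcal{Z}^0_{reg}\subset\mathcal{Z}$. Since the family wall $\mathcal{F}\mathcal{W}^{k-1}_{g,s}$ has pointwise codimension $b^+\ge \dim B+1$, any $\eta\in\mathcal{Z}^0_{reg}$ may be perturbed slightly to also lie outside the wall while remaining regular, which forces irreducibility of every $\eta$-monopole and establishes $(\mathbf{A_1})$. Gluing the local slices by the $d^*$-normalised cocycles of Lemma~\ref{lemma:cocycle-d*} as in Theorem~\ref{thm:regular} produces a smooth family moduli space of dimension $\ind\D_A + b_1 - 1 - b^+ + \dim B = 1$ (the boundary correction of Proposition~\ref{prop:virtualDimX0} vanishes because $g|_{\partial X_0}$ is the fixed round product), giving $(\mathbf{A_3})$.

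The hard part is the exact equality $\dim H^2(F(\hat{\mathsf{C}}_0))=\dim B$ of $(\mathbf{A_2})$. The upper bound $\dim H^2(F(\hat{\mathsf{C}}_0))\le\dim B$ follows from transversality of $\mathcal{F}$: projecting $D\mathcal{F}$ at a regular solution onto $T\hat{\mathcal{Y}}_\mu$ shows the fibrewise cokernel of the linearized Seiberg--Witten map is cut down only by the $\dim B$ base-direction contributions. For the matching lower bound I would combine a Fredholm-index balance with the long exact sequence
\[
\cdots\to H^1_{\hat{\mathsf{C}}_0}\xrightarrow{\partial_\infty} H^1(B_{\hat{\mathsf{C}}_0})\to H^2(F(\hat{\mathsf{C}}_0))\to H^2_{\hat{\mathsf{C}}_0}\to H^2(B_{\hat{\mathsf{C}}_0})=0
\]
of \ref{equ:longExact}: in the present regime the unfamily virtual dimension on $X_0$ is $d(\hat{\mathsf{C}}_0)=1-\dim B$, and surjectivity of $\partial_\infty$ supplied by the closed-form shift construction at the end of Proposition~\ref{prop:noncompactTransversality} applied fibrewise gives $H^2(F(\hat{\mathsf{C}}_0))\cong H^2_{\hat{\mathsf{C}}_0}$. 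Combining $d(\hat{\mathsf{C}}_0)=1-\dim B$ with the fact that the family moduli space has tangent dimension $1$ at a regular solution then pins $\dim H^2_{\hat{\mathsf{C}}_0}=\dim B$. The subtlety, and the main obstacle, is ensuring that the $\dim B$ independent asymptotic shifts can be realised continuously across $B$; this is where surjectivity of $H^1(X_0)\to H^1(N)$ combined with the fixed product metric on $\partial X_0$ becomes essential.
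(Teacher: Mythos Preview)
Your setup for the Sard--Smale step, the wall-avoidance argument for $(\mathbf{A_1})$, the dimension count for $(\mathbf{A_3})$, and the upper bound $\dim H^2(F(\hat{\mathsf{C}}_0))\le\dim B$ all match the paper's proof. The problem is in your lower-bound argument for $(\mathbf{A_2})$.

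You claim that the closed-form shift of Proposition~\ref{prop:noncompactTransversality} gives surjectivity of $\partial_\infty:H^1_{\hat{\mathsf{C}}_0}\to H^1(B_{\hat{\mathsf{C}}_0})$, and hence $H^2(F(\hat{\mathsf{C}}_0))\cong H^2_{\hat{\mathsf{C}}_0}$. This fails for two reasons. First, the closed-form shift at the end of Proposition~\ref{prop:noncompactTransversality} does not establish surjectivity of $\partial_\infty$ on $H^1$; that surjectivity there came from $H^2(F)=0$, which you certainly do not have in the family situation. Second, and more fundamentally, surjectivity of $\partial_\infty:H^1_{\hat{\mathsf{C}}_0}\to H^1(B_{\hat{\mathsf{C}}_0})$ is simply false at some monopoles: since $d(\hat{\mathsf{C}}_0)=1-\dim B$ can be nonpositive, it is possible that $\dim H^1_{\hat{\mathsf{C}}_0}=0$ while $H^1(B_{\hat{\mathsf{C}}_0})=\R$. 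The paper in fact exhibits both possibilities later (the dichotomy \ref{L0} versus \ref{L1}), and in the first case $\dim H^2_{\hat{\mathsf{C}}_0}=\dim B-1\neq\dim H^2(F)$, so your isomorphism $H^2(F)\cong H^2_{\hat{\mathsf{C}}_0}$ is wrong there. The final step, ``the family moduli space has tangent dimension $1$ then pins $\dim H^2_{\hat{\mathsf{C}}_0}=\dim B$'', is also unjustified: the family dimension only constrains the index, not $\dim H^1_{\hat{\mathsf{C}}_0}$ and $\dim H^2_{\hat{\mathsf{C}}_0}$ separately.

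The paper's route avoids $H^2_{\hat{\mathsf{C}}_0}$ altogether and works directly with the complex $F_{\hat{\mathsf{C}}_0}$. From $\chi(\widehat{\mathcal K}_{\hat{\mathsf{C}}_0})=\chi(F_{\hat{\mathsf{C}}_0})+\chi(B_{\hat{\mathsf{C}}_0})$ and $\chi(B_{\hat{\mathsf{C}}_0})=1-1=0$ one gets $-\dim H^1(F)+\dim H^2(F)=-d(\hat{\mathsf{C}}_0)=\dim B-1$. The missing ingredient is the \emph{lower} bound $\dim H^1(F_{\hat{\mathsf{C}}_0})\ge 1$: since $\hat{\mathsf{C}}_0$ is irreducible but $\mathsf{C}_\infty$ is reducible, the connecting map $H^0(B_{\hat{\mathsf{C}}_0})=T_1G_\infty\cong\R\to H^1(F_{\hat{\mathsf{C}}_0})$ in \ref{equ:longExact} is injective (cf.\ Remark~\ref{rem:imL(kerLinfty)}). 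Combining this lower bound with your upper bound $\dim H^2(F)\le\dim B$ and the Euler-characteristic identity forces the equalities $\dim H^1(F)=1$ and $\dim H^2(F)=\dim B$.
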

\begin{proof}
For $X_0$, form the bundle $\mathcal{X}$ over $B$ as in the proof of Theorem \ref{thm:regular}:
\[
\mathcal{X} := \bigsqcup_{b\in B}\hat{\mathcal{C}}^*_{g(b),\mu,sw}/\mathscr{G}.
\]
Here $\hat{\mathcal{C}}_{\mu,sw}$ is the space of configurations on $X_0$ that restricts to monopoles on $\S^1\times \S^2$, as defined in (\ref{equ:defCsw}).
Let 
\[
\M_s = \M(\S^1\times \S^2,s, g_{round}).
\]
Exactly as in the proof of Proposition \ref{prop:dimOfReducible}, one can show that $\M_s = \S^1$. The bundle $E_{X_0}$ indeuces a bundle $ E_{\S^1\times \S^2}$, and this gives the parametrized moduli space $\FM_s$. $\FM_s$ is an $\S^1$-bundle over $B$. Define 
\[
\hat{\mathcal{Y}}_{g,\mu} := L_\mu^{1,2}(\hat{S}_{g,\hat{s}}^- \oplus \mathbf{i}\Lambda^{2,+_g}T^*\hat{N}),
\]
and $\mathcal{U}_b =\hat{\mathcal{C}}^*_{g(b),\mu,sw}\times\hat{\mathcal{Y}}_{g(b),\mu}$ and 
\[
F:\mathcal{X} \oplus \FM_s\times \mathcal Z \to  \bigsqcup_{b\in B}[\mathcal{U}_b] \oplus \FM_s \oplus \FM_s,
\]
\[
F(b, \hat{\mathsf{C}},\mathsf{C},\eta) =( \widehat{{SW}}_{\eta(b)}(\hat{\mathsf{C}}),\partial_\infty \hat{\mathsf{C}}, \mathsf{C}).
\]
Let $\Delta$ be the diagonal of $\M_s\times \M_s$. Let $\mathcal{F}\Delta \subset \FM_s \oplus \FM_s$ be an $\S^1$-bundle over $B$ with fiber $\Delta$. One can show that $F$ is transversal to $0\oplus \mathcal{F}\Delta \subset  \mathcal{Y} \oplus \FM_s \oplus \FM_s$. Then apply Sard-Smale to the projection
\[
\pi: F^{-1}(0\oplus \mathcal{F}\Delta) \to \mathcal{Z}.
\]
Let $\mathcal{Z}_{reg}$ be the set of regular values of $\pi$ which don't meet the wall. As before this set is of the second category. Given any $\eta \in \mathcal{Z}_{reg}$,
\[
F_\eta: \mathcal{X} \oplus \FM_s \to \mathcal{Y} \oplus \FM_s \oplus \FM_s
\]
is transversal to $0\oplus \mathcal{F}\Delta \subset  \mathcal{Y} \oplus \FM_s \oplus \FM_s$. Fix any $(b, \hat{\mathsf{C}}_0, \mathsf{C}_\infty)\in F^{-1}_\eta(0\oplus \mathcal{F}\Delta)$ and write $\hat{\mathsf{C}}_0 = (\hat{\Phi}_0,\hat{A}_0)$. Recall that there are two short exact sequences in the diagram \ref{equ:diagram} : 
\[
T_1 \hat{\mathcal{G}}_{\mu} 
\stackrel{i}{\hookrightarrow} T_1 \hat{\mathcal{G}}_{\mu,ex} 
\stackrel{\partial_\infty}{\twoheadrightarrow}  T_1\mathcal{G}_s
\]
and 
\[
T_{\hat{\mathsf{C}}_0} \partial_\infty^{-1}(\mathsf{C}_\infty)
\stackrel{i}{\hookrightarrow} 
T_{\hat{\mathsf{C}}_0} \hat{\mathcal{C}}_{g(b),\mu,sw} 
\stackrel{\partial_\infty}{\twoheadrightarrow}  
T_{\mathsf{C}_\infty}\mathcal{Z}_s.
\]
Note that these sequences split. Hence by chasing the left top and left bottom square of the diagram \ref{equ:diagram}, $T_{\hat{\mathsf{C}}_0}( \hat{\mathcal{C}}_{g(b),\mu,sw}/\hat{\mathcal{G}}_{\mu,ex} ) = T_{\hat{\mathsf{C}}_0} \mathcal{B}^*_{g(b),\mu,sw}$ admits a decomposition (see Figure \ref{fig:cubeOfDiagram}):
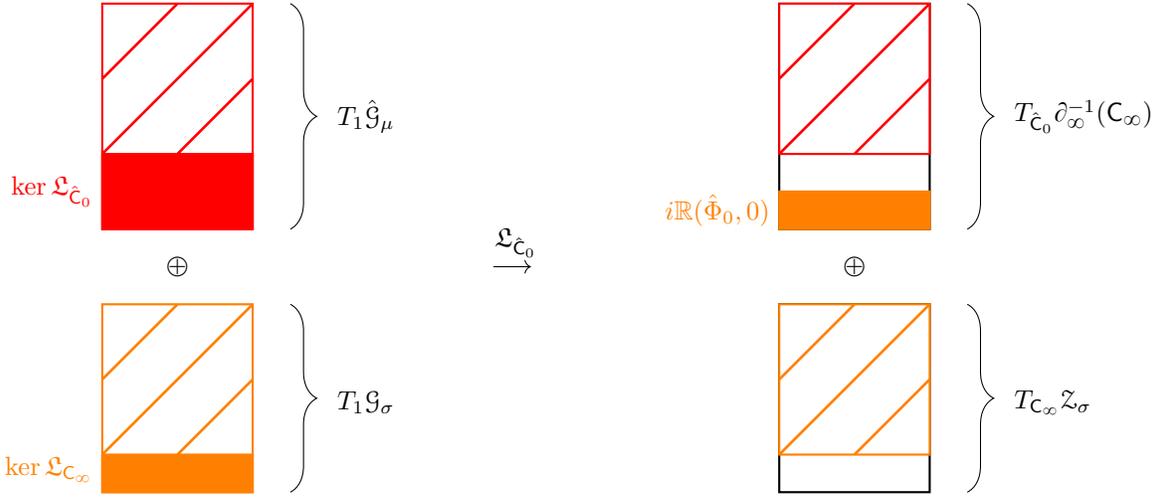
\begin{figure}
    \centering
    \begin{tikzpicture}

    \filldraw[fill=white,draw=red,thick] (0,0) -- (2,0) -- (2,3) -- (0,3) -- cycle;
    \draw[decorate, decoration={brace, amplitude=10pt}] (2.5,3) -- (2.5,0);
    \node[anchor=west] at (3,1.5) {$ T_1 \hat{\mathcal{G}}_{\mu} $};
    \draw[red,  line width=1pt] (0,2) -- (1,3);
    \draw[red,  line width=1pt] (0,1) -- (2,3);
    \draw[red,  line width=1pt] (0,0) -- (2,2);
    \filldraw[fill=red,draw=red,thick] (0,0) -- (2,0) -- (2,1) -- (0,1) -- cycle;
    \node[red, anchor=east] at (0,0.5) {$\ker \mathfrak{L}_{\hat{\mathsf{C}}_0} $};

    \node at (1,-0.5) {$ \oplus $};
    
    \filldraw[fill=white,draw=orange,thick] (0,-3.5) -- (2,-3.5) -- (2,-1) -- (0,-1) -- cycle;
    \draw[decorate, decoration={brace, amplitude=10pt}] (2.5,-1) -- (2.5,-3.5);
    \node[anchor=west] at (3,-2.3) {$ T_1\mathcal{G}_\sigma $};
    \draw[orange,  line width=1pt] (0,-2) -- (1,-1);
    \draw[orange,  line width=1pt] (0,-3) -- (2,-1);
    \draw[orange,  line width=1pt] (1,-3) -- (2,-2);
    \filldraw[fill=orange,draw=orange,thick] (0,-3.5) -- (2,-3.5) -- (2,-3) -- (0,-3) -- cycle;
        \node[orange, anchor=east] at (0,-3.2) {$\ker \mathfrak{L}_{\mathsf{C}_\infty} $};

     	\node[anchor=south] at (5.5,-0.5) {$\mathfrak{L}_{\hat{\mathsf{C}}_0}$};
        
        \draw[->] (5.2,-0.5) -- (5.7,-0.5);
        
    	\begin{scope}[xshift=9cm]
    	 \filldraw[fill=white,draw=black,thick] (0,0) -- (2,0) -- (2,3) -- (0,3) -- cycle;
    	  \draw[decorate, decoration={brace, amplitude=10pt}] (2.5,3) -- (2.5,0);
    \node[anchor=west] at (3,1.5) {$T_{\hat{\mathsf{C}}_0} \partial_\infty^{-1}(\mathsf{C}_\infty) $};
    	 \filldraw[fill=white,draw=black,thick] (0,-3.5) -- (2,-3.5) -- (2,-1) -- (0,-1) -- cycle;
    \filldraw[fill=white,draw=red,thick] (0,3) -- (2,3) -- (2,1) -- (0,1) -- cycle;
    \draw[red,  line width=1pt] (0,2) -- (1,3);
    \draw[red,  line width=1pt] (0,1) -- (2,3);
    \draw[red,  line width=1pt] (1,1) -- (2,2);
    
    \node at (1,-0.5) {$ \oplus $};
    
    \filldraw[fill=white,draw=orange,thick] (0,-1) -- (2,-1) -- (2,-3) -- (0,-3) -- cycle;
    \draw[decorate, decoration={brace, amplitude=10pt}] (2.5,-1) -- (2.5,-3.5);
    \node[anchor=west] at (3,-2.3) {$T_{\mathsf{C}_\infty}\mathcal{Z}_\sigma$};
    \draw[orange,  line width=1pt] (0,-2) -- (1,-1);
    \draw[orange,  line width=1pt] (0,-3) -- (2,-1);
    \draw[orange,  line width=1pt] (1,-3) -- (2,-2);
    
    \filldraw[fill=orange,draw=orange,thick] (0,0) -- (2,0) -- (2,0.5) -- (0,0.5) -- cycle;
    \node[orange, anchor=east] at (0,0.25) {$i\R (\hat\Phi_0,0) $};
        
    	\end{scope}
    	
    \end{tikzpicture}
    \caption{Decomposition of $T_1 \hat{\mathcal{G}}_{\mu,ex} $ and $T_{\hat{\mathsf{C}}_0} \hat{\mathcal{C}}_{g(b),\mu,sw}$}
    \label{fig:cubeOfDiagram}
\end{figure} 
\begin{align}
T_{\hat{\mathsf{C}}_0}( \hat{\mathcal{C}}_{g(b),\mu,sw}/\hat{\mathcal{G}}_{\mu,ex} )
&\cong T_{\hat{\mathsf{C}}_0} (\partial_\infty^{-1}(\mathsf{C}_\infty)/\hat{\mathcal{G}}_{\mu}  )/i\R (\hat\Phi_0,0) \oplus T_{\mathsf{C}_\infty}(\mathcal{Z}_s/\mathcal{G}_s)\label{equ:decomp-of-config}\\
\underline{\hat{\mathsf{C}}}_0 &\mapsto (\underline{\hat{\mathsf{C}}}_0 -  \partial_\infty^{-1}\partial_\infty \underline{\hat{\mathsf{C}}}_0)\oplus \partial_\infty \underline{\hat{\mathsf{C}}}_0. \notag
\end{align}
Here $\R (\hat\Phi_0,0) $ is the image of $\ker \mathfrak{L}_{\hat{\mathsf{C}}_0} $ under the map $\mathfrak{L}_{\hat{\mathsf{C}}_0}$, as we have discussed in the remark \ref{rem:imL(kerLinfty)}.
Notice that here $T_{\hat{\mathsf{C}}_0}( \hat{\mathcal{C}}_{g(b),\mu,sw}/\hat{\mathcal{G}}_{\mu,ex} ) = T_{\hat{\mathsf{C}}_0} \mathcal{B}^*_{g(b),\mu,sw}$ and $T_{\mathsf{C}_\infty}(\mathcal{Z}_s/\mathcal{G}_s)= T_{\mathsf{C}_\infty}\M_s$.
With above notations we deduce 
\begin{align*}
D_{(b, \hat{\mathsf{C}}_0, \mathsf{C}_\infty)}F_\eta: 
T_bB \oplus T_{\hat{\mathsf{C}}_0} \mathcal{B}^*_{g(b),\mu,sw}\oplus T_{\mathsf{C}_\infty}\M_s &\to T_0\hat{\mathcal{Y}}_{g(b),\mu}\oplus T_{\mathsf{C}_\infty}\M_s \oplus T_{\mathsf{C}_\infty}\M_s\\
(t,\underline{\hat{\mathsf{C}}}_0,\underline{{\mathsf{C}}}_\infty) &\mapsto  (\widehat{\underline{SW}}_{\eta(b)}(\underline{\hat{\mathsf{C}}}_0) + d_b\eta(t), \partial_\infty \underline{\hat{\mathsf{C}}}_0,\underline{{\mathsf{C}}}_\infty).
\end{align*}
Since $\eta$ is a regular value of $\pi$, $D_{(b, \hat{\mathsf{C}}_0, \mathsf{C}_\infty)}F_\eta$ is surjective. We deduce that 
\begin{align*}
T_bB \oplus T_{\hat{\mathsf{C}}_0} (\partial_\infty^{-1}(\mathsf{C}_\infty)/\hat{\mathcal{G}}_{\mu}  ) &\to T_0\hat{\mathcal{Y}}_{g(b),\mu}\\
(t,\underline{\hat{\mathsf{C}}}_0) &\mapsto  \widehat{\underline{SW}}_{\eta(b)}(\underline{\hat{\mathsf{C}}}_0) + d_b\eta(t)
\end{align*}
must be surjective, otherwise $T_bB \oplus T_{\hat{\mathsf{C}}_0} \mathcal{B}^*_{g(b),\mu,sw}=T_bB \oplus T_{\hat{\mathsf{C}}_0} (\partial_\infty^{-1}(\mathsf{C}_\infty)/\hat{\mathcal{G}}_{\mu}  ) \oplus T_{\mathsf{C}_\infty}(\mathcal{Z}_s/\mathcal{G}_s)$ cannot fill in $T_0\hat{\mathcal{Y}}_{g(b),\mu}\oplus T_{\mathsf{C}_\infty}\M_s$ (see Figure \ref{fig:fill}). Hence the image $d_b\eta(T_b B)$ contains $H^2(F(\hat{\mathsf{C}}_0))$, and this means that
\begin{equation}\label{equ:surjIneq}
\dim H^2(F(\hat{\mathsf{C}}_0))\leq\dim B.
\end{equation}
This result is for any specified $\mathsf{C}_\infty$, but we can use the strategy in the second part of Proposition \ref{prop:noncompactTransversality} to show that this is true for any point on $\M_s$. If the $\S^1$ in $\S^1 \times \S^2 = \partial X_0$ is a trivial loop in $X_0$, however,  that strategy cannot apply. But the result is enough.

Next we show that $\dim H^2(F(\hat{\mathsf{C}}_0))=\dim B$. Recall that We have three differential complexes:
\[\label{equ:complexF2}
0\to T_1 \hat{\mathcal{G}}_{\mu} 
\xrightarrow{\mathfrak{L}_{\hat{\mathsf{C}}_0}} T_{\hat{\mathsf{C}}_0} \partial_\infty^{-1}(\mathsf{C}_\infty) 
\xrightarrow{\widehat{\underline{SW}}_{\hat{\mathsf{C}}_0}} T_0\mathcal{Y}_\mu
\to 0
\tag{$F_{\hat{\mathsf{C}}_0}$}
\]
\[\label{equ:complexK2}
0
\to T_1 \hat{\mathcal{G}}_{\mu,ex} 
\xrightarrow{\frac{1}{2}\mathfrak{L}_{\hat{\mathsf{C}}_0}} T_{\hat{\mathsf{C}}_0} \hat{\mathcal{C}}_{\mu,sw} 
\xrightarrow{\widehat{\underline{SW}}_{\hat{\mathsf{C}}_0}} T_0\mathcal{Y}_\mu
\to 0
\tag{$\widehat{\mathcal{K}}_{\hat{\mathsf{C}}_0}$}
\]
\[\label{equ:complexB2}
0\to T_1\mathcal{G}_\sigma 
\xrightarrow{\frac{1}{2}\mathfrak{L}_{{\mathsf{C}}_\infty}} T_{\mathsf{C}_\infty}\mathcal{Z}_\sigma
\to 0
\to 0
\tag{$B_{\hat{\mathsf{C}}_0}$}
\]
From the exact sequence 
\begin{equation}
0\to \text{\ref{equ:complexF2}}
\stackrel{i}{\hookrightarrow} \text{\ref{equ:complexK2}} 
\stackrel{\partial_\infty}{\twoheadrightarrow} \text{\ref{equ:complexB2}} 
\to 0
\tag{\textbf E}
\end{equation}
we deduce
\[
\chi(\widehat{\mathcal{K}}_{\hat{\mathsf{C}}_0} ) = \chi(F_{\hat{\mathsf{C}}_0}) + \chi(B_{\hat{\mathsf{C}}_0}).
\]
In our case $H^0(B_{\hat{\mathsf{C}}_0}) = 1$ since all monopoles on the boundary are reducible, and $H^1(B_{\hat{\mathsf{C}}_0}) = \dim \M_s = 1$. Hence $ \chi(B_{\hat{\mathsf{C}}_0}) = 1-1 =0$. On the other hand $H^0(\widehat{\mathcal{K}}_{\hat{\mathsf{C}}_0} ) = 0$ since all monopoles on $X_0$ are irreducible, and thus $H^0(F_{\hat{\mathsf{C}}_0} ) = 0$ by the long exact sequence (\ref{equ:longExact}). Therefore by the Proposition \ref{prop:virtualDimX0}
\begin{equation}\label{chiF}
-H^1(F_{\hat{\mathsf{C}}_0})+H^2(F_{\hat{\mathsf{C}}_0})= \chi(\widehat{\mathcal{K}}_{\hat{\mathsf{C}}_0}) = -d(\hat{\mathsf{C}}_0)= -1 +\dim B.
\end{equation}
Since $\dim H^0(\widehat{\mathcal{K}}_{\hat{\mathsf{C}}_0} ) = 0$ and $\dim H^0(B_{\hat{\mathsf{C}}_0}) = 1$, $H^1(F_{\hat{\mathsf{C}}_0})$ is at least $1$-dimensional ($i\R(\hat{\Phi}_0,0)$ is the image of $H^0(B_{\hat{\mathsf{C}}_0}) $). Hence by (\ref{equ:surjIneq}) and (\ref{chiF}), $\dim H^2(F_{\hat{\mathsf{C}}_0})= \dim B$ and $\dim H^1(F_{\hat{\mathsf{C}}_0}) = 1$.
\begin{figure}
    \centering
    \begin{tikzpicture}
    \filldraw[fill=gray!30,draw=white] (-1,0) -- (4,0) -- (6,1.2) -- (1,1.2) -- cycle;
        \draw[->] (-1,0) -- (4,0) node[below] {$T_{\hat{\mathsf{C}}_0} (\partial_\infty^{-1}(\mathsf{C}_\infty)/\hat{\mathcal{G}}_{\mu}  ) \oplus T_bB$};
        \draw[->] (0,-2.5) -- (0,2.5) node[left] {$T_{\mathsf{C}_\infty}\M_s$};
        \node[anchor=west] at (2,0.8) {$T_{\hat{\mathsf{C}}_0} \mathcal{B}^*_{g(b),\mu,sw} \oplus T_bB$};

        
        
        \draw[red,  line width=1pt] (-1,-2) -- (1,2);
     	\node[anchor=west] at (6,0.3) {$DF_\eta$};
        
        \draw[->] (6.2,0) -- (6.7,0);
        
    	\begin{scope}[xshift=9cm]
        \draw[->] (-1,0) -- (4,0) node[below] {$T_0\hat{\mathcal{Y}}_{g(b),\mu}$};
        \draw[->] (0,0) -- (3.5,2) node[left] {$T_{\mathsf{C}_\infty}\M_s$};
        \draw[->] (0,-2.5) -- (0,2.5) node[left] {$T_{\mathsf{C}_\infty}\M_s$};
        
    	\end{scope}
    	
    \end{tikzpicture}
    \caption{}
    \label{fig:fill}
\end{figure} 
\end{proof}

\begin{theorem}
Under the assumptions in Theorem \ref{thm:cylindricalTransversality}, there exists a set $\mathcal{Z}_{reg}  \subset \mathcal{Z}^{k-1}$ of the second category in the sense of Baire such that in addition to the properties in Theorem \ref{thm:cylindricalTransversality}, all monopoles are of type $1$ (see Figure \ref{fig:type0}).
\end{theorem}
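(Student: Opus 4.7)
The plan is to take $\mathcal{Z}_{reg}$ to be the very same Baire-comeager set already produced by Theorem \ref{thm:cylindricalTransversality}, and to verify that the type 1 conclusion is an automatic consequence of the dimensional identities proved there; no further shrinking is needed.

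First I would record the pointwise data extracted during the proof of Theorem \ref{thm:cylindricalTransversality}: for every $\eta \in \mathcal{Z}_{reg}$ and every monopole $\hat{\mathsf{C}}_0 \in \FM(X_0,\hat{s},g,\eta)$ lying over a parameter $b_0 \in B$, one has
\[
\dim H^1(F_{\hat{\mathsf{C}}_0}) = 1, \qquad \dim H^2(F_{\hat{\mathsf{C}}_0}) = \dim B,
\]
together with the family-transversality assertion that the composite
\[
T_{b_0}B \xrightarrow{\;d_{b_0}\eta\;} T_0\hat{\mathcal{Y}}_{g(b_0),\mu} \twoheadrightarrow H^2(F_{\hat{\mathsf{C}}_0})
\]
is surjective (this is what made $F_\eta$ meet the zero section cleanly). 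Since the source and target both have dimension $\dim B$, this composite is in fact an isomorphism.

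Next I would analyse the family linearization at $(b_0,\hat{\mathsf{C}}_0)$. A kernel vector $(t,\underline{\hat{\mathsf{C}}})$ must satisfy
\[
d_{\hat{\mathsf{C}}_0}\mathcal{F}_{(g(b_0),\eta(b_0))}(\underline{\hat{\mathsf{C}}}) + d_{b_0}\eta(t) = 0.
\]
Projecting this identity onto $H^2(F_{\hat{\mathsf{C}}_0})$ annihilates the first summand, so the isomorphism above forces $t = 0$; the residual equation then pins $\underline{\hat{\mathsf{C}}}$ inside the one-dimensional space $H^1(F_{\hat{\mathsf{C}}_0})$. Therefore every tangent vector to $\FM$ lies entirely in the $\mathcal{C}$-slice direction, i.e.\ the projection $\pi:\FM \to B$ has vanishing differential at every point of the moduli space.

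Consequently each connected component of $\FM$ is contained in a single fiber $\FM_{b_0}$: around every monopole, the parametrised moduli space sits over one isolated base parameter, with no nearby $b \neq b_0$ producing nearby monopoles. This is precisely the type 1 configuration depicted in Figure \ref{fig:type1}. The only non-cosmetic step is verifying that $d_{b_0}\eta$ gives an isomorphism (not a mere surjection) onto the cokernel, and this is forced as soon as the equality $\dim H^2(F_{\hat{\mathsf{C}}_0}) = \dim B$ from Theorem \ref{thm:cylindricalTransversality} is invoked; after that, the type 1 statement is essentially tautological.
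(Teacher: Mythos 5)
Your choice to reuse the set $\mathcal{Z}_{reg}$ produced by Theorem \ref{thm:cylindricalTransversality} is reasonable, but the central step of your argument has a genuine gap: projecting the kernel equation onto $H^2(F_{\hat{\mathsf{C}}_0})$ does \emph{not} annihilate the term $d_{\hat{\mathsf{C}}_0}\mathcal{F}_{(g(b_0),\eta(b_0))}(\underline{\hat{\mathsf{C}}})$. The space $H^2(F_{\hat{\mathsf{C}}_0})$ is the cokernel of $\widehat{\underline{SW}}_{\hat{\mathsf{C}}_0}$ restricted to $T_{\hat{\mathsf{C}}_0}\partial_\infty^{-1}(\mathsf{C}_\infty)$, i.e.\ to variations whose boundary value is frozen at $\mathsf{C}_\infty$, whereas the fixed-parameter linearization acts on $T_{\hat{\mathsf{C}}_0}\hat{\mathcal{C}}_{\mu,sw}$, which by the decomposition (\ref{equ:decomp-of-config}) contains the extra direction $T_{\mathsf{C}_\infty}(\mathcal{Z}_s/\mathcal{G}_s)$ along the circle of boundary monopoles. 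Whether $\widehat{\underline{SW}}_{\hat{\mathsf{C}}_0}$ of that extra direction has a nonzero component in $H^2(F_{\hat{\mathsf{C}}_0})$ is precisely the Case 0 / Case 1 dichotomy of Section \ref{sec:obstruction}: writing $\underline{\hat{\mathsf{C}}}=\underline{\hat{\mathsf{C}}}^{\mathrm{fix}}+c\,v_\partial$ with $v_\partial$ a lift of the boundary direction, in Case 0 the projected equation reads $\pi_{H^2(F)}\bigl(d_{b_0}\eta(t)\bigr)=-c\,\pi_{H^2(F)}\bigl(\widehat{\underline{SW}}_{\hat{\mathsf{C}}_0}(v_\partial)\bigr)$ with nonzero right-hand side, so it admits kernel vectors with $t\neq 0$ and the moduli space is of type 0, not type 1. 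Both cases are compatible with everything you quote from Theorem \ref{thm:cylindricalTransversality} — $\dim H^1(F_{\hat{\mathsf{C}}_0})=1$, $\dim H^2(F_{\hat{\mathsf{C}}_0})=\dim B$, and $d_{b_0}\eta$ mapping isomorphically onto $H^2(F_{\hat{\mathsf{C}}_0})$ — as diagrams \ref{L0} and \ref{L1} show (the two cases differ only in $H^1_{\hat{\mathsf{C}}_0}$ and $H^2_{\hat{\mathsf{C}}_0}$, the cohomology of $\widehat{\mathcal{K}}_{\hat{\mathsf{C}}_0}$, not of $F_{\hat{\mathsf{C}}_0}$). Since Case 0 genuinely occurs — it is the basis of the results for homologically trivial loops, cf.\ Section \ref{sec:local-gluing-theory-for-case-0} and Theorem \ref{thm:nullhomology-orientable} — the type 1 conclusion cannot be a formal consequence of those dimension counts; your argument tacitly assumes $H^2_{\hat{\mathsf{C}}_0}=H^2(F_{\hat{\mathsf{C}}_0})$, which is equivalent to what is to be proved.

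The missing ingredient is topological, not generic: one must exclude Case 0 using the hypothesis that the surgery circle is homologically nontrivial, equivalently that the restriction $H^1(X_0;\R)\to H^1(\S^1\times\S^2;\R)$ is surjective. As in the Case 0 discussion of Section \ref{sec:obstruction}, surjectivity lets one represent the boundary direction by a closed $1$-form $\alpha$ on $X_0$ with $\partial_\infty\alpha$ generating $T_{\mathsf{C}_\infty}(\mathcal{Z}_s/\mathcal{G}_s)$, which forces $\widehat{\underline{SW}}_{\hat{\mathsf{C}}_0}\bigl(T_{\mathsf{C}_\infty}(\mathcal{Z}_s/\mathcal{G}_s)\bigr)$ into the image of the fixed-boundary-value part; hence Case 0 is impossible and every monopole satisfies $\dim H^1_{\hat{\mathsf{C}}_0}=1$, $\dim H^2_{\hat{\mathsf{C}}_0}=\dim B$. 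Only after this exclusion does your projection step (and the resulting verticality of $\FM(X_0)$, cf.\ Proposition \ref{discreteCircle}) become valid — and then indeed with the same $\mathcal{Z}_{reg}$ and no further shrinking. As written, your proof would equally "establish" type 1 for homologically trivial loops, contradicting the type 0 behavior the paper relies on later, so you need to state and use this hypothesis explicitly.
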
 

In our case, we have $\Pi(E_{X_0})$ on one side, and $\Pi(E_{\S^1 \times D^3})$ or $\Pi(E_{D^2 \times \S^2 })$ on the other side. 
We hope each of $\eta_{\S^1 \times D^3}: B \to \Pi(E_{\S^1 \times D^3})$ and $\eta_{D^2 \times \S^2 }: B \to \Pi(E_{D^2 \times \S^2 })$ 
sents each point of $B$ to a fixed PSC metric and vanished perturbation, and this property is preserved after gluing. 
Thus, we have to find an $\eta_{X_0}: B \to \Pi(E_{X_0})$ such that it vanishes on $[R_{\text{vanish}},\infty)$ of the neck.

\begin{prop}\label{prop:perturbationCondition}
    It's possible to choose families perturbations $\eta_{X_0}$ satisfying the following assumptions:
\begin{enumerate}
\item[A1] $\eta_{X_0}$ doesn't meet the wall.
\item[A3] (A5) of Proposition 7.2 in \cite{BK2020} is satisfied.
\item[A4] $\eta_{X_0}: B \to \Pi(E_{X_0})$ is generic in the family sense (in the sense of Theorem \ref{thm:regular}).
\item[A5] For any $b\in B$, $\eta_{X_0}(b)$ vanishes on $[R_{\text{vanish}},\infty)$ of the neck.
\end{enumerate}
\end{prop}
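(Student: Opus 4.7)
The plan is to carry out the same Sard--Smale / universal moduli space argument as in the proofs of Theorem \ref{thm:regular} and Theorem \ref{thm:cylindricalTransversality}, but restricted to the subspace of perturbations that are compactly supported away from the neck-end. Concretely, define
\[
\mathcal{Z}^{k-1}_{\mathrm{comp}} := \{\eta \in \mathcal{Z}^{k-1} : \eta(b)|_{[R_{\mathrm{vanish}},\infty)\times N} \equiv 0 \text{ for all } b \in B\},
\]
which is a closed affine subspace of $\mathcal{Z}^{k-1}$. Assumption A5 asks us to find $\eta_{X_0}\in\mathcal{Z}^{k-1}_{\mathrm{comp}}$, and A1, A3, A4 will follow if we can rerun the transversality machinery after replacing $\mathcal{Z}^{k-1}$ by $\mathcal{Z}^{k-1}_{\mathrm{comp}}$.

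The key observation I would establish first is that $\mathcal{Z}^{k-1}_{\mathrm{comp}}$ is still large enough for transversality. For A1, note that the wall $\mathcal{F}\mathcal{W}^{k-1}_{g,s}$ is cut out fiberwise by the affine condition $F^{+_{g(b)}}_A + i\eta(b) = 0$ for some $A$; its codimension in each fiber is $b^+(X_0)\geq \dim B+1$, so the complement $\mathcal{Z}^{k-1}_{\mathrm{comp}}\setminus \mathcal{F}\mathcal{W}^{k-1}_{g,s}$ is open and dense in $\mathcal{Z}^{k-1}_{\mathrm{comp}}$ by a small-perturbation argument carried out interior to the compact region. For A4 (and the variant A3), form the universal moduli space
\[
\mathbb{M}_{\mathrm{comp}} := \{(b,\hat{\mathsf{C}}_0,\mathsf{C}_\infty,\eta)\in \mathcal{X}\oplus \FM_s \times \mathcal{Z}^{k-1}_{\mathrm{comp}}: F_\eta(b,\hat{\mathsf{C}}_0,\mathsf{C}_\infty)=0\}
\]
as in the proof of Theorem \ref{thm:cylindricalTransversality}, and verify that the associated linearization
\[
D_{(b,\hat{\mathsf{C}}_0,\mathsf{C}_\infty,\eta)}F: (\tau,\underline{\hat{\mathsf{C}}}_0,\underline{\mathsf{C}}_\infty,\zeta)\mapsto (\widehat{\underline{SW}}_{\eta(b)}(\underline{\hat{\mathsf{C}}}_0)+\zeta(b)+d_b\eta(\tau),\partial_\infty\underline{\hat{\mathsf{C}}}_0,\underline{\mathsf{C}}_\infty)
\]
is still surjective onto $\hat{\mathcal{Y}}_{g(b),\mu}\oplus T_{\mathsf{C}_\infty}\M_s\oplus T_{\mathsf{C}_\infty}\M_s$ even when $\zeta$ is restricted to vanish on the tail. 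Then Sard--Smale applied to the projection $\pi_{\mathrm{comp}}\colon\mathbb{M}_{\mathrm{comp}}\to\mathcal{Z}^{k-1}_{\mathrm{comp}}$ produces the desired second-category set of regular perturbations.

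The main obstacle is precisely this surjectivity statement under the compact-support restriction. The issue is that unique continuation / elliptic arguments need to rule out nontrivial cokernel elements supported on the tail. Here I would use that any element $(\Psi,\omega)\in\mathrm{coker}\,\widehat{\underline{SW}}_{\eta(b)}$ is $L^2$ and satisfies an elliptic equation $\widehat{\underline{SW}}^*_{\eta(b)}(\Psi,\omega)=0$; by the standard Aronszajn unique continuation argument on the cylindrical end (where the equation reduces to an asymptotically constant-coefficient operator), if the pairing of $(\Psi,\omega)$ with every compactly supported perturbation variation $\zeta$ vanishes, then $\omega$ vanishes on the interior of $X_0$, hence $(\Psi,\omega)=0$. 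Combined with the spinor/connection parts of the existing transversality proof (which already used only interior variations for the $\D_A\phi$ piece), this gives surjectivity.

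Finally, A3 (the analogue of \cite{BK2020} Proposition 7.2 (A5)) is established as in Theorem \ref{thm:cylindricalTransversality}: from the surjectivity above one reads off $\dim H^2(F(\hat{\mathsf{C}}_0))\leq\dim B$, and the index computation (\ref{chiF}) upgrades this to equality. Taking the intersection of the four resulting second-category subsets of $\mathcal{Z}^{k-1}_{\mathrm{comp}}$ yields a Baire-generic choice of $\eta_{X_0}$ satisfying A1--A5 simultaneously, which completes the proof.
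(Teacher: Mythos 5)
Your proposal is correct and follows essentially the same route as the paper: the paper's own proof is a one-line deferral to the proof of Proposition 7.2 of \cite{BK2020} (with the dimension condition replaced), and that argument is precisely what you reconstruct — restrict to perturbation families vanishing on $[R_{\text{vanish}},\infty)$ of the neck, avoid the wall using $b^+\geq \dim B+1$, rerun the universal-moduli-space/Sard--Smale argument of Theorems \ref{thm:regular} and \ref{thm:cylindricalTransversality} over this smaller perturbation space, and use unique continuation (pointwise spanning by Clifford multiplication where the spinor is nonzero, then Aronszajn to propagate vanishing into the end) to show compactly supported variations still kill the relevant cokernel, with the index computation (\ref{chiF}) upgrading $\dim H^2(F(\hat{\mathsf{C}}_0))\leq\dim B$ to equality for the analogue of (A5) of \cite{BK2020}. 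The only caveat is presentational: the step ``$\omega$ vanishes on the interior, hence $(\Psi,\omega)=0$'' compresses the order of the standard argument (orthogonality to $\zeta$ gives vanishing only on the region where perturbations are allowed, and Aronszajn is what extends the vanishing over the tail after the spinor part is handled), but the ingredients you list are the right ones and these details are standard.
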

\begin{proof}
We just replace the dimension condition in the proof of Proposition 7.2 in \cite{BK2020}.
\end{proof}

\section{The proof of $1$-surgery formula for families invariant}

\subsection{Computation of obstruction bundles}\label{sec:obstruction}
Suppose $\sigma = (g,\eta)$ is a regular section in the sense of Theorem \ref{thm:regular}
, and $b\in B$ is a point such that $sw_{\sigma(b)}^{-1}$ is nonempty. As the discussion in the proof of Theorem \ref{thm:regular}, either
\begin{align*}
\dim \ker [\underline{sw}_{\sigma(b)}] &= 0  \\
\dim \coker [\underline{sw}_{\sigma(b)}] &= 1-\dim B = -\ind \underline{sw}_{\sigma(b)}
\end{align*}
or
\begin{align*}
\dim \ker [\underline{sw}_{\sigma(b)}] &= 1  \\
\dim \coker [\underline{sw}_{\sigma(b)}] &= -\dim B= 1-\ind \underline{sw}_{\sigma(b)}.
\end{align*}

Alternatively, one can compute $H^2(F(\hat{\mathsf{C}}_0)) $ from the exact sequence \ref{equ:longExact}
\begin{equation}
\cdots
\to H^1_{\hat{\mathsf{C}}_0}
\to H^1(B_{\hat{\mathsf{C}}_0})
\to H^2(F_{\hat{\mathsf{C}}_0}) 
\to H^2_{\hat{\mathsf{C}}_0}
\to 0
\end{equation}
Not like the formula (\ref{equ:obstructionF=kerexD*}), now we have irrducible monopole. Hence 
\[
H^2(F(\hat{\mathsf{C}}_0)) = \text{ker}_{\text{ex}}\D^*_{\hat{A}_0}\oplus H^2_+(\hat{N})\oplus L^2_{top}, 
\] 
As in (\ref{equ:kerex=ker}), we have
\[
\ker_{ex}\D^*_{\hat{A}_0} = \ker_{L^2}\D^*_{\hat{A}_0}.
\]


Let $\hat{N}_1 \simeq X_0$. Let $\eta$ be the perturbation family chosen in Theorem \ref{thm:cylindricalTransversality}. Let $b\in B$ be a point and $\hat{\mathsf{C}}_1$ be an $\eta(b)$-monopole of $\hat{N}_1$. The sequence (\ref{equ:longExact}) becomes 
\begin{equation}\label{L?}
\begin{tikzcd}
& \dim H^0(F_{\hat{\mathsf{C}}_1})=0 \arrow[d]
& \dim H^1(F_{\hat{\mathsf{C}}_1})=1 \arrow[d]
& \dim H^2(F_{\hat{\mathsf{C}}_1})=\dim B \arrow[d] & \\
\arrow[r, phantom, ""{coordinate, name=Y}] & \dim H^0_{\hat{\mathsf{C}}_1}=0  \arrow[d]\arrow[r, phantom, ""{coordinate, name=Z}]
&  \dim H^1_{\hat{\mathsf{C}}_1}=?  \arrow[d]\arrow[r, phantom, ""{coordinate, name=T}]
& \dim H^2_{\hat{\mathsf{C}}_1} = ? \arrow[d] &\\
0   \arrow[ruu,
"",
rounded corners,
to path={
-| (Y) [near end]\tikztonodes
|-  (\tikztotarget)}]  
& \dim H^0(B_{\hat{\mathsf{C}}_1}) = 1  \arrow[ruu,
"",
rounded corners,
to path={
-| (Z) [near end]\tikztonodes
|-  (\tikztotarget)}]  
&\dim H^1(B_{\hat{\mathsf{C}}_1}) = 1\arrow[ruu,
"",
rounded corners,
to path={
-| (T) [near end]\tikztonodes
|-  (\tikztotarget)}]  
& 0 & 
\end{tikzcd}
\tag{\textbf{L?}}
\end{equation}
As we discussed in the proof of Theorem \ref{thm:cylindricalTransversality}, the homomorphism $ H^0(B_{\hat{\mathsf{C}}_1}) \to H^1(F_{\hat{\mathsf{C}}_1})$ is an isomorphism. Since $\chi(\widehat{\mathcal{K}}_{\hat{\mathsf{C}}_1}) = -d(\hat{\mathsf{C}}_1)= -1 +\dim B$, there are only two cases: either $\dim H^1_{\hat{\mathsf{C}}_1} =  0$ and $\dim H^2_{\hat{\mathsf{C}}_1} = \dim B -1$, or $\dim H^1_{\hat{\mathsf{C}}_1}=1$ and $\dim H^2_{\hat{\mathsf{C}}_1} = \dim B$. In fact both cases are possible.

\textbf{Case 0:} $\dim H^1_{\hat{\mathsf{C}}_1} =  0$ and $\dim H^2_{\hat{\mathsf{C}}_1} = \dim B -1$. In this case we have
\begin{equation}\label{L0}
\begin{tikzcd}
& \dim H^0(F_{\hat{\mathsf{C}}_1})=0 \arrow[d]
& \dim H^1(F_{\hat{\mathsf{C}}_1})=1 \arrow[d]
& \dim H^2(F_{\hat{\mathsf{C}}_1})=\dim B \arrow[d] & \\
\arrow[r, phantom, ""{coordinate, name=Y}] & \dim H^0_{\hat{\mathsf{C}}_1}=0  \arrow[d]\arrow[r, phantom, ""{coordinate, name=Z}]
&  \dim H^1_{\hat{\mathsf{C}}_1}=0  \arrow[d]\arrow[r, phantom, ""{coordinate, name=T}]
& \dim H^2_{\hat{\mathsf{C}}_1} = \dim B -1 \arrow[d] &\\
0   \arrow[ruu,
"",
rounded corners,
to path={
-| (Y) [near end]\tikztonodes
|-  (\tikztotarget)}]  
& \dim H^0(B_{\hat{\mathsf{C}}_1}) = 1  \arrow[ruu,
"",
rounded corners,
to path={
-| (Z) [near end]\tikztonodes
|-  (\tikztotarget)}]  
&\dim H^1(B_{\hat{\mathsf{C}}_1}) = 1\arrow[ruu,
"",
rounded corners,
to path={
-| (T) [near end]\tikztonodes
|-  (\tikztotarget)}]  
& 0 & 
\end{tikzcd}
\tag{\textbf{L0}}
\end{equation}
Since the virtual dimension of the moduli space for a fixed parameter is $\dim H^1_{\hat{\mathsf{C}}_1} =  0$, $\hat{\mathsf{C}}_1$ is of type 0 (see Figure \ref{fig:type1}).

In the first row of diagram \ref{equ:3T}, $L^+_1 = \partial_\infty^c (H^1_{\hat{\mathsf{C}}_1})$. Hence $L^+_1$ is certainly $0$. By complementarity equations from the Lagrangian condition
, we have 
\[
L_1^+\oplus L_1^- =  T_{C_\infty}\M_\sigma.
\]
So $L_1^-$ is $\R$. Let $\hat{N}_2 = \S^1\times D^3$. Then the dimension of $H^2(F_{\hat{\mathsf{C}}_2})$, $L_2^-$, $\mathfrak{C}_1^-$ and $\mathfrak{C}_2^-$ are computed as in \ref{prop:trivialObs}. We deduce $\dim {\H}_r^- = \dim B -1$ by the obstruction diagram \ref{Fig:O0}. Since the obstruction space has one less dimension than the parameter space, we have type 0 configuration $\hat{\mathsf{C}}_1 \#_r \hat{\mathsf{C}}_2$ (see Figure \ref{fig:genericCell}).
\begin{figure}
\begin{center}
\begin{tikzpicture}[commutative diagrams/every diagram]
\node (P0) at (1.7cm, 0cm) {$0$};
\node (P1) at (5cm, 0cm) {$0$};
\node (P2) at (8.3cm, 0cm) {$0$};

\node (P3) at (0cm, -1.6cm) {$0$};
\node (P4) at (1.7cm, -1.6cm) {$ \ker\Delta_-^c$ };
\node (P41) at (0.8cm, -1.1cm) {$\dim B -1$ };
\node (P5) at (5cm, -1.6cm) {$H^2(F_{\hat{\mathsf{C}}_1}) \oplus H^2(F_{\hat{\mathsf{C}}_2})$};
\node (P51) at (4.3cm, -1.1cm) {$\dim B$};
\node[red] (P52) at (5.5cm, -1.1cm) {$0$};
\node (P6) at (8.3cm,-1.6cm) { $L_1^- + L_2^- $};
\node (P61) at (7.8cm,-1.1cm) { $1$};
\node[red] (P62) at (8.8cm,-1.1cm) { $0$};
\node (P7) at (10cm, -1.6cm) {$0$};

\node (P8) at (0cm, -3.2cm) {$0$};
\node (P9) at (1.7cm, -3.2cm) {${\H}_r^-$ };
\node (P91) at (0.8cm, -2.7cm) {$\dim B -1$ };
\node (P10) at (5cm, -3.2cm) {$\ker_{ex}\hat{\mathcal{T}}^*_{\hat{\mathsf{C}}_1}\oplus \ker_{ex}\hat{\mathcal{T}}^*_{\hat{\mathsf{C}}_2}$};
\node (P11) at (8.3cm,-3.2cm) { $\hat{L}_1^- + \hat{L}_2^-$};
\node (P12) at (10cm, -3.2cm) {$0$};

\node (P13) at (0cm, -4.8cm) {$0$};
\node (P14) at (1.7cm, -4.8cm) {$\ker\Delta_-^0$ };
\node (P141) at (1.2cm, -5.3cm) {$0$};
\node (P15) at (5cm, -4.8cm) {$\mathfrak{C}_1^-\oplus \mathfrak{C}_2^-$};
\node (P151) at (4.5cm, -5.3cm) {$0$};
\node[red] (P152) at (5.5cm, -5.3cm) {$1$};
\node (P16) at (8.3cm,-4.8cm) { $\mathfrak{C}_1^- + \mathfrak{C}_2^-$};
\node (P161) at (7.8cm, -5.3cm) {$0$};
\node[red] (P162) at (8.8cm, -5.3cm) {$1$};
\node (P17) at (10cm, -4.8cm) {$0$};

\node (P18) at (1.7cm,-6.4cm) {$0$};
\node (P19) at (5cm, -6.4cm) {$0$};
\node (P20) at (8.3cm, -6.4cm) {$0$};

\path[commutative diagrams/.cd, every arrow, every label]
(P0) edge node {} (P4)
(P1) edge node {} (P5)
(P2) edge node {} (P6)

(P3) edge node {} (P4)
(P4) edge node {$S_r$} (P5)
(P5) edge node {$\Delta_-^c$} (P6)
(P6) edge node {} (P7)

(P4) edge node {} (P9)
(P5) edge node {} (P10)
(P6) edge node {} (P11)

(P8) edge node {} (P9)
(P9) edge node {$S_r$} (P10)
(P10) edge node {$\Delta_+^c$} (P11)
(P11) edge node {} (P12)

(P9) edge node {} (P14)
(P10) edge node {} (P15)
(P11) edge node {} (P16)

(P13) edge node {} (P14)
(P14) edge node {$S_r$} (P15)
(P15) edge node {$\Delta_-^0$} (P16)
(P16) edge node {} (P17)

(P14) edge node {} (P18)
(P15) edge node {} (P19)
(P16) edge node {} (P20)
;
\end{tikzpicture}
\end{center}
\caption{Obstruction diagram for case 0 and $\hat{N}_2 = \S^1\times D^3$, with dimension for each term.}
    \label{Fig:O0}
\end{figure}

We have to identify $ {\H}_r^-$ explicitly. By the definition of $H^2_{\hat{\mathsf{C}}_1}$ and $H^2(F_{\hat{\mathsf{C}}_1})$ we have 
\begin{align*}
\dim(\widehat{\underline{SW}}T_{\hat{\mathsf{C}}_1}( \hat{\mathcal{C}}_{\mu,sw}/\hat{\mathcal{G}}_{\mu,ex} ))^\bot &= \dim H^2_{\hat{\mathsf{C}}_1} = \dim B -1\\
\dim(\widehat{\underline{SW}}T_{\hat{\mathsf{C}}_1} (\partial_\infty^{-1}(\mathsf{C}_\infty)/\hat{\mathcal{G}}_{\mu}  ))^\bot  &= \dim H^2(F_{\hat{\mathsf{C}}_1})=\dim B
\end{align*}
Namely, the image of 
\[
T_{\hat{\mathsf{C}}_1}( \hat{\mathcal{C}}_{\mu,sw}/\hat{\mathcal{G}}_{\mu,ex} )
\cong
 T_{\hat{\mathsf{C}}_1} (\partial_\infty^{-1}(\mathsf{C}_\infty)/\hat{\mathcal{G}}_{\mu}  )/i\R (\hat\Phi_1,0) \oplus T_{\mathsf{C}_\infty}(\mathcal{Z}_s/\mathcal{G}_s)
\]
under $\widehat{\underline{SW}}_{\hat{\mathsf{C}}_1}$ has one more dimension than the image of 
\[
T_{\hat{\mathsf{C}}_1} (\partial_\infty^{-1}(\mathsf{C}_\infty)/\hat{\mathcal{G}}_{\mu}  )
\]
under $\widehat{\underline{SW}}_{\hat{\mathsf{C}}_1}$. Since $\widehat{\underline{SW}}_{\hat{\mathsf{C}}_0}(i\R (\hat\Phi_1,0) ) =0$ and $\widehat{\underline{SW}}_{\hat{\mathsf{C}}_1}( T_{\mathsf{C}_\infty}(\mathcal{Z}_s/\mathcal{G}_s))$ is $1$-dimensional, this means that
\[
\widehat{\underline{SW}}_{\hat{\mathsf{C}}_1}( T_{\mathsf{C}_\infty}(\mathcal{Z}_s/\mathcal{G}_s)) 
\notin 
\widehat{\underline{SW}}_{\hat{\mathsf{C}}_1} (T_{\hat{\mathsf{C}}_1} (\partial_\infty^{-1}(\mathsf{C}_\infty)/\hat{\mathcal{G}}_{\mu}  )).
\]
Assume that the restriction $H^1(X_0,\R) \to H^1(\S^1\times \S^2,\R)$ is surjective, one can find a closed form $\alpha\in \Omega^1(X_0)$ such that $\partial_\infty \alpha \in T_{\mathsf{C}_\infty}(\mathcal{Z}_s/\mathcal{G}_s)$. Then $d^+\alpha = 0$ so
\[
\widehat{\underline{SW}}_{\hat{\mathsf{C}}_1}( T_{\mathsf{C}_\infty}(\mathcal{Z}_s/\mathcal{G}_s)) 
\in 
\widehat{\underline{SW}}_{\hat{\mathsf{C}}_1} (T_{\hat{\mathsf{C}}_1} (\partial_\infty^{-1}(\mathsf{C}_\infty)/\hat{\mathcal{G}}_{\mu}  )).
\]
Hence in this case (Case 0) $H^1(X_0,\R) \to H^1(\S^1\times \S^2,\R)$ is not surjective, i.e. $L^1_{top}=0$ and $(L^1_{top})^\bot =\R =  H^1(\S^1\times \S^2,\R)$. From the long exact sequence
\begin{equation}\label{equ:boundaryLongExact}
\begin{tikzcd}
H^3(N;\R) 
& H^3(\hat{N};\R)\arrow[l] \arrow[d, phantom, ""{coordinate, name=X}] 
& H^3(\hat{N},N;\R)\arrow[l]\\
H^2(N;\R) \arrow[urr,
"",
rounded corners,
to path={
|- (X) [near end]\tikztonodes
-|  (\tikztotarget)}]  
& H^2(\hat{N};\R)\arrow[l]  \arrow[d, phantom, ""{coordinate, name=Y}] 
& H^2(\hat{N},N;\R)\arrow[l,"f"']\\
H^1(N;\R) \arrow[rru,
"\delta",
rounded corners,
to path={
|- (Y) [near end]\tikztonodes
-|  (\tikztotarget)}]  
& H^1(\hat{N};\R)\arrow[l]
& H^1(\hat{N},N;\R)\arrow[l]
\end{tikzcd}
\tag{\textbf{BL}}
\end{equation}
(where $ H^*(\hat{N},N;\R)$ is the de Rham cohomology with compact support on the cylindrical manifold $\hat{N}$) and the Poincar\'e dual theorem:
\begin{align*}
H^2(N;\R) &\cong H^1(N;\R)\\
\alpha &\mapsto *\alpha
\end{align*}
\begin{equation}\label{equ:PDin2d}
\hat *:H^2(\hat{N},N;\R) \cong H^2(\hat{N};\R) 
\end{equation}
\[
 H^3(\hat{N},N;\R) \cong H^1(\hat{N},N;\R)
\]
\[
H^3(\hat{N};\R) \cong  H^1(\hat{N},N;\R)
\]
we deduce that (see Figure \ref{fig:BoundaryExactStructure1})
\[
H^2(N;\R) = *L^1_{top} \oplus L^2_{top}
\]
and 
\[
H^1(N;\R) = L^1_{top} \oplus *L^2_{top},
\]
where 
\[
L^*_{top} := \text{im }H^*(\hat{N};\R) \to H^*({N};\R).
\]
Moreover $L^2_{top}  \cong *L^2_{top} = (L^1_{top})^\bot $ is isomorphic to $\delta( (L^1_{top})^\bot)$, the kernel of the natural forgetful morphism $f :  H^2(\hat{N},N;\R) \to H^2(\hat{N};\R)$ in the long exact sequence (\ref{equ:boundaryLongExact}). 
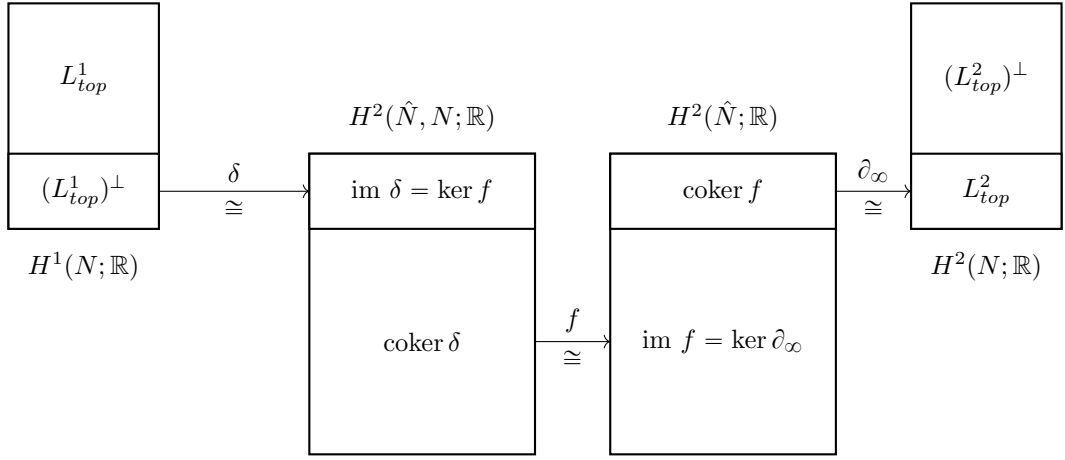
\begin{figure}
    \centering
    \begin{tikzpicture}

    \filldraw[fill=white,draw=black,thick] (0,0) -- (2,0) -- (2,3) -- (0,3) -- cycle; 
    \node[black] at (1,2) {$L^1_{top} $};
    \filldraw[fill=white,draw=black,thick] (0,0) -- (2,0) -- (2,1) -- (0,1) -- cycle;
    \node[black] at (1,0.5) {$(L^1_{top})^\bot $};

    \node at (1,-0.5) {$ H^1(N;\R) $};
    \node[anchor=south] at (3,0.5) {$\delta$};
    \draw[->] (2,0.5) -- (4,0.5);
    \node[anchor=north] at (3,0.5) {$\cong$};
    
    \begin{scope}[xshift=4cm]
    \filldraw[fill=white,draw=black,thick] (0,-3) -- (3,-3) -- (3,1) -- (0,1) -- cycle;
    \node at (1.5,-1.5) {$\coker \delta$};
    \filldraw[fill=white,draw=black,thick] (0,0) -- (3,0) -- (3,1) -- (0,1) -- cycle;
     \node at (1.5,0.5) {$\text{im } \delta = \ker f$};
         
     \node at (1.5,1.5) {$ H^2(\hat N, N;\R) $};
     
       \node[anchor=south] at (3.5,-1.5) {$f$};
    \draw[->] (3,-1.5) -- (4,-1.5);
    \node[anchor=north] at (3.5,-1.5) {$\cong$};
        \end{scope}

        \begin{scope}[xshift=8cm]
    \filldraw[fill=white,draw=black,thick] (0,-3) -- (3,-3) -- (3,1) -- (0,1) -- cycle;
    \node at (1.5,-1.5) {$\text{im }f = \ker \partial_\infty$};
    \filldraw[fill=white,draw=black,thick] (0,0) -- (3,0) -- (3,1) -- (0,1) -- cycle;
     \node at (1.5,0.5) {$\coker f$};
         
     \node at (1.5,1.5) {$ H^2(\hat N;\R) $};
        
\node[anchor=south] at (3.5,0.5) {$\partial_\infty$};
    \draw[->] (3,0.5) -- (4,0.5);
    \node[anchor=north] at (3.5,0.5) {$\cong$};
    
        \end{scope}
    	
    	 \begin{scope}[xshift=12cm]
     \filldraw[fill=white,draw=black,thick] (0,0) -- (2,0) -- (2,3) -- (0,3) -- cycle; 
    \node[black] at (1,2) {$(L^2_{top})^\bot $};
    \filldraw[fill=white,draw=black,thick] (0,0) -- (2,0) -- (2,1) -- (0,1) -- cycle;
    \node[black] at (1,0.5) {$L^2_{top} $};

    \node at (1,-0.5) {$ H^2(N;\R) $};
        \end{scope}
        
    \end{tikzpicture}
    \caption{Some terms in the long exact sequence (\ref{equ:boundaryLongExact})}
    \label{fig:BoundaryExactStructure1}
\end{figure} 
The isomorphism $\hat *$ in (\ref{equ:PDin2d}) is the Hodge star operator on $\hat N$ and by definition it comes from the cup product, which is a nondegenrate pairing 
\begin{align*}
\int_{\hat{N}} : H^2(\hat{N},N;\R) &\otimes H_2(\hat{N};\R) \to \R\\
[\omega] &\otimes [\tau] \phantom{===.}\mapsto \int_{\hat{N}} \omega \wedge \tau.
\end{align*}
If $f([\alpha])=0$, then there exists a $1$-form $a\in C^1(\hat{N})$ such that $da= \alpha$. Then for any cocycle $\beta \in C^2(\hat{N},N)$,
\begin{align*}
\int_{\hat{N}}\alpha \wedge \beta &= \int_{\hat{N}}da \wedge \beta\\
&= \int_{\hat{N}}d(a \wedge \beta)\pm \int_{\hat{N}}a \wedge d\beta \\
&=  \int_{\partial \hat{N}}a \wedge \beta\pm 0\\
&= 0.
\end{align*}
Hence $[\alpha]$ is in the radical of the intersection form $Q$ of $H^2(\hat{N},N;\R)$. Conversely, if $f[\alpha] \neq 0$, then there exists an element $[\omega]\in H^2(\hat{N},N;\R) $ such that 
\[
\int_{\hat{N}}\omega \wedge\alpha \neq 0.
\]
So $\text{Rad }Q$ of $H^2(\hat{N},N;\R)$ is precisely $\ker f =\delta( (L^1_{top})^\bot) \cong *L^2_{top}$.

From above discussion and based on Figure \ref{fig:BoundaryExactStructure1}, we have Figure \ref{fig:BoundaryExactStructure2}, where $\partial_\infty^0 = \partial_\infty -\partial_\infty^c $ and $\partial_\infty^c $ is the contraction by $\partial_t$ and then taking $\partial_\infty$. In particular, one has an exact sequence
\begin{equation}\label{L2exsequence}
 1\to \left.\ker_{L^2}(\hat{d}+\hat{d}^*)\right|_{\Omega^2_+} \to  \left.\ker_{ex}(\hat{d}+\hat{d}^*)\right|_{\Omega^2_+}  \stackrel{\partial_\infty^0}{\to} L^2_{top} \to 1.
 \end{equation}

 \begin{figure}
    \centering
    \begin{tikzpicture}

    \filldraw[fill=white,draw=black,thick] (0,0) -- (2,0) -- (2,3) -- (0,3) -- cycle; 
    \node[black] at (1,2) {$L^1_{top} $};
    \filldraw[fill=white,draw=black,thick] (0,0) -- (2,0) -- (2,1) -- (0,1) -- cycle;
    \node[black] at (1,0.5) {$(L^1_{top})^\bot $};

    \node at (1,-0.5) {$ H^1(N;\R) $};
    \node[anchor=south] at (2.5,0.5) {$\delta$};
    \draw[->] (2,0.5) -- (3,0.5);
    \node[anchor=north] at (2.5,0.5) {$\cong$};
    
    \begin{scope}[xshift=3cm]
    \filldraw[fill=white,draw=black,thick] (0,-3) -- (3,-3) -- (3,1) -- (0,1) -- cycle;
    \node at (1.5,-1) {$Q_+$};
     \node[anchor=south] at (4,-1) {$\hat * = f$};
    \draw[->] (3,-1) -- (5,-1);
    
    \node at (1.5,-2) {$Q_-$};
    \node[anchor=south] at (4,-2) {$\hat * = -f$};
    \draw[->] (3,-2) -- (5,-2);
    \filldraw[fill=white,draw=black,thick] (0,0) -- (3,0) -- (3,1) -- (0,1) -- cycle;
     \node at (1.5,0.5) {$\text{Rad } Q$};
     \node[anchor=south] at (4,1.3) {$f $};
     \draw[->] (3,1) -- (4.7,1.5);
      \node[anchor=west] at (4.7,1.5) {$0 \in$};
      \node[anchor=south] at (4,0.5) {$\hat * $};
       \node[anchor=north] at (4,0.5) {$\cong$};
    \draw[->] (3,0.5) -- (5,0.5);
         
     \node at (1.5,1.5) {$ H^2(\hat N, N;\R) $};
     
        \end{scope}

        \begin{scope}[xshift=8cm]
    \filldraw[fill=white,draw=black,thick] (0,-3) -- (3,-3) -- (3,1) -- (0,1) -- cycle;
    \node at (1.5,-1) {$\left.\ker_{L^2}(\hat{d}+\hat{d}^*)\right|_{\Omega^2_+} $};
    \node at (1.5,-2) {$\left.\ker_{L^2}(\hat{d}+\hat{d}^*)\right|_{\Omega^2_-} $};
    \filldraw[fill=white,draw=black,thick] (0,0) -- (3,0) -- (3,1) -- (0,1) -- cycle;
     \node at (1.5,0.5) {$\coker f$};
         
     \node at (1.5,1.5) {$\ker_{ex}(\hat{d}+\hat{d}^*)$};
        
\node[anchor=south] at (3.5,0.5) {$\partial_\infty^0$};
    \draw[->] (3,0.5) -- (4,0.5);
    \node[anchor=north] at (3.5,0.5) {$\cong$};
    
        \end{scope}
    	
    	 \begin{scope}[xshift=12cm]
     \filldraw[fill=white,draw=black,thick] (0,0) -- (2,0) -- (2,3) -- (0,3) -- cycle; 
    \node[black] at (1,2) {$(L^2_{top})^\bot $};
    \filldraw[fill=white,draw=black,thick] (0,0) -- (2,0) -- (2,1) -- (0,1) -- cycle;
    \node[black] at (1,0.5) {$L^2_{top} $};

    \node at (1,-0.5) {$ H^2(N;\R) $};
        \end{scope}
        
    \end{tikzpicture}
    \caption{}
    \label{fig:BoundaryExactStructure2}
\end{figure} 

By the computation of the \textbf{ASD} operator $d^+ \oplus d^*
$ (see Example 4.1.24), one has
\[
\ker_{ex}\textbf{ASD}^* = \left.\ker_{ex}(\hat{d}+\hat{d}^*)\right|_{\Omega^2_+} \oplus H^0(\hat{N};\R).
\]
and 
\[
\partial_\infty \ker_{ex}(\textbf{ASD}^*) \cong L^2_{top} \oplus L^0_{top}.
\]

Let $\R_+ \times \S^1\times \S^2$ be the neck of $\hat{N}_1 \simeq X_0$. Let $t$ be the coordinate of $\R_+$. Let $\theta$ be the coordinate of $\S^1$. 
Then $H^1(\S^1\times \S^2;\R) = \R$ is generated by $d\theta$. Let 
\begin{equation}\label{equ:baseDirectionVector}
\alpha = \beta(t)d\theta \in \Omega^1(\hat N)
\end{equation}
where $\beta(t)$ is the cutoff function defined in the begining of the subsection . Then $\partial_\infty \alpha$ generates $T_{\mathsf{C}_\infty}(\mathcal{Z}_s/\mathcal{G}_s)$. Notice that $[d\alpha]$ is precisely $\delta[d\theta]$ by the definition of the connecting homomorphism $\delta$ in the sequence (\ref{equ:boundaryLongExact}).

As before we regard $T_{\mathsf{C}_\infty}(\mathcal{Z}_s/\mathcal{G}_s)$ as a subspace of $T_{\hat{\mathsf{C}}_0}( \hat{\mathcal{C}}_{g(b),\mu,sw}/\hat{\mathcal{G}}_{\mu,ex} )$. So $\alpha$ generates $T_{\mathsf{C}_\infty}(\mathcal{Z}_s/\mathcal{G}_s)$. Hence 
\[
d^+\alpha = \frac{\hat * d + d}{2}\alpha
\]
generates $\widehat{\underline{SW}}_{\hat{\mathsf{C}}_1}( T_{\mathsf{C}_\infty}(\mathcal{Z}_s/\mathcal{G}_s)) $. 
As $[d\alpha] = \delta [d\theta]$ for $[d\theta] \in (L^1_{top})^\bot $ , the projection of $\hat * d \alpha$ to $\ker_{ex}(\hat{d}+\hat{d}^*)$, denoted by $\H(\hat * d \alpha)$, is nonzero (see Figure \ref{fig:BoundaryExactStructure2}). On the other hand, $\H(d \alpha)$ is zero since $f([d\alpha]) = f\circ \delta [d\theta] = 0$. In conclusion, $d^+\alpha$ projects to a nonzero element of $\ker_{ex}(\hat{d}+\hat{d}^*)$, and in addition, $\partial_\infty^0 $ sends this element to $L^2_{top}=\R$.

In the right top of the obstruction diagram (Figure \ref{Fig:O0}) we have the map $\partial_\infty^c: H^2(F_{\hat{\mathsf{C}}_1})  \to L_1^-$. Here $H^2(F_{\hat{\mathsf{C}}_1})$ is a subspace of 
\[
\left.\ker_{ex}(\hat{d}+\hat{d}^*)\right|_{\Omega^2_+}  \subset \ker_{ex}\textbf{ASD}^*\] 
(it's a proper subspace if the kernel of the twisted Dirac operator is nontrivial), and $ L_1^- = (L^1_{top})^\bot = *L^2_{top}  \cong L^2_{top}$. For a self dual two form, $\partial_\infty^c = * \partial_\infty^0$. 
Hence 
\begin{equation}\label{equ:image-of-fiber-nonzero}
\partial_\infty^c \H (\widehat{\underline{SW}}_{\hat{\mathsf{C}}_1}( T_{\mathsf{C}_\infty}(\mathcal{Z}_s/\mathcal{G}_s))) \neq 0
\end{equation}
is nonzero. 
Note that previously in (\ref{equ:decomp-of-config}) we choose $T_{\mathsf{C}_\infty}(\mathcal{Z}_s/\mathcal{G}_s)$ to be the $L^2$-complement of 
\[
T_{\hat{\mathsf{C}}_0} (\partial_\infty^{-1}(\mathsf{C}_\infty)/\hat{\mathcal{G}}_{\mu}  )/i\R (\hat\Phi_0,0)
\] 
in $T_{\hat{\mathsf{C}}_0}( \hat{\mathcal{C}}_{g(b),\mu,sw}/\hat{\mathcal{G}}_{\mu,ex} )$. 
Now we can choose another $T_{\mathsf{C}_\infty}(\mathcal{Z}_s/\mathcal{G}_s)$ such that it still satisfies (\ref{equ:decomp-of-config}) and in addition, 
\[
\H (\widehat{\underline{SW}}_{\hat{\mathsf{C}}_1}( T_{\mathsf{C}_\infty}(\mathcal{Z}_s/\mathcal{G}_s))) \subset H^2(F_{\hat{\mathsf{C}}_1}).
\]
This is possible because $H^2(F_{\hat{\mathsf{C}}_1})$ is by defintion the $L^2$-complement of $\widehat{\underline{SW}}_{\hat{\mathsf{C}}_0} (T_{\hat{\mathsf{C}}_0} (\partial_\infty^{-1}(\mathsf{C}_\infty)/\hat{\mathcal{G}}_{\mu}  ))$.

Recall that Figure \ref{Fig:O0} shows that
\begin{equation}\label{equ:H-=kerDelta-c}
 {\H}_r^- \cong \ker\Delta_-^c.
 \end{equation}
Therefore by (\ref{equ:image-of-fiber-nonzero}) $ {\H}_r^-$ is isomorphic to the $L^2$ complement of $\H (\widehat{\underline{SW}}_{\hat{\mathsf{C}}_1}( T_{\mathsf{C}_\infty}(\mathcal{Z}_s/\mathcal{G}_s))) $ in $H^2(F_{\hat{\mathsf{C}}_1})$. Note that this isomorphism is given by gluing the obstruction spaces of $\hat{N}_1$ and $\Hat{N}_2$ (which is trivial).

Recall that we have chosen the perturbation family $\eta$ such that \[
d_b\eta(T_bB)
\oplus 
\widehat{\underline{SW}}_{\hat{\mathsf{C}}_1} (T_{\hat{\mathsf{C}}_1} (\partial_\infty^{-1}(\mathsf{C}_\infty)/\hat{\mathcal{G}}_{\mu}  )) 
=\hat{\mathcal{Y}}_{g(b),\mu}.
\]
Choose $v\in T_bB$ such that 
\[
\widehat{\underline{SW}}_{\hat{\mathsf{C}}_0}( T_{\mathsf{C}_\infty}(\mathcal{Z}_s/\mathcal{G}_s)) 
\in 
\R d_b\eta(v)
 \oplus
  \widehat{\underline{SW}}_{\hat{\mathsf{C}}_0} (T_{\hat{\mathsf{C}}_0} (\partial_\infty^{-1}(\mathsf{C}_\infty)/\hat{\mathcal{G}}_{\mu}  )).
  \]
Fix any Riemann metric on $B$. Let $V\subset T_bB$ be the othogonal complement of $\R v$, then $\dim V = \dim B -1$ (see Figure \ref{fig:Type0image}). Observe that 
\begin{equation}\label{equ:P-minus-iso-1}
d_b\eta(V) \cong \H_r^-,
\end{equation}
and this isomorphism is given by first gluing perturbations on $\hat{N}_1$ and $\hat{N}_2$ (which is $0$), then projecting it to $\H_r^-$.
 \begin{figure}
    \centering
    \begin{tikzpicture}
        \draw[->] (0,0) -- (3,0) node[below] {$\R v$};
        \draw[->] (0,0) -- (-2.2,-1.1) node[below] {$V\subset T_bB$};
        \draw[->] (0,0) -- (0,3) node[left] {$T_{\hat{\mathsf{C}}_1}( \hat{\mathcal{C}}_{\mu,sw}/\hat{\mathcal{G}}_{\mu,ex} )$};
        \draw[red,  line width=1pt] (0,0) -- (2,2.8) node[right] {$T\M$};
        \node[anchor=west] at (0,-1) {$T_b B$};
        
     	\node[anchor=west] at (5,0.3) {$DF_\eta$};
        
        \draw[->] (5.2,0) -- (5.7,0);
        
    	\begin{scope}[xshift=9cm]
        \draw[->] (0,0) -- (3,0) node[below] {$\widehat{\underline{SW}}_{\hat{\mathsf{C}}_0}( T_{\mathsf{C}_\infty}(\mathcal{Z}_s/\mathcal{G}_s)) $};
        \draw[->] (0,0) -- (-2.2,-1.1) node[below] {$ d_b\eta(V)$};
        \draw[->] (0,0) -- (0,3) node[left] {$\widehat{\underline{SW}}_{\hat{\mathsf{C}}_1} (T_{\hat{\mathsf{C}}_1} (\partial_\infty^{-1}(\mathsf{C}_\infty)/\hat{\mathcal{G}}_{\mu}  ))$};
        \draw[->] (0,0) -- (2,2.8) node[right] {$\R d_b\eta(v)$};
        \draw[dashed] (2,0) -- (2,2.8);
        
    	\end{scope}
    	
    \end{tikzpicture}
    \caption{Case 0 corresponds to Type 0 in Figure \ref{fig:type1}. Remember the othogonal complement of $\widehat{\underline{SW}}_{\hat{\mathsf{C}}_0}( T_{\mathsf{C}_\infty}(\mathcal{Z}_s/\mathcal{G}_s)) $ is $ H^2(F(\hat{\mathsf{C}}_0))$.}
    \label{fig:Type0image}
\end{figure}
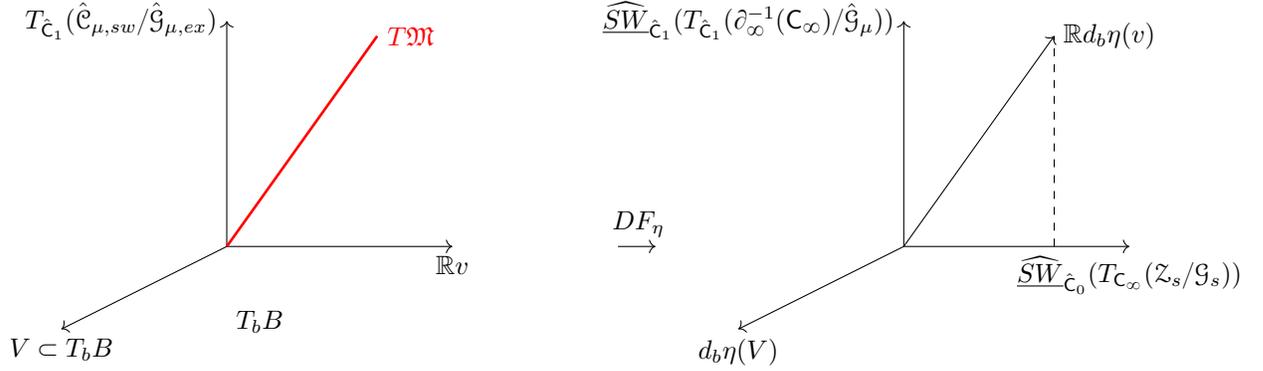 

Now let $\hat{N}_2 = D^2\times \S^2$. From Proposition \ref{prop:trivialObs}, we have Figure \ref{Fig:O0-D2}. Surprisingly, the dimesion of the obstruction space is different from the one in Figure \ref{Fig:O0}. Since the dimension of the obstruction space matches the dimension of the parameter space, we have type 1 configuration $\hat{\mathsf{C}}_1 \#_r \hat{\mathsf{C}}_2$ (see Figure \ref{fig:genericCell}).
\begin{figure}
\begin{center}
\begin{tikzpicture}[commutative diagrams/every diagram]
\node (P0) at (1.7cm, 0cm) {$0$};
\node (P1) at (5cm, 0cm) {$0$};
\node (P2) at (8.3cm, 0cm) {$0$};

\node (P3) at (0cm, -1.6cm) {$0$};
\node (P4) at (1.7cm, -1.6cm) {$ \ker\Delta_-^c$ };
\node (P41) at (1cm, -1.1cm) {$\dim B$ };
\node (P5) at (5cm, -1.6cm) {$H^2(F_{\hat{\mathsf{C}}_1}) \oplus H^2(F_{\hat{\mathsf{C}}_2})$};
\node (P51) at (4.3cm, -1.1cm) {$\dim B$};
\node[orange] (P52) at (5.5cm, -1.1cm) {$1$};
\node (P6) at (8.3cm,-1.6cm) { $L_1^- + L_2^- $};
\node (P61) at (7.8cm,-1.1cm) { $1$};
\node[orange] (P62) at (8.8cm,-1.1cm) { $1$};
\node (P7) at (10cm, -1.6cm) {$0$};

\node (P8) at (0cm, -3.2cm) {$0$};
\node (P9) at (1.7cm, -3.2cm) {${\H}_r^-$ };
\node (P91) at (1cm, -2.7cm) {$\dim B $ };
\node (P10) at (5cm, -3.2cm) {$\ker_{ex}\hat{\mathcal{T}}^*_{\hat{\mathsf{C}}_1}\oplus \ker_{ex}\hat{\mathcal{T}}^*_{\hat{\mathsf{C}}_2}$};
\node (P11) at (8.3cm,-3.2cm) { $\hat{L}_1^- + \hat{L}_2^-$};
\node (P12) at (10cm, -3.2cm) {$0$};

\node (P13) at (0cm, -4.8cm) {$0$};
\node (P14) at (1.7cm, -4.8cm) {$\ker\Delta_-^0$ };
\node (P141) at (1.2cm, -5.3cm) {$0$};
\node (P15) at (5cm, -4.8cm) {$\mathfrak{C}_1^-\oplus \mathfrak{C}_2^-$};
\node (P151) at (4.5cm, -5.3cm) {$0$};
\node[orange] (P152) at (5.5cm, -5.3cm) {$1$};
\node (P16) at (8.3cm,-4.8cm) { $\mathfrak{C}_1^- + \mathfrak{C}_2^-$};
\node (P161) at (7.8cm, -5.3cm) {$0$};
\node[orange] (P162) at (8.8cm, -5.3cm) {$1$};
\node (P17) at (10cm, -4.8cm) {$0$};

\node (P18) at (1.7cm,-6.4cm) {$0$};
\node (P19) at (5cm, -6.4cm) {$0$};
\node (P20) at (8.3cm, -6.4cm) {$0$};

\path[commutative diagrams/.cd, every arrow, every label]
(P0) edge node {} (P4)
(P1) edge node {} (P5)
(P2) edge node {} (P6)

(P3) edge node {} (P4)
(P4) edge node {$S_r$} (P5)
(P5) edge node {$\Delta_-^c$} (P6)
(P6) edge node {} (P7)

(P4) edge node {} (P9)
(P5) edge node {} (P10)
(P6) edge node {} (P11)

(P8) edge node {} (P9)
(P9) edge node {$S_r$} (P10)
(P10) edge node {$\Delta_+^c$} (P11)
(P11) edge node {} (P12)

(P9) edge node {} (P14)
(P10) edge node {} (P15)
(P11) edge node {} (P16)

(P13) edge node {} (P14)
(P14) edge node {$S_r$} (P15)
(P15) edge node {$\Delta_-^0$} (P16)
(P16) edge node {} (P17)

(P14) edge node {} (P18)
(P15) edge node {} (P19)
(P16) edge node {} (P20)
;
\end{tikzpicture}
\end{center}
\caption{Obstruction diagram for case 0 and $\hat{N}_2 = D^2\times \S^2$, with dimension for each term.}
\label{Fig:O0-D2}
\end{figure}

\begin{equation}
\xymatrix{
&
0\ar[d] &
0\ar[d] & 
0 \ar[d]& 
\\
0\ar[r] &
\ker\Delta_-^c \ar[d] \ar[r]^(0.35){S_r} &
H^2(F_{\hat{\mathsf{C}}_1}) \oplus H^2(F_{\hat{\mathsf{C}}_2}) \ar[d] \ar[r]^(0.62){\Delta_-^c} &
L_1^- + L_2^- \ar[d]\ar[r] & 
0\\
0\ar[r] &
{\H}_r^- \ar[d] \ar[r]^(0.3){S_r} &
\ker_{ex}\hat{\mathcal{T}}^*_{\hat{\mathsf{C}}_1}\oplus \ker_{ex}\hat{\mathcal{T}}^*_{\hat{\mathsf{C}}_2} \ar[d] \ar[r]^(0.64){\Delta_+^c} &
\hat{L}_1^- + \hat{L}_2^- \ar[d]\ar[r] &
 0 \\
0\ar[r] &
\ker\Delta_-^0 \ar[d] \ar[r]^{S_r} &
\mathfrak{C}_1^-\oplus \mathfrak{C}_2^- \ar[d] \ar[r]^{\Delta_-^0} &
\mathfrak{C}_1^- + \mathfrak{C}_2^- \ar[d]\ar[r] & 
0 \\
&
0 &
0& 
0& 
\\
}
\tag{\textbf{O}}
\end{equation}

\textbf{Case 1:} $\dim H^1_{\hat{\mathsf{C}}_1} = 1$ and $\dim H^2_{\hat{\mathsf{C}}_1} = \dim B$. Since the virtual dimension of the moduli space for a fixed parameter is $\dim H^1_{\hat{\mathsf{C}}_1} =  1$, $\hat{\mathsf{C}}_1$ is of type 1 (see Figure \ref{fig:type1}). Now the diagram (\ref{L?}) becomes 
\begin{equation}\label{L1}
\begin{tikzcd}
& \dim H^0(F_{\hat{\mathsf{C}}_1})=0 \arrow[d]
& \dim H^1(F_{\hat{\mathsf{C}}_1})=1 \arrow[d]
& \dim H^2(F_{\hat{\mathsf{C}}_1})=\dim B \arrow[d] & \\
\arrow[r, phantom, ""{coordinate, name=Y}] & \dim H^0_{\hat{\mathsf{C}}_1}=0  \arrow[d]\arrow[r, phantom, ""{coordinate, name=Z}]
&  \dim H^1_{\hat{\mathsf{C}}_1}=1  \arrow[d]\arrow[r, phantom, ""{coordinate, name=T}]
& \dim H^2_{\hat{\mathsf{C}}_1} = \dim B \arrow[d] &\\
0   \arrow[ruu,
"",
rounded corners,
to path={
-| (Y) [near end]\tikztonodes
|-  (\tikztotarget)}]  
& \dim H^0(B_{\hat{\mathsf{C}}_1}) = 1  \arrow[ruu,
"",
rounded corners,
to path={
-| (Z) [near end]\tikztonodes
|-  (\tikztotarget)}]  
&\dim H^1(B_{\hat{\mathsf{C}}_1}) = 1\arrow[ruu,
"",
rounded corners,
to path={
-| (T) [near end]\tikztonodes
|-  (\tikztotarget)}]  
& 0 & 
\end{tikzcd}
\tag{\textbf{L1}}
\end{equation}
We deduce that the map $H^1_{\hat{\mathsf{C}}_1} \to H^1(B_{\hat{\mathsf{C}}_1})$ is surjective. On the other hand, the projection of the paramertized moduli space to the parameter space has trivial derivative. Conbined this with the compactness result, we have
\begin{prop}\label{discreteCircle}
Under the dimension assumption (\ref{equ:dimAssumption}), for a generic perturbation family $\eta$, there exists a finite set of points $b_1,\cdots, b_n$ on $B$, such that the paramertrized moduli space for $\hat{N}_1 \simeq X_0$ are in the fiber of $b_1,\cdots, b_n$.
\end{prop}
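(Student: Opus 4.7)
The plan is to show that in Case~1 the projection $\pi\colon\FM(X_0,\hat{\ss},g,\eta)\to B$ has vanishing derivative everywhere, and then invoke compactness of $\FM$ to conclude that it is supported over finitely many fibers.

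First I will identify $T_{(b,[\hat{\mathsf{C}}_1])}\FM$ with the kernel of the linearization
\[
dF_\eta\colon T_bB\oplus T_{[\hat{\mathsf{C}}_1]}\mathcal{B}^{*}_{g(b),\mu,sw}\to\hat{\mathcal{Y}}_{g(b),\mu},\qquad (\tau,v)\mapsto d_b\eta(\tau)+\widehat{\underline{SW}}_{\hat{\mathsf{C}}_1}(v),
\]
of the parametrized Seiberg--Witten map, and fit the projection $d\pi$ onto $T_bB$ into the four-term exact sequence
\[
0\longrightarrow H^1_{\hat{\mathsf{C}}_1}\longrightarrow T_{(b,[\hat{\mathsf{C}}_1])}\FM\xrightarrow{\;d\pi\;} T_bB\xrightarrow{\;[d_b\eta]\;} H^2_{\hat{\mathsf{C}}_1}\longrightarrow 0,
\]
in which right-surjectivity is precisely the transversality condition furnished by Theorem~\ref{thm:cylindricalTransversality}. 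Plugging in the Case~1 dimensions $\dim H^1_{\hat{\mathsf{C}}_1}=1$, $\dim H^2_{\hat{\mathsf{C}}_1}=\dim B$ from diagram~(\ref{L1}), together with $\dim\FM=1$, forces $\operatorname{rank}(d\pi)=0$ at every point of $\FM$. Hence $\pi$ is locally constant on its domain.

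Next I will appeal to compactness: $\FM(X_0,\hat{\ss},g,\eta)$ is a compact $1$-manifold by the cylindrical-end Seiberg--Witten compactness theorem from Chapter~4 of \cite{Nicolaescu2000NotesOS}, applied under the boundary PSC hypothesis and the assumption (A5) of Proposition~\ref{prop:perturbationCondition} that $\eta$ vanishes on the neck. A compact $1$-manifold whose projection to $B$ has vanishing derivative can only have finitely many connected components, each mapped to a single point, which yields the required finite set $\{b_1,\dots,b_n\}\subset B$.

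The main obstacle I expect is verifying exactness at the right end of the Fredholm sequence above, because what is produced directly from the SW linearization is the boundary-version obstruction $H^2(F_{\hat{\mathsf{C}}_1})$, and one must trace through diagram~(\ref{equ:diagram}) and the long exact sequence~(\ref{equ:longExact}) to convert this into $H^2_{\hat{\mathsf{C}}_1}$. The distinction matters because $\dim H^2(F_{\hat{\mathsf{C}}_1})=\dim B$ and $\dim H^2_{\hat{\mathsf{C}}_1}=\dim B$ arise from different sources, as the contrast between diagrams~(\ref{L0}) and~(\ref{L1}) shows, and it is precisely the Case~1 equality $\dim H^2_{\hat{\mathsf{C}}_1}=\dim B$ that drives the dimension count giving $\operatorname{rank}(d\pi)=0$. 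Once this identification is pinned down, the remainder of the argument is pure bookkeeping plus the compactness already recorded.
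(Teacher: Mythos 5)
Your proposal is essentially the paper's own argument: the paper's proof is the remark immediately preceding the proposition, namely that in Case 1 ($\dim H^1_{\hat{\mathsf{C}}_1}=1$, $\dim H^2_{\hat{\mathsf{C}}_1}=\dim B$, $\dim\FM=1$) the projection of the parametrized moduli space to $B$ has trivial derivative, which combined with compactness confines $\FM(X_0)$ to finitely many fibers, and your Fredholm exact sequence is just an explicit bookkeeping of that same dimension count. The right-exactness you flag as the main obstacle is in fact not needed for the conclusion: since $H^1_{\hat{\mathsf{C}}_1}\subset\ker(d\pi|_{T\FM})$ and $\dim T\FM=1=\dim H^1_{\hat{\mathsf{C}}_1}$, one already gets $\operatorname{rank}(d\pi)=0$ (and in Case 1 the map $H^2(F_{\hat{\mathsf{C}}_1})\to H^2_{\hat{\mathsf{C}}_1}$ is an isomorphism anyway, so the identification resolves as you expect).
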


Now we compute the obstruction space for the closed manifolds obtained by gluing. In the first row of diagram \ref{equ:3T}, 
\begin{equation}\label{equ:L+=R}
L^+_1 =\text{im } (H^1_{\hat{\mathsf{C}}_1}) = \R
\end{equation}
since the map $H^1_{\hat{\mathsf{C}}_1} \to L^+_1$ is by definition $H^1_{\hat{\mathsf{C}}_1} \to H^1(B_{\hat{\mathsf{C}}_1})$. By complementarity equations from the Lagrangian condition
, we have 
\[
L_1^+\oplus L_1^- =  T_{C_\infty}\M_\sigma.
\]
So $L_1^-=0$. Let $\hat{N}_2 = \S^1\times D^3$. Then we deduce $\dim {\H}_r^- = \dim B $ by the following obstruction diagram:
\begin{center}
\begin{tikzpicture}[commutative diagrams/every diagram]\label{equ:3O-param-case1-1} 
\node (P0) at (1.7cm, 0cm) {$0$};
\node (P1) at (5cm, 0cm) {$0$};
\node (P2) at (8.3cm, 0cm) {$0$};

\node (P3) at (0cm, -1.6cm) {$0$};
\node (P4) at (1.7cm, -1.6cm) {$ \ker\Delta_-^c$ };
\node (P41) at (1cm, -1.1cm) {$\dim B$ };
\node (P5) at (5cm, -1.6cm) {$H^2(F_{\hat{\mathsf{C}}_1}) \oplus H^2(F_{\hat{\mathsf{C}}_2})$};
\node (P51) at (4.3cm, -1.1cm) {$\dim B$};
\node[red] (P52) at (5.5cm, -1.1cm) {$0$};
\node (P6) at (8.3cm,-1.6cm) { $L_1^- + L_2^- $};
\node (P61) at (7.8cm,-1.1cm) { $0$};
\node[red] (P62) at (8.8cm,-1.1cm) { $0$};
\node (P7) at (10cm, -1.6cm) {$0$};

\node (P8) at (0cm, -3.2cm) {$0$};
\node (P9) at (1.7cm, -3.2cm) {${\H}_r^-$ };
\node (P91) at (1cm, -2.7cm) {$\dim B$ };
\node (P10) at (5cm, -3.2cm) {$\ker_{ex}\hat{\mathcal{T}}^*_{\hat{\mathsf{C}}_1}\oplus \ker_{ex}\hat{\mathcal{T}}^*_{\hat{\mathsf{C}}_2}$};
\node (P11) at (8.3cm,-3.2cm) { $\hat{L}_1^- + \hat{L}_2^-$};
\node (P12) at (10cm, -3.2cm) {$0$};

\node (P13) at (0cm, -4.8cm) {$0$};
\node (P14) at (1.7cm, -4.8cm) {$\ker\Delta_-^0$ };
\node (P141) at (1.2cm, -5.3cm) {$0$};
\node (P15) at (5cm, -4.8cm) {$\mathfrak{C}_1^-\oplus \mathfrak{C}_2^-$};
\node (P151) at (4.5cm, -5.3cm) {$0$};
\node[red] (P152) at (5.5cm, -5.3cm) {$1$};
\node (P16) at (8.3cm,-4.8cm) { $\mathfrak{C}_1^- + \mathfrak{C}_2^-$};
\node (P161) at (7.8cm, -5.3cm) {$0$};
\node[red] (P162) at (8.8cm, -5.3cm) {$1$};
\node (P17) at (10cm, -4.8cm) {$0$};

\node (P18) at (1.7cm,-6.4cm) {$0$};
\node (P19) at (5cm, -6.4cm) {$0$};
\node (P20) at (8.3cm, -6.4cm) {$0$};

\path[commutative diagrams/.cd, every arrow, every label]
(P0) edge node {} (P4)
(P1) edge node {} (P5)
(P2) edge node {} (P6)

(P3) edge node {} (P4)
(P4) edge node {$S_r$} (P5)
(P5) edge node {$\Delta_-^c$} (P6)
(P6) edge node {} (P7)

(P4) edge node {} (P9)
(P5) edge node {} (P10)
(P6) edge node {} (P11)

(P8) edge node {} (P9)
(P9) edge node {$S_r$} (P10)
(P10) edge node {$\Delta_+^c$} (P11)
(P11) edge node {} (P12)

(P9) edge node {} (P14)
(P10) edge node {} (P15)
(P11) edge node {} (P16)

(P13) edge node {} (P14)
(P14) edge node {$S_r$} (P15)
(P15) edge node {$\Delta_-^0$} (P16)
(P16) edge node {} (P17)

(P14) edge node {} (P18)
(P15) edge node {} (P19)
(P16) edge node {} (P20)
;
\end{tikzpicture}
\end{center}
Since the dimension of the obstruction space matches the dimension of the parameter space, we have type 1 configuration $\hat{\mathsf{C}}_1 \#_r \hat{\mathsf{C}}_2$ (see Figure \ref{fig:genericCell}).

Now 
we have 
\begin{align*}
\dim(\widehat{\underline{SW}}T_{\hat{\mathsf{C}}_1}( \hat{\mathcal{C}}_{\mu,sw}/\hat{\mathcal{G}}_{\mu,ex} ))^\bot &= \dim H^2_{\hat{\mathsf{C}}_1} = \dim B, \\
\dim(\widehat{\underline{SW}}T_{\hat{\mathsf{C}}_1} (\partial_\infty^{-1}(\mathsf{C}_\infty)/\hat{\mathcal{G}}_{\mu}  ))^\bot  &= \dim H^2(F_{\hat{\mathsf{C}}_1})=\dim B.
\end{align*}
Hence
\[
\widehat{\underline{SW}}_{\hat{\mathsf{C}}_1}( T_{\mathsf{C}_\infty}(\mathcal{Z}_s/\mathcal{G}_s)) 
\in 
\widehat{\underline{SW}}_{\hat{\mathsf{C}}_1} (T_{\hat{\mathsf{C}}_1} (\partial_\infty^{-1}(\mathsf{C}_\infty)/\hat{\mathcal{G}}_{\mu}  )).
\]
and 
\begin{equation}
 {\H}_r^- \cong \ker\Delta_-^c \cong H^2(F_{\hat{\mathsf{C}}_1}) = (\widehat{\underline{SW}}T_{\hat{\mathsf{C}}_1} (\partial_\infty^{-1}(\mathsf{C}_\infty)/\hat{\mathcal{G}}_{\mu}  ))^\bot.
 \end{equation}
Agian we have chosen the perturbation family $\eta$ such that \[
d_b\eta(T_bB)
\oplus 
\widehat{\underline{SW}}_{\hat{\mathsf{C}}_1} (T_{\hat{\mathsf{C}}_1} (\partial_\infty^{-1}(\mathsf{C}_\infty)/\hat{\mathcal{G}}_{\mu}  )) 
=\hat{\mathcal{Y}}_{g(b),\mu}.
\]
Therefore, similar to (\ref{equ:P-minus-iso-1}), we have
\begin{equation}\label{equ:P-minus-iso}
d_b\eta(T_bB) \cong \H_r^-,
\end{equation}
and this isomorphism is given by first gluing perturbations on $\hat{N}_1$ and $\hat{N}_2$ (which is $0$), then projecting it to $\H_r^-$.

Now let $\hat{N}_2 = D^2\times \S^2$. As in \ref{prop:trivialObs}, we have 
\begin{center}
\begin{tikzpicture}[commutative diagrams/every diagram]
\node (P0) at (1.7cm, 0cm) {$0$};
\node (P1) at (5cm, 0cm) {$0$};
\node (P2) at (8.3cm, 0cm) {$0$};

\node (P3) at (0cm, -1.6cm) {$0$};
\node (P4) at (1.7cm, -1.6cm) {$ \ker\Delta_-^c$ };
\node (P41) at (1cm, -1.1cm) {$\dim B$ };
\node (P5) at (5cm, -1.6cm) {$H^2(F_{\hat{\mathsf{C}}_1}) \oplus H^2(F_{\hat{\mathsf{C}}_2})$};
\node (P51) at (4.3cm, -1.1cm) {$\dim B$};
\node[orange] (P52) at (5.5cm, -1.1cm) {$1$};
\node (P6) at (8.3cm,-1.6cm) { $L_1^- + L_2^- $};
\node (P61) at (7.8cm,-1.1cm) { $0$};
\node[orange] (P62) at (8.8cm,-1.1cm) { $1$};
\node (P7) at (10cm, -1.6cm) {$0$};

\node (P8) at (0cm, -3.2cm) {$0$};
\node (P9) at (1.7cm, -3.2cm) {${\H}_r^-$ };
\node (P91) at (1cm, -2.7cm) {$\dim B $ };
\node (P10) at (5cm, -3.2cm) {$\ker_{ex}\hat{\mathcal{T}}^*_{\hat{\mathsf{C}}_1}\oplus \ker_{ex}\hat{\mathcal{T}}^*_{\hat{\mathsf{C}}_2}$};
\node (P11) at (8.3cm,-3.2cm) { $\hat{L}_1^- + \hat{L}_2^-$};
\node (P12) at (10cm, -3.2cm) {$0$};

\node (P13) at (0cm, -4.8cm) {$0$};
\node (P14) at (1.7cm, -4.8cm) {$\ker\Delta_-^0$ };
\node (P141) at (1.2cm, -5.3cm) {$0$};
\node (P15) at (5cm, -4.8cm) {$\mathfrak{C}_1^-\oplus \mathfrak{C}_2^-$};
\node (P151) at (4.5cm, -5.3cm) {$0$};
\node[orange] (P152) at (5.5cm, -5.3cm) {$1$};
\node (P16) at (8.3cm,-4.8cm) { $\mathfrak{C}_1^- + \mathfrak{C}_2^-$};
\node (P161) at (7.8cm, -5.3cm) {$0$};
\node[orange] (P162) at (8.8cm, -5.3cm) {$1$};
\node (P17) at (10cm, -4.8cm) {$0$};

\node (P18) at (1.7cm,-6.4cm) {$0$};
\node (P19) at (5cm, -6.4cm) {$0$};
\node (P20) at (8.3cm, -6.4cm) {$0$};

\path[commutative diagrams/.cd, every arrow, every label]
(P0) edge node {} (P4)
(P1) edge node {} (P5)
(P2) edge node {} (P6)

(P3) edge node {} (P4)
(P4) edge node {$S_r$} (P5)
(P5) edge node {$\Delta_-^c$} (P6)
(P6) edge node {} (P7)

(P4) edge node {} (P9)
(P5) edge node {} (P10)
(P6) edge node {} (P11)

(P8) edge node {} (P9)
(P9) edge node {$S_r$} (P10)
(P10) edge node {$\Delta_+^c$} (P11)
(P11) edge node {} (P12)

(P9) edge node {} (P14)
(P10) edge node {} (P15)
(P11) edge node {} (P16)

(P13) edge node {} (P14)
(P14) edge node {$S_r$} (P15)
(P15) edge node {$\Delta_-^0$} (P16)
(P16) edge node {} (P17)

(P14) edge node {} (P18)
(P15) edge node {} (P19)
(P16) edge node {} (P20)
;
\end{tikzpicture}
\end{center}
Since the dimension of hte obstruction space matches the dimension of the parameter space, we have type 1 configuration $\hat{\mathsf{C}}_1 \#_r \hat{\mathsf{C}}_2$ (see Figure \ref{fig:genericCell}).

\subsection{Local gluing theory for type $1$ configuration}
We start from case $1$ as it is simpler (the dimension of the obstruction diagram matches the dimension of the parameter space, i.e, the configuration obtained by gluing is of type 1) by the previou subsection. 

Suppose that $X$ is the $4$-manifold satisfying the dimension assumption, and $X_0$ is obtained by cutting off a neighborhood of a cohomologically nontrivial loop. In this case, the gluing configurations are of type $1$. The main theorem of this subsection is:

\begin{theorem}\label{Thm:Local-gluing-theory-for-type1}
\begin{align*}
\FM(X) &\cong \FM(X_0) \times_{\FM(\S^1\times \S^2)} \FM(\S^1\times D^3), \\
\FM(X') &\cong \FM(X_0) \times_{\FM(\S^1\times \S^2)} \FM(D^2 \times \S^2). 
\end{align*}
Here ``$\cong$'' means the isotopy in the parameterized configuration space of $X$ or $X'$.
\end{theorem}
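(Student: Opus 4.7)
The plan is to adapt the global gluing argument of Theorem \ref{thm:ggt} to the parameterized setting by working fiberwise over $B$ and using the variation of the perturbation family $\eta$ along $B$ to absorb the now-nontrivial obstruction bundle $\H_r^-$. In the type $1$ case, Proposition \ref{discreteCircle} localizes the problem: the parameterized moduli space $\FM(X_0)$ is supported on the fibers over a finite set $\{b_1, \ldots, b_n\} \subset B$, and at each $b_i$ it is a disjoint union of circles $\sqcup_\iota \S^1_\iota$. The two claimed identifications should therefore be verified in a neighborhood $V_i$ of each $b_i$ and then globalized.

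First I fix $r$ sufficiently large and pick, for each $b_i$, a trivializing neighborhood $V_i$ for $E_X$ and $E_{X_0}$. Over $V_i$, the unparameterized linear and local gluing results of Theorems \ref{thm:lgt} and \ref{thm:ls} apply to each fiber and produce, for every compatible pair $(\hat{\mathsf{C}}_1, \hat{\mathsf{C}}_2)$, a Kuranishi model near $\hat{\mathsf{C}}_r = \hat{\mathsf{C}}_1 \#_r \hat{\mathsf{C}}_2$ for the fiber monopoles, cut out by $\kappa_r: \H_r^+ \to \H_r^-$. Since $\eta$ varies with $b$, I package these into a family Kuranishi map
\[
\tilde{\kappa}_r : V_i \times \H_r^+ \to \H_r^-
\]
by trivializing $\H_r^{\pm}$ over $V_i$. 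The obstruction computation of Section \ref{sec:obstruction}, in particular the isomorphism $d_{b_i}\eta \colon T_{b_i} B \xrightarrow{\sim} \H_r^-$ of equation (\ref{equ:P-minus-iso}) for case $1$ with $\hat{N}_2 = \S^1 \times D^3$ (and its analog with $\hat{N}_2 = D^2 \times \S^2$), shows that the $B$-derivative of $\tilde{\kappa}_r$ at $(b_i, 0)$ is surjective. Hence $\tilde{\kappa}_r$ is transverse to zero, and $\tilde{\kappa}_r^{-1}(0)$ is a smooth $1$-dimensional submanifold of the family slice, matching the expected dimension $\dim \H_r^+ + \dim B - \dim \H_r^- = 1$.

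Next I would combine these local descriptions globally. Fiberwise local surjectivity (Theorem \ref{thm:ls}) guarantees that every parameterized monopole on $E_{X_r}$ lies in the image of some family Kuranishi chart. A family version of Lemma \ref{lem:fibProd}, together with the surjectivity of $H^1(X_0; \R) \to H^1(\S^1 \times \S^2; \R)$ (as in the proof of Corollary \ref{fiberProduct}), identifies the union of these charts with
\[
\FM(X_0) \times_{\FM(\S^1 \times \S^2)} \FM(\S^1 \times D^3)
\]
sitting inside the parameterized configuration bundle. The argument for $X'$ is identical once one replaces $\hat{N}_2 = \S^1 \times D^3$ with $\hat{N}_2 = D^2 \times \S^2$; in this case $\mathfrak{C}_2^- \cong \R$, but the obstruction diagram still yields $\dim \H_r^- = \dim B$, and the same transversality estimate (\ref{equ:P-minus-iso}) for $d_b \eta$ applies.

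The main obstacle I foresee is verifying compatibility of the family Kuranishi models across the overlaps $V_i \cap V_j$ and with the transition cocycles $\tilde{g}_{\alpha\beta}$ constructed in Lemma \ref{lemma:cocycle-d*}: one must check that the cutoff gluing $\#_r$ of (\ref{equ:cut})--(\ref{equ:paste}) intertwines equivariantly with these cocycles modulo gauge, so that the zero sets $\tilde{\kappa}_r^{-1}(0)$ assemble into a globally defined submanifold rather than merely a disjoint patchwork. Once this compatibility is settled, the final step is formal: as in the last paragraph of the proof of Theorem \ref{thm:ggt}, the vanishing of $\H_r^-$ on the fiber product locus means $\tilde{\kappa}_r^{-1}(0)$ is a section of the normal bundle to $\FM(X_0) \times_{\FM(\S^1 \times \S^2)} \FM(\S^1 \times D^3)$ (respectively $\FM(D^2 \times \S^2)$) inside the parameterized configuration bundle, yielding the asserted isotopy.
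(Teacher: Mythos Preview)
Your high-level strategy --- localize via Proposition \ref{discreteCircle}, set up a family Kuranishi reduction, and use the $B$-variation of $\eta$ to kill $\H_r^-$ --- is the paper's strategy. But the proposal treats the central analytical content as a black box, and that is where the actual work lies.

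First, you invoke (\ref{equ:P-minus-iso}) as establishing that $P^r_-\circ\Psi_r\circ D_{b_i}\eta:T_{b_i}B\to\H_r^-$ is an isomorphism. In the obstruction subsection that equation is asserted only via a dimension count and the heuristic ``glue then project''; the rigorous statement that this composite is an isomorphism with uniformly bounded inverse for large $r$ is precisely Lemma \ref{lemma:BK7.16} and Corollary \ref{corollary:projection-iso}, and proving these is a substantial part of the theorem's proof, not a prerequisite you can cite.

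Second, and more seriously, you propose to obtain the family Kuranishi model by applying Theorem \ref{thm:lgt} fiberwise over $V_i$. But for $b\neq b_i$ there are no $\eta(b)$-monopoles on $X_0$, so there is nothing to glue and Theorem \ref{thm:lgt} does not apply. The paper instead fixes the approximate solution $\hat{\mathsf{C}}_r$ built at $b_i$ and regards $\tilde{\mathcal{F}}_r(\cdot,b)=\mathcal{F}_r(\cdot)+iK_r(b)=0$ as a $b$-perturbation of the $b_i$-equation. The contraction producing $\underline{\hat{\mathsf{C}}}^\bot=\Phi(\underline{\hat{\mathsf{C}}}_0,b)$ then carries the extra term $SQ^r_-iK_r(b)$, and verifying that $\tilde F$ still maps $B_\bot(r^{-4})$ into itself (Lemma \ref{lemma:F-contraction-1}) is not automatic: it requires the a priori bound on $\|b\|$ from Corollary \ref{lemma:BK7.17} together with a rescaling of the spinor norm to force $\|\pi^\bot_{\hat{\mathsf{C}}_1}\eta(b)\|$ small enough. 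Only after this is done does one obtain a second contraction $G$ on $T_bB$ (Lemma \ref{lem:BKlem7.22}) whose fixed point $\Psi(\underline{\hat{\mathsf{C}}}_0)$ parameterizes the genuine family solutions over $\H_r^+$; the final identification of $\H_r^+$ with the tangent space of the fiber product then proceeds as in Theorem \ref{thm:ggt}.

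By contrast, the ``main obstacle'' you flag --- cocycle compatibility across overlaps $V_i\cap V_j$ --- is not where the difficulty lies: once the local two-contraction model is in hand at each $b_i$, the global assembly is essentially formal.
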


We will use some classical ideas found in \cite{Nicolaescu2000NotesOS} and \cite{BK2020}, but generalize them as we don't have any assupmtion on the kernel of the twisted Dirac operator. Throughout this subsection, $\hat{N}_1$ is the cylindrical manifold obtaind from $X_0$, and $X(r)\simeq X$ is the closed manifold obtained by regluing $\hat{N}_2 \simeq \S^1\times D^3$ to $\hat{N}_1$ with a length $r$ neck. As there is no restriction on the twisted Dirac operator, the perturbations provided by the parameter space does not coincide with the obstruction space in general. The neck of the glued manifold should be stretched to control the error occurring from such difference. 

\begin{prop}\label{prop:perturbationCondition}
    It's possible to choose families perturbations $\eta_{X_0}$ satisfying the following assumptions:
\begin{enumerate}[I]
\item $\eta_{X_0}$ doesn't meet the wall. \label{condition:doesn't-meet-the-wall}
\item 
Let $\delta$ be any fixed positive number. For any $b\in B$ and any solution $\hat{\mathsf{C}}_1$ for the perturbation $\eta(b)$, let $D_b\eta$ be the differential of $\eta$ that projects to the fiber, which is a linear operator
\[
D_b\eta : T_bB \to L_\mu^{1,2}(\mathbf{i}\Lambda_+^2T^*\hat{N_1}) 
\]
and let $\pi_{\hat{\mathsf{C}}_1}$ and $\pi_{\hat{\mathsf{C}}_1}^\bot$ be the $L^2_\mu$-orthogonal projections
\begin{align*}
 \pi_{\hat{\mathsf{C}}_1}: L_\mu^{1,2}(\mathbf{i}\Lambda_+^2T^*\hat{N_1}) &\to H^2(F_{\hat{\mathsf{C}}_1}) \\
  \pi_{\hat{\mathsf{C}}_1}^\bot: L_\mu^{1,2}(\mathbf{i}\Lambda_+^2T^*\hat{N_1}) &\to H^2(F_{\hat{\mathsf{C}}_1})^\bot, 
\end{align*}
where $H^2(F_{\hat{\mathsf{C}}_1})^\bot$ is the $L^2_\mu$-orthogonal complement of $H^2(F_{\hat{\mathsf{C}}_1})$. The following conditions on the local gradient are satisfied. \label{condition:gradient}
\begin{enumerate}[a]
\item \label{pi-bot-small} There exists some $\epsilon \in (0,\delta)$, such that the projection $\pi_+$ of $D_b\eta$ to $H^+_{L^2}(\hat{N}_1)$ satisfies
\[
\|\pi_+^\bot(D_b\eta)\|_{L^2_\mu(\hat{N}_1)} \leq \epsilon\|D_b\eta\| .
\]
\item \label{D-nontrvial} There exists some $\delta' \in (0,\delta)$ such that 
\[
\| D_b\eta(t)\|_{L^2_\mu(X_0)} < \delta'\|t\|.
\]
\item \label{piD-nontrvial} There exists $\delta'' > \frac12 \delta$ such that 
\[
\|  \pi_{\hat{\mathsf{C}}_1} D_b \eta(t)\|_{L^2_\mu(X_0)}  > \delta''\|t\|
\]
\end{enumerate}
\item $\eta_{X_0}: B \to \Pi(E_{X_0})$ is generic in the family sense (in the sense of Theorem \ref{thm:cylindricalTransversality}).
\item For any $b\in B$, $\eta_{X_0}(b)$ vanishes on $[R_{\text{vanish}},\infty)$ of the neck.
\end{enumerate}
\end{prop}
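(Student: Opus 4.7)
The plan is to obtain $\eta_{X_0}$ by starting from a generic section provided by Theorem \ref{thm:cylindricalTransversality} and then making finitely many local modifications to force condition (II). Concretely, let $\mathcal{Z}_0 \subset \mathcal{Z}^{k-1}$ denote the closed subspace of sections whose fiberwise value vanishes on $[R_{\text{vanish}}, \infty) \times \S^1 \times \S^2 \subset \hat{N}_1$. The transversality argument in Theorem \ref{thm:cylindricalTransversality} only uses perturbations of the parameter $\zeta$ on the compact core of $\hat{N}_1$ (the surjectivity of $d_{(c,\eta(b))}\mathcal{F}_{g(b)}$ in \eqref{equ:dFperturbation} is robust under restricting $\zeta$ to be compactly supported), so the set $\mathcal{Z}_{reg} \cap \mathcal{Z}_0$ of perturbations avoiding the family wall \eqref{equ:familywall} and making $\FM(X_0, \hat{s}, g, \eta)$ transverse is still of the second category in $\mathcal{Z}_0$. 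Pick any $\eta^{(0)} \in \mathcal{Z}_{reg} \cap \mathcal{Z}_0$; this already secures (I), (III), and (IV).

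For (II), invoke Proposition \ref{discreteCircle}, which says that the parameterized moduli space on $\hat{N}_1$ is concentrated over a finite set $\{b_1, \dots, b_n\} \subset B$ with a finite number of irreducible solutions $\hat{\mathsf{C}}_1^{(i)}$ above them. At each $(b_i, \hat{\mathsf{C}}_1^{(i)})$ regularity forces
\[
d_{b_i}\eta^{(0)}(T_{b_i}B) \oplus \widehat{\underline{SW}}_{\hat{\mathsf{C}}_1^{(i)}}(T_{\hat{\mathsf{C}}_1^{(i)}}(\partial_\infty^{-1}(\mathsf{C}_\infty)/\hat{\mathcal{G}}_{\mu})) = \hat{\mathcal{Y}}_{g(b_i),\mu},
\]
so the induced map $T_{b_i}B \to H^2(F_{\hat{\mathsf{C}}_1^{(i)}})$ is an isomorphism. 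Now choose $L^2_\mu$-orthonormal frames of $H^2_+(\hat{N}_1) \cap H^2(F_{\hat{\mathsf{C}}_1^{(i)}})$ — which, by the decomposition $H^2(F_{\hat{\mathsf{C}}_1^{(i)}}) = \ker_{ex}\D^*_{\hat{A}_0} \oplus H^2_+(\hat{N}_1) \oplus L^2_{top}$ and the $L^2_\mu$-pairing on the summands, lifts to a full basis of $H^2(F_{\hat{\mathsf{C}}_1^{(i)}})$ up to controllable error — and replace $\eta^{(0)}$ on a small ball $U_i \ni b_i$ by $\eta^{(0)} + \chi_i(b)\, \psi_i(b)$, where $\chi_i$ is a bump with $\chi_i(b_i)=0$, $d_{b_i}\chi_i\neq 0$, and $\psi_i : U_i \to L^2_\mu(\mathbf{i}\Lambda^2_+ T^*\hat{N}_1)$ is a linear map to the chosen harmonic self-dual cross-sections, scaled by a small parameter $\lambda$. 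By openness of transversality, the modified $\eta$ remains in $\mathcal{Z}_{reg}\cap \mathcal{Z}_0$ for $\lambda$ small, and at each $(b_i, \hat{\mathsf{C}}_1^{(i)})$ the new differential $D_{b_i}\eta$ can be arranged so that (II)(a) holds with any prescribed $\epsilon$, (II)(b) holds by shrinking $\lambda$, and (II)(c) holds because $\psi_i$ maps into the obstruction space so that $\pi_{\hat{\mathsf{C}}_1^{(i)}}$ is essentially the identity on the image.

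The main obstacle is reconciling (II)(a) with (II)(c): $\pi_+^\bot$ penalizes any component outside the $L^2$-harmonic self-dual forms, whereas $\pi_{\hat{\mathsf{C}}_1}$ rewards the full obstruction space, which also contains $\ker_{ex}\D^*_{\hat{A}_0}$ and $L^2_{top}$. When those factors are nontrivial the original $d_{b_i}\eta^{(0)}(T_{b_i}B)$ necessarily has components there, contradicting a small $\epsilon$. The role of the local model $\psi_i$ is precisely to kill these components: because $\psi_i$ is built from genuinely harmonic self-dual forms, the new $d_{b_i}\eta$ can be forced to sit almost entirely inside $H^+_{L^2}$, but then one must verify that the resulting map into $H^2(F_{\hat{\mathsf{C}}_1^{(i)}})$ is still surjective and remains so uniformly on a neighborhood of each $b_i$ — this is the content of a continuity-plus-dimension-count argument, and it is here that the explicit form of the cross-section diagram \ref{L1} (showing $\dim H^2(F_{\hat{\mathsf{C}}_1}) = \dim B$) is essential. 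Once the three inequalities hold at each $b_i$, continuity of the obstruction bundle and of $D\eta$ along $B$ propagates them to a neighborhood of the compact set $\{b_1,\ldots, b_n\}$, and outside that neighborhood the moduli space is empty so the conditions are vacuous.
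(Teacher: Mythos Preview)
The paper's own proof of this proposition is itself only a two-sentence sketch: it says one should show that the set $\mathcal{O}$ of families satisfying (I) and (II) is open, then construct a single element by starting from some perturbation and modifying it via a homotopy (with a cryptic mention of $\S^1\times\S^{\dim B-1}\times I$); the argument literally trails off mid-sentence in the manuscript. Your overall strategy --- take a generic $\eta^{(0)}\in\mathcal{Z}_{reg}\cap\mathcal{Z}_0$ from Theorem~\ref{thm:cylindricalTransversality} to secure (I), (III), (IV), then locally adjust near the finitely many base points carrying solutions --- is entirely compatible with that sketch and is considerably more explicit.

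That said, your detailed construction has real gaps. First, the decomposition $H^2(F_{\hat{\mathsf{C}}_1})=\ker_{ex}\D^*_{\hat{A}_0}\oplus H^2_+(\hat{N}_1)\oplus L^2_{top}$ you invoke is the formula \eqref{equ:obstructionF=kerexD*}, which was derived only for \emph{reducible} monopoles $(\hat{A}_0,0)$; here $\hat{\mathsf{C}}_1$ is irreducible by (I), so that splitting is not available and your choice of ``harmonic self-dual cross-sections'' for $\psi_i$ does not automatically land in the obstruction space. Second, by Proposition~\ref{discreteCircle} the fibre $\FM(X_0)|_{b_i}$ is a compact $1$-manifold (a union of circles), not a finite set of points; condition~(II)(c) must hold for every $\hat{\mathsf{C}}_1$ on those circles, and $H^2(F_{\hat{\mathsf{C}}_1})$ varies along them, so a single linear $\psi_i$ chosen at one solution will not suffice. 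Third, your scaling argument is internally inconsistent: with $\lambda$ small the added term $\chi_i\psi_i$ only perturbs $D_{b_i}\eta^{(0)}$ slightly and cannot ``kill'' its $\pi_+^\bot$-component as (II)(a) demands, while with $\lambda$ large (II)(b) fails and openness no longer guarantees that the modified section stays in $\mathcal{Z}_{reg}$. The paper's sketch does not resolve these points either, but the homotopy language there suggests the intended fix is to deform the \emph{entire} linear part of $\eta$ near each $b_i$ (not merely add a small correction), while using compactness over the circle of solutions to achieve (II)(c) uniformly; making this precise is exactly the missing work.
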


\begin{proof}
The classical strategy is to first show that the space $\mathcal{O}$ of all families perturbations satisfying \ref{condition:doesn't-meet-the-wall} and \ref{condition:gradient} is open. Next 

To show the existence of at least one element satisying these assumptions, we will construct one and then modify it by a homotopy such that it satisfies local gradient conditions. We can find a homotopy on $\S^1 \times \S^{\dim B -1} \times I$

\end{proof}

Recall that the cut-off function $\alpha_r(t) = 1$ on $[0, r)$ and  $\alpha_r(t) = 0$ on $[r+1,\infty)$. Define the gluing map with $0$-section on $\hat{N}_2$ by
\begin{align*}
\Psi_r:L_\mu^{1,2}(\mathbf{i}\Lambda_+^2T^*\hat{N_1})  &\to L^{1,2}(\mathbf{i}\Lambda_+^2T^*{X(r)})  \\
\eta(t) &\mapsto \alpha_r(t)\eta(t)
\end{align*}
on the neck, and $\Psi_r$ is identity on $\hat{N_1}$ and $\Psi_r = 0$ on $\hat{N_2}$. We glue $\eta_{\hat{N}_1}$ on $\hat{N}_1$-side and the zero perturbation on $\hat{N}_2$-side, and denote it by the same notation as \cite{BK2020}:
\[
K_r(b) :=\Psi_r \eta_{\hat{N}_1}(b) =  \eta_{\hat{N}_1}(b)\#_r 0
\]
where $\#_r $ is defined in (\ref{equ:paste}).

Let's define
\[
\mathfrak{X}^k_+=\mathfrak{X}^k_+(r) := L^{k,2}(\hat{\S}^+_{\hat \sigma} \oplus \mathbf{i} T^*\hat{X}(r)), 
\mathfrak{X}^k_-:= L^{k,2}(\hat{\S}^-_{\hat \sigma} \oplus \mathbf{i} \Lambda^2_+T^*\hat{X}(r)), 
\]
\[
\mathfrak{X}^k := \mathfrak{X}^k_+ \oplus \mathfrak{X}^k_-.
\]

Let $\mathcal{F}_r$ be the operator on $X(r)$ that combines the sw map and the gauge fixing condition, as defined in (\ref{def:F}). Let $\tilde{\mathcal{F}}_r(\cdot, b) = \mathcal{F}_r(\cdot) + iK_r(b)$ for $b\in B$ be the parameterized version. Denote by 
\[
 \hat{\mathcal T}_r: \mathfrak{X}^0_+ \to \mathfrak{X}^0_-
\]
the the differential of $\tilde{\mathcal{F}}_r(\cdot, 0) $ at $(0,0)$. Then $\mathcal{F}_r(\cdot, 0) $ has the expansion
\begin{equation*}
\tilde{\mathcal{F}}_r(\underline{\hat{\mathsf{C}}}, 0)  = \mathcal{F}_r(0,0) + \hat{\mathcal T}_r(\underline{\hat{\mathsf{C}}}) + R(\underline{\hat{\mathsf{C}}})
\end{equation*}
where $R$ is the remainder term. We will choose locally constant metric such that the parameterized operator $\tilde{\mathcal{F}}_r(\underline{\hat{\mathsf{C}}}, b)$ only has one more perturbation term, namely
\begin{equation*}
\tilde{\mathcal{F}}_r(\underline{\hat{\mathsf{C}}}, b)  = \mathcal{F}_r(0) + \hat{\mathcal T}_r(\underline{\hat{\mathsf{C}}}) + R(\underline{\hat{\mathsf{C}}}) + iK_r(b).
\end{equation*}

Define
\[
\hat{L}_r:= 
\begin{bmatrix}
0 &\hat{\mathcal T}^*_r \\
\hat{\mathcal T}_r & 0
\end{bmatrix}
:\mathfrak{X}^0 \to  \mathfrak{X}^0.
\]
We want to use the eigenspace corresponds to very small eigenvalues to approximate the kernel of this operator. Let $\H_r$ be the subspace of $\mathfrak{X}^0$ spanned by
\begin{equation}\label{equ:def-of-H-r}
\{ v; \hat{L}_r v = \lambda v, |\lambda| < r^{-2}\}.
\end{equation}
Let $\mathcal{Y}^0 = \mathcal{Y}^0(r)$ be the orthogonal complement of $\H_r$ in $\mathfrak{X}^0$. Let $\H_r^\pm$ be the orthogonal projection of $\H_r$ to $\mathfrak{X}^0_\pm$.  Let $\mathcal{Y}_{\pm}^0(r)$ be the orthogonal projection of $\mathcal{Y}^0(r)$ to $\mathfrak{X}^0_\pm$. 

Let $P_\pm = P^r_\pm$ be the $L^2$-othogonal projections $\mathfrak{X}^0_\pm \to \H_r^\pm$ and let $Q_\pm = Q^r_\pm = 1-P^r_\pm$ be the $L^2$-othogonal projections $\mathfrak{X}^0_\pm \to \mathcal{Y}_{\pm}^0$.

The idea is to decompose the equation $\tilde{\mathcal{F}}_r(\underline{\hat{\mathsf{C}}},b) = 0$ to the equations 
\begin{equation}\label{decompose-F=0}
\begin{cases}
P^r_- \tilde{\mathcal{F}}_r(\underline{\hat{\mathsf{C}}},b) = 0,\\
Q^r_-\tilde{\mathcal{F}}_r(\underline{\hat{\mathsf{C}}},b) = 0.
\end{cases}
\end{equation}
Denote $P^r_+(\underline{\hat{\mathsf{C}}})$ by $\underline{\hat{\mathsf{C}}}_0$ and $Q^r_+(\underline{\hat{\mathsf{C}}})$ by $\underline{\hat{\mathsf{C}}}^\bot$. Note that $\underline{\hat{\mathsf{C}}}= \underline{\hat{\mathsf{C}}}_0\oplus \underline{\hat{\mathsf{C}}}^\bot$ and by BK Remark 6.10
\[
P^r_- \hat{\mathcal T}_r(\underline{\hat{\mathsf{C}}}) = \hat{\mathcal T}_r(P^r_+\underline{\hat{\mathsf{C}}}) = \hat{\mathcal T}_r(\underline{\hat{\mathsf{C}}}_0), 
\hspace{0.5cm} 
Q^r_- \hat{\mathcal T}_r(\underline{\hat{\mathsf{C}}}) = \hat{\mathcal T}_r(Q^r_+\underline{\hat{\mathsf{C}}}) = \hat{\mathcal T}_r(\underline{\hat{\mathsf{C}}}^\bot).
\]
Hence (\ref{decompose-F=0}) becomes
\begin{equation}\label{expand-decompose-F=0}
\begin{cases}
P^r_-  \mathcal{F}_r(0) + \hat{\mathcal T}_r(\underline{\hat{\mathsf{C}}}_0)+ P^r_- R( \underline{\hat{\mathsf{C}}}_0\oplus \underline{\hat{\mathsf{C}}}^\bot) + P^r_- iK_r(b) = 0,\\
Q^r_- \mathcal{F}_r(0) + \hat{\mathcal T}_r(\underline{\hat{\mathsf{C}}}^\bot)+ Q^r_- R( \underline{\hat{\mathsf{C}}}_0\oplus \underline{\hat{\mathsf{C}}}^\bot) + Q^r_- iK_r(b) = 0.
\end{cases}
\end{equation}


First we try to solve the second equation. Define $\mathcal{Y}^k =\mathcal{Y}^0 \cap \mathcal{X}^k$. Note that 
by elementary linear algebra $\hat{\mathcal T}_r$ sends $\mathcal{Y}^{k+1}_+$ to $\mathcal{Y}^k_-$ (see Remark 6.10). Let $S$ be the inverse of $ \hat{\mathcal T}_r:\mathcal{Y}^{k+1}_+ \to \mathcal{Y}^k_-$.  Denote $-SQ_- \mathcal{F}_r(0)$ by $U^\bot$. Apply the operator $S$ to the second equation of (\ref{expand-decompose-F=0}) then we get
\[
\underline{\hat{\mathsf{C}}}^\bot =  U^\bot - SQ_-R(\underline{\hat{\mathsf{C}}}^\bot + \underline{\hat{\mathsf{C}}}_0) + SQ_-iK_r(t).
\]
To solve this equation, we define 
\[
\tilde{{F}} :\underline{\hat{\mathsf{C}}}^\bot \to U^\bot - SQ_-R(\underline{\hat{\mathsf{C}}}^\bot + \underline{\hat{\mathsf{C}}}_0) + SQ_-iK_r(t)
\]
and expect it to be a contraction map and therefore admits a unique fixed point.

\begin{lemma}\label{lemma:projectionInequality}
Let $\hat{N}_1$ be the cylindrical manifold obtained from $X_0$. For any positive number $\epsilon$, define $\mathcal{O}(\epsilon)$ to be a subset of the perturbation families, such that for each $\eta\in \mathcal{O}(\epsilon)$, 
\begin{equation}\label{equ:conditionA5c}
\| \pi_{\hat{\mathsf{C}}_1}^\bot D_b\eta\|_{L^2_\mu(X_0)} \leq \epsilon\| \pi_{\hat{\mathsf{C}}_1} D_b\eta\|_{L^2_\mu(X_0)}
\end{equation}
for any $b\in B$ and any solution $\hat{\mathsf{C}}_1$ for the perturbation $\eta(b)$. Here $D_b\eta$ is the differential of $\eta$ that projects to the fiber, which is a linear operator
\[
D_b\eta : T_bB \to L_\mu^{1,2}(\mathbf{i}\Lambda_+^2T^*\hat{N_1}) 
\]
and $\pi_{\hat{\mathsf{C}}_1}$ and $\pi_{\hat{\mathsf{C}}_1}^\bot$ are the $L^2_\mu$-orthogonal projections
\begin{align*}
 \pi_{\hat{\mathsf{C}}_1}: L_\mu^{1,2}(\mathbf{i}\Lambda_+^2T^*\hat{N_1}) &\to H^2(F_{\hat{\mathsf{C}}_1}) \\
  \pi_{\hat{\mathsf{C}}_1}^\bot: L_\mu^{1,2}(\mathbf{i}\Lambda_+^2T^*\hat{N_1}) &\to H^2(F_{\hat{\mathsf{C}}_1})^\bot = \widehat{\underline{SW}}_{\hat{\mathsf{C}}_1} (T_{\hat{\mathsf{C}}_1} (\partial_\infty^{-1}(\mathsf{C}_\infty)/\hat{\mathcal{G}}_{\mu}  )), 
\end{align*}
where $H^2(F_{\hat{\mathsf{C}}_1})^\bot$ is the $L^2_\mu$-orthogonal complement of $H^2(F_{\hat{\mathsf{C}}_1})$. Then any $\eta \in \mathcal{Z}_{reg}$ found in Theorem \ref{thm:cylindricalTransversality} is in $\mathcal{O}(\epsilon)$ for some $\epsilon > 0$.
\end{lemma}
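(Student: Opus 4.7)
The plan is to exploit the transversality condition defining $\mathcal{Z}_{reg}$ to bound $\pi_{\hat{\mathsf{C}}_1}D_b\eta$ from below by a positive constant, and then to use compactness of the parametrized moduli space to extract a uniform ratio bound.

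First I would unpack the transversality. By the proof of Theorem \ref{thm:cylindricalTransversality}, for any $\eta\in\mathcal{Z}_{reg}$ and any solution $(b,\hat{\mathsf{C}}_1,\mathsf{C}_\infty)$, the image $D_b\eta(T_bB)$ together with $\widehat{\underline{SW}}_{\hat{\mathsf{C}}_1}(T_{\hat{\mathsf{C}}_1}(\partial_\infty^{-1}(\mathsf{C}_\infty)/\hat{\mathcal{G}}_\mu))$ spans $\hat{\mathcal{Y}}_{g(b),\mu}$. Because $H^2(F_{\hat{\mathsf{C}}_1})$ is by definition the $L^2_\mu$-orthogonal complement of the second summand, projecting by $\pi_{\hat{\mathsf{C}}_1}$ yields that
\[
\pi_{\hat{\mathsf{C}}_1}\circ D_b\eta \;:\; T_bB \;\longrightarrow\; H^2(F_{\hat{\mathsf{C}}_1})
\]
is surjective; combined with the dimension equality $\dim H^2(F_{\hat{\mathsf{C}}_1})=\dim B$ from the same theorem, it is in fact a linear isomorphism of finite-dimensional spaces.

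Second, at each fixed $(b,\hat{\mathsf{C}}_1)$ in the parametrized moduli space I would set
\[
\epsilon(b,\hat{\mathsf{C}}_1) := \bigl\|\pi_{\hat{\mathsf{C}}_1}^\bot D_b\eta \circ (\pi_{\hat{\mathsf{C}}_1}D_b\eta)^{-1}\bigr\|_{\mathrm{op}},
\]
which is finite since the inverse exists on the finite-dimensional space $H^2(F_{\hat{\mathsf{C}}_1})$. Unwinding the operator norm gives exactly (\ref{equ:conditionA5c}) at $(b,\hat{\mathsf{C}}_1)$ with constant $\epsilon(b,\hat{\mathsf{C}}_1)$, so the task reduces to controlling this constant uniformly.

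Third, I would promote the pointwise bound to a uniform one. By Proposition \ref{discreteCircle} the projection $\FM(X_0,\eta)\to B$ is supported over finitely many parameter points $b_1,\ldots,b_n$, and each fiber is a compact circle swept out by the residual $S^1$-gauge action on an irreducible monopole (with the usual cylindrical-end compactness coming from exponential decay toward the unique asymptotic reducible on $\S^1\times\S^2$). Hence $\FM(X_0,\eta)$ is a compact $1$-manifold. Since $\dim H^2(F_{\hat{\mathsf{C}}_1})$ is locally constant along it, the cokernels of the continuously varying family of Fredholm operators $\widehat{\underline{SW}}_{\hat{\mathsf{C}}_1}$ assemble into a continuous finite-rank subbundle of the trivial Hilbert bundle with fiber $L^2_\mu(\mathbf{i}\Lambda^2_+ T^*\hat{N}_1)$; consequently $\pi_{\hat{\mathsf{C}}_1}$ and $\pi_{\hat{\mathsf{C}}_1}^\bot$ vary norm-continuously in $\hat{\mathsf{C}}_1$. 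Combined with continuity of $D_b\eta$ in $b$, the function $(b,\hat{\mathsf{C}}_1)\mapsto\epsilon(b,\hat{\mathsf{C}}_1)$ is continuous on the compact parametrized moduli space, and the desired $\epsilon$ is simply its supremum. The one step that requires genuine care (rather than routine linear algebra) is verifying norm-continuity of the cokernel projection in the exponentially weighted setting: because the Fredholm property on a cylindrical end depends on the asymptotic operator having no small eigenvalues, continuity of the cokernels needs a uniform Fredholm estimate along $\FM(X_0,\eta)$, which is what I would spend the technical portion of the argument verifying.
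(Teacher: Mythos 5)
Your proposal is correct and follows essentially the same route as the paper: transversality from Theorem \ref{thm:cylindricalTransversality} plus the equality $\dim H^2(F_{\hat{\mathsf{C}}_1})=\dim B$ give that $\pi_{\hat{\mathsf{C}}_1}\circ D_b\eta$ is an isomorphism, yielding a pointwise constant $\epsilon(b,\hat{\mathsf{C}}_1)$, and compactness of the parametrized moduli space then gives a uniform bound. The only difference is that you make explicit the continuity of the projections along the moduli space, which the paper leaves implicit in its appeal to compactness.
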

\begin{proof}
Choose an arbitrary element in $\mathcal{O}(\epsilon)$ and denote it by $\eta$. By Theorem \ref{thm:cylindricalTransversality} and the analysis in Subsection \ref{sec:obstruction}, for each $\eta(b)$-monopole $\hat{\mathsf{C}}_1$ the projection of $T_bB$ to $H^2(F_{\hat{\mathsf{C}}_1})$ is an isomorphism, so there is a positive number $\epsilon(\hat{\mathsf{C}}_1)$ satisfying (\ref{equ:conditionA5c}). By the compactness result the function $\epsilon$ is defined on a compact paramertrized moduli space and therefore there exsits a maximum.
\end{proof}

\begin{lemma}
Let $\hat{N}$ be the cylindrical manifold obtained from $X_0$. Fix any $\eta \in \mathcal{Z}_{reg}$. For any positive number $\epsilon'<1$, there exists some large $R$ such that for any $r>R$,
\begin{equation}\label{equ:conditionA5c}
\| P^r_- \circ \Psi \circ \pi_{\hat{\mathsf{C}}_1}^\bot D_b\eta\|_{L^2_\mu(X(r))} \leq \epsilon'\| P^r_- \circ \Psi \circ\pi_{\hat{\mathsf{C}}_1} D_b\eta\|_{L^2_\mu(X(r))}
\end{equation}
for any $b\in B$ and any solution $\hat{\mathsf{C}}_1$ for the perturbation $\eta(b)$.

\end{lemma}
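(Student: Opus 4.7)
The plan is to analyze how $P^r_-\circ\Psi$ interacts with the $L^2_\mu$-orthogonal splitting
\[
L_\mu^{1,2}(\mathbf{i}\Lambda_+^2T^*\hat{N}_1) = H^2(F_{\hat{\mathsf{C}}_1}) \oplus \widehat{\underline{SW}}_{\hat{\mathsf{C}}_1}(T_{\hat{\mathsf{C}}_1}\partial_\infty^{-1}(\mathsf{C}_\infty)).
\]
I intend to establish two asymptotic estimates, uniform in $b\in B$ and in the solution $\hat{\mathsf{C}}_1$: (i) $P^r_-\circ\Psi$ restricted to $H^2(F_{\hat{\mathsf{C}}_1})$ is, up to $o(1)$ error, the asymptotic isomorphism $S_r\colon\ker\Delta_-^c=H^2(F_{\hat{\mathsf{C}}_1})\xrightarrow{\cong^{a}}\H_r^-$ supplied by the obstruction diagram \ref{equ:3O} in Case 1, so its inverse has uniformly bounded norm for $r$ large; (ii) $\|P^r_-\circ\Psi|_{H^2(F_{\hat{\mathsf{C}}_1})^\bot}\|$ tends to $0$ as $r\to\infty$. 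Granting (i) and (ii), combining them with the upper bound $\|\pi^\bot_{\hat{\mathsf{C}}_1}D_b\eta\|\leq\epsilon\|\pi_{\hat{\mathsf{C}}_1}D_b\eta\|$ from Lemma \ref{lemma:projectionInequality} produces the desired ratio bound, and choosing $r$ large enough makes the ratio fall below any prescribed $\epsilon'<1$.

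For (ii), the main idea is that any $\beta\in H^2(F_{\hat{\mathsf{C}}_1})^\bot$ lies in the image of $\widehat{\underline{SW}}_{\hat{\mathsf{C}}_1}$, so I can write $\beta=\widehat{\underline{SW}}_{\hat{\mathsf{C}}_1}(\xi)$ with $\xi\in T_{\hat{\mathsf{C}}_1}\partial_\infty^{-1}(\mathsf{C}_\infty)$ and $\|\xi\|_{L^{2,2}_\mu}\leq C\|\beta\|$ by choosing the Fredholm inverse on the $L^2_\mu$-complement of $\ker\widehat{\underline{SW}}_{\hat{\mathsf{C}}_1}$. Since $\partial_\infty\xi=0$, the field $\xi$ has $L^2$-norm on the slice $[r,r+1]\times N$ bounded by $e^{-\mu r}\|\xi\|_{L^2_\mu}$. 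Expanding
\[
\Psi\widehat{\underline{SW}}_{\hat{\mathsf{C}}_1}(\xi)=\hat{\mathcal T}_r(\Psi\xi)+[\Psi,\widehat{\underline{SW}}_{\hat{\mathsf{C}}_1}](\xi)+(\widehat{\underline{SW}}_{\hat{\mathsf{C}}_1}-\hat{\mathcal T}_r)(\Psi\xi),
\]
the two error terms are supported in the neck $[r,r+1]$ (the commutator involves $d\alpha_r$, and $\hat{\mathcal T}_r$ differs from $\widehat{\underline{SW}}_{\hat{\mathsf{C}}_1}$ only by lower-order terms proportional to $\mathsf{C}_r-\hat{\mathsf{C}}_1$, which is likewise exponentially small there), so their $L^2$-norms are bounded by $Ce^{-\mu r}\|\beta\|$. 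For the main term, decompose $\Psi\xi=P^r_+\Psi\xi+Q^r_+\Psi\xi$: since $\hat{\mathcal T}_r$ sends $\mathcal{Y}^1_+$ into $\mathcal{Y}^0_-$, applying $P^r_-$ kills the $Q^r_+$ piece, and on $\H^+_r$ the identity $\hat{\mathcal T}_rv_+=\lambda v_-$ with $|\lambda|<r^{-2}$ gives $\|P^r_-\hat{\mathcal T}_r(\Psi\xi)\|\leq r^{-2}\|\Psi\xi\|\leq Cr^{-2}\|\beta\|$. Altogether $\|P^r_-\Psi\beta\|\leq C(r^{-2}+e^{-\mu r})\|\beta\|$.

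For (i), I will use that in Case 1 with $\hat{N}_2=\S^1\times D^3$ the obstruction diagram \ref{equ:3O} yields $\Delta_-^c=0$ and $\H_r^-\cong^a H^2(F_{\hat{\mathsf{C}}_1})$, with the isomorphism realized by $S_r=P^r_-\circ\Psi$ restricted to $H^2(F_{\hat{\mathsf{C}}_1})$ modulo an operator whose norm decays in $r$. Since $\dim H^2(F_{\hat{\mathsf{C}}_1})=\dim\H_r^-=\dim B$ is finite and matches, this matrix has uniformly bounded inverse; in particular there exists $c>0$ and $R_0$ with $\|P^r_-\Psi\alpha\|\geq c\|\alpha\|$ for all $\alpha\in H^2(F_{\hat{\mathsf{C}}_1})$, all $r>R_0$. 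Combining (i) and (ii) gives
\[
\frac{\|P^r_-\Psi\pi^\bot_{\hat{\mathsf{C}}_1}D_b\eta\|}{\|P^r_-\Psi\pi_{\hat{\mathsf{C}}_1}D_b\eta\|}\leq\frac{C\epsilon(r^{-2}+e^{-\mu r})\|\pi_{\hat{\mathsf{C}}_1}D_b\eta\|}{c\|\pi_{\hat{\mathsf{C}}_1}D_b\eta\|}=\frac{C\epsilon}{c}(r^{-2}+e^{-\mu r}),
\]
which can be made smaller than any $\epsilon'<1$ by taking $r$ large.

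The main obstacle will be the uniformity of the constants $C$ and $c$ over all pairs $(b,\hat{\mathsf{C}}_1)$ with $\hat{\mathsf{C}}_1$ an $\eta(b)$-monopole. The norm of the Fredholm inverse of $\widehat{\underline{SW}}_{\hat{\mathsf{C}}_1}$ on a complement of its kernel, and the norm of $S_r^{-1}$, both depend on $\hat{\mathsf{C}}_1$. This is where I will invoke compactness of the parameterized moduli space $\FM(X_0,\eta)$ coming from Proposition \ref{discreteCircle} together with continuous dependence of $H^2(F_{\hat{\mathsf{C}}_1})$, of its $L^2_\mu$-orthogonal complement, and of the gluing parametrix on $\hat{\mathsf{C}}_1$, to extract uniform $C,c>0$ valid on the full parametrized moduli space.
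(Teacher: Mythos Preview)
The paper's own proof of this lemma is empty: the \texttt{proof} environment immediately following the statement contains no text. So there is nothing to compare against directly, and the lemma itself is not invoked later in the paper --- the subsequent Lemma~\ref{lemma:BK7.16} supplies the estimate that is actually used downstream. Your sketch is therefore filling a genuine gap in the manuscript rather than duplicating an existing argument.

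That said, your two-step strategy is sound and is precisely in the spirit of the surrounding material. Step~(i) is exactly the content of the asymptotic isomorphism $S_r\colon H^2(F_{\hat{\mathsf C}_1})\to\H_r^-$ coming from the first row of the obstruction diagram in Case~1, which the paper records around~(\ref{equ:P-minus-iso}). Step~(ii) --- writing $\beta=\widehat{\underline{SW}}_{\hat{\mathsf C}_1}(\xi)$, transferring through the cutoff, and using $P^r_-\hat{\mathcal T}_r=\hat{\mathcal T}_r P^r_+$ together with the eigenvalue bound $\|\hat{\mathcal T}_r|_{\H_r^+}\|\le r^{-2}$ --- is a clean way to get the decay; the same commutation identity and eigenvalue bound appear in the paper's proof of Lemma~\ref{lemma:BK7.16} and in the estimate $\|\hat{\mathcal T}_r(\underline{\hat{\mathsf C}}_0)\|\le r^{-2}\|\underline{\hat{\mathsf C}}_0\|$ in the proof of Lemma~\ref{lemma:parameter-restriction0}. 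Your appeal to compactness of $\FM(X_0,\eta)$ for uniformity of $C,c$ mirrors the argument of Lemma~\ref{lemma:projectionInequality}.

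One small point to tidy: in the paper $\Psi_r$ is defined only on $L^{1,2}_\mu(\mathbf i\Lambda^2_+T^*\hat N_1)$, whereas $\pi^\bot_{\hat{\mathsf C}_1}D_b\eta$ lands in $H^2(F_{\hat{\mathsf C}_1})^\bot\subset\hat{\mathcal Y}_\mu$, which has a spinor component. You are implicitly extending $\Psi_r$ to act on spinors by the same cutoff $\alpha_r$; this is harmless but should be said explicitly, or alternatively you can run the whole argument after first projecting to the self-dual $2$-form summand, since $P^r_-$ and $\Psi_r$ on the spinor side satisfy the analogous estimates.
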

\begin{proof}

\end{proof}

\begin{lemma}\label{lemma:BK7.16}
Let $\hat{N}$ be the cylindrical manifold obtained from $X_0$. Fix any $\eta \in \mathcal{Z}_{reg}$. For any positive number $C_0<1$, any $b\in B$, $t\in T_b B$, and any solution $\hat{\mathsf{C}}_1$ for the perturbation $\eta(b)$, for all sufficiently large $r$,
\[
\| P^r_- \circ \Psi_r \circ D_b\eta(t)\|_{L^2_\mu(X(r))} > C_0\| \pi_{\hat{\mathsf{C}}_1} D_b\eta(t)\|_{L^2_\mu(X_0)}.
\]
\end{lemma}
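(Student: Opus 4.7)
The strategy is to use the orthogonal decomposition $D_b\eta(t) = v_1 + v_2$, where $v_1 = \pi_{\hat{\mathsf{C}}_1}D_b\eta(t)\in H^2(F_{\hat{\mathsf{C}}_1})$ and $v_2 = \pi_{\hat{\mathsf{C}}_1}^\bot D_b\eta(t)$ lies in $\widehat{\underline{SW}}_{\hat{\mathsf{C}}_1}(T_{\hat{\mathsf{C}}_1}\partial_\infty^{-1}(\mathsf{C}_\infty)/\hat{\mathcal{G}}_\mu)$, and to show separately that $\|P_-^r\Psi_r v_1\|$ converges to $\|v_1\|$ while $\|P_-^r\Psi_r v_2\|$ tends to $0$ as $r\to\infty$. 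Combined with the bound $\|v_2\|\leq\epsilon\|v_1\|$ of Lemma~\ref{lemma:projectionInequality} (applied to $\eta\in\mathcal{Z}_{reg}\cap\mathcal{O}(\epsilon)$ for $\epsilon$ small), the triangle inequality then yields the claimed lower bound for any $C_0<1$. A preliminary reduction is that by Condition IV of Proposition~\ref{prop:perturbationCondition}, $\eta$ vanishes on $[R_{\text{vanish}},\infty)\times\S^1\times\S^2$, so $D_b\eta(t)$ is compactly supported in the body of $X_0$, and for every $r>R_{\text{vanish}}$ the cutoff $\Psi_r$ acts simply as zero-extension of $D_b\eta(t)$ across the neck into $\hat{N}_2$.

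The matching step for $v_1$ is the main content. The obstruction-space diagram~\eqref{equ:3O} supplies an asymptotic map $S_r:\ker\Delta_-^c\xrightarrow{\cong^a}\H_r^-$ given by cut-off-and-glue, and in the present setting ($\hat{N}_2=\S^1\times D^3$, Case~1 of Section~\ref{sec:obstruction}) one has $H^2(F_{\hat{\mathsf{C}}_2})=0$ and $L_1^-=0$, so $\ker\Delta_-^c=H^2(F_{\hat{\mathsf{C}}_1})$ and $S_r$ is an asymptotic isometry onto $\H_r^-$. Since elements of $H^2(F_{\hat{\mathsf{C}}_1})=\ker_{L^2_\mu}\hat{\mathcal{T}}^*_{\hat{\mathsf{C}}_1}$ decay exponentially in the neck variable $t$, one checks that $\Psi_r v_1$ and $S_r v_1$ differ in $L^2(X(r))$ by a term of order $O(e^{-\mu r})\|v_1\|$, giving $\|P_-^r\Psi_r v_1\|\geq(1-o(1))\|v_1\|$.

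For $v_2$, strong regularity of $\hat{\mathsf{C}}_1$ (Theorem~\ref{thm:cylindricalTransversality}) implies that $\widehat{\underline{SW}}_{\hat{\mathsf{C}}_1}$ has closed $L^2_\mu$-range equal to $H^2(F_{\hat{\mathsf{C}}_1})^\bot$, so a bounded right inverse produces $w$ with $\widehat{\underline{SW}}_{\hat{\mathsf{C}}_1}(w) = v_2$ and $\|w\|_{L^{2,2}_\mu}\lesssim\|v_2\|$. Then $\Psi_r v_2 = \hat{\mathcal{T}}_r(\Psi_r w) + [\Psi_r,\hat{\mathcal{T}}_r]w$, where the commutator is supported in $\{r\leq t\leq r+1\}$ and is bounded by $O(e^{-\mu r})\|w\|$ by the exponential decay of $w$ along the neck. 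The principal term lies in $\operatorname{im}\hat{\mathcal{T}}_r$, which is $L^2$-orthogonal to $\ker\hat{\mathcal{T}}_r^*$ and meets $\H_r^-$ only through the spectral window $|\lambda|<r^{-2}$ defining $\H_r^-$, so $\|P_-^r\hat{\mathcal{T}}_r(\Psi_r w)\|\leq r^{-2}\|\Psi_r w\|\lesssim r^{-2}\|v_2\|$. The main technical hurdle will be verifying the comparison $S_r = P_-^r\circ\Psi_r + O(e^{-\mu r})$ on $H^2(F_{\hat{\mathsf{C}}_1})$: in Nicolaescu's framework this amounts to showing that the cut-off extensions of $L^2_\mu$-harmonic elements are approximate eigenvectors of $\hat{L}_r$ with exponentially small eigenvalue, so they lie in the $r^{-2}$-spectral space up to exponentially small error, which is where the bulk of the estimates live.
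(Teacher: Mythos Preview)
Your argument is sound, but it takes a genuinely different route from the paper's. Both proofs start from the orthogonal decomposition $D_b\eta(t)=v_1+v_2$ with $v_1=\pi_{\hat{\mathsf{C}}_1}D_b\eta(t)$, and both use that $\Psi_r$ restricted to $H^2(F_{\hat{\mathsf{C}}_1})$ is an asymptotic map onto $\H_r^-$, giving $\|P_-^r\Psi_r v_1\|\to\|v_1\|$. The divergence is in how $v_2$ is handled. The paper does \emph{not} try to prove $\|P_-^r\Psi_r v_2\|\to 0$; instead it argues geometrically: since $\Psi_r v_1$ is asymptotically in $\H_r^-$, the distance from $\Psi_r D_b\eta(t)=\Psi_r v_1+\Psi_r v_2$ to $\H_r^-$ (namely $\|Q_-^r\Psi_r D_b\eta(t)\|$) is asymptotically at most $\|\Psi_r v_2\|$, and then uses $\|P_-^r u\|\geq\|u\|-\|Q_-^r u\|$ together with Lemma~\ref{lemma:projectionInequality} to bound $\|\Psi_r D_b\eta(t)\|$ from below in terms of $\|v_1\|$. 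Your route realises $v_2$ as $\widehat{\underline{SW}}_{\hat{\mathsf{C}}_1}(w)$ via a Fredholm right inverse, so that $\Psi_r v_2=\hat{\mathcal T}_r(\Psi_r w)$ up to a commutator supported on $[r,r+1]$ of size $O(e^{-\mu r})\|w\|$, and then exploits the spectral definition of $\H_r^-$ to get $\|P_-^r\hat{\mathcal T}_r(\Psi_r w)\|\lesssim r^{-2}\|\Psi_r w\|$. This yields the stronger conclusion $\|P_-^r\Psi_r v_2\|=o(1)\|v_2\|$, at the price of the right-inverse and commutator estimates; the paper's argument is softer but needs the ``almost right triangle'' step.

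Two small corrections. First, Theorem~\ref{thm:cylindricalTransversality} gives $\dim H^2(F_{\hat{\mathsf{C}}_1})=\dim B>0$, so $\hat{\mathsf{C}}_1$ is \emph{not} strongly regular; what you actually need (and have) is only that $\widehat{\underline{SW}}_{\hat{\mathsf{C}}_1}$ is Fredholm with closed range equal to $H^2(F_{\hat{\mathsf{C}}_1})^\bot$, which is where the bounded right inverse comes from. Second, Lemma~\ref{lemma:projectionInequality} furnishes \emph{some} $\epsilon>0$, not an arbitrarily small one; but your estimate $\|P_-^r\Psi_r v_2\|\leq o(1)\|v_2\|\leq o(1)\,\epsilon\,\|v_1\|$ works for any fixed $\epsilon$ once $r$ is large, so this causes no difficulty.
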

\begin{proof}
By a result in the linear gluing theorem (conclued in Nic Page 305), the gluing map $\Psi_r :L_\mu^{1,2}(\mathbf{i}\Lambda_+^2T^*\hat{N_1})  \to L^{1,2}(\mathbf{i}\Lambda_+^2T^*{X(r)}) $ defines an asymptotic map $\Psi_r: H^2(F_{\hat{\mathsf{C}}_1}) \to \H^-_r$. Note that $H^2(F_{\hat{\mathsf{C}}_1}) $ consists of two summands: $H^+_{L^2}(\hat{N}_1)$ and the spinor part, which we denote by $S_2$. $H^2(F_{\hat{\mathsf{C}}_1}) $ projects to them surjectively. 


First notice that, for any $\eta \in L_\mu^{1,2}(\mathbf{i}\Lambda_+^2T^*\hat{N_1}) $,
\begin{align*}
\|  \eta \|_{L^2_\mu(\hat{N}_1)} -  \|\Psi_r \eta\|_{L^2(X(r))} &= \|  \eta \|_{L^2_\mu(\hat{N}_1)} -  \|\Psi_r \eta\|_{L^2_\mu(\hat{N}_1)} \\
&\leq \|  \eta  - \Psi_r \eta\|_{L^2_\mu(\hat{N}_1)} \\
&= \|  (1-\alpha_r) \eta\|_{L^2_\mu(\hat{N}_1)} \\
&\leq \|   \eta\|_{L^2([r,r+1]\times \S^1 \times \S^2 )}\\
&:= \epsilon(r) \|  \eta \|_{L^2_\mu(\hat{N}_1)}
\end{align*}
and since $\eta$ decays exponentially, $\epsilon(r) \to 0$ as $r\to \infty$. Therefore we have,
\begin{align}
\label{equ-d-asymp}\|  D_b\eta(t)\|_{L^2_\mu(\hat{N}_1)} &\to \|\Psi_r D_b\eta(t)\|_{L^2_\mu(X(r))} \\
\label{equ-pi-d-asymp}\| \pi_{\hat{\mathsf{C}}_1} D_b\eta(t)\|_{L^2_\mu(\hat{N}_1)} &\to \|\Psi_r \pi_{\hat{\mathsf{C}}_1} D_b\eta(t)\|_{L^2_\mu(X(r))} \\
\label{equ-pi-bot-d-asymp}\| \pi^\bot_{\hat{\mathsf{C}}_1} D_b\eta(t)\|_{L^2_\mu(\hat{N}_1)} &\to \|\Psi_r \pi^\bot_{\hat{\mathsf{C}}_1} D_b\eta(t)\|_{L^2_\mu(X(r))}
\end{align}
for any $t\in TB$. 
From (\ref{equ-pi-d-asymp}), the norm of $\Psi_r \pi_{\hat{\mathsf{C}}_1} D_b\eta(t)$ is controlled to a positive number, so by the definition of asymptotic map, the distance from $\Psi_r \pi_{\hat{\mathsf{C}}_1} D_b\eta(t)$ to $\H^-_r$ approaches $0$. 
Hence $\Psi_r \pi^\bot_{\hat{\mathsf{C}}_1} D_b\eta(t)$ almost connects $\Psi_r  D_b\eta(t)$ to $\H^-_r$.
From (\ref{equ-d-asymp}), (\ref{equ-pi-d-asymp}), and (\ref{equ-pi-bot-d-asymp}), 
\begin{equation}\label{equ:rightTriangle}
\|\Psi_r \pi_{\hat{\mathsf{C}}_1} D_b\eta(t)\|_{L^2_\mu(X(r))} +  \|\Psi_r \pi^\bot_{\hat{\mathsf{C}}_1} D_b\eta(t)\|_{L^2_\mu(X(r))} \to  \|\Psi_r D_b\eta(t)\|_{L^2_\mu(X(r))}.
\end{equation}
Hence the triangle they form is almost a right triangle. Note that $\| Q^r_-  \Psi_r  D_b\eta(t)\|_{L^2_\mu(X(r))} $ is the distance from $\Psi_r D_b\eta(t)$ to $\H^-_r$, which by definition, is less than the length of any other vector that connects $\Psi_r  D_b\eta(t)$ to $\H^-_r$. Combine all these observations, for any $C_1< 1$, there exists a number $R(C_1)$, such that for $r>R(C_1)$,
\begin{equation}\label{equ:distanceInequality}
 C_1\| Q^r_- \circ \Psi_r \circ D_b\eta(t)\|_{L^2_\mu(X(r))} <  \|\Psi_r \pi^\bot_{\hat{\mathsf{C}}_1} D_b\eta(t)\|_{L^2_\mu(X(r))}.
\end{equation}
Therefore
\begin{align}
\| P^r_- \circ \Psi_r \circ D_b\eta(t)\|_{L^2_\mu(X(r))} &= \|  \Psi_r \circ D_b\eta(t)\|_{L^2_\mu(X(r))}  - \| Q^r_- \circ \Psi_r \circ D_b\eta(t)\|_{L^2_\mu(X(r))}\notag \\
&\stackrel{\text{by (\ref{equ:distanceInequality})}}{>} \|  \Psi_r \circ D_b\eta(t)\|_{L^2_\mu(X(r))}  -  \frac{1}{C_1}\|\Psi_r \pi^\bot_{\hat{\mathsf{C}}_1} D_b\eta(t)\|_{L^2_\mu(X(r))} \notag \\
&\stackrel{\text{by (\ref{equ:rightTriangle})}}{>}(1- \frac{1}{C_1})\|  \Psi_r \circ D_b\eta(t)\|_{L^2_\mu(X(r))}  + \frac{1}{C_1}(1-\epsilon) \|\Psi_r \pi_{\hat{\mathsf{C}}_1} D_b\eta(t)\|_{L^2_\mu(X(r))} \notag \\
&\stackrel{\text{by (\ref{equ-pi-d-asymp})}}{>} (1- \frac{1}{C_1})(1-\epsilon)\|  D_b\eta(t)\|_{L^2_\mu(\hat{N}_1)} + \frac{1}{C_1}(1-\epsilon)^2 \| \pi_{\hat{\mathsf{C}}_1} D_b\eta(t)\|_{L^2_\mu(\hat{N}_1)}\label{equ:epsilonInequality}
\end{align}
for any $\epsilon \in (0,1)$ and any $r>R(\epsilon)$. The second term of (\ref{equ:epsilonInequality}) is what we desired, so we consider the first term now. By Lemma \ref{lemma:projectionInequality}, there exists some fixed positive constant $C(\eta)$, such that
\begin{align*}
(1+C(\eta))\| \pi_{\hat{\mathsf{C}}_1} D_b\eta\|_{L^2_\mu(X_0)} &> \| \pi_{\hat{\mathsf{C}}_1} D_b\eta\|_{L^2_\mu(X_0)} + \| \pi_{\hat{\mathsf{C}}_1}^\bot D_b\eta\|_{L^2_\mu(X_0)}\\
&= \|  D_b\eta(t)\|_{L^2_\mu(X(r))}.
\end{align*}
Note that $C_1<1$ and therefore $(1-\frac{1}{C_1})<0$. Hence
\[
(1- \frac{1}{C_1})(1-\epsilon)\|  D_b\eta(t)\|_{L^2_\mu(\hat{N}_1)} >
(1- \frac{1}{C_1})(1-\epsilon)(1+C(\eta))\| \pi_{\hat{\mathsf{C}}_1} D_b\eta\|_{L^2_\mu(X_0)} .
\]

Finally, choose 
\[
\epsilon \in (0, 1-(\frac{1  + C_0}{2})^{\frac12})
\]
and
\[
C_1\in (\frac{1}{\frac{1-C_0}{2(1+C(\eta))}+1},1),
\]
then for $r$ satisfying (\ref{equ:distanceInequality}) and inequality (\ref{equ:epsilonInequality}), 
\[
\| P^r_- \circ \Psi_r \circ D_b\eta(t)\|_{L^2_\mu(X(r))} > C_0\| \pi_{\hat{\mathsf{C}}_1} D_b\eta(t)\|_{L^2_\mu(X_0)} 
\]
for any $t\in TB$.
\end{proof}

From Lemma \ref{lemma:BK7.16} and Condition (\ref{piD-nontrvial}) we have
\begin{corollary} \label{corollary:projection-iso}
$P^r_- \circ \Psi_r \circ D_b\eta$ is an isomorphism.
\end{corollary}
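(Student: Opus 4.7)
The plan is to combine the two ingredients to produce a uniform lower bound on the operator norm of $P^r_-\circ\Psi_r\circ D_b\eta$, then match dimensions to upgrade injectivity to an isomorphism.

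First, I would chain the two inequalities. Fix $C_0\in(\tfrac{1}{2},1)$ and apply Lemma \ref{lemma:BK7.16} to obtain, for all sufficiently large $r$ and for every $t\in T_bB$,
\[
\|P^r_-\circ\Psi_r\circ D_b\eta(t)\|_{L^2_\mu(X(r))}>C_0\,\|\pi_{\hat{\mathsf{C}}_1}D_b\eta(t)\|_{L^2_\mu(X_0)}.
\]
By Condition (\ref{piD-nontrvial}) of Proposition \ref{prop:perturbationCondition}, the right hand side is bounded below by $C_0\delta''\|t\|$. Hence there is a constant $C=C_0\delta''>0$, independent of $t$, such that
\[
\|P^r_-\circ\Psi_r\circ D_b\eta(t)\|_{L^2_\mu(X(r))}>C\,\|t\|
\qquad\text{for all }t\in T_bB.
\]
In particular, the linear map $P^r_-\circ\Psi_r\circ D_b\eta:T_bB\to\H_r^-$ has trivial kernel.

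Next I would match dimensions. The domain $T_bB$ has dimension $\dim B$. In Case $1$ of Subsection \ref{sec:obstruction} the obstruction diagram was shown to yield $\dim\H_r^-=\dim B$ (the right vertical column combined with the isomorphism \eqref{equ:H-=kerDelta-c} and $\dim H^2(F_{\hat{\mathsf{C}}_1})=\dim B$ computed in Theorem \ref{thm:cylindricalTransversality}). Thus domain and target have the same finite dimension, and an injective linear map between equidimensional finite dimensional vector spaces is an isomorphism.

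The main subtle point is that the inequality in Lemma \ref{lemma:BK7.16} only controls the $\pi_{\hat{\mathsf{C}}_1}$-component, so one needs Condition (\ref{piD-nontrvial}) precisely to ensure that this component is itself bounded below uniformly in $t$; Condition (\ref{pi-bot-small}) alone would not suffice. Once both ingredients are in place, the argument is a one line chain of inequalities plus the dimension count provided by the obstruction analysis, so there is no further technical obstacle.
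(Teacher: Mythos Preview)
Your proof is correct and is exactly the argument the paper intends: the paper states the corollary as an immediate consequence of Lemma \ref{lemma:BK7.16} and Condition (\ref{piD-nontrvial}), and you have simply written out the chain of inequalities and the dimension count that make this immediate. One minor quibble: the label \eqref{equ:H-=kerDelta-c} you cite actually occurs in the Case~0 discussion; in Case~1 the equality $\dim\H_r^-=\dim B$ is read off directly from the obstruction diagram following \eqref{equ:L+=R}, but the conclusion is of course the same.
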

\begin{remark}
Lemma \ref{lemma:BK7.16} and Corollary \ref{corollary:projection-iso} recover (and stronger than) Lemma 7.16 and Lemma 7.15 of \cite{BK2020}, but the proof is different since the conditions are released.
\end{remark}

From lemma \ref{lemma:BK7.16}, use the same strategy in \cite{BK2020} Lemma 7.17, we deduce that 
\begin{corollary}\label{lemma:BK7.17}
Let $B_0(r^{-4})\subset \H_r^+$ and $ B_\bot(r^{-4})\subset  \mathcal{Y}_+ $ be two balls of radius $r^{-4}$. If $\hat{\mathsf{C}}_r + \underline{\hat{\mathsf{C}}}_0\oplus \underline{\hat{\mathsf{C}}}^\bot$ is a $K_r(b)$-monopole, and $ \underline{\hat{\mathsf{C}}}_0 \in B_0(r^{-4}) $, $ \underline{\hat{\mathsf{C}}}^\bot \in B_\bot(r^{-4})$ for all sufficiently large $r$, then 
\begin{equation}\label{equ:BK7.17}
\|b\| \leq \frac{4}{\delta C_0} r^{-6},
\end{equation}
where $\delta$ is the one in Condition \ref{condition:gradient}.
\end{corollary}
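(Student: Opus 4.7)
The plan is to project the parameterized monopole equation $\tilde{\mathcal{F}}_r(\underline{\hat{\mathsf{C}}},b)=0$ onto the small-eigenvalue subspace $\H_r^-$ via $P^r_-$ and compare the two sides. Implicitly we work in a chart on $B$ centered at a reference point $b_0$ at which $\hat{\mathsf{C}}_r$ was obtained by gluing compatible genuine monopoles on $\hat N_1$ and $\hat N_2$; in this chart $\|b\|$ denotes the distance from $b_0$, and writing $K_r(b)-K_r(b_0)=\Psi_r(D_{b_0}\eta)(b)+O(\|b\|^2)$ converts nonlinear dependence on $b$ into a controlled linear one. The idea is then standard: the right-hand side of the projected equation is a sum of explicitly small terms in $r$, while the left-hand side is bounded below by a multiple of $\|b\|$ via Lemma~\ref{lemma:BK7.16} and Corollary~\ref{corollary:projection-iso}.

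Applying $P^r_-$ to the identity
\[
i\bigl(K_r(b)-K_r(b_0)\bigr) = -\bigl(\mathcal{F}_r(0)+iK_r(b_0)\bigr) - \hat{\mathcal T}_r(\underline{\hat{\mathsf{C}}}) - R(\underline{\hat{\mathsf{C}}})
\]
and bounding term by term: the gluing error $\|P^r_-(\mathcal{F}_r(0)+iK_r(b_0))\|$ decays exponentially in $r$ (since the two pieces are genuine monopoles and the mismatch is supported on the neck); the term $P^r_-\hat{\mathcal T}_r(\underline{\hat{\mathsf{C}}})=\hat{\mathcal T}_r(\underline{\hat{\mathsf{C}}}_0)$ (by the commutation with $P$) is bounded by $r^{-2}\|\underline{\hat{\mathsf{C}}}_0\|\leq r^{-2}\cdot r^{-4}=r^{-6}$ from the definition \eqref{equ:def-of-H-r} of $\H_r^+$; and $R$ is quadratic in $\underline{\hat{\mathsf{C}}}$, giving $\|P^r_- R(\underline{\hat{\mathsf{C}}})\|\leq C(r^{-4})^2=Cr^{-8}$. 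For sufficiently large $r$ these combine to give $\|P^r_-\,i\bigl(K_r(b)-K_r(b_0)\bigr)\|\leq 2r^{-6}$.

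On the other hand, Lemma~\ref{lemma:BK7.16} combined with Condition~\ref{piD-nontrvial} yields for any tangent vector $v\in T_{b_0}B$
\[
\|P^r_-\Psi_r D_{b_0}\eta(v)\|_{L^2(X(r))} > C_0\,\|\pi_{\hat{\mathsf{C}}_1}D_{b_0}\eta(v)\|_{L^2_\mu(X_0)} > C_0\delta''\,\|v\| > \frac{C_0\delta}{2}\|v\|.
\]
Applying this with $v=b$ and absorbing the quadratic remainder in the Taylor expansion of $K_r$ (which contributes $O(\|b\|^2)$ and is dominated by the linear term once $\|b\|$ is small) yields
\[
\frac{C_0\delta}{2}\|b\| \le 2r^{-6},
\]
i.e.\ $\|b\|\leq \tfrac{4}{\delta C_0}r^{-6}$, as claimed.

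The main obstacle I anticipate is the exponential decay estimate $\|P^r_-(\mathcal{F}_r(0)+iK_r(b_0))\|\leq Ce^{-\kappa r}$: this requires verifying that the usual gluing-error bound, which is exponential in the $L^2_\mu$ norm on each piece, survives projection onto the low-eigenvalue subspace. This is a standard feature of the linear gluing theorem of \cite{Nicolaescu2000NotesOS}, but one has to check that $\Psi_r$ commutes sufficiently well with $P^r_-$ on exponentially decaying inputs; aside from this, the argument is essentially bookkeeping of decay rates versus the fixed lower bound $\tfrac{C_0\delta}{2}$.
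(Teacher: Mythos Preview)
Your proposal is essentially correct and follows the same strategy the paper invokes (it defers to \cite{BK2020} Lemma~7.17, and spells out the identical argument later in the proof of Lemma~\ref{lemma:parameter-restriction0}): project the first equation of \eqref{expand-decompose-F=0} onto $\H_r^-$, bound the right-hand side term by term, and use Lemma~\ref{lemma:BK7.16} together with Condition~\ref{piD-nontrvial} to extract the lower bound $\tfrac{C_0\delta}{2}\|b\|$ on the left.

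Two small points. First, your estimate $\|P^r_-R(\underline{\hat{\mathsf{C}}})\|\le C r^{-8}$ omits the $r^{3/2}$ factor that the Nicolaescu remainder estimate carries (see the proof of Lemma~\ref{lemma:parameter-restriction0}, where $\|R(\hat{\mathsf{C}})\|_{L^{1,2}}\le C_2 r^{3/2}\|\hat{\mathsf{C}}\|^2_{L^{2,2}}$); the correct bound is $C_2 r^{-13/2}$, still $o(r^{-6})$, so the conclusion is unaffected. Second, your Taylor-expansion treatment of $K_r(b)$ with an $O(\|b\|^2)$ remainder is more careful than the paper, which implicitly works in linear coordinates on $B$ and identifies $K_r$ with $\Psi_r\circ D_b\eta$; the circularity you note (``dominated once $\|b\|$ is small'') is resolved either by compactness of $B$ or by the standard bootstrap that absorbs $C\|b\|^2$ into the linear lower bound for $\|b\|$ below a fixed threshold. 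Your worry about the exponential decay surviving $P^r_-$ is unnecessary: $\|P^r_-\|_{\mathrm{op}}\le 1$, so $\|P^r_-\mathcal{F}_r(0)\|\le\|\mathcal{F}_r(0)\|\le C_1 e^{-\mu r}$ directly.
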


Recall that 
\[
\tilde{{F}} :\underline{\hat{\mathsf{C}}}^\bot \to U^\bot - SQ_-R(\underline{\hat{\mathsf{C}}}^\bot + \underline{\hat{\mathsf{C}}}_0) + SQ_-iK_r(t)
\]
\begin{lemma}\label{lemma:F-contraction-1}
For sufficiently large $r$, $\tilde{{F}}$ sends the ball $B_\bot(r^{-4})$ to itself and is a contraction.
\end{lemma}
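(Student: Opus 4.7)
The plan is to adapt the contraction–mapping argument behind Nicolaescu's local gluing theorem (Theorem \ref{thm:lgt}) to our parametrized setting, absorbing the new term $SQ_{-}iK_r(b)$ using Corollary \ref{lemma:BK7.17}. Three ingredients carry the proof: (i) decay of the gluing error, $\|U^{\bot}\| = \|SQ_{-}\mathcal{F}_r(0)\| \leq C r^{-N}$ for any fixed $N$, which follows from the exponential decay of $\hat{\mathsf{C}}_1,\hat{\mathsf{C}}_2$ on the necks and the standard cut–off estimates in \cite{Nicolaescu2000NotesOS} Section~4.4; (ii) a uniform bound $\|S\|_{\mathcal{Y}_{-}^{0}\to \mathcal{Y}_{+}^{1}} \leq r^{2}$, which is immediate from the definition of $\H_r$ via the $r^{-2}$ spectral gap in \eqref{equ:def-of-H-r}; and (iii) a pointwise quadratic estimate $\|R(u)-R(v)\|_{L^{2}} \leq C(\|u\|_{2,2}+\|v\|_{2,2})\,\|u-v\|_{2,2}$ for the nonlinear remainder in the expansion of $\tilde{\mathcal{F}}_r$.

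First I would establish the contraction property. For $u,v \in B_{\bot}(r^{-4})$, we have
\[
\tilde{F}(u)-\tilde{F}(v) \;=\; -SQ_{-}\bigl(R(u+\underline{\hat{\mathsf{C}}}_0)-R(v+\underline{\hat{\mathsf{C}}}_0)\bigr),
\]
and combining (ii) and (iii) with $\|\underline{\hat{\mathsf{C}}}_0\|\leq r^{-4}$ and $\|u\|,\|v\|\leq r^{-4}$ gives
\[
\|\tilde{F}(u)-\tilde{F}(v)\|_{2,2} \;\leq\; r^{2}\cdot C\cdot 3r^{-4}\cdot \|u-v\|_{2,2} \;\leq\; 3C r^{-2}\|u-v\|_{2,2},
\]
which is strictly less than $\tfrac12\|u-v\|$ for all sufficiently large $r$.

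Next I would show the self-map property. For $v\in B_{\bot}(r^{-4})$,
\[
\|\tilde{F}(v)\|_{2,2} \;\leq\; \|U^{\bot}\| \;+\; \|SQ_{-}R(v+\underline{\hat{\mathsf{C}}}_0)\| \;+\; \|SQ_{-}iK_r(b)\|.
\]
Ingredient (i) bounds the first term by $Cr^{-N}\ll r^{-4}$; ingredients (ii)–(iii) bound the second term by $r^{2}\cdot C\cdot (2r^{-4})^{2} = 4Cr^{-6}$. The only nontrivial piece is the third term: here I invoke Corollary \ref{lemma:BK7.17}, which gives $\|b\|\leq \tfrac{4}{\delta C_0}r^{-6}$ whenever a monopole sits in $B_0(r^{-4})\oplus B_{\bot}(r^{-4})$, together with Condition \ref{condition:gradient}(\ref{D-nontrvial}) that yields $\|K_r(b)\|_{L^{2}}\leq \|\Psi_r\|\cdot \delta'\|b\|$. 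Since $\|\Psi_r\|$ is bounded uniformly in $r$, the third term is at most $r^{2}\cdot C\delta'\cdot \tfrac{4}{\delta C_0}r^{-6} = C'r^{-4}$.

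The hard part of the argument is not any single estimate but the bookkeeping of constants in the self–map step: the term $SQ_{-}iK_r(b)$ is naturally of size $O(r^{-4})$, which matches the radius of the target ball exactly. Shrinking $\delta$ in Condition \ref{condition:gradient}(\ref{D-nontrvial}) makes $C'$ as small as we wish, while Corollary \ref{lemma:BK7.17} is unaffected (its constant depends only on the lower bound $\delta$). Thus by choosing $\delta$ sufficiently small at the outset (and $r$ sufficiently large thereafter), we achieve $\|\tilde{F}(v)\|_{2,2}\leq r^{-4}$, concluding that $\tilde{F}$ maps $B_{\bot}(r^{-4})$ into itself and is a contraction; the Banach fixed–point theorem then produces the unique $\underline{\hat{\mathsf{C}}}^{\bot}=\Phi(\underline{\hat{\mathsf{C}}}_0,b)$ solving the second equation in \eqref{expand-decompose-F=0}.
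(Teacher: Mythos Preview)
Your contraction estimate is fine and matches the paper, which simply cites \cite{Nicolaescu2000NotesOS}~(4.5.8). The genuine gap is in the self-map step, precisely at the term $\|SQ_-iK_r(b)\|$. Your bound is
\[
\|SQ_-iK_r(b)\|\;\leq\; r^{2}\cdot \delta'\|b\|\;\leq\; r^{2}\cdot \delta'\cdot \tfrac{4}{\delta C_0}r^{-6}\;=\;\tfrac{4\delta'}{\delta C_0}\,r^{-4},
\]
and you claim that shrinking $\delta$ makes the constant $\tfrac{4\delta'}{\delta C_0}$ small. It does not: conditions (\ref{D-nontrvial}) and (\ref{piD-nontrvial}) together force $\tfrac12\delta<\delta''\leq\delta'<\delta$ (the projected gradient cannot exceed the full gradient), so $\delta'/\delta>\tfrac12$ and your constant is bounded below by $\tfrac{2}{C_0}$ regardless of $\delta$. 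Moreover the $\delta$ in Corollary~\ref{lemma:BK7.17} sits in the \emph{denominator}, so shrinking $\delta$ actually worsens that bound; the two $\delta$'s cancel. The paper flags exactly this issue (see the paragraph after \eqref{cntrol-S}).

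The paper's fix requires two ideas you have not used. First, split $\eta(b)=\pi_{\hat{\mathsf{C}}_1}\eta(b)+\pi^\bot_{\hat{\mathsf{C}}_1}\eta(b)$ and exploit that $\Psi_r$ is an asymptotic map $H^2(F_{\hat{\mathsf{C}}_1})\to\H_r^-$: then $\|Q_-\Psi_r\pi_{\hat{\mathsf{C}}_1}\eta(b)\|\leq\epsilon(r)\|\eta(b)\|$ with $\epsilon(r)\to 0$. Second, rescale the norm on the spinor component of $\hat{\mathcal{Y}}_\mu$; this rotates $H^2(F_{\hat{\mathsf{C}}_1})$ toward $L^{1,2}_\mu(\mathbf{i}\Lambda^2_+)$ and, via Condition~(\ref{pi-bot-small}), forces $\|\pi^\bot_{\hat{\mathsf{C}}_1}\eta(b)\|\leq\epsilon\|\eta(b)\|<\delta\cdot\delta'\|b\|$. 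This gains the extra factor of $\delta$ needed to survive the $1/\delta$ in Corollary~\ref{lemma:BK7.17}, yielding $\|SQ_-iK_r(b)\|\leq C_k(\epsilon(r)+\delta)\tfrac{4}{C_0}r^{-4}$, which is now genuinely small for large $r$ and small $\delta$. Your proposal never invokes Condition~(\ref{pi-bot-small}) or the $Q_-$-structure, and without them the argument does not close.
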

\begin{proof}
From \cite{Nicolaescu2000NotesOS} (4.5.8) the operator 
\[
{F} :\underline{\hat{\mathsf{C}}}^\bot \to U^\bot - SQ_-R(\underline{\hat{\mathsf{C}}}^\bot + \underline{\hat{\mathsf{C}}}_0) 
\]
is a contraction, which is only different from $\tilde{{F}}$ by a constant term. Hence $\tilde{{F}}$ is a contraction. It remains to show that $\tilde{{F}}$ sends the ball $B_\bot(r^{-4})$ to itself.

From the norm estimate of the operator $R$ ( see \cite{Nicolaescu2000NotesOS} Lemma 4.5.6) and the norm estimate of the operator $S$ (see \cite{Nicolaescu2000NotesOS} (4.4.5)) we deduce (see also \cite{BK2020} Remark 6.13) for $\underline{\hat{\mathsf{C}}}^\bot \in B_\bot(r^{-4})$, 
\[
\|{F} (\underline{\hat{\mathsf{C}}}^\bot)\| \leq C(r^2 e^{-\mu r} + r^{-13/2}).
\]
Hence 
\[
\|\tilde{{F}} (\underline{\hat{\mathsf{C}}}^\bot)\| \leq C(r^2 e^{-\mu r} + r^{-13/2}) + \|SQ_-iK_r(t)\|.
\]
From the norm estimate of the operator $S$ (see \cite{Nicolaescu2000NotesOS} (4.4.5)) we have 
\begin{equation}\label{S-norm}
\|SQ_-iK_r(t)\| \leq C_k r^2 \|  Q_-iK_r(t)\|.
\end{equation}
Recall that by definition $K_r(b) = \Psi_r \circ \eta(b)$. Hence we have 
\begin{align}
\|Q_- K_r(b)\| &= \|Q_- \Psi_r \eta(b)\| \notag\\
&\leq \| Q_- \Psi_r \pi_{\hat{\mathsf{C}}_1}\eta(b)\| + \| Q_- \Psi_r \pi^\bot_{\hat{\mathsf{C}}_1} \eta(b)\|\notag \\
&\stackrel{\text{by (\ref{equ-pi-bot-d-asymp})}}{\leq}  \| Q_- \Psi_r \pi_{\hat{\mathsf{C}}_1}\eta(b)\| + \|  \pi^\bot_{\hat{\mathsf{C}}_1} \eta(b)\|\label{tooLarge} \\
&\leq  \| Q_- \Psi_r \pi_{\hat{\mathsf{C}}_1}\eta(b)\| + \|  \eta(b)\|\notag\\
&\stackrel{\text{by \ref{D-nontrvial}}}{\leq}  \| Q_- \Psi_r \pi_{\hat{\mathsf{C}}_1}\eta(b)\| + \delta\|  b\|. \label{second-term-too-large}
\end{align}
Recall that the gluing map $\Psi_r :L_\mu^{1,2}(\mathbf{i}\Lambda_+^2T^*\hat{N_1})  \to L^{1,2}(\mathbf{i}\Lambda_+^2T^*{X(r)}) $ defines an asymptotic map $\Psi_r: H^2(F_{\hat{\mathsf{C}}_1}) \to \H^-_r$. Recall also that $\pi_{\hat{\mathsf{C}}_1}$ is the projection from $L_\mu^{1,2}(\mathbf{i}\Lambda_+^2T^*\hat{N_1})$ to $H^2(F_{\hat{\mathsf{C}}_1})$, and $\|Q^r_-\|$ is the projection from $L^{1,2}(\mathbf{i}\Lambda_+^2T^*{X(r)}) $ to $(\H^-_r)^\bot$. Hence as $r\to 0$
\[
\| Q^r_- \circ \Psi_r \| \to 0
\]
and therefore 
\begin{equation}\label{Q-norm}
\| Q^r_-   \Psi_r \pi_{\hat{\mathsf{C}}_1}\eta(b) \| \leq \epsilon(r) \| \pi_{\hat{\mathsf{C}}_1}\eta(b) \|,
\end{equation}
where $\epsilon(r) \to 0$ as $r\to 0$.

Combine (\ref{S-norm}),  (\ref{second-term-too-large}), and (\ref{Q-norm}) we deduce
\begin{align}
\|SQ_-iK_r(t)\|
&\leq 
C_k r^2 (\epsilon(r) \| \pi_{\hat{\mathsf{C}}_1}\eta(b) \| + \delta\|b\|) \label{cntrol-S}\\
&\leq 
C_k r^2 (\epsilon(r) \| \eta(b) \| + \delta\|b\|) \notag\\
&\stackrel{\text{by \ref{D-nontrvial}}}{\leq} 
C_k r^2 ( \epsilon(r)\delta\|b\|+ \delta\|b\|).\notag
\end{align}
We hope to apply Corollary \ref{lemma:BK7.17} to control these terms. The issue is that, the second term cannot be controlled, since $\delta$ here cancels with the one in (\ref{equ:BK7.17}) when we combine them. Hence we have to reconsider how to control the second term $\pi^\bot_{\hat{\mathsf{C}}_1} \eta(b)$ of (\ref{tooLarge}). 

The main idea is to scale the spinor part to narrow down the angle between the direction of the perturbation and the image of Seiberg-Witten map. Recall that the target of the linearization $\widehat{\underline{SW}}_{\hat{\mathsf{C}}_1}$ is
\[
\hat{\mathcal{Y}}_\mu := L_\mu^{1,2}(\hat{S}_{\hat{\sigma}}^- \oplus \mathbf{i}\Lambda_+^2T^*\hat{N}_1),
\]
in which the orthogonal complement of $\widehat{\underline{SW}}_{\hat{\mathsf{C}}_1} (T_{\hat{\mathsf{C}}_1} (\partial_\infty^{-1}(\mathsf{C}_\infty)/\hat{\mathcal{G}}_{\mu}  ))$ is $ H^2(F_{\hat{\mathsf{C}}_1})$. $\eta(b)$ lives in $L_\mu^{1,2}(\mathbf{i}\Lambda_+^2T^*\hat{N_1}) $. 
$\pi_{\hat{\mathsf{C}}_1} $ and $\pi^\bot_{\hat{\mathsf{C}}_1} $ project $\eta(b)$ to $ H^2(F_{\hat{\mathsf{C}}_1})$ and $\widehat{\underline{SW}}_{\hat{\mathsf{C}}_1} (T_{\hat{\mathsf{C}}_1} (\partial_\infty^{-1}(\mathsf{C}_\infty)/\hat{\mathcal{G}}_{\mu}  ))$. 
Rescaling the norm on the spinor part of $\hat{\mathcal{Y}}_\mu$ would change $ H^2(F_{\hat{\mathsf{C}}_1})$, and therefore changes the norm of these projections (see Figure \ref{fig:rescale_spinor}).

 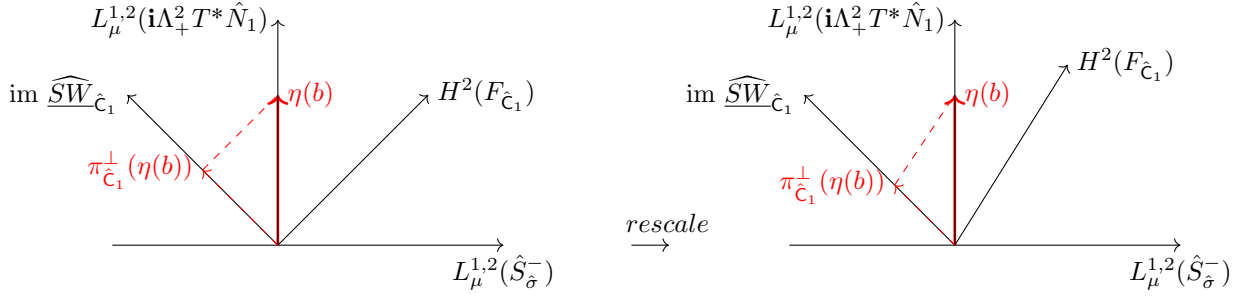
\begin{figure}
    \centering
    \begin{tikzpicture}
        \draw[->] (-2.2,0) -- (3,0) node[below] {$ L_\mu^{1,2}(\hat{S}_{\hat{\sigma}}^- )$};
        \draw[->] (0,0) -- (-2,2) node[left] {im $\widehat{\underline{SW}}_{\hat{\mathsf{C}}_1} $};
        \draw[->,red,  line width=1pt] (0,0) -- (0,2) node[right] {$\eta(b)$};
        \draw[->] (0,0) -- (0,3) node[left] {$L_\mu^{1,2}(\mathbf{i}\Lambda_+^2T^*\hat{N}_1)$};
        \draw[->] (0,0) -- (2,2) node[right] {$ H^2(F_{\hat{\mathsf{C}}_1})$};
        \draw[red, dashed] (-1,1) -- (0,2);
        \draw[->, red, dashed] (0,0) -- (-1,1) node[left] {$\pi^\bot_{\hat{\mathsf{C}}_1}(\eta(b))$};

     	\node[anchor=west] at (4.5,0.3) {$rescale$};
        
        \draw[->] (4.7,0) -- (5.2,0);
        
    	\begin{scope}[xshift=9cm]
    	\draw[->] (-2.2,0) -- (3,0) node[below] {$ L_\mu^{1,2}(\hat{S}_{\hat{\sigma}}^- )$};
        \draw[->] (0,0) -- (-2,2) node[left] {im $\widehat{\underline{SW}}_{\hat{\mathsf{C}}_1} $};
        \draw[->,red,  line width=1pt] (0,0) -- (0,2) node[right] {$\eta(b)$};
        \draw[->] (0,0) -- (0,3) node[left] {$L_\mu^{1,2}(\mathbf{i}\Lambda_+^2T^*\hat{N}_1)$};
        \draw[->] (0,0) -- (1.5,2.4) node[right] {$ H^2(F_{\hat{\mathsf{C}}_1})$};
        \draw[red, dashed] (-0.8,0.8) -- (0,2);
        \draw[->, red, dashed] (0,0) -- (-0.8,0.8) node[left] {$\pi^\bot_{\hat{\mathsf{C}}_1}(\eta(b))$};
        
    	\end{scope}
    	
    \end{tikzpicture}
    \caption{When the norm on $ L_\mu^{1,2}(\hat{S}_{\hat{\sigma}}^- )$ increases, the angle between $H^2(F_{\hat{\mathsf{C}}_1})$ and $ L_\mu^{1,2}(\hat{S}_{\hat{\sigma}}^- )$ would increase to make $H^2(F_{\hat{\mathsf{C}}_1})$ orthogonal to im $\widehat{\underline{SW}}_{\hat{\mathsf{C}}_1} $. }
    \label{fig:rescale_spinor}
\end{figure} 

Assume that the original norm on the space of spinor is $\|\cdot\|_s$, and the norm on the space of self dual $2$ forms is $\|\cdot\|_+$. Let $k$ be a positive number, then $k\|\cdot\|_s$ is equivalent to $\|\cdot\|_s$. We show that as $k \to \infty$, $\pi^\bot_{\hat{\mathsf{C}}_1}(\eta(b)) \to 0$, as follows.

Let 
\[
H^2_k(F_{\hat{\mathsf{C}}_1})
\]
be the orthogonal complement of $\widehat{\underline{SW}}_{\hat{\mathsf{C}}_1} (T_{\hat{\mathsf{C}}_1} (\partial_\infty^{-1}(\mathsf{C}_\infty)/\hat{\mathcal{G}}_{\mu}  ))$ under the norm $(k\|\cdot\|_s,\|\cdot\|_+)$. Let $\S^n_k$ be the unit sphere of $H^2_k(F_{\hat{\mathsf{C}}_1})$, where $n = \dim H^2_k(F_{\hat{\mathsf{C}}_1})$ is a finite number that doesn't depend on $k$. Let 
\begin{equation}\label{def:vector-e}
 e=s + \eta \in \S^n_1  \subset H^2_1(F_{\hat{\mathsf{C}}_1})
\end{equation}
where $s$ is the spinor part of $e$ and $\eta \in \Lambda_+^2 T^*\hat{N}_1$. Assume that $\epsilon(k)$ is a positive number such that 
\[
e(k) := \epsilon(k) s + \epsilon(k) k^2 \eta \in \S^n_k.
\]
To check that $e(k) \in H^2_k(F_{\hat{\mathsf{C}}_1})$, pick any $s' + \eta' \in \widehat{\underline{SW}}_{\hat{\mathsf{C}}_1} (T_{\hat{\mathsf{C}}_1} (\partial_\infty^{-1}(\mathsf{C}_\infty)/\hat{\mathcal{G}}_{\mu}  ))$, then
\begin{align*}
k^2\langle \epsilon(k)s , s'\rangle + \langle \epsilon(k)k^2\eta, \eta'\rangle &= k^2\epsilon(k) (\langle s , s'\rangle + \langle \eta, \eta'\rangle)\\
&\stackrel{\text{by (\ref{def:vector-e})}}{=} k^2\epsilon(k)\cdot 0.
\end{align*}
From the assumption that the norm of $e(k)$ is $1$, we deduce that
\[
(k\|\epsilon(k) s\|_s)^2 + \|\epsilon(k)k^2\eta\|_+^2 = 1
\]
\[
k^2 \epsilon(k)^2\langle s, s\rangle + \epsilon(k)^2k^4 \langle \eta,\eta \rangle = 1
\]
\begin{align*}
\epsilon(k)^2 &= 1/(k^2 \langle s,s\rangle + k^4 \langle \eta,\eta \rangle) \\
&\stackrel{\text{by (\ref{def:vector-e})}}{=} 1/((k^4-k^2)\|\eta\|^2 +k^2)
\end{align*}
When $k \leq 1$, there is a unique positive solution for $\epsilon(k)$. Define
\begin{align*}
f_k : \S^n_1 &\to \S^n_k, \\
e &\mapsto e(k)=  \epsilon(k) s + \epsilon(k) k^2 \eta.
\end{align*}
By finding its inverse, we can easily show that $f_k$ is actually a diffeomorphism. When $k \to \infty$,
\[
\|\epsilon(k)k^2\eta\|_+ = \sqrt{\frac{1}{(k^4-k^2)\|\eta\|_+^2 + k^2}}k^2\|\eta\|_+ \to 1.
\]
Hence the angle between $H^2(F_{\hat{\mathsf{C}}_1})$ and $H^+_{L^2}(\hat{N}_1)$ approaches $0$. From the condition (\ref{pi-bot-small}) 
we have
\[
\|\pi^\bot_{\hat{\mathsf{C}}_1}(\eta(b))\| \to \|\pi_+^\bot\eta(b)\| \leq \epsilon\|\eta(b)\|
\]
as $k\to \infty$.

We pick (\ref{tooLarge}) up and deduce that
\begin{align}
\|Q_- K_r(b)\| &= \|Q_- \Psi_r \eta(b)\| \notag\\
&\leq \| Q_- \Psi_r \pi_{\hat{\mathsf{C}}_1}\eta(b)\| + \| Q_- \Psi_r \pi^\bot_{\hat{\mathsf{C}}_1} \eta(b)\|\notag \\
&\stackrel{\text{by (\ref{equ-pi-bot-d-asymp})}}{\leq}  \| Q_- \Psi_r \pi_{\hat{\mathsf{C}}_1}\eta(b)\| + \|  \pi^\bot_{\hat{\mathsf{C}}_1} \eta(b)\|\notag\\
&\stackrel{\text{by \ref{pi-bot-small}}}{\leq}  \| Q_- \Psi_r \pi_{\hat{\mathsf{C}}_1}\eta(b)\| + \delta\|  \eta(b)\|\notag\\
&\stackrel{\text{by \ref{D-nontrvial}}}{\leq}  \| Q_- \Psi_r \pi_{\hat{\mathsf{C}}_1}\eta(b)\| + \delta^2\|  b\|.
\end{align}
Then as what we have already seen in (\ref{cntrol-S}),
\begin{align*}
\|SQ_-iK_r(t)\|
&\leq 
C_k r^2 (\epsilon(r) \| \pi_{\hat{\mathsf{C}}_1}\eta(b) \| + \delta^2\|b\|) \\
&\leq 
C_k r^2 (\epsilon(r) \| \eta(b) \| + \delta^2\|b\|) \notag\\
&\stackrel{\text{by \ref{D-nontrvial}}}{\leq} 
C_k r^2 ( \epsilon(r)\delta\|b\|+ \delta^2\|b\|)\\
&\stackrel{\text{by (\ref{equ:BK7.17})}}{\leq} 
C_k r^2 ( \epsilon(r)+ \delta)\frac{4}{C_0}r^{-6} \\
&= C_k ( \epsilon(r)+ \delta)\frac{4}{C_0} r^{-4}.
\end{align*}
Therefore, for sufficiently large $r$ and small $\delta$, 
\[
\|SQ_-iK_r(t)\| \leq \frac{1}{2}r^{-4},
\]
which means that $\tilde{{F}}$ sends the ball $B_\bot(r^{-4})$ to itself.



\end{proof}

We have two contraction maps:
\begin{lemma}[\cite{BK2020} Lemma 7.18]
For fixed $\hat{C}_0$ and $t\in U_i$, the map
\begin{align}\label{equ:Fcontraction}
\tilde{\mathcal{F}}:Y_+^2 &\to Y_+^2\\
\tilde{\mathcal{F}}(\underline{\hat{\mathsf{C}}}^\bot)&= U^\bot - SQ_-R(\underline{\hat{\mathsf{C}}}^\bot + \underline{\hat{\mathsf{C}}}_0) + SQ_-iK_r(b)
\end{align}
is a contraction for sufficiently large $r$.
\end{lemma}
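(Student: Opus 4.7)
The plan is to reduce the claim to the classical Nicolaescu contraction estimate by observing that the $K_r(b)$-term is constant in the variable of interest. Concretely, since $\hat{\mathsf{C}}_0$ and $t\in U_i$ are fixed, the term $SQ_-iK_r(b)$ is independent of $\underline{\hat{\mathsf{C}}}^\bot$, and so for any $\underline{\hat{\mathsf{C}}}^\bot_1, \underline{\hat{\mathsf{C}}}^\bot_2 \in Y_+^2$ one has
\[
\tilde{\mathcal{F}}(\underline{\hat{\mathsf{C}}}^\bot_1) - \tilde{\mathcal{F}}(\underline{\hat{\mathsf{C}}}^\bot_2)
= -SQ_-\bigl(R(\underline{\hat{\mathsf{C}}}^\bot_1 + \underline{\hat{\mathsf{C}}}_0) - R(\underline{\hat{\mathsf{C}}}^\bot_2 + \underline{\hat{\mathsf{C}}}_0)\bigr).
\]
Thus the Lipschitz constant of $\tilde{\mathcal{F}}$ coincides with that of the unperturbed map $\underline{\hat{\mathsf{C}}}^\bot \mapsto U^\bot - SQ_-R(\underline{\hat{\mathsf{C}}}^\bot + \underline{\hat{\mathsf{C}}}_0)$ studied by Nicolaescu.

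The main step is then to invoke the norm bounds already in place. By Lemma 4.5.6 of \cite{Nicolaescu2000NotesOS}, the remainder $R$ is quadratic: it satisfies $\|R(\underline{\hat{\mathsf{C}}}_1) - R(\underline{\hat{\mathsf{C}}}_2)\| \leq C(\|\underline{\hat{\mathsf{C}}}_1\| + \|\underline{\hat{\mathsf{C}}}_2\|)\|\underline{\hat{\mathsf{C}}}_1 - \underline{\hat{\mathsf{C}}}_2\|$, with $\|\underline{\hat{\mathsf{C}}}_0\|$ controlled by $r^{-3}$ (from Corollary \ref{lemma:BK7.17}) and $\|\underline{\hat{\mathsf{C}}}^\bot_i\|$ controlled by $r^{-4}$ on the ball $B_\bot(r^{-4})$. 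Combined with the operator bound $\|SQ_-\| \leq C_k r^2$ of \cite{Nicolaescu2000NotesOS} (4.4.5), this yields
\[
\|\tilde{\mathcal{F}}(\underline{\hat{\mathsf{C}}}^\bot_1) - \tilde{\mathcal{F}}(\underline{\hat{\mathsf{C}}}^\bot_2)\|
\leq C_k r^2 \cdot C\bigl(r^{-3} + r^{-4}\bigr) \|\underline{\hat{\mathsf{C}}}^\bot_1 - \underline{\hat{\mathsf{C}}}^\bot_2\|
\leq C' r^{-1} \|\underline{\hat{\mathsf{C}}}^\bot_1 - \underline{\hat{\mathsf{C}}}^\bot_2\|,
\]
which is a contraction as soon as $r$ is large enough.

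The expected routine obstacle is only bookkeeping: extending the Lipschitz estimate from the local ball $B_\bot(r^{-4})$ (where Lemma \ref{lemma:F-contraction-1} already lives) to all of $Y_+^2$ as asserted here. Since $R$ is a polynomial (specifically, quadratic) map of sections, the Lipschitz constant in any bounded region is still $O(r^{-1})$ after applying $SQ_-$, and once one restricts to the invariant ball produced in Lemma \ref{lemma:F-contraction-1} the Banach fixed point theorem applies and yields the unique solution $\underline{\hat{\mathsf{C}}}^\bot = \Phi(\underline{\hat{\mathsf{C}}}_0, b)$ used to parametrize monopoles in the slice. No new analytic ingredient beyond those already developed in the preceding subsections is needed; the content of the lemma is essentially that adding the $b$-dependent constant $SQ_-iK_r(b)$ does not destroy the classical Nicolaescu contraction.
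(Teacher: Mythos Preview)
Your proposal is correct and matches the paper's approach exactly: the paper (in the proof of Lemma \ref{lemma:F-contraction-1}) simply notes that $\tilde{F}$ differs from Nicolaescu's map $F:\underline{\hat{\mathsf{C}}}^\bot \mapsto U^\bot - SQ_-R(\underline{\hat{\mathsf{C}}}^\bot + \underline{\hat{\mathsf{C}}}_0)$ only by the constant term $SQ_-iK_r(b)$, so the contraction follows directly from \cite{Nicolaescu2000NotesOS} (4.5.8). One small bookkeeping correction: the bound on $\|\underline{\hat{\mathsf{C}}}_0\|$ should be $r^{-4}$ (it is a hypothesis, $\underline{\hat{\mathsf{C}}}_0\in B_0(r^{-4})$), not $r^{-3}$ via Corollary \ref{lemma:BK7.17}; this does not affect your $O(r^{-1})$ Lipschitz conclusion.
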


Let
\[
\Phi(\underline{\hat{\mathsf{C}}}_0, b)
\]
be the fixed point of $\tilde{{F}}$. $\Phi$ is a smooth function with respect to $\underline{\hat{\mathsf{C}}}_0$ and $b$. Any tuple 
\[
(\underline{\hat{\mathsf{C}}}_0,b, \underline{\hat{\mathsf{C}}}^\bot = \Phi(\underline{\hat{\mathsf{C}}}_0, b) )
\]
is a solution of the second equation of (\ref{expand-decompose-F=0}). Next we consider the first equation of (\ref{expand-decompose-F=0}):
\begin{equation}\label{equ:1-equ}
P^r_- \tilde{\mathcal{F}}_r(\underline{\hat{\mathsf{C}}},b) =P^r_- {\mathcal{F}}_r(\underline{\hat{\mathsf{C}}}_0 + \underline{\hat{\mathsf{C}}}^\bot) + P^r_- iK_r(b)   = P^r_-  \mathcal{F}_r(0) + \hat{\mathcal T}_r(\underline{\hat{\mathsf{C}}}_0)+ P^r_- R( \underline{\hat{\mathsf{C}}}_0 + \underline{\hat{\mathsf{C}}}^\bot) + P^r_- iK_r(b) = 0.
\end{equation}
Recall that the isomorphism (\ref{equ:P-minus-iso}) is given by $P^r_- iK_r$. Hence it admits an inverse operator $J_r$. \cite{BK2020} uses the same strategy by applying $J_r$ to this equation and then try to solve:
\[
J_r P^r_- {\mathcal{F}}_r(\underline{\hat{\mathsf{C}}}_0 + \underline{\hat{\mathsf{C}}}^\bot) +b    = 0.
\]
They prove
\begin{lemma}[\cite{BK2020} Lemma 7.22]\label{lem:BKlem7.22}
\[
G(b) := -J_r P^r_- \mathcal{F}_r(\underline{\hat{\mathsf{C}}}_0+\Phi(\underline{\hat{\mathsf{C}}}_0,b)),
\]
is a contraction on $\{t\text{ }|\text{ } \|t\| \leq r^{-6} \}$ for sufficiently large $r$.
\end{lemma}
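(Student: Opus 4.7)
The plan is to mimic the Banach fixed point argument of \cite{BK2020} Lemma 7.22, but track the extra error terms produced by the fact that our perturbation family $\eta$ no longer has image contained in the obstruction bundle on the nose. First, using the decomposition $\mathcal{F}_r(\underline{\hat{\mathsf{C}}}_0 + \Phi) = \mathcal{F}_r(0) + \hat{\mathcal T}_r(\underline{\hat{\mathsf{C}}}_0) + \hat{\mathcal T}_r(\Phi) + R(\underline{\hat{\mathsf{C}}}_0 + \Phi)$ together with $\hat{\mathcal T}_r(\Phi) \in \mathcal{Y}^0_-$, equation (\ref{equ:1-equ}) reduces to
\[
P^r_- \mathcal{F}_r(0) + \hat{\mathcal T}_r(\underline{\hat{\mathsf{C}}}_0) + P^r_- R(\underline{\hat{\mathsf{C}}}_0+\Phi) + P^r_- iK_r(b) = 0.
\]
Applying $J_r$ and using that $P^r_- iK_r$ is, up to a higher-order correction in $\|b\|$ coming from the nonlinearity of $\eta$ in $b$, equal to the isomorphism $P^r_- i\Psi_r D_b\eta$ that defines $J_r$, the equation becomes precisely $b = G(b)$ modulo an $O(\|b\|^2)$ term. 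Solving for $b$ in the ball $\{\|t\|\le r^{-6}\}$ is thus equivalent to finding a fixed point of $G$.

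Next I would estimate each summand. The norm of $J_r$ is bounded by a constant independent of $r$ thanks to Lemma \ref{lemma:BK7.16} and Condition \ref{piD-nontrvial}, which together give the uniform lower bound $\|P^r_-\Psi_r D_b\eta\|\ge C_0\delta''\|t\|$. The constant term $P^r_- \mathcal{F}_r(0)$ decays exponentially in $r$ because $\hat{\mathsf{C}}_r$ is an approximate monopole on the stretched neck. The linear term satisfies $\|\hat{\mathcal T}_r(\underline{\hat{\mathsf{C}}}_0)\|\le r^{-2}\|\underline{\hat{\mathsf{C}}}_0\|\le r^{-6}$ by the very definition \eqref{equ:def-of-H-r} of $\H_r^+$ and the restriction $\underline{\hat{\mathsf{C}}}_0\in B_0(r^{-4})$. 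The quadratic remainder $R$ is controlled in $L^{2,2}$ by $(\|\underline{\hat{\mathsf{C}}}_0\|+\|\Phi\|)^2 \lesssim r^{-8}$ using Lemma \ref{lemma:F-contraction-1}. Combining these with the norm estimate $\|J_r\|\le (C_0\delta'')^{-1}$ and absorbing the $r$-independent constants, one obtains $\|G(b)\| \le r^{-6}$ for sufficiently large $r$, so $G$ preserves the ball.

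Finally, for the contraction property I would differentiate: $DG(b) = -J_r P^r_- (D\mathcal{F}_r)(\underline{\hat{\mathsf{C}}}_0 + \Phi)\cdot D_b\Phi - J_r P^r_- \cdot (\text{nonlinear correction of }iK_r)$. The first factor $D\mathcal{F}_r \circ D_b\Phi$ is small because $D_b\Phi$ is governed by the contraction estimate for $\tilde F$ in Lemma \ref{lemma:F-contraction-1}, which yields $\|D_b\Phi\|\le C_k r^2\|Q_-^r\Psi_r D_b\eta\|$, and by (\ref{Q-norm}) together with the spinor-rescaling observation that forces $\|\pi^\bot_{\hat{\mathsf{C}}_1}\eta\|$ below $\delta\|\eta\|$, this product is $O(\epsilon(r)+\delta)$ with $\epsilon(r)\to 0$. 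The nonlinear correction of $K_r$ in $b$ similarly contributes only a term of order $r^{-6}$. Thus $\|DG\|<1$ for sufficiently large $r$ and sufficiently small $\delta$, and the Banach fixed point theorem produces a unique solution $b\in \overline{B(0,r^{-6})}$.

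The main obstacle is the balance of powers of $r$: $J_r$ carries no smallness in $r$, while the inverse $S$ of $\hat{\mathcal T}_r$ on the complement has norm growing like $r^2$, so every appearance of $S$ in the expansion of $\Phi$ and its derivative must be compensated by either the exponential decay of $\mathcal{F}_r(0)$, the $r^{-2}$ spectral gap defining $\H_r^+$, or the asymptotic vanishing of $Q_-^r\circ \Psi_r$ on $H^2(F_{\hat{\mathsf{C}}_1})$ established in (\ref{Q-norm}). Unlike the Baraglia–Konno setting where the family perturbation already lies in the obstruction space, here one must essentially use the spinor-rescaling trick of Lemma \ref{lemma:F-contraction-1} to keep $\pi^\bot_{\hat{\mathsf{C}}_1}\eta(b)$ below threshold, so that the residual error does not dominate the linear term $b$ in the equation $b = G(b)$.
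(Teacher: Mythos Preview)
The paper does not give its own proof of this lemma; it simply cites \cite{BK2020} Lemma~7.22 and moves on. The nearest thing to a proof in the paper is the case-0 analogue in Section~\ref{sec:local-gluing-theory-for-case-0}: there the self-map estimate $\|G(t)\|\le r^{-6}$ is worked out explicitly in Lemma~\ref{lemma:parameter-restriction0}, and the contraction step for the subsequent unnamed lemma is again dismissed with ``The proof is exactly the same as the proof of Lemma~\ref{lem:BKlem7.22}.''

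Your outline is consistent with that template and with what one would expect from the Baraglia--Konno argument: you correctly identify the exponential decay of $P^r_-\mathcal{F}_r(0)$, the $r^{-2}$ spectral bound on $\hat{\mathcal T}_r|_{\H_r^+}$, the quadratic remainder bound on $R$, and the role of Lemma~\ref{lemma:BK7.16} together with Condition~\ref{piD-nontrvial} in bounding $\|J_r\|\le (C_0\delta'')^{-1}$. One small point worth making precise: in your self-map estimate the dominant term is $(C_0\delta'')^{-1}\cdot r^{-6}$, whose prefactor is $r$-independent and does \emph{not} go to zero as $r\to\infty$, so ``absorbing the $r$-independent constants'' is not automatic. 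The paper's Lemma~\ref{lemma:parameter-restriction0} handles exactly this by noting that $\delta$ (hence $\delta''$) may be taken large enough that $\frac{2}{C_0\delta}<1$; you should invoke the same freedom here rather than appeal to large $r$ alone.
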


Might be deleted: In \cite{BK2020}, they consider $0$-dimensional case, so they don't need to prove that $\hat{C}_1$ depends diffentiably on $\hat{C}_0$ and $t$. But in our case, the tensor product is $1$-dimensional, so we have to prove this fact:
\begin{theorem}
$\hat{C}_1$ depends diffentiably on $\hat{C}_0$ and $t$.
\end{theorem}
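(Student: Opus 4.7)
The plan is to deduce the differentiable dependence from the smooth parameter version of the uniform contraction principle, applied successively to the two contractions $\tilde{F}$ and $G$ constructed in the preceding subsection. Writing the glued monopole as the composite
\[
\hat{C}_1 \;=\; \hat{\mathsf{C}}_r + \underline{\hat{\mathsf{C}}}_0 + \Phi\bigl(\underline{\hat{\mathsf{C}}}_0,\, b(\underline{\hat{\mathsf{C}}}_0)\bigr),
\]
the claim reduces to smoothness of $\Phi$ jointly in $(\underline{\hat{\mathsf{C}}}_0, b)$ and of the second fixed point $b$ in $\underline{\hat{\mathsf{C}}}_0$.

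First I would check that the defining map of $\tilde{F}$,
\[
(\underline{\hat{\mathsf{C}}}_0,\, b,\, \underline{\hat{\mathsf{C}}}^\bot) \;\longmapsto\; U^\bot - SQ_-R(\underline{\hat{\mathsf{C}}}_0 + \underline{\hat{\mathsf{C}}}^\bot) + SQ_-\, iK_r(b),
\]
is $C^\infty$ jointly in all three variables: the remainder $R$ is polynomial of low degree in the fields, $S$ and $Q_-$ are bounded linear, and $K_r(b) = \Psi_r\eta(b)$ is smooth in $b$ because $\eta$ was chosen smooth. Combined with the uniform contraction constant on $B_\bot(r^{-4})$ provided by Lemma \ref{lemma:F-contraction-1}, the uniform contraction principle with smooth parameters produces a fixed point $\Phi(\underline{\hat{\mathsf{C}}}_0, b)$ that is $C^\infty$ in $(\underline{\hat{\mathsf{C}}}_0, b)$. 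Feeding this into the definition of $G$, the same reasoning applies: Lemma \ref{lem:BKlem7.22} already shows $G$ is a uniform contraction on $\{\|b\| \le r^{-6}\}$, and the map sending $(\underline{\hat{\mathsf{C}}}_0, b)$ to $-J_r P^r_- \mathcal{F}_r(\underline{\hat{\mathsf{C}}}_0 + \Phi(\underline{\hat{\mathsf{C}}}_0, b))$ is smooth in both arguments. A second application of the parameter-dependent fixed point theorem then gives that $b(\underline{\hat{\mathsf{C}}}_0)$ is $C^\infty$ in $\underline{\hat{\mathsf{C}}}_0$. Composing yields the desired dependence on $\underline{\hat{\mathsf{C}}}_0$, and the dependence on $t = b \in B$ is already present in the first step.

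The main obstacle lies in verifying that the derivatives of $\tilde{F}$ and $G$ in the fixed-point variable satisfy bounds uniform in the parameters (and in $r$), so that the quantitative form of the uniform contraction principle applies on the prescribed balls of radii $r^{-4}$ and $r^{-6}$. These estimates should follow by differentiating the norm bounds on $S$, $Q_-$, the gluing operator $\Psi_r$, and the polynomial nonlinearity $R$ that were already exploited in the proofs of Lemma \ref{lemma:F-contraction-1} and Lemma \ref{lem:BKlem7.22}, together with the local gradient conditions on $\eta$ imposed in Proposition \ref{prop:perturbationCondition}. Once these uniform derivative bounds are in place, no new analytic input is needed, and the conclusion is purely a consequence of the classical uniform contraction principle with $C^\infty$ parameters.
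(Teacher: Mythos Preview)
The paper does not actually prove this theorem. The statement is prefaced in the source by the note ``Might be deleted:'' and is followed immediately by the proof of a different result (Theorem \ref{Thm:Local-gluing-theory-for-type1}); no argument for the differentiable dependence is supplied anywhere.

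Your proposal---two successive applications of the uniform contraction principle with smooth parameters, first to $\tilde{F}$ to obtain smoothness of $\Phi$ in $(\underline{\hat{\mathsf{C}}}_0,b)$, then to $G$ to obtain smoothness of the fixed point $b=\Psi(\underline{\hat{\mathsf{C}}}_0)$---is the standard route for statements of this type and is exactly what one would expect the author had in mind. The ingredients you identify (polynomiality of $R$, boundedness of $S$ and $Q_-$, smoothness of $\eta$, and the uniform contraction constants established in Lemmas \ref{lemma:F-contraction-1} and \ref{lem:BKlem7.22}) are the right ones, and the uniform-in-parameter derivative bounds you flag as the main obstacle are indeed obtained by differentiating the same estimates already used in those lemmas. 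In short, your outline is correct and fills a gap that the paper left open.
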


\begin{proof}[Proof of Theorem \ref{Thm:Local-gluing-theory-for-type1}]
Let 
\[
\Psi (\underline{\hat{\mathsf{C}}}_0)
 \]
be the unique fixed point of $G$. Then for any $\underline{\hat{\mathsf{C}}}_0\in B_0(r^{-4})\subset \H_r^+$, there exist a unique tuple
\[
(\underline{\hat{\mathsf{C}}}_0,b, \underline{\hat{\mathsf{C}}}^\bot  )
\]
that can solve (\ref{expand-decompose-F=0}), where $b =\Psi (\underline{\hat{\mathsf{C}}}_0)$ and $ \underline{\hat{\mathsf{C}}}^\bot = \Phi(\underline{\hat{\mathsf{C}}}_0, b)$. 

Next for a specified $b\in B$ in Proposition \ref{discreteCircle} we identify the finite dimensional space $\H_r^+$ by the diagram (\ref{equ:3T-param}).  
\begin{equation}\label{equ:3T-param} 
\xymatrix{
&
0\ar[d] &
0\ar[d] & 
0 \ar[d]& 
\\
0\ar[r] &
\ker\Delta_+^c \ar[d] \ar[r]^{S_r} &
H^1_{\hat{\mathsf{C}}_1}\oplus H^1_{\hat{\mathsf{C}}_2}\ar[d] \ar[r]^{\Delta_+^c} &
L_1^+ + L_2^+ \ar[d]\ar[r] & 
0\\
0\ar[r] &
{\H}_r^+ \ar[d] \ar[r]^(0.3){S_r} &
\ker_{ex}\hat{\mathcal{T}}_{\hat{\mathsf{C}}_1}\oplus \ker_{ex}\hat{\mathcal{T}}_{\hat{\mathsf{C}}_2} \ar[d]^{\partial_\infty^0} \ar[r]^(0.64){\Delta_+^c} &
\hat{L}_1^+ + \hat{L}_2^+ \ar[d]\ar[r] &
 0 \\
0\ar[r] &
\ker\Delta_+^0 \ar[d] \ar[r]^{S_r} &
\mathfrak{C}_1^+\oplus \mathfrak{C}_2^+ \ar[d] \ar[r]^{\Delta_+^0} &
\mathfrak{C}_1^+ + \mathfrak{C}_2^+ \ar[d]\ar[r] & 
0 \\
&
0 &
0& 
0& 
\\
}
\tag{\textbf{T}}
\end{equation}
We always glue an irreducible to a reducible. 
Hence $ \ker\Delta_+^0 $ is always trivial. By (\ref{equ:L+=R}), $ \dim \H^+_r = \dim \ker\Delta_+^c  $ is always $\dim H^1_{\hat{\mathsf{C}}_1} + \dim H^1_{\hat{\mathsf{C}}_2} - 1$. Hence for $\hat{N}_1 = X_0$ and $\hat{N}_2 =\S^1\times D^3 $ or $D_2\times \S^2$, $ \dim \H^+_r$ is the dimension of the fiber product of the moduli spaces on $\hat{N}_1$ and $\hat{N}_2$. 
By (\ref{L1}), $\dim H^1_{\hat{\mathsf{C}}_1}=1$ and recall that for a specified $b\in B$ in Proposition \ref{discreteCircle}, the $\eta(b)$-monopole $\hat{\mathsf{C}}_2$ satisfies $\dim H^1_{\hat{\mathsf{C}}_2}=1$ for $\hat{N}_2 =\S^1\times D^3 $ and  $\dim H^1_{\hat{\mathsf{C}}_2}=0$ for $\hat{N}_2 = D_2\times \S^2$ by Corollary \ref{cor:dimOfReducible}. Hence $ \dim \H^+_r$ would be either $1$ or $0$, respectively.

Then repeat the story in section \ref{section:global-gluing} we prove that $\H^+_r$ approaches the tangent bundle of the fiber product of the moduli spaces on $\hat{N}_1$ and $\hat{N}_2$, and therefore, the genuine moduli space of $X(r)$ is isotopic to the fiber product in its configuration space.
\end{proof}

\subsection{Local gluing theory for case $0$}\label{sec:local-gluing-theory-for-case-0}
Now we try to recover Theorem \ref{Thm:Local-gluing-theory-for-type1} with the surgery on a homologically trivial loop. In this case, by the analysis in the previous section, the solution on $X_0$-side we glue is of type $0$. Hence the fiber product in the configuration space of $X(r)$ is also of type $0$. All computation carries on until (\ref{lemma:BK7.17}). Now we cannot find an inverse of $P_-^rK_r$ since we do not have the isomorphism (\ref{equ:P-minus-iso}). Instead we have the isomorphism (\ref{equ:P-minus-iso-1}) and it is given by a restriction of $P_-^riK_r$: 
\[
P_-^rK_r|_{d_b\eta(V)} : d_b\eta(V) \stackrel{\cong}{\to} \H_r^-.
\]
Hence we are only able to find an inverse operator of $P_-^rK_r|_{d_b\eta(V)} $:
\[
J_{r,b} : \H^-_r \to T_bB.
\]
To find a contraction map as in the previous subsection, we want to control the norm of $J_{r,b}$. 

The first step is to estimate the norm of $P_-^rK_r$. Lemma \ref{lemma:BK7.16} does not hold for this case, but the statement is still true if we restrict the domain of $P_-^rK_r$ to $V$:
\begin{lemma}\label{lemma:BK7.16-2}
Let $\hat{N}$ be the cylindrical manifold obtained from $X_0$. Fix any $\eta \in \mathcal{Z}_{reg}$. For any positive number $C_0<1$, any $b\in B$, $t\in V \subset T_b B$, and any solution $\hat{\mathsf{C}}_1$ for the perturbation $\eta(b)$, for all sufficiently large $r$,
\[
\| P^r_- \circ \Psi_r \circ D_b\eta(t)\|_{L^2_\mu(X(r))} > C_0\| \pi_{\hat{\mathsf{C}}_1} D_b\eta(t)\|_{L^2_\mu(X_0)}.
\]
\end{lemma}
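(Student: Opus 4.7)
The plan is to imitate the proof of Lemma \ref{lemma:BK7.16}, with the essential new ingredient being that restricting to $V$ compensates for the failure of $\Psi_r$ to identify all of $H^2(F_{\hat{\mathsf{C}}_1})$ with $\H^-_r$ in Case 0. As in the earlier proof, I would first split
\[
P^r_- \Psi_r D_b\eta(t) = P^r_- \Psi_r \pi_{\hat{\mathsf{C}}_1} D_b\eta(t) + P^r_- \Psi_r \pi^\bot_{\hat{\mathsf{C}}_1} D_b\eta(t).
\]
The second summand is controlled by condition~\ref{pi-bot-small} in Proposition~\ref{prop:perturbationCondition}, combined with the rescaling trick on the spinor component used in Lemma~\ref{lemma:F-contraction-1}: after rescaling we can force $\|\pi^\bot_{\hat{\mathsf{C}}_1} D_b\eta(t)\|_{L^2_\mu}$ to be an arbitrarily small fraction of $\|D_b\eta(t)\|_{L^2_\mu}$, hence an arbitrarily small fraction of $\|\pi_{\hat{\mathsf{C}}_1} D_b\eta(t)\|_{L^2_\mu}$. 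Consequently the problem reduces to a lower bound on $\|P^r_- \Psi_r \pi_{\hat{\mathsf{C}}_1} D_b\eta(t)\|$.

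Next I would invoke the structure identified in Subsection~\ref{sec:obstruction}. Under the asymptotic identification $S_r$ from the obstruction diagram \ref{Fig:O0}, $\H^-_r$ corresponds to the subspace $\ker\Delta^c_- \subset H^2(F_{\hat{\mathsf{C}}_1})$, and the orthogonal complement of this subspace in $H^2(F_{\hat{\mathsf{C}}_1})$ is exactly the one-dimensional line $\H\bigl(\widehat{\underline{SW}}_{\hat{\mathsf{C}}_1}(T_{\mathsf{C}_\infty}(\mathcal Z_s/\mathcal G_s))\bigr)$ (cf. (\ref{equ:image-of-fiber-nonzero}) and (\ref{equ:H-=kerDelta-c})). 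By the defining property of $v$, the line $\pi_{\hat{\mathsf{C}}_1} d_b\eta(\R v)$ coincides (within $H^2(F_{\hat{\mathsf{C}}_1})$) with this one-dimensional complement, so $\pi_{\hat{\mathsf{C}}_1} d_b\eta(V)$ is a codimension-one subspace of $H^2(F_{\hat{\mathsf{C}}_1})$ transverse to it. This means the component of $\pi_{\hat{\mathsf{C}}_1} D_b\eta(t)$ in $\ker\Delta^c_-$ has norm at least $\sin(\theta_b)\cdot\|\pi_{\hat{\mathsf{C}}_1} D_b\eta(t)\|_{L^2_\mu}$, where $\theta_b$ is the angle between $\pi_{\hat{\mathsf{C}}_1} d_b\eta(V)$ and $\H\bigl(\widehat{\underline{SW}}_{\hat{\mathsf{C}}_1}(T_{\mathsf{C}_\infty}(\mathcal Z_s/\mathcal G_s))\bigr)$ measured in the $L^2_\mu$-inner product.

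With this geometric input in hand, I would apply the same asymptotic-map estimates that powered Lemma~\ref{lemma:BK7.16}: $\Psi_r$ and $P^r_-$ asymptotically preserve $L^2$-norms on $\ker\Delta^c_-$ as $r\to\infty$, so
\[
\|P^r_-\Psi_r \pi_{\hat{\mathsf{C}}_1} D_b\eta(t)\|_{L^2(X(r))} \;\longrightarrow\; \|\operatorname{proj}_{\ker\Delta^c_-}\pi_{\hat{\mathsf{C}}_1} D_b\eta(t)\|_{L^2_\mu(X_0)}
\]
as $r\to\infty$, uniformly on the compact parameterized moduli space. Combining this with the angle estimate from the previous paragraph and the negligibility of the $\pi^\bot_{\hat{\mathsf{C}}_1}$-term, one obtains the desired inequality
$\|P^r_-\Psi_r D_b\eta(t)\| > C_0 \|\pi_{\hat{\mathsf{C}}_1} D_b\eta(t)\|$ for every $C_0<1$, provided $r$ is taken sufficiently large.

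The main obstacle is showing that the angle $\theta_b$ admits a uniform positive lower bound over $b$. A priori the Riemannian metric on $B$ chosen to define $V$ has nothing to do with the $L^2_\mu$-geometry of $H^2(F_{\hat{\mathsf{C}}_1})$, so $\theta_b$ could in principle be small. However, the very statement (\ref{equ:P-minus-iso-1}) that $P^r_-\Psi_r|_{d_b\eta(V)}$ is an isomorphism for large $r$, together with compactness of the family parameterized moduli space and continuity of $\theta_b$ in $b$, forces $\inf_b \theta_b > 0$. This is the step where I would take the most care, since it is the only place where the fact that we are in Case 0 rather than Case 1 truly matters for the estimate.
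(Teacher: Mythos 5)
You are following the route the paper itself intends (it gives no separate argument for this lemma, only the assertion that the proof of Lemma \ref{lemma:BK7.16} survives once the domain is restricted to $V$), and your two structural observations are the right ones: in Case 0 only $\ker\Delta_-^c$ glues into $\H_r^-$, and $\pi_{\hat{\mathsf{C}}_1}d_b\eta(\R v)$ is exactly the lost line $\R\,\H\bigl(\widehat{\underline{SW}}_{\hat{\mathsf{C}}_1}(T_{\mathsf{C}_\infty}(\mathcal{Z}_s/\mathcal{G}_s))\bigr)$ singled out in (\ref{equ:image-of-fiber-nonzero}) and (\ref{equ:H-=kerDelta-c}).

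The gap is in the final quantitative step. Your estimate bottoms out at roughly $\liminf_r\|P_-^r\Psi_r D_b\eta(t)\|\ \ge\ \sin(\theta_b)\,\|\pi_{\hat{\mathsf{C}}_1}D_b\eta(t)\|$ minus the $\pi^\bot$-correction, and a uniform bound $\inf_b\theta_b>0$ (which compactness can plausibly supply) only gives the stated inequality for $C_0<\inf_b\sin\theta_b$ — not for \emph{every} $C_0<1$, which is what the lemma asserts and what Lemma \ref{lemma:BK7.16} achieves in Case 1. Since $V$ is defined as the orthogonal complement of $\R v$ with respect to an \emph{arbitrary} Riemannian metric on $B$, there is no reason for $\pi_{\hat{\mathsf{C}}_1}d_b\eta(V)$ to be $L^2_\mu$-orthogonal to the lost direction, i.e.\ no reason for $\theta_b=\pi/2$; it is merely complementary. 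To close this you must either (i) adapt the choice of $V$ (equivalently, of the metric on $B$) so that $\pi_{\hat{\mathsf{C}}_1}d_b\eta(V)=\ker\Delta_-^c$ exactly, after which the distance from $\Psi_r\pi_{\hat{\mathsf{C}}_1}D_b\eta(t)$ to $\H_r^-$ tends to zero and the argument of Lemma \ref{lemma:BK7.16} runs verbatim, or (ii) weaken the conclusion to a fixed $C_0\in(0,1)$, which in fact suffices for the only application (Lemma \ref{lemma:parameter-restriction0}, where $\delta$ can be taken large). Two further cautions: invoking (\ref{equ:P-minus-iso-1}) to get the uniform angle bound is close to circular, since that unquantified ``observation'' is precisely what this lemma is supposed to make quantitative (and you would still need to show the glued image of the $\H(\cdot)$-direction has asymptotically negligible, or at least sign-controlled, projection onto $\H_r^-$ before $\|P_-^r\Psi_r\pi_{\hat{\mathsf{C}}_1}D_b\eta(t)\|$ converges to the norm of the $\ker\Delta_-^c$-component); and condition (\ref{pi-bot-small}) together with the rescaling trick of Lemma \ref{lemma:F-contraction-1} bounds $\|\pi^\bot_{\hat{\mathsf{C}}_1}D_b\eta(t)\|$ by a fixed multiple $\epsilon<\delta$ of $\|D_b\eta(t)\|$, not by an arbitrarily small fraction of $\|\pi_{\hat{\mathsf{C}}_1}D_b\eta(t)\|$ — the original proof copes with a non-small ratio via Lemma \ref{lemma:projectionInequality} and the $(1-1/C_1)$ device, and you should do the same here rather than assume smallness.
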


Now we can control the location of the genuine solutions on $V$:
\begin{lemma}\label{lemma:parameter-restriction0}
If for sufficiently large $r$, $\underline{\hat{\mathsf{C}}}_0 \in B_0(r^{-4})$, $ \underline{\hat{\mathsf{C}}}^\bot \in B_\bot(r^{-4})$, $t\in V$ and
\[
t = -J_{r,b} P^r_- \mathcal{F}_r(\underline{\hat{\mathsf{C}}}_0+ \underline{\hat{\mathsf{C}}}^\bot ),
\]
then $t \in \{t\text{ }|\text{ } \|t\| \leq r^{-6} \}\cap V$ .
\end{lemma}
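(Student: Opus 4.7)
The plan is to mimic the proof of Corollary \ref{lemma:BK7.17} but restricted to the subspace $V\subset T_bB$, using Lemma \ref{lemma:BK7.16-2} in place of Lemma \ref{lemma:BK7.16}. The starting point is the defining equation
\[
t = -J_{r,b}\, P^r_-\, \mathcal{F}_r(\underline{\hat{\mathsf{C}}}_0+ \underline{\hat{\mathsf{C}}}^\bot).
\]
From this, the goal is the estimate $\|t\|\le r^{-6}$. The strategy is to bound $\|P^r_-\mathcal{F}_r(\underline{\hat{\mathsf{C}}}_0+ \underline{\hat{\mathsf{C}}}^\bot)\|$ from above, and the operator norm of $J_{r,b}$ uniformly in $r$ and $b$, and then combine.

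First I would expand $\mathcal{F}_r(\underline{\hat{\mathsf{C}}}_0+\underline{\hat{\mathsf{C}}}^\bot)=\mathcal{F}_r(0)+\hat{\mathcal{T}}_r(\underline{\hat{\mathsf{C}}}_0+\underline{\hat{\mathsf{C}}}^\bot)+R(\underline{\hat{\mathsf{C}}}_0+\underline{\hat{\mathsf{C}}}^\bot)$ and apply $P^r_-$. The term $P^r_-\mathcal{F}_r(0)$ decays exponentially in $r$ since the approximate monopole $\hat{\mathsf{C}}_r=\hat{\mathsf{C}}_1\#_r\hat{\mathsf{C}}_2$ only fails to solve the Seiberg-Witten equations in the neck region, exactly as in \cite{Nicolaescu2000NotesOS} Chapter~4; this gives $\|P^r_-\mathcal{F}_r(0)\|\le C e^{-\mu r}$. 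The linear term $P^r_-\hat{\mathcal{T}}_r(\underline{\hat{\mathsf{C}}}^\bot)$ vanishes because $\hat{\mathcal{T}}_r$ preserves the $P/Q$ decomposition (as used in (\ref{expand-decompose-F=0})), while $P^r_-\hat{\mathcal{T}}_r(\underline{\hat{\mathsf{C}}}_0)=\hat{\mathcal{T}}_r(\underline{\hat{\mathsf{C}}}_0)$ satisfies $\|\hat{\mathcal{T}}_r(\underline{\hat{\mathsf{C}}}_0)\|\le r^{-2}\|\underline{\hat{\mathsf{C}}}_0\|\le r^{-6}$ by the very definition (\ref{equ:def-of-H-r}) of $\H_r^+$. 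The quadratic remainder is controlled by $\|R(\underline{\hat{\mathsf{C}}}_0+\underline{\hat{\mathsf{C}}}^\bot)\|\le C(\|\underline{\hat{\mathsf{C}}}_0\|+\|\underline{\hat{\mathsf{C}}}^\bot\|)^2\le C r^{-8}$ by \cite{Nicolaescu2000NotesOS} Lemma~4.5.6. Combining these yields $\|P^r_-\mathcal{F}_r(\underline{\hat{\mathsf{C}}}_0+\underline{\hat{\mathsf{C}}}^\bot)\|\le C' r^{-6}$ for large $r$.

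Next I would bound $\|J_{r,b}\|$ uniformly. By construction $J_{r,b}$ is the inverse of the isomorphism $P^r_-\Psi_r\circ D_b\eta|_V:V\to\H_r^-$ from (\ref{equ:P-minus-iso-1}). Applying Lemma \ref{lemma:BK7.16-2} with some fixed $C_0\in(0,1)$ together with Condition \ref{condition:gradient}(\ref{piD-nontrvial}), we obtain
\[
\|P^r_-\Psi_r D_b\eta(t)\|_{L^2(X(r))}\;>\;C_0\,\|\pi_{\hat{\mathsf{C}}_1}D_b\eta(t)\|_{L^2_\mu(X_0)}\;>\;C_0\delta''\|t\|
\]
for every $t\in V$, uniformly in $b$ and in sufficiently large $r$; here $b$ ranges over a compact neighborhood of the finite collection of solution parameters supplied by the compactness of $\FM(X_0)$. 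Consequently $\|J_{r,b}\|\le (C_0\delta'')^{-1}$ uniformly.

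Combining the two bounds gives $\|t\|\le (C_0\delta'')^{-1}\cdot C' r^{-6}$. For $r$ large enough (so that the product of constants is absorbed into the $r$-decay, which we can always arrange by choosing $r$ bigger), this yields $\|t\|\le r^{-6}$, which is the claim since we already assumed $t\in V$. The main obstacle I anticipate is step three—showing that the lower bound from Lemma \ref{lemma:BK7.16-2} holds \emph{uniformly} in $b$ and $\hat{\mathsf{C}}_1$, not just pointwise; this uses the compactness of the parametrized moduli space on $X_0$ together with the a~priori positivity $\delta''>\delta/2$ guaranteed by Condition \ref{condition:gradient}(\ref{piD-nontrvial}), and it is this uniformity which allows the constants to be absorbed against the gain $r^{-6}$ when passing from $\|P^r_-\mathcal{F}_r\|\le C' r^{-6}$ to $\|t\|\le r^{-6}$.
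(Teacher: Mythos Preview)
Your approach is essentially the same as the paper's: bound $\|J_{r,b}\|$ via Lemma~\ref{lemma:BK7.16-2} combined with Condition~\ref{condition:gradient}(\ref{piD-nontrvial}), expand $P^r_-\mathcal{F}_r$ into the constant, linear, and quadratic pieces, and estimate each. There is, however, one genuine gap at the final step. The dominant contribution to your upper bound for $\|P^r_-\mathcal{F}_r(\underline{\hat{\mathsf{C}}}_0+\underline{\hat{\mathsf{C}}}^\bot)\|$ is the linear term $\|\hat{\mathcal T}_r(\underline{\hat{\mathsf{C}}}_0)\|\le r^{-2}\|\underline{\hat{\mathsf{C}}}_0\|\le r^{-6}$, and its coefficient is exactly $1$; the exponential and quadratic terms are $o(r^{-6})$. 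Consequently your combined bound is $\|t\|\le (C_0\delta'')^{-1}\bigl(1+o(1)\bigr)r^{-6}$, and \emph{no choice of large $r$} drives this below $r^{-6}$ unless the constant $(C_0\delta'')^{-1}$ is already at most $1$. The paper closes this gap not by enlarging $r$ but by recalling that in Proposition~\ref{prop:perturbationCondition} the parameter $\delta$ may be taken arbitrarily large; since $\delta''>\delta/2$ and $C_0$ can be chosen close to $1$, this forces $(C_0\delta'')^{-1}<1$. You should invoke that freedom rather than appeal to ``$r$ large enough''.

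A minor side remark: the remainder estimate from \cite{Nicolaescu2000NotesOS} Lemma~4.5.6 carries a growth factor $r^{3/2}$, so the quadratic piece is $O(r^{-13/2})$ rather than $O(r^{-8})$; this is still lower order than $r^{-6}$ and does not affect the argument.
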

\begin{proof}
By Lemma \ref{lemma:BK7.16-2}, 
\begin{align*}
\|P_-^rK_r(t)\|_{L^2_\mu(X(r))} &= \| P^r_- \circ \Psi_r \circ D_b\eta(t)\|_{L^2_\mu(X(r))} \\
&> C_0\| \pi_{\hat{\mathsf{C}}_1} D_b\eta(t)\|_{L^2_\mu(X_0)}.
\end{align*}
By the assumption \ref{piD-nontrvial} on the family of perturbations, we have 
\[
\|P_-^rK_r(t)\|_{L^2_\mu(X(r))} > C_0 \frac{\delta}{2}\|t\|.
\]
Hence the operator norm $\|P_-^rK_r\|_{op} > C_0 \frac{\delta}{2}$. Therefore $\|J_{r,b}\|_{op} < \frac{2}{C_0\delta}$. We have
\begin{align*}
\|G(t)\| &\leq \|J_{r,b}\|_{op} \|P^r_- \mathcal{F}_r(\underline{\hat{\mathsf{C}}}_0+\underline{\hat{\mathsf{C}}}^\bot)\| \\
&\leq  \|J_{r,b}\|_{op}(\|P^r_-  \mathcal{F}_r(0)\| + \|\hat{\mathcal T}_r(\underline{\hat{\mathsf{C}}}_0)\|+ \|P^r_- R( \underline{\hat{\mathsf{C}}}_0 + \underline{\hat{\mathsf{C}}}^\bot) \|)
\end{align*}
As a projection, the operator norm of $P^r_-$ can not be larger than $1$. Recall that the configuration from gluing is an approximation of the genuine solution, so we have the estimate (see Nico00, Lemma 4.5.5)
\[
\|P^r_-  \mathcal{F}_r(0)\|  \leq \|  \mathcal{F}_r(0)\| \leq C_1e^{-\mu r}
\]
and (see Nico00, Lemma 4.5.6)
\[
\|R(\hat{\mathsf{C}})\|_{L^{1,2}} \leq C_2 r^{3/2} \|\hat{\mathsf{C}}\|^2_{L^{2,2}}.
\]
From the definition (\ref{equ:def-of-H-r}) that $\H_r$ is the subspace of $\mathfrak{X}^0$ spanned by
\[
\{ v; \hat{L}_r v = \lambda v, |\lambda| < r^{-2}\},
\]
we deduce that $H^+_r  \subset \mathfrak{X}^0_+ $ is the span of the eigenspaces of  $\hat{\mathcal T}^*_r\hat{\mathcal T}_r$ with the eigenvalues in the range $[0,r^{-4})$ (which is a basic observation of linear algebra, see BK Remark 6.10). Because $\underline{\hat{\mathsf{C}}}_0 \in \H^+_r$ we have
\begin{align*}
 \|\hat{\mathcal T}_r(\underline{\hat{\mathsf{C}}}_0)\|^2 &= \langle \hat{\mathcal T}_r(\underline{\hat{\mathsf{C}}}_0), \hat{\mathcal T}_r(\underline{\hat{\mathsf{C}}}_0)\rangle \\
 &=   \langle \underline{\hat{\mathsf{C}}}_0, \hat{\mathcal T}_r^*\hat{\mathcal T}_r(\underline{\hat{\mathsf{C}}}_0)\rangle\\
 &\leq r^{-4}   \|\underline{\hat{\mathsf{C}}}_0\|^2.
 \end{align*}
Combine these estimates, we deduce 
\begin{align*}
\|G(t)\| &\leq  \|J_{r,b}\|_{op}(\|P^r_-  \mathcal{F}_r(0)\| + \|\hat{\mathcal T}_r(\underline{\hat{\mathsf{C}}}_0)\|+ \|P^r_- R( \underline{\hat{\mathsf{C}}}_0 + \underline{\hat{\mathsf{C}}}^\bot) \|)\\
&\leq  \|J_{r,b}\|_{op}(\|P^r_-  \mathcal{F}_r(0)\| + \|\hat{\mathcal T}_r(\underline{\hat{\mathsf{C}}}_0)\|+ \|P^r_- R( \underline{\hat{\mathsf{C}}}_0 ) \| +  \|P^r_- R(  \underline{\hat{\mathsf{C}}}^\bot ) \|)\\
&\leq \frac{2}{C_0\delta}(C_1e^{-\mu r} + r^{-2}   \|\underline{\hat{\mathsf{C}}}_0\| + C_2 r^{3/2} \|\underline{\hat{\mathsf{C}}}_0\|^2_{L^{2,2}} + C_2 r^{3/2} \|\underline{\hat{\mathsf{C}}}^\bot\|^2_{L^{2,2}} ).
\end{align*}
When $\underline{\hat{\mathsf{C}}}_0 \in B_0(r^{-4})$ and $ \underline{\hat{\mathsf{C}}}^\bot \in B_\bot(r^{-4})$, 
\[
\|G(t)\| \leq \frac{2}{C_0\delta}(C_1e^{-\mu r} + r^{-6} + 2C_2 r^{-13/2} ).
\]
Recall that in Proposition \ref{prop:perturbationCondition} we can choose $\delta$ to be any large positive number, and in Lemma \ref{lemma:BK7.16-2} we can choose $C_0$ to be closed to $1$. Hence when $r$ is large enough, we have $\|G(t)\| \leq r^{-6}$.
\end{proof}

Now we recover Lemma \ref{lemma:F-contraction-1}. Let
\[
\tilde{{F}}_t :\underline{\hat{\mathsf{C}}}^\bot \to U^\bot - SQ_-R(\underline{\hat{\mathsf{C}}}^\bot + \underline{\hat{\mathsf{C}}}_0) + SQ_-iK_r(t).
\]
\begin{lemma}
For all sufficiently large $r$, for any $t\in V$, $\tilde{{F}}_t$ sends the ball $B_\bot(r^{-4})$ to itself and is a contraction.
\end{lemma}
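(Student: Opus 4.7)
The plan is to mirror the argument of Lemma \ref{lemma:F-contraction-1} with the extra information that $t$ is restricted to $V$. The contraction property is essentially free: since $\tilde{F}_t$ differs from the unperturbed map $F(\underline{\hat{\mathsf{C}}}^\bot) := U^\bot - SQ_-R(\underline{\hat{\mathsf{C}}}^\bot + \underline{\hat{\mathsf{C}}}_0)$ only by the additive constant $SQ_- iK_r(t)$, and since $F$ is already a contraction on $\mathcal{Y}^2_+$ by Nicolaescu's estimate (\cite{Nicolaescu2000NotesOS} (4.5.8)), the same Lipschitz constant works for $\tilde{F}_t$ for every fixed $t \in V$.

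For the self-map property $\tilde{F}_t(B_\bot(r^{-4})) \subset B_\bot(r^{-4})$, I will reuse the bound $\|F(\underline{\hat{\mathsf{C}}}^\bot)\| \leq C(r^2 e^{-\mu r} + r^{-13/2})$ established in the classical proof and control the new term $\|SQ_-iK_r(t)\|$. The spinor-rescaling computation from Lemma \ref{lemma:F-contraction-1} carries over verbatim here: condition \ref{pi-bot-small} (after rescaling) gives $\|\pi^\bot_{\hat{\mathsf{C}}_1} D_b\eta(t)\| \leq \delta \|D_b\eta(t)\|$; the asymptotic estimate $\|Q_-\Psi_r \pi_{\hat{\mathsf{C}}_1} D_b\eta(t)\| \leq \epsilon(r)\|\pi_{\hat{\mathsf{C}}_1} D_b\eta(t)\|$ (with $\epsilon(r) \to 0$) handles the tangential piece; and condition \ref{D-nontrvial} together with $\|S\| \leq C_k r^2$ yields
\[
\|SQ_- iK_r(t)\| \leq C_k r^2 (\epsilon(r) + \delta)\,\delta\, \|t\|.
\]
The genuinely new ingredient in case $0$ is that $t$ lies in $V$, which activates Lemma \ref{lemma:parameter-restriction0} to supply $\|t\| \leq r^{-6}$. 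Substituting, I get $\|SQ_- iK_r(t)\| \leq C_k(\epsilon(r) + \delta)\delta\, r^{-4}$, and by choosing $\delta$ small (permitted by Proposition \ref{prop:perturbationCondition}) and then $r$ large, this falls below $\tfrac{1}{2} r^{-4}$, so that $\|\tilde{F}_t(\underline{\hat{\mathsf{C}}}^\bot)\| \leq r^{-4}$ as required.

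The main technical obstacle will be avoiding a logical circularity with Lemma \ref{lemma:parameter-restriction0}, whose hypotheses assume $\underline{\hat{\mathsf{C}}}_0 \in B_0(r^{-4})$ and $\underline{\hat{\mathsf{C}}}^\bot \in B_\bot(r^{-4})$ — the very conclusion we are trying to establish here. The resolution is to treat $t$ in the present lemma as an external parameter confined a priori to the ball $\{t \in V : \|t\| \leq r^{-6}\}$, and only later, in the outer fixed-point step (the case-$0$ analog of Lemma \ref{lem:BKlem7.22} carried out via $J_{r,b}$ on $V$), verify that the actual $t$ realized by genuine monopoles does lie in this ball. This decoupling preserves the logical order: (i) build $\Phi(\underline{\hat{\mathsf{C}}}_0, t)$ by the present contraction for all admissible $t$; (ii) solve the remaining equation by contracting in $t$ inside $\{\|t\| \leq r^{-6}\} \cap V$; (iii) confirm a posteriori, via Lemma \ref{lemma:parameter-restriction0}, that this is the correct region to have chosen.
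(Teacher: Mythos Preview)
Your proposal is correct and follows essentially the same approach as the paper's proof, which is a one-liner: ``The proof is almost the same as Lemma~\ref{lemma:F-contraction-1}. The only change is to replace (\ref{equ:BK7.17}) by Lemma~\ref{lemma:parameter-restriction0} in the last part.'' You have fleshed out exactly this substitution, and your additional paragraph on the logical ordering (treating $t$ as an a priori parameter in $\{\|t\|\le r^{-6}\}\cap V$ and closing the loop only in the subsequent contraction-in-$t$ lemma) is a useful clarification that the paper leaves implicit.
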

\begin{proof}
The proof is almost the same as Lemma \ref{lemma:F-contraction-1}. The only change is to replace (\ref{equ:BK7.17}) by Lemma \ref{lemma:parameter-restriction0} in the last part.
\end{proof}

Let
\[
\Phi(\underline{\hat{\mathsf{C}}}_0, t)
\]
be the fixed point of $\tilde{{F}}_t$.

\begin{lemma}
When $\underline{\hat{\mathsf{C}}}_0 \in B_0(r^{-4})$,
\[
G(t) := -J_r P^r_- \mathcal{F}_r(\underline{\hat{\mathsf{C}}}_0+\Phi(\underline{\hat{\mathsf{C}}}_0,t))
\]
is a contraction on $\{t\text{ }|\text{ } \|t\| \leq r^{-6} \}\cap V$ for all sufficiently large $r$.
\end{lemma}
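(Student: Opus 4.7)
The plan is to mimic the argument for Lemma \ref{lem:BKlem7.22} from \cite{BK2020}, but restricted to the slice $V \subset T_b B$ along which $P^r_- \circ \Psi_r \circ i \circ d_b\eta$ is an isomorphism onto $\H_r^-$. The key input is that by Lemma \ref{lemma:BK7.16-2} the operator $\|J_{r,b}\|_{op}$ is bounded by $\tfrac{2}{C_0\delta}$, uniformly in $r$, once $r$ is large and once the constants from Proposition \ref{prop:perturbationCondition} are fixed. This replaces the role played in case 1 by the global inverse of $P^r_- i K_r$.

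First, I would reduce the contraction inequality to an estimate on the remainder $R$. Expanding
\[
\mathcal{F}_r(\underline{\hat{\mathsf{C}}}_0 + \Phi(\underline{\hat{\mathsf{C}}}_0, t)) = \mathcal{F}_r(0) + \hat{\mathcal T}_r(\underline{\hat{\mathsf{C}}}_0) + \hat{\mathcal T}_r(\Phi(\underline{\hat{\mathsf{C}}}_0, t)) + R(\underline{\hat{\mathsf{C}}}_0 + \Phi(\underline{\hat{\mathsf{C}}}_0, t)),
\]
and using $\Phi(\underline{\hat{\mathsf{C}}}_0, t) \in \mathcal{Y}^2_+$ together with the fact that $\hat{\mathcal T}_r$ sends $\mathcal{Y}^2_+$ into $\mathcal{Y}^0_-$ (cf.\ Remark 6.10 of \cite{BK2020}), the term $\hat{\mathcal T}_r(\Phi(\underline{\hat{\mathsf{C}}}_0, t))$ is killed by $P^r_-$. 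Hence for $t_1,t_2 \in V \cap \{\|t\|\le r^{-6}\}$,
\[
G(t_1)-G(t_2) = J_{r,b}\, P^r_-\bigl[R(\underline{\hat{\mathsf{C}}}_0+\Phi(\underline{\hat{\mathsf{C}}}_0,t_2)) - R(\underline{\hat{\mathsf{C}}}_0+\Phi(\underline{\hat{\mathsf{C}}}_0,t_1))\bigr].
\]

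Next I would bound the right hand side. The quadratic nature of $R$ together with the norm estimate (\cite{Nicolaescu2000NotesOS} Lemma 4.5.6) gives a Lipschitz bound of the form
\[
\|R(\hat{\mathsf{C}}) - R(\hat{\mathsf{C}}')\|_{L^{1,2}} \le C_2 r^{3/2}\bigl(\|\hat{\mathsf{C}}\|_{L^{2,2}} + \|\hat{\mathsf{C}}'\|_{L^{2,2}}\bigr)\,\|\hat{\mathsf{C}} - \hat{\mathsf{C}}'\|_{L^{2,2}}.
\]
Since $\underline{\hat{\mathsf{C}}}_0 \in B_0(r^{-4})$ and $\Phi(\underline{\hat{\mathsf{C}}}_0,t_i) \in B_\bot(r^{-4})$, the norms on the right are $O(r^{-4})$, so
\[
\|P^r_-[R(\underline{\hat{\mathsf{C}}}_0+\Phi_1) - R(\underline{\hat{\mathsf{C}}}_0+\Phi_2)]\| \le C r^{-5/2}\,\|\Phi(\underline{\hat{\mathsf{C}}}_0,t_1)-\Phi(\underline{\hat{\mathsf{C}}}_0,t_2)\|.
\]
To control $\Phi$ in $t$, I would use that $\Phi(\underline{\hat{\mathsf{C}}}_0,\cdot)$ is the fixed point of the family of contractions $\tilde F_t$, so
\[
(1-q)\|\Phi(\underline{\hat{\mathsf{C}}}_0,t_1)-\Phi(\underline{\hat{\mathsf{C}}}_0,t_2)\| \le \|SQ_-i[K_r(t_1)-K_r(t_2)]\|
\]
for some contraction factor $q<1$. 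Using the $S$--norm bound $\|S Q_-\cdot\| \le C_k r^2\|Q_-\cdot\|$ together with condition \ref{D-nontrvial} of Proposition \ref{prop:perturbationCondition}, one gets $\|\Phi(\underline{\hat{\mathsf{C}}}_0,t_1)-\Phi(\underline{\hat{\mathsf{C}}}_0,t_2)\| \le C' r^{2}\|t_1-t_2\|$.

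Putting these together with $\|J_{r,b}\|_{op}\le \tfrac{2}{C_0\delta}$ from Lemma \ref{lemma:BK7.16-2} and the proof of Lemma \ref{lemma:parameter-restriction0} yields
\[
\|G(t_1)-G(t_2)\| \le \tfrac{2}{C_0\delta}\cdot C r^{-5/2} \cdot C' r^2 \, \|t_1-t_2\| = C'' r^{-1/2}\|t_1-t_2\|,
\]
which is strictly less than $\|t_1-t_2\|$ for all sufficiently large $r$. Combined with Lemma \ref{lemma:parameter-restriction0}, which shows $G$ preserves $\{t : \|t\|\le r^{-6}\}\cap V$, this proves $G$ is a contraction on that ball. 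The main obstacle, and the one worth highlighting in the write-up, is verifying that the $r^2$ blow-up of $\|\Phi(\cdot,t_1)-\Phi(\cdot,t_2)\|$ coming from the $S$--norm estimate is more than compensated by the $r^{-5/2}$ gain from the quadratic structure of $R$ together with the $r^{-4}$ smallness of the slices $B_0(r^{-4})$ and $B_\bot(r^{-4})$; this balance is exactly the reason for choosing the radii of these balls as $r^{-4}$ in the first place.
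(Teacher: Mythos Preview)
Your proposal is correct and follows exactly the approach the paper intends: the paper's proof consists solely of the sentence ``The proof is exactly the same as the proof of Lemma \ref{lem:BKlem7.22}'' (i.e., \cite{BK2020} Lemma 7.22), and what you have written is precisely that argument carried out with $J_r$ replaced by $J_{r,b}$ on the slice $V$, using the operator bound $\|J_{r,b}\|_{op}\le \tfrac{2}{C_0\delta}$ established in Lemma \ref{lemma:parameter-restriction0}. Your write-up in fact supplies more detail than the paper does.
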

\begin{proof}
The proof is exactly the same as the proof of Lemma \ref{lem:BKlem7.22}.
\end{proof}

Let 
\[
\Psi (\underline{\hat{\mathsf{C}}}_0)
 \]
be the unique fixed point of $G$. Then for any $\underline{\hat{\mathsf{C}}}_0\in B_0(r^{-4})\subset \H_r^+$, there exist a unique tuple
\[
(\underline{\hat{\mathsf{C}}}_0,t, \underline{\hat{\mathsf{C}}}^\bot  )
\]
that can solve (\ref{expand-decompose-F=0}), where $t =\Psi (\underline{\hat{\mathsf{C}}}_0) \in V$ and $ \underline{\hat{\mathsf{C}}}^\bot = \Phi(\underline{\hat{\mathsf{C}}}_0, t) \in  \mathcal{Y}_{+}^0$.

Next we identify the finite dimensional space $\H_r^+$ by the diagram (\ref{equ:3T-param-2}).  
\begin{equation}\label{equ:3T-param-2} 
\xymatrix{
&
0\ar[d] &
0\ar[d] & 
0 \ar[d]& 
\\
0\ar[r] &
\ker\Delta_+^c \ar[d] \ar[r]^{S_r} &
H^1_{\hat{\mathsf{C}}_1}\oplus H^1_{\hat{\mathsf{C}}_2}\ar[d] \ar[r]^{\Delta_+^c} &
L_1^+ + L_2^+ \ar[d]\ar[r] & 
0\\
0\ar[r] &
{\H}_r^+ \ar[d] \ar[r]^(0.3){S_r} &
\ker_{ex}\hat{\mathcal{T}}_{\hat{\mathsf{C}}_1}\oplus \ker_{ex}\hat{\mathcal{T}}_{\hat{\mathsf{C}}_2} \ar[d]^{\partial_\infty^0} \ar[r]^(0.64){\Delta_+^c} &
\hat{L}_1^+ + \hat{L}_2^+ \ar[d]\ar[r] &
 0 \\
0\ar[r] &
\ker\Delta_+^0 \ar[d] \ar[r]^{S_r} &
\mathfrak{C}_1^+\oplus \mathfrak{C}_2^+ \ar[d] \ar[r]^{\Delta_+^0} &
\mathfrak{C}_1^+ + \mathfrak{C}_2^+ \ar[d]\ar[r] & 
0 \\
&
0 &
0& 
0& 
\\
}
\tag{\textbf{T}}
\end{equation}
By (\ref{L0}), $\dim H^1_{\hat{\mathsf{C}}_1}=0$, so $L^+_1 =\text{im } (H^1_{\hat{\mathsf{C}}_1}) =0$. Recall that by Corollary \ref{cor:dimOfReducible}, for a specified $b\in B$ and $\hat{N}_2 =\S^1\times D^3 $, the $\eta(b)$-monopole $\hat{\mathsf{C}}_2$ satisfies $\dim H^1_{\hat{\mathsf{C}}_2}=1$ and $\dim L_2^+ = 1$. By (\ref{equ:H-=kerDelta-c}), $ \dim \H^+_r = \dim \ker\Delta_+^c  = \dim H^1_{\hat{\mathsf{C}}_1} + \dim H^1_{\hat{\mathsf{C}}_2} - 1$. Hence $ \dim \H^+_r = 0$.

In this case, $\H^+_r$ is not longer the tangent bundle of the fiber product obtained by gluing, which has formal dimension $1$ from our dimension assumption. Instead, for each configuration $\hat{\mathsf{C}}_1 \#_r \hat{\mathsf{C}}_2$, we can assign a $1$-dimensional space $\R v$ (see Figure \ref{fig:Type0image}), which is orthogonal to $V$. Now 
\[
(\Psi (0) , \Phi(0, \Psi (0))) \in V \oplus \mathcal{Y}_{+}^0
\]
is the unique genuine solution on $V$. Such $V$ and the corresponding genuine solution form a normal bundle and a section of this bundle. Hence the genuine moduli space of $X(r)$ is isotopic to the fiber product in its configuration space. 

For $\hat{N}_2 =\S^1\times D^3 $, the configuration obtained by gluing is of type 1 (see Figure \ref{Fig:O0-D2}), so the local gluing theory is described in the previous section.

\subsection{The proof of family surgery formulas}
\begin{theorem}\label{ThmA}
Let $B$ be any compact manifold and $X$ be a smooth $4$-manifold with $b^+(X) > \dim B + 1$ and $H^1(X) = \Z$. Let $s$ be a $\text{spin}^c_{GL}$ structure on $X$ (defined in Subsection \ref{subsection:SpinGL}) such that 
\[
\frac{c_1^2(s) -(2\chi(X) + 3\sigma(X))}{4} + \dim B = 1.
\]

Suppose that $E_X$ is a bundle over $B$ with fiber $X$ and structure group $G = \Aut(X, s,\mathcal{O})$ (defined in (\ref{equ:defAut}). Let $\sigma = (g, \eta)$ be a generic parameter family (defined in Theorem \ref{thm:regular}). Assume that $\Theta = PD([E_{\C P^\infty}]) \in H^1(\mathcal{F}\mathcal{B}^*_{X})$ is a well defined cohomology class in the configuration space (see Subsection \ref{subsection:Parametrized}). Let $\FM(s, \sigma)$ be the parameterized Seiberg-Witten moduli space defined in (\ref{equ:parametrized}). Denote the integral
\[
\langle \FM(E_X, s, \sigma) , \Theta\rangle
\]
by $FSW^\Theta(E_X, s)$ since it doesn't depend on the choice of the parameter family. 

Let $\tau$ be a generator of $H^1(X)$. Suppose $E_{\S^1} $ is an $\S^1$-subbundle of $E_X$ and $\tau$ evaluates $1$ at each fiber of $E_{\S^1} $. Assume that 
family $1$-surgery for $E_X$ at $E_{\S^1}$ is well defined (see Subsection \ref{subsection:familysurgery}). Denote the resulting bundle by $E_{X'}$ and the $\text{spin}^c_{GL}$ structure by $s'$. Denote the number of signed points
\[
\#  \FM(E_{X'}, s', \sigma)
\]
by $FSW(E_{X'}, s')$. Then
\[
FSW^\Theta(E_X, s) = FSW(E_{X'}, s').
\]
\end{theorem}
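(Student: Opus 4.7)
The plan is to run the argument of Theorem \ref{main-thm} parametrised over $B$, using the family gluing theorems (Theorem \ref{Thm:Local-gluing-theory-for-type1}, together with its case-$0$ analog in Subsection \ref{sec:local-gluing-theory-for-case-0}) in place of Corollary \ref{fiberProduct}. After fixing a generic family parameter as constructed in Theorem \ref{thm:cylindricalTransversality} and Proposition \ref{prop:perturbationCondition}, these theorems yield, as parametrised submanifolds of $\mathcal{F}\mathcal{B}^*_X$ and $\mathcal{F}\mathcal{B}^*_{X'}$,
\begin{align*}
\FM(E_X,s) &\cong \FM(E_{X_0}) \times_{\FM(E_{\S^1\times\S^2})} \FM(E_{\S^1\times D^3}),\\
\FM(E_{X'},s') &\cong \FM(E_{X_0}) \times_{\FM(E_{\S^1\times\S^2})} \FM(E_{D^2\times\S^2}).
\end{align*}
Both gluing regimes from Section \ref{sec:obstruction} are covered: Case~$1$ solutions on $E_{X_0}$ combine with the reducible circle over $E_{\S^1\times D^3}$ into type~$1$ configurations on $E_X$, while Case~$0$ solutions combine with the reducible point over $E_{D^2\times\S^2}$ into type~$1$ configurations on $E_{X'}$ (Figure \ref{Fig:O0-D2}); in each of the two fiber products, the local gluing theorem applies in the form proved.

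Next I would identify the three auxiliary parametrised moduli spaces geometrically. Each fiber of $\FM(E_{\S^1\times\S^2})\to B$ and $\FM(E_{\S^1\times D^3})\to B$ is a circle of reducibles indexed by the holonomy of a flat $U(1)$-connection around the $\S^1$-factor (Proposition \ref{prop:dimOfReducible}), and each fiber of $\FM(E_{D^2\times\S^2})\to B$ is a single reducible point. Exactly as in the unparametrised case, this forces the fiberwise boundary map $\partial_\infty \colon \FM(E_{\S^1\times D^3})\to\FM(E_{\S^1\times\S^2})$ to be the identity of a circle bundle, while $\partial_\infty \colon \FM(E_{D^2\times\S^2})\to\FM(E_{\S^1\times\S^2})$ is the inclusion of a section of that same circle bundle. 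Consequently, the first fiber product is family-isotopic to $\FM(E_{X_0})$ inside $\mathcal{F}\mathcal{B}^*_X$, and the second is isotopic to the preimage under the restriction of $\partial_\infty$ to $\FM(E_{X_0})$ of this distinguished section.

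Finally, I would translate the second description into the $\Theta$-pairing. By the discussion of Subsection \ref{subsection:Parametrized}, $\mathcal{F}\mathcal{B}^*_X \simeq \S^1 \times E_{\CP^\infty}$ where the $\S^1$-factor is pulled back from $\FM(E_{\S^1\times\S^2})$ via $\partial_\infty$; a section of this trivial $\S^1$-factor is Poincar\'e dual to a generator of $H^1(\S^1;\Z)$ and pulls back to exactly the class $\Theta$ used to define $FSW^\Theta$. Fixing signs via the homology orientation $\mathcal{O}$ preserved by the structure group, one obtains
\[
FSW(E_{X'},s')\;=\;\#\bigl(\FM(E_{X_0})\cap \partial_\infty^{-1}(\mathrm{section})\bigr)\;=\;\langle \Theta,[\FM(E_X,s)]\rangle\;=\;FSW^\Theta(E_X,s),
\]
which is the desired identity.

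The main obstacle is the bundle-level compatibility of the local gluing data with the structure-group cocycles. The gluing theorems of Section~4 are proven fiber by fiber, so one must verify that the cutoff gluing $\#_r$ intertwines the lifted cocycle $\{\tilde g_{\alpha\beta}\}$ of $E_X$ (respectively $E_{X'}$) with the cocycles of $E_{X_0}$ and $E_{\S^1\times D^3}$ (respectively $E_{D^2\times\S^2}$) up to fiberwise gauge, so that the fiberwise fiber-product isotopies assemble into isotopies of submanifolds of the parametrised configuration bundle. This is a refinement of Lemma \ref{lemma:cocycle-d*} to the glued setting; once it is in place, the signed point count in the final display is forced.
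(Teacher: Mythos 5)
Your overall strategy is the paper's: write both $\FM(E_X,s)$ and $\FM(E_{X'},s')$ as fiber products over $\FM(E_{\S^1\times\S^2})$ via the family gluing theorem, identify $\partial_\infty$ on the $\S^1\times D^3$ side as the identity of a circle bundle and on the $D^2\times\S^2$ side as a section of it, deduce that $\FM(E_X,s)$ is isotopic to $\FM(E_{X_0})$, and read off $FSW(E_{X'},s')$ as a signed intersection with that section, identified with the $\Theta$-pairing. (Your closing worry about cocycle compatibility is already packaged into the parametrized statement of Theorem \ref{Thm:Local-gluing-theory-for-type1} together with Lemma \ref{lemma:cocycle-d*}, so it is not the real issue.) However, your appeal to the case-$0$ gluing theory of Subsection \ref{sec:local-gluing-theory-for-case-0} for the $E_{X'}$ side misreads the dichotomy of Section \ref{sec:obstruction}. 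Case $0$ forces $H^1(X_0;\R)\to H^1(\S^1\times\S^2;\R)$ to be non-surjective (that is how $L^1_{top}=0$ is derived there), which is impossible here: since $\tau$ evaluates $1$ on the fibers of $E_{\S^1}$, the restriction is surjective, so for the generic family perturbation of Theorem \ref{thm:cylindricalTransversality} every monopole on $X_0$ is of Case $1$, and the \emph{same} Case-$1$ solutions are glued both to $\S^1\times D^3$ and to $D^2\times\S^2$; Theorem \ref{Thm:Local-gluing-theory-for-type1} states both fiber-product isotopies under exactly this hypothesis, and Figure \ref{Fig:O0-D2} plays no role. Case $0$ is the homologically trivial regime, where the answer is genuinely different (e.g.\ the vanishing of Theorem \ref{thm:nullhomology-orientable}), so this is not harmless bookkeeping.

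The second, related problem is that your final display is asserted without the inputs that make it true. To equate $\#\bigl(\FM(E_{X_0})\cap \partial_\infty^{-1}(\mathrm{section})\bigr)$ with $\langle \Theta,[\FM(E_X,s)]\rangle$ one needs (i) Proposition \ref{discreteCircle}, placing $\FM(E_{X_0})$, a finite union of circles, over finitely many parameters $b\in B$, and (ii) transversality of $\partial_\infty|_{\FM(E_{X_0})}$ to the section of the circle bundle $\FM(E_{\S^1\times\S^2})\cong \S^1\times B$, which is precisely the surjectivity of $H^1_{\hat{\mathsf{C}}_1}\to H^1(B_{\hat{\mathsf{C}}_1})$ read off from the Case-$1$ diagram (\ref{L1}). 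Without (i) and (ii) the fiber product over the section need not be a transverse zero-dimensional intersection computing the homological pairing, and its signed count need not equal the $\Theta$-evaluation. Since your Case-$0$ attribution discards exactly the Case-$1$ analysis that supplies (ii), the gap is substantive; it is repaired by invoking the paper's Case-$1$ computations (Theorem \ref{thm:cylindricalTransversality}, Section \ref{sec:obstruction}, Proposition \ref{discreteCircle}) rather than the case-$0$ subsection, after which the argument closes as you intend.
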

\begin{proof}

The parameterized configuration space $\mathcal{F}\mathcal{B}^*_{X} $ is a bundle over $B$ with fiber $\mathcal{B}_X^*$.

We will apply the gluing results in the previous sections to 
\[
X_0 \cup_{\S^1\times \S^2} \S^1 \times D^3
\]
and 
\[
X_0 \cup_{\S^1\times \S^2} D^2 \times \S^2.
\]

First, we consider the gluing theory for $X_0 \cup_{\S^1\times \S^2} \S^1 \times D^3$. By Theorem \ref{Thm:Local-gluing-theory-for-type1}, we have the isotopy (through out the proof we will omit the input of the $\text{spin}^c_{GL}$ structure and the perturbation family, for example $\FM(X_0) := \FM(E_{X_0}, s|_{X_0}, \sigma|_{X_0}) $)
\[
\FM(X_0 \cup_{\S^1\times \S^2} \S^1 \times D^3) \cong \FM(X_0) \times_{\FM(\S^1\times \S^2)} \FM(\S^1\times D^3).
\]

Recall that by the choice of the perturbation family, the Seiberg-Witten equations on 
$\S^1\times \S^2$ and $\S^1 \times D^3$ have $0$ perturbation. So $\FM (\S^1\times \S^2) $ and $ \FM (\S^1 \times D^3)$ consist of flat connections, and both of them is a circle bundle over $B$. The map between them is the identity. 

Denote by $\mathcal{F}\mathcal{B}^{red}, \mathcal{F}\mathcal{B}^* \subset \mathcal{F}\mathcal{B}$ the reducible configuration space, the irreducible configuration space, and the configuration space. Denote by $f: \mathcal{F}\mathcal{B} \to \mathcal{F}\mathcal{B}^{red}$ the forgetting map that throws away the spinor. Since the structure map of $E_X$ preserves the homology orientation, $\mathcal{F}\mathcal{B}^{red} (X)$ is homotopy equivalent to the trivial bundle $\S^1 \times B$. Hence we have the following diagram:
\[\begin{tikzcd}
	{\FM(X)} &&&&& {\FM({\mathbb{S}^1\times D^3})} \\
	& {\mathcal{F}\mathcal{B}_X} &&& {\mathcal{F}\mathcal{B}_{\mathbb{S}^1\times D^3}} \\
	&& {\mathbb{S}^1{\times}B} & {\mathbb{S}^1{\times}B} \\
	&& {\mathbb{S}^1{\times}B} & {\mathbb{S}^1{\times}B} \\
	& {\mathcal{F}\mathcal{B}_{X_0}} &&& {\mathcal{F}\mathcal{B}_{\mathbb{S}^1\times \mathbb{S}^2}} \\
	{\FM({X_0})} &&&&& {\FM({\mathbb{S}^1\times \mathbb{S}^2})}
	\arrow[dashed, from=1-1, to=1-6]
	\arrow["inclusion"{pos=0.5}, from=1-1, to=2-2]
	\arrow[dashed, from=1-1, to=6-1]
	\arrow["inclusion"'{pos=0.5}, from=1-6, to=2-5]
	\arrow["{=}"{description, pos=0.4}, bend left=18, from=1-6, to=3-4]
	\arrow["="',from=1-6, to=6-6]
	\arrow[from=2-2, to=2-5]
	\arrow["f", from=2-2, to=3-3]
	\arrow[from=2-2, to=5-2]
	\arrow["f"', from=2-5, to=3-4]
	\arrow["{\partial_\infty}", from=2-5, to=5-5]
	\arrow["{=}", from=3-3, to=3-4]
	\arrow["{=}",from=3-3, to=4-3]
	\arrow["="',from=3-4, to=4-4]
	\arrow["{\cong}",from=4-3, to=4-4]
	\arrow["f"'{pos=0.6}, from=5-2, to=4-3]
	\arrow["{\partial_\infty}"', from=5-2, to=5-5]
	\arrow["f"{pos=0.6}, from=5-5, to=4-4]
	\arrow["inclusion"'{pos=0.5}, from=6-1, to=5-2]
	\arrow[from=6-1, to=6-6]
	\arrow["{=}"{description, pos=0.4}, bend right=18, from=6-6, to=4-4]
	\arrow["inclusion"{pos=0.5}, from=6-6, to=5-5]
\end{tikzcd}\]
where the largest square is a pullback square, and all triangles and squares commute. By the property of the pullback diagram, $\FM(X)$ is isotopic to $\FM(X_0)$ as $1$-dimensional manifolds in $\S^1 \times B$. Hence 
\begin{equation}\label{equ:Mx=Mx0}
\langle \FM(X) , \Theta\rangle = \langle \FM(X_0) ,  \Theta\rangle
\end{equation}

Second, we consider the gluing theory for $X_0 \cup_{\S^1\times \S^2} D^2 \times \S^2$. By Theorem \ref{Thm:Local-gluing-theory-for-type1}, we have the isotopy
\[
\FM(X_0 \cup_{\S^1\times \S^2} D^2 \times \S^2) \cong \FM(X_0) \times_{\FM(\S^1\times \S^2)} \FM(D^2 \times \S^2).
\]
The only difference is that, for each parameter, the configuration space $\mathcal{B}^{red}(D^2 \times \S^2)$ is contractible since $H^1(D^2 \times \S^2)$ is trivial. So $\mathcal{F}\mathcal{B}^{red}(D^2 \times \S^2) \cong B$. But we cannot recover the previous commutative diagram since the squre  
\[\begin{tikzcd}
	& {\mathcal{F}\mathcal{B}_{D^2 \times\mathbb{S}^2 }} \\
	B \\
	{\mathbb{S}^1{\times}B} \\
	& {\mathcal{F}\mathcal{B}_{\mathbb{S}^1\times \mathbb{S}^2}}
	\arrow["f"', from=1-2, to=2-1]
	\arrow["{\partial_\infty}"', from=1-2, to=4-2]
	\arrow["{\{1\}\times B}"', from=2-1, to=3-1]
	\arrow["f", from=4-2, to=3-1]
\end{tikzcd}\]
is noncommutative. However, $\FM(D^2 \times \S^2) \cong B$ maps to the zero zection of $\FM({\mathbb{S}^1\times \mathbb{S}^2}) \cong \S^1 \times B$ and thus the diagram 
\begin{equation}\label{equ:flat-connection-is-zero-section}
\begin{tikzcd}
	&& {\FM_{D^2 \times\mathbb{S}^2 }} \\
	& {\mathcal{F}\mathcal{B}_{D^2 \times\mathbb{S}^2 }} \\
	B \\
	{\mathbb{S}^1{\times}B} \\
	& {\mathcal{F}\mathcal{B}_{\mathbb{S}^1\times \mathbb{S}^2}} \\
	&& {\FM_{\mathbb{S}^1\times \mathbb{S}^2}}
	\arrow["inclusion"'{pos=0.4}, from=1-3, to=2-2]
	\arrow["{=}"{description, pos=0.4}, bend left=18, from=1-3, to=3-1]
	\arrow["{\partial_\infty}"',  from=1-3, to=6-3]
	\arrow["f"', from=2-2, to=3-1]
	\arrow["\{1\}\times B"', from=3-1, to=4-1]
	\arrow["f"{pos=0.5}, from=5-2, to=4-1]
	\arrow["{=}"{description, pos=0.4}, bend right=18, from=6-3, to=4-1]
	\arrow["inclusion"{pos=0.5}, from=6-3, to=5-2]
\end{tikzcd}\end{equation}
still commutes. Therefore, we have the commutative diagram
\[\begin{tikzcd}
	{\FM({X'})} &&&&& {\FM({D^2 \times \S^2})} \\
	&&&& {\mathcal{F}\mathcal{B}_{D^2 \times \S^2}} \\
	&&& B \\
	&& {\mathbb{S}^1{\times}B} & {\mathbb{S}^1{\times}B} \\
	& {\mathcal{F}\mathcal{B}_{X_0}} &&& {\mathcal{F}\mathcal{B}_{\mathbb{S}^1\times \mathbb{S}^2}} \\
	{\FM({X_0})} &&&&& {\FM({\mathbb{S}^1\times \mathbb{S}^2})}
	\arrow[dashed, from=1-1, to=1-6]
	\arrow[dashed, from=1-1, to=6-1]
	\arrow["inclusion"', from=1-6, to=2-5]
	\arrow["{{=}}"{description, pos=0.4}, bend left=18, from=1-6, to=3-4]
	\arrow["{\partial_\infty}"', from=1-6, to=6-6]
	\arrow["f"', from=2-5, to=3-4]
	\arrow["{\{1\}\times B}"', from=3-4, to=4-4]
	\arrow["{{\cong}}", from=4-3, to=4-4]
	\arrow["f"'{pos=0.6}, from=5-2, to=4-3]
	\arrow["{{\partial_\infty}}"', from=5-2, to=5-5]
	\arrow["f"{pos=0.6}, from=5-5, to=4-4]
	\arrow["inclusion"', from=6-1, to=5-2]
	\arrow["{{\partial_\infty}}", from=6-1, to=6-6]
	\arrow["{{=}}"{description, pos=0.4}, bend right=18, from=6-6, to=4-4]
	\arrow["inclusion", from=6-6, to=5-5]
\end{tikzcd}\]

Recall that by Propsition \ref{discreteCircle}, $\FM(X_0)$ consists of finite many circle, and they are contained in finite many fibers of $\S^1 \times B$ under the map $f \circ inclusion$. In addition, from the sequence (\ref{L1}), we deduce that the map from $H^1_{\hat{\mathsf{C}}_1}$ (the tagent space of $\FM(X_0)$) to $H^1(B_{\hat{\mathsf{C}}_1})$ (the tangent space of $\FM({\mathbb{S}^1\times \mathbb{S}^2})$) is surjective, so $\FM(X_0) \cong \partial_\infty(\FM(X_0))$ intersects the zero section of $\FM({\mathbb{S}^1\times \mathbb{S}^2}) \cong \S^1 \times B$ transversally. We have:
\begin{enumerate}
\item[\textbullet]  Their intersection is just $\FM(X')$, a set of finite many points, by the property of the fiber product.
\item[\textbullet]  the number of their intersection is $\langle \FM(X_0) , \Theta\rangle$, by the definition of $\Theta$.
\end{enumerate}
Combine these with (\ref{equ:Mx=Mx0}), the conclusion follows.
\end{proof}

\begin{remark}
It's important to note that. Thanks to Propsition \ref{discreteCircle}
\end{remark}

In the previous theorem, the structure group of $E_X$ is chosen to preserve the homology orientation. If it is not the case, the space of reducible solutions might be nonorientable $\S^1$-bundle. For example, when $B=\S^1$ and $E_X$ is the mapping torus of some reflection map that reverses the homology orientation, the space of reducible solutions is a Klein bottle. In this case $\Theta = PD([E_{\C P^\infty}]) \in H^1(\mathcal{F}\mathcal{B}^*_{X},\Z_2)$. The surgery formula still holds:
\begin{theorem}
Use the notation of Theorem \ref{ThmA}. But assume the following instead:
\begin{itemize}
\item The structure group of $E_X$ is $G = \Aut(X, s)$;
\end{itemize}
We can still define a $\Z_2$ invariant 
\[
FSW^{\Theta , \Z_2}(E_X, s) := 
\langle \FM(E_X, s, \sigma) , \Theta\rangle
\]
and a $\Z_2$ invariant
\[
FSW^{\Z_2}(E_{X'}, s') := \#  \FM(E_{X'}, s', \sigma).
\]
We still have 
\[
FSW^{\Theta , \Z_2}(E_X, s) = FSW^{\Z_2}(E_{X'}, s') .
\]
\end{theorem}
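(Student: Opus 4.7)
The plan is to rerun the proof of Theorem \ref{ThmA} with $\Z/2$-coefficients in place of $\Z$. First I need to check that $\Theta$ is well-defined in $H^1(\mathcal{F}\mathcal{B}^*_X; \Z/2)$ even when $\mathcal{F}\mathcal{B}^{red}(X)$ is a nonorientable $\S^1$-bundle over $B$ — the Klein-bottle example from the paragraph preceding the theorem being the model. Since every topological manifold is $\Z/2$-orientable, the fiber $\C P^\infty$ still admits a mod-$2$ Poincaré–Lefschetz dual, which will serve as $\Theta$. Similarly, $\FM(E_X, s, \sigma)$ remains a compact smooth $1$-manifold by the transversality and compactness arguments of Theorem \ref{thm:cylindricalTransversality} and Subsection \ref{subsection:Parametrized} (none of which invokes orientation), and hence represents a class in $H_1(\mathcal{F}\mathcal{B}^*_X; \Z/2)$; independence of the pairing from the choice of generic $\sigma$ will follow by a mod-$2$ analogue of Lemma \ref{lem:detect-exotic}, in which the relevant cobordism is a compact $2$-manifold whose boundary mod $2$ is the symmetric difference of the two moduli spaces.

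Next I would invoke the local gluing results of Theorem \ref{Thm:Local-gluing-theory-for-type1} and Section \ref{sec:local-gluing-theory-for-case-0}, which are purely topological. These yield the isotopies
\begin{align*}
\FM(E_X) &\cong \FM(E_{X_0}) \times_{\FM(E_{\S^1 \times \S^2})} \FM(E_{\S^1 \times D^3}), \\
\FM(E_{X'}) &\cong \FM(E_{X_0}) \times_{\FM(E_{\S^1 \times \S^2})} \FM(E_{D^2 \times \S^2}),
\end{align*}
and exactly as in Theorem \ref{ThmA}, $\partial_\infty \colon \FM(E_{\S^1 \times D^3}) \to \FM(E_{\S^1 \times \S^2})$ is a fiberwise identity of the (possibly nonorientable) $\S^1$-bundle of reducibles, so $\FM(E_X)$ is isotopic to $\FM(E_{X_0})$ inside $\mathcal{F}\mathcal{B}^*_X$; diagram (\ref{equ:flat-connection-is-zero-section}) identifies $\partial_\infty \colon \FM(E_{D^2 \times \S^2}) \to \FM(E_{\S^1 \times \S^2})$ with the zero-section inclusion $B \hookrightarrow \FM(E_{\S^1 \times \S^2})$.

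The final step would invoke the surjectivity of $H^1_{\hat{\mathsf{C}}_1} \to H^1(B_{\hat{\mathsf{C}}_1})$ recorded in diagram (\ref{L1}) to conclude that $\FM(E_{X_0})$ meets the zero section transversely, so that the mod-$2$ count of $\FM(E_{X'})$ equals the mod-$2$ intersection number $[\FM(E_{X_0})] \cdot [B] \equiv \langle [\FM(E_{X_0})], \Theta \rangle \pmod 2$. Combined with the isotopy $\FM(E_X) \simeq \FM(E_{X_0})$ from the previous paragraph, this gives the desired equality. The main delicacy I anticipate — and the only place where absence of homology orientation actually intervenes — is in verifying that the zero section of the nonorientable $\S^1$-bundle $\FM(E_{\S^1 \times \S^2}) \to B$ represents the Poincaré dual of $\Theta$ restricted to the reducible locus; this is automatic over $\Z/2$ because the fiber $\S^1$ carries a canonical mod-$2$ fundamental class, but it genuinely fails over $\Z$, which is precisely why the integer surgery formula of Theorem \ref{ThmA} descends only to its mod-$2$ reduction in this setting.
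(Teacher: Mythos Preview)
Your proposal is correct and follows essentially the same approach as the paper's proof, which is a single sentence: ``The proof is exactly the same as the previous one, with $\S^1 \times B$ in those diagrams replaced by some possible nonorientable $\S^1$-bundle.'' You have simply unpacked this sentence with more care, explaining why $\Theta$ survives as a $\Z/2$-class and why the intersection-number argument goes through mod $2$. One minor point: your invocation of Section \ref{sec:local-gluing-theory-for-case-0} is unnecessary here, since under the hypotheses of Theorem \ref{ThmA} the surgery loop pairs nontrivially with $\tau$ and we are in Case 1 throughout, so only Theorem \ref{Thm:Local-gluing-theory-for-type1} is needed.
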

\begin{proof}
The proof is exactly the same as the previous one, with $\S^1 \times B$ in those diagrams replaced by some possible nonorientable $\S^1$-bundle.
\end{proof}

Now we consider the case where we do the surgery on a homologically trivial loop. We have the following vanishing result:
\begin{theorem}\label{thm:nullhomology-orientable}
Use the notation of Theorem \ref{ThmA}. But assume the following instead/additionally:
\begin{itemize}
\item $\dim B > 0$;
\item $E_{\S^1} $ is an orientable $\S^1$-subbundle of $E_X$;
\item $H^1(X)$ is trivial and each fiber of $E_{\S^1} $ is homologically trivial in the fiber of $E_X$.
\end{itemize}
Then
\[
FSW(E_{X'}, s')=0.
\]
\end{theorem}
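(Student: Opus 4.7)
The plan is to mimic the gluing-based proof of Theorem~\ref{ThmA}, but to conclude by a cohomological vanishing rather than a positive count. First I would invoke the family local gluing theorem in the mixed form needed here: by Mayer--Vietoris, the hypothesis $H^1(X)=0$ already forces $H^1(X_0)=0$, so $L^1_{top}=0$ and the $X_0$-side sits in case~$0$ in the terminology of Section~\ref{sec:obstruction}; on the other hand, with $\hat{N}_2 = D^2 \times \S^2$ the glued configurations end up of type~$1$ (Figure~\ref{Fig:O0-D2}), so Theorem~\ref{Thm:Local-gluing-theory-for-type1} applied in this regime yields the isotopy
\[
\FM(E_{X'},s') \;\simeq\; \FM(E_{X_0}) \times_{\FM(E_{\S^1\times \S^2})} \FM(E_{D^2 \times \S^2})
\]
inside $\mathcal{F}\mathcal{B}^*(E_{X'})$. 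A dimension count using $\chi(X')=\chi(X)+2$, $\sigma(X')=\sigma(X)$, $c_1^2(s')=c_1^2(s)$ (Theorem~\ref{thm:changeOfSpinc}) and $\beta=0$ (Proposition~\ref{prop:virtualDimX0}) gives $\dim\FM(E_{X_0})=1$ and $\dim\FM(E_{X'})=0$, so $FSW(E_{X'},s')$ is well-defined.

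Next I would trivialize the boundary $\S^1$-bundle. Orientability of $E_{\S^1}$ makes $\FM(E_{\S^1 \times \S^2}) \to B$ an orientable $\S^1$-bundle, and the unique flat $U(1)$-connection on $D^2 \times \S^2$ (Proposition~\ref{prop:spincRestriction}) provides a global section $\FM(E_{D^2 \times \S^2}) \hookrightarrow \FM(E_{\S^1 \times \S^2})$, forcing $\FM(E_{\S^1 \times \S^2}) \cong \S^1 \times B$. Exactly as at the end of the proof of Theorem~\ref{main-thm}, the signed fiber-product count becomes
\[
FSW(E_{X'},s') \;=\; \bigl\langle [\FM(E_{X_0})],\; \partial_\infty^* \Theta\bigr\rangle,
\]
where $\Theta \in H^1(\S^1 \times B;\Z)$ is pulled back from the generator of $H^1(\S^1)$ and $\partial_\infty : \mathcal{F}\mathcal{B}^*(E_{X_0}) \to \mathcal{F}\mathcal{B}^*(E_{\S^1 \times \S^2})$ is boundary restriction.

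The heart of the proof is the vanishing $\partial_\infty^* \Theta = 0 \in H^1(\mathcal{F}\mathcal{B}^*(E_{X_0});\Z)$. Since $H^1(X_0)=0$, every gauge transformation $u : X_0 \to \S^1$ can be written as $u = \exp(if)$ for some $f : X_0 \to \R$, and on the core circle $\gamma'_b \subset \partial (X_0)_b$ one has
\[
\int_{\gamma'_b}(A + u^{-1}du) \;=\; \int_{\gamma'_b} A \;+\; i\bigl(f(\gamma'_b(1)) - f(\gamma'_b(0))\bigr) \;=\; \int_{\gamma'_b} A.
\]
Hence $(A,b) \mapsto \tfrac{1}{2\pi i}\int_{\gamma'_b} A$ is gauge-invariant in $\R$ on each fiber of $\mathcal{F}\mathcal{B}^*(E_{X_0})$. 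Orientability of $E_{\S^1}$ together with the cocycle choice of Lemma~\ref{lemma:cocycle-d*} ensures that the family transitions preserve $\gamma'_b$ as an oriented loop, so these fiberwise functions glue to a global continuous lift $\mathcal{F}\mathcal{B}^*(E_{X_0}) \to \R$ of the classifying map for $\partial_\infty^* \Theta$. A class that factors through the contractible group $\R$ is zero in integral cohomology; therefore the pairing above vanishes and $FSW(E_{X'},s') = 0$.

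The main technical obstacle is setting up the mixed-case family gluing carefully: the $X_0$-side only supplies a codimension-one variation direction $V \subset T_b B$ as in Section~\ref{sec:local-gluing-theory-for-case-0}, so one must verify that pairing this with the zero-dimensional type-$1$ contribution from $D^2 \times \S^2$ genuinely produces a transverse fiber product of the expected dimension rather than meeting transversality failures along the missing direction $v$. A secondary subtlety is that the continuity of the $\R$-lift across $B$ relies on the nullhomologous hypothesis on each fiber $\gamma_b$: it is this hypothesis that guarantees each $\gamma'_b$ bounds a surface in $(X_0)_b$, removing any homological obstruction to gluing the fiberwise lifts under the transition cocycle of Lemma~\ref{lemma:cocycle-d*}.
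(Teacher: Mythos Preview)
Your proposal is correct and follows essentially the same route as the paper: both observe that $H^1(X_0;\Z)=0$ makes the holonomy along $\gamma'_b$ a gauge-invariant $\R$-valued function on $\mathcal{F}\mathcal{B}^*(E_{X_0})$, so the boundary map factors through $\R\times B$ and the pairing with $\Theta$ vanishes. The paper phrases this via an explicit lift through the covering $p:\R\times B\to\S^1\times B$ rather than your cohomological statement $\partial_\infty^*\Theta=0$, and resolves the transversality issue you flag by invoking the case-0 analysis around Figure~\ref{fig:Type0image} and equation~(\ref{equ:surjIneq}).
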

\begin{proof}
In this case, we are doing surgery on a homologically trivial loop of $X$. We will apply the gluing results in Section \ref{sec:local-gluing-theory-for-case-0} to 
\[
X_0 \cup_{\S^1\times \S^2} \S^1 \times D^3
\]
and 
\[
X_0 \cup_{\S^1\times \S^2} D^2 \times \S^2.
\]

First, we consider the gluing theory for $X_0 \cup_{\S^1\times \S^2} \S^1 \times D^3$. We still have 
\[
\FM(X_0 \cup_{\S^1\times \S^2} \S^1 \times D^3) \cong \FM(X_0) \times_{\FM(\S^1\times \S^2)} \FM(\S^1\times D^3),
\]
but now $ \FM(X_0) \cong B$ and we don't have the commutative diagram 
\[\begin{tikzcd}
	& B & {\mathbb{S}^1{\times}B} \\
	{\mathcal{F}\mathcal{B}_{X_0}} &&& {\mathcal{F}\mathcal{B}_{\mathbb{S}^1\times \mathbb{S}^2}}
	\arrow[from=1-2, to=1-3]
	\arrow["f"'{pos=0.6}, from=2-1, to=1-2]
	\arrow["{{\partial_\infty}}"', from=2-1, to=2-4]
	\arrow["f"{pos=0.6}, from=2-4, to=1-3]
\end{tikzcd}\]
Unlike (\ref{equ:flat-connection-is-zero-section}), we don't know the top arrow in the diagram 
\[\begin{tikzcd}
	&& B & {\mathbb{S}^1{\times}B} \\
	& {\mathcal{F}\mathcal{B}_{X_0}} &&& {\mathcal{F}\mathcal{B}_{\mathbb{S}^1\times \mathbb{S}^2}} \\
	{\FM({X_0})} &&&&& {\FM({\mathbb{S}^1\times \mathbb{S}^2})}
	\arrow[from=1-3, to=1-4]
	\arrow["f"'{pos=0.6}, from=2-2, to=1-3]
	\arrow["f"{pos=0.6}, from=2-5, to=1-4]
	\arrow["inclusion"', from=3-1, to=2-2]
	\arrow[from=3-1, to=3-6]
	\arrow["{{=}}"{description, pos=0.4},bend right=18, from=3-6, to=1-4]
	\arrow["inclusion", from=3-6, to=2-5]
\end{tikzcd}\]
neither, since $\FM(X_0)$ can be any $1$-manifold. 

The solution is to record more infomation then just the reducible solution. We want to remenber the holonomy. Let $hol: {\mathcal{F}\mathcal{B}_{X_0}}  \to \R \times B$ be the holonomy along the loop that we do the surgery. Then the diagram
\[\begin{tikzcd}
	&& \R \times B  & {\mathbb{S}^1{\times}B} \\
	& {\mathcal{F}\mathcal{B}_{X_0}} &&& {\mathcal{F}\mathcal{B}_{\mathbb{S}^1\times \mathbb{S}^2}} \\
	{\FM({X_0})} &&&&& {\FM({\mathbb{S}^1\times \mathbb{S}^2})}
	\arrow["{{p}}", from=1-3, to=1-4]
	\arrow["hol"'{pos=0.6}, from=2-2, to=1-3]
	\arrow["{{\partial_\infty}}"', from=2-2, to=2-5]
	\arrow["f"{pos=0.6}, from=2-5, to=1-4]
	\arrow["inclusion"', from=3-1, to=2-2]
	\arrow[from=3-1, to=3-6]
	\arrow["{{=}}"{description, pos=0.4},bend right=18, from=3-6, to=1-4]
	\arrow["inclusion", from=3-6, to=2-5]
\end{tikzcd}\]
commutes, where $p$ is the covering map times the identity on $B$.
Therefore, we have the commutative diagram 
\[\begin{tikzcd}
	{\FM(X)} &&&&& {\FM({\mathbb{S}^1\times D^3})} \\
	& {\mathcal{F}\mathcal{B}_X} &&& {\mathcal{F}\mathcal{B}_{\mathbb{S}^1\times D^3}} \\
	&& {\R{\times}B} & {\mathbb{S}^1{\times}B} \\
	&& {\R{\times}B} & {\mathbb{S}^1{\times}B} \\
	& {\mathcal{F}\mathcal{B}_{X_0}} &&& {\mathcal{F}\mathcal{B}_{\mathbb{S}^1\times \mathbb{S}^2}} \\
	{\FM({X_0})} &&&&& {\FM({\mathbb{S}^1\times \mathbb{S}^2})}
	\arrow[dashed, from=1-1, to=1-6]
	\arrow["inclusion", from=1-1, to=2-2]
	\arrow[dashed, from=1-1, to=6-1]
	\arrow["inclusion"', from=1-6, to=2-5]
	\arrow["{{=}}"{description, pos=0.4}, bend left=18, from=1-6, to=3-4]
	\arrow["{=}"', from=1-6, to=6-6]
	\arrow[from=2-2, to=2-5]
	\arrow["hol", from=2-2, to=3-3]
	\arrow[from=2-2, to=5-2]
	\arrow["f"', from=2-5, to=3-4]
	\arrow["{{\partial_\infty}}", from=2-5, to=5-5]
	\arrow["p", from=3-3, to=3-4]
	\arrow["{=}", from=3-3, to=4-3]
	\arrow["{=}"', from=3-4, to=4-4]
	\arrow["p", from=4-3, to=4-4]
	\arrow["hol"'{pos=0.6}, from=5-2, to=4-3]
	\arrow["{{\partial_\infty}}"', from=5-2, to=5-5]
	\arrow["f"{pos=0.6}, from=5-5, to=4-4]
	\arrow["inclusion"', from=6-1, to=5-2]
	\arrow[from=6-1, to=6-6]
	\arrow["{{=}}"{description, pos=0.4}, bend right=18, from=6-6, to=4-4]
	\arrow["inclusion", from=6-6, to=5-5]
\end{tikzcd}\]
By the property of the pullback squre, we have
\begin{equation}\label{equ:Mx-iso-Mx0}
 \FM(X) \cong  \FM(X_0)
\end{equation}
in $\R \times B$.

Second, we consider the gluing theory for $X_0 \cup_{\S^1\times \S^2} D^2 \times \S^2$. We have the isotopy
\begin{equation}\label{nullhomology-second}
\FM(X_0 \cup_{\S^1\times \S^2} D^2 \times \S^2) \cong \FM(X_0) \times_{\FM(\S^1\times \S^2)} \FM(D^2 \times \S^2).
\end{equation}
and the commutative diagram 
\[\begin{tikzcd}
	{\FM(X')} &&&&& {\FM({D^2 \times \mathbb{S}^2})} \\
	& {\mathcal{F}\mathcal{B}_{X'}} &&& {\mathcal{F}\mathcal{B}_{{D^2 \times \mathbb{S}^2}}} \\
	&& {\R{\times}B} & B \\
	&& {\R{\times}B} & {\mathbb{S}^1{\times}B} \\
	& {\mathcal{F}\mathcal{B}_{X_0}} &&& {\mathcal{F}\mathcal{B}_{\mathbb{S}^1\times \mathbb{S}^2}} \\
	{\FM({X_0})} &&&&& {\FM({\mathbb{S}^1\times \mathbb{S}^2})}
	\arrow[dashed, from=1-1, to=1-6]
	\arrow["inclusion", from=1-1, to=2-2]
	\arrow[dashed, from=1-1, to=6-1]
	\arrow["inclusion"', from=1-6, to=2-5]
	\arrow["{{{=}}}"{description, pos=0.4}, bend left=18, from=1-6, to=3-4]
	\arrow["{\{1\} \times B}"', from=1-6, to=6-6]
	\arrow[from=2-2, to=2-5]
	\arrow["hol", from=2-2, to=3-3]
	\arrow[from=2-2, to=5-2]
	\arrow["f"', from=2-5, to=3-4]
	\arrow["pj", from=3-3, to=3-4]
	\arrow["{{=}}", from=3-3, to=4-3]
	\arrow["{\{1\} \times B}", from=3-4, to=4-4]
	\arrow["p", from=4-3, to=4-4]
	\arrow["hol"'{pos=0.6}, from=5-2, to=4-3]
	\arrow["{{{\partial_\infty}}}"', from=5-2, to=5-5]
	\arrow["f"{pos=0.6}, from=5-5, to=4-4]
	\arrow["inclusion"', from=6-1, to=5-2]
	\arrow[from=6-1, to=6-6]
	\arrow["{{{=}}}"{description, pos=0.4},bend right=18, from=6-6, to=4-4]
	\arrow["inclusion", from=6-6, to=5-5]
\end{tikzcd}\]
We see that $\FM(X_0)$ is a loop in $\R\times B$ under the map $hol\circ inclusion$, so is it in $\S^1 \times B$ under the map $p\circ hol\circ inclusion$. By the lifting property of the covering map we have a diagram
\[\begin{tikzcd}
	& {\R\times B} && B \\
	\\
	& {\R\times B} & {} & {\mathbb{S}^1 \times B} \\
	{\FM(X_0)}
	\arrow["{exp \times id}", from=1-2, to=3-4]
	\arrow["{\{0\}\times B}"', from=1-4, to=1-2]
	\arrow["{\{1\}\times B}", from=1-4, to=3-4]
	\arrow["id"', from=3-2, to=1-2]
	\arrow["p", from=3-2, to=3-4]
	\arrow[from=4-1, to=1-2]
	\arrow[from=4-1, to=3-2]
\end{tikzcd}\]
We see that the image of $\FM(X_0)$ and $B$ in $\mathbb{S}^1 \times B$ have algebraic intersection $0$. We still need to show that they intersect transversally.

We choose the perturbation family such that $1 \in \S^1$ is the point where the argument around (\ref{equ:surjIneq}) applies. In this case, we have Figure \ref{fig:Type0image}, and it shows that the differential of the moduli space on $X_0$ to the moduli space of the boundary (which is reducible) is surjective. This exactly means that the differential of $\FM(X_0)$ to the $\mathbb{S}^1$-factor (connection part) in $\mathbb{S}^1 \times B$ is surjective at $1\in \mathbb{S}^1$. Hence the image of $\FM(X_0)$ and $B$ intersect transversally. 

Therefore, the fiber product has $0$ signed points and by (\ref{nullhomology-second}) we have 
\[
\#\FM(X_0 \cup_{\S^1\times \S^2} D^2 \times \S^2) = 0.
\] 
\end{proof}

\begin{theorem}\label{thm:nullhomology-nonorientable}
Use the notation of Theorem \ref{ThmA}. But assume the following instead/additionally:
\begin{itemize}
\item $B$ is a circle;
\item $E_{\S^1} $ is an $\S^1$-subbundle of $E_X$, and it is a Klein bottle;
\item Each fiber of $E_{\S^1} $ is homologically trivial in the fiber of $E_X$.
\end{itemize}
Then
\[
FSW^{\Z/2}(E_{X'}, s')\equiv SW(X, s) \mod 2.
\]
(Here the family invariant is defined by counting the points.)
\end{theorem}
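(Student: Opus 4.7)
The proof parallels Theorem~\ref{thm:nullhomology-orientable}, replacing the vanishing argument in the orientable case with a non-trivial mod~$2$ intersection computation on the Klein bottle that recovers $SW(X,s)$.

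First, because $\gamma$ is null-homologous in the fiber, $H^1(X_0;\R)\to H^1(\partial X_0;\R)$ fails to be surjective, placing us in Case~$0$ of the analysis in Section~\ref{sec:obstruction}. The family gluing theorem from Section~\ref{sec:local-gluing-theory-for-case-0} then gives
\[
\FM(X')\;\cong\;\FM(X_0)\,\times_{\FM(\S^1\times\S^2)}\,\FM(D^2\times\S^2).
\]
Since $E_{\S^1}$ is a Klein bottle, $\FM(\S^1\times\S^2)\to B$ is a Klein bottle $K$ (the circle of reducibles in each fiber inherits the orientation-reversing monodromy), and $\FM(D^2\times\S^2)$ embeds as the zero-holonomy section $B_0\hookrightarrow K$. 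In Case~$0$, $\FM(X_0)\to B$ is a finite \'etale cover; applying Corollary~\ref{fiberProduct} fiberwise identifies $\M(X_0,\sigma(b))$ with $\M(X,s)$, so its mod~$2$ degree equals $SW(X,s)$.

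With $\iota=\partial_\infty$, the gluing identity becomes the mod~$2$ intersection number
\[
FSW^{\Z/2}(E_{X'},s')\;=\;\iota_*[\FM(X_0)]\cdot[B_0]\in H_0(K;\Z/2).
\]
The key linear-algebraic input is the intersection form on $H_1(K;\Z/2)$. In the basis $\{[B_0],[F]\}$ it is $\bigl(\begin{smallmatrix}1&1\\1&0\end{smallmatrix}\bigr)$: the diagonal entry $[B_0]^2=1$ reflects that the normal bundle of $B_0$ in $K$ is a M\"obius band, so a generic perturbation of $B_0$ has a single mod~$2$ zero. Writing $\iota_*[\FM(X_0)]=\alpha[B_0]+\beta[F]$, intersection with a generic vertical fiber of $K$ is realized by $\iota$ on $\M(X_0,\sigma(b))$, giving $\alpha\equiv SW(X,s)\pmod 2$.

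The main obstacle is showing $\beta\equiv 0 \pmod 2$. The input is that $\gamma$ bounds in $X_0$, so the boundary holonomy around $\gamma$ is a well-defined $\R$-valued function on $\mathcal{FB}_{X_0}$; this yields a lift $\widetilde\iota\colon\FM(X_0)\to\R\times B$ compatible with the covering $\R\times B\to K$ given by the deck action $(h,b)\mapsto(-h,b+1)$. Analyzing $\widetilde\iota$ component-by-component—each connected component $C\subset\FM(X_0)$ is a $k$-fold cover of $B$ corresponding to a monodromy orbit in $\M(X,s)$—the monodromy around $B$ combines the permutation on solutions with the fiber reflection $\theta\mapsto-\theta$, so the net $\R$-displacement accumulated by $\widetilde\iota|_C$ around the orbit vanishes in $\Z$; summing over components forces $\beta=0$. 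Equivalently, pulling back along the orientation double cover $\widetilde B\to B$ trivializes $K$ to a torus and reduces the computation of $\beta$ to the vanishing argument already carried out in Theorem~\ref{thm:nullhomology-orientable}. Combining the two values yields
\[
FSW^{\Z/2}(E_{X'},s')\;\equiv\;\alpha+\beta\;\equiv\;SW(X,s)\pmod 2,
\]
as claimed.
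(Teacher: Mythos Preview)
Your argument is correct and rests on the same key geometric input as the paper---namely, that the null-homology of $\gamma$ lets the boundary-holonomy map lift $\FM(X_0)$ through a M\"obius band $\R\widetilde\times B$ covering the Klein bottle $K=\FM(\S^1\times\S^2)$---but packages the final count differently. The paper works directly in the universal cover $\R\times\R$ of $K$: it lifts $\FM(X_0)$ as a union of arcs and observes (its Figure~\ref{fig:nonorientable}) that each arc winding once around $B$ crosses the preimage $\Z\times\R$ of $B_0$ an odd number of times, so the total mod~$2$ crossing count equals the degree of $\FM(X_0)\to B$, i.e.\ $\#\M(X)\pmod 2$. You instead decompose $\iota_*[\FM(X_0)]\in H_1(K;\Z/2)$ against the intersection form $\bigl(\begin{smallmatrix}1&1\\1&0\end{smallmatrix}\bigr)$ and extract the coefficients $\alpha,\beta$ separately.

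Your homological framing is clean---once the form is in hand, $FSW^{\Z/2}=\alpha+\beta$ is immediate---though your argument for $\beta=0$ is more elaborate than necessary. The mere existence of the lift to $M=\R\widetilde\times B$ already forces $\iota_*[\FM(X_0)]\in p_*H_1(M;\Z/2)=\Z/2\cdot[B_0]$, since $M$ deformation-retracts onto its core circle and that core maps to $B_0$; no displacement bookkeeping or passage to the orientation double cover is needed. The paper's covering-space argument trades your algebra for a picture, avoiding the intersection form entirely; both routes reach the same parity.
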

\begin{proof}
First consider the shape of the parameterized moduli space $\FM(X)$. Since we have chosen a regular parameter $\sigma= (g ,\eta)$, on $B $ minus any point, $\FM(X)$ is a cobordism. At each point $b$ of $B$, the parameter $\sigma(b) = (g(b) ,\eta(b))$ is regular to define 
\[
\M(X, g(b) ,\eta(b)).
\]
Hence the projection $\FM(X) \to B$ has degree $\# \M(X)$. Here $\# \M(X)$ doesn't depend on the choice of $b$, so we obmit the input.

Since $E_{\S^1}$ is a nonorientable $\S^1$-subbundle, the holonomy around each fiber of $E_{\S^1}$ would give the map 
\[
\mathcal{F}\mathcal{B}_{X} \to \R\widetilde{\times} B
\]
where $\R\widetilde{\times} B$ is the nonorientable $\R$ bundle over $B$. As in the first part in the proof of Theorem \ref{thm:nullhomology-orientable}, we have the diagram
\[\begin{tikzcd}
	{\FM(X)} &&&&& {\FM({\mathbb{S}^1\times D^3})} \\
	& {\mathcal{F}\mathcal{B}_X} &&& {\mathcal{F}\mathcal{B}_{\mathbb{S}^1\times D^3}} \\
	&& {\R\widetilde{\times} B} & {\mathbb{S}^1\widetilde{\times} B} \\
	&& {\R\widetilde{\times} B} & {\mathbb{S}^1\widetilde{\times} B} \\
	& {\mathcal{F}\mathcal{B}_{X_0}} &&& {\mathcal{F}\mathcal{B}_{\mathbb{S}^1\times \mathbb{S}^2}} \\
	{\FM({X_0})} &&&&& {\FM({\mathbb{S}^1\times \mathbb{S}^2})}
	\arrow[dashed, from=1-1, to=1-6]
	\arrow["inclusion", from=1-1, to=2-2]
	\arrow[dashed, from=1-1, to=6-1]
	\arrow["inclusion"', from=1-6, to=2-5]
	\arrow["{{=}}"{description, pos=0.4}, bend left=18, from=1-6, to=3-4]
	\arrow["{=}"', from=1-6, to=6-6]
	\arrow[from=2-2, to=2-5]
	\arrow["hol", from=2-2, to=3-3]
	\arrow[from=2-2, to=5-2]
	\arrow["f"', from=2-5, to=3-4]
	\arrow["{{\partial_\infty}}", from=2-5, to=5-5]
	\arrow["p", from=3-3, to=3-4]
	\arrow["{=}", from=3-3, to=4-3]
	\arrow["{=}"', from=3-4, to=4-4]
	\arrow["p", from=4-3, to=4-4]
	\arrow["hol"'{pos=0.6}, from=5-2, to=4-3]
	\arrow["{{\partial_\infty}}"', from=5-2, to=5-5]
	\arrow["f"{pos=0.6}, from=5-5, to=4-4]
	\arrow["inclusion"', from=6-1, to=5-2]
	\arrow[from=6-1, to=6-6]
	\arrow["{{=}}"{description, pos=0.4}, bend right=18, from=6-6, to=4-4]
	\arrow["inclusion", from=6-6, to=5-5]
\end{tikzcd}\]
from which we have
\begin{equation}
 \FM(X) \cong  \FM(X_0) 
\end{equation}
in $\R \widetilde{\times}  B$.

Now consider the gluing theory for $X_0 \cup_{\S^1\times \S^2} D^2 \times \S^2$. We have the isotopy
\begin{equation}
\FM(X_0 \cup_{\S^1\times \S^2} D^2 \times \S^2) \cong \FM(X_0) \times_{\FM(\S^1\times \S^2)} \FM(D^2 \times \S^2).
\end{equation}
Remember now $B\cong \S^1$. Let $\R\widetilde{\times} B$ be the Mobius band and $\S^1 \widetilde{\times} B$ be the Klein bottle. We have 
\[\begin{tikzcd}
	{\FM(X')} &&&&& {\FM({D^2 \times \mathbb{S}^2})} \\
	& {\mathcal{F}\mathcal{B}_{X'}} &&& {\mathcal{F}\mathcal{B}_{{D^2 \times \mathbb{S}^2}}} \\
	&& {\R\widetilde{\times} B} & B \\
	&& {\R\widetilde{\times} B} & {\mathbb{S}^1 \widetilde{\times} B} \\
	& {\mathcal{F}\mathcal{B}_{X_0}} &&& {\mathcal{F}\mathcal{B}_{\mathbb{S}^1\times \mathbb{S}^2}} \\
	{\FM({X_0})} &&&&& {\FM({\mathbb{S}^1\times \mathbb{S}^2})}
	\arrow[dashed, from=1-1, to=1-6]
	\arrow["inclusion", from=1-1, to=2-2]
	\arrow[dashed, from=1-1, to=6-1]
	\arrow["inclusion"', from=1-6, to=2-5]
	\arrow["{{{{=}}}}"{description, pos=0.4},bend left=18 , from=1-6, to=3-4]
	\arrow["{{\{1\} \times B}}"', from=1-6, to=6-6]
	\arrow[from=2-2, to=2-5]
	\arrow["hol", from=2-2, to=3-3]
	\arrow[from=2-2, to=5-2]
	\arrow["f"', from=2-5, to=3-4]
	\arrow["pj", from=3-3, to=3-4]
	\arrow["{{{=}}}", from=3-3, to=4-3]
	\arrow["{{\{1\} \times B}}", from=3-4, to=4-4]
	\arrow["p", from=4-3, to=4-4]
	\arrow["hol"'{pos=0.6}, from=5-2, to=4-3]
	\arrow["{{{{\partial_\infty}}}}"', from=5-2, to=5-5]
	\arrow["f"{pos=0.6}, from=5-5, to=4-4]
	\arrow["inclusion"', from=6-1, to=5-2]
	\arrow[from=6-1, to=6-6]
	\arrow["{{{{=}}}}"{description, pos=0.4}, bend right=18, from=6-6, to=4-4]
	\arrow["inclusion", from=6-6, to=5-5]
\end{tikzcd}\]
To understand the intersection of $\FM(X_0)$ and $\FM({D^2 \times \mathbb{S}^2})$ in $\mathbb{S}^1\widetilde{\times}B$, we consider the universal cover of the Klein bottle 
\[
exp \widetilde{\times}  exp : \R \times \R \to \mathbb{S}^1 \widetilde{\times}  B
\]
and the universal cover of the M\"obius band
\[
id \widetilde{\times}  exp : \R \times \R \to \R \widetilde{\times}  B.
\]
Consider the diagram
\[\begin{tikzcd}
	& {\R\times \R} && B={\FM({D^2 \times \mathbb{S}^2})} \\
	& {} \\
	& {\R\widetilde{\times}  B} & {} & {\mathbb{S}^1 \widetilde{\times}  B} \\
	{\FM(X_0)}
	\arrow["{{exp \widetilde{\times}  exp}}", from=1-2, to=3-4]
	\arrow["{{id \widetilde{\times}  exp}}", from=1-2, to=3-2]
	\arrow["{{\{0\}\times [0,1]}}"', dashed, from=1-4, to=1-2]
	\arrow["{{\{1\}\times B}}", from=1-4, to=3-4]
	\arrow["p", from=3-2, to=3-4]
	\arrow[dashed, from=4-1, to=1-2]
	\arrow[from=4-1, to=3-2]
\end{tikzcd}\]
where dashed lines indicate lifts of paths if we regard both $B$ and $\FM(X_0)$ as a path. Cosider the lift of $\FM(X_0)$ in $\R\times \R$. Each component of the lift of $\FM(X_0)$, that winds around $B$ once, would intersect $\Z \times \R$ at odd number of points (see Figure \ref{fig:nonorientable}). Hence the algebraic intersection number between the lift of $\FM(X_0)$ and $\Z \times \R$ is exactly the degree of the projection $\FM(X_0) \to B$ (mod 2). 
\begin{figure}[ht!]
    \begin{Overpic}{\includegraphics[scale=0.6]{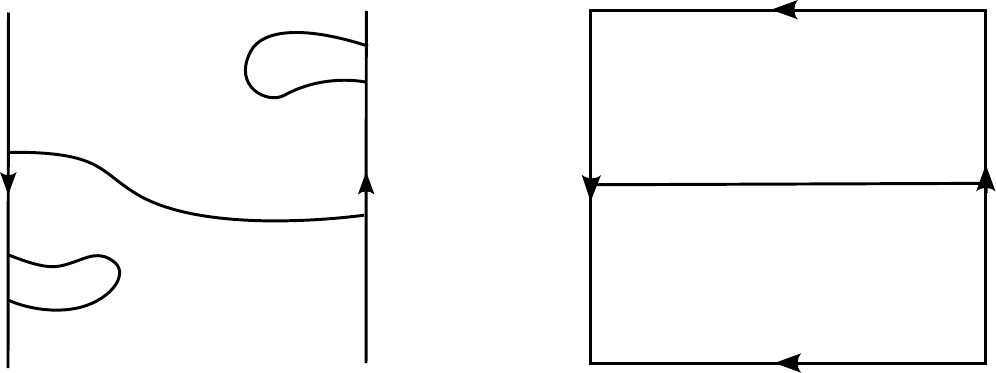}}
        \put(22,20){$\FM(X_0)$}
        \put(47,18){$\stackrel{p}{\to}$}
        \put(64,21){${\FM({D^2 \times \mathbb{S}^2})}$}
    \end{Overpic}
    \caption{}
    \label{fig:nonorientable}
\end{figure}

Note that $\Z \times \R$ is the preimage of $\FM({D^2 \times \mathbb{S}^2}) \subset \mathbb{S}^1 \widetilde{\times}  B$ under the map ${{exp \widetilde{\times}  exp}}$. So the algebraic intersection number between $\FM(X_0)$ and $\FM({D^2 \times \mathbb{S}^2})$ is exactly $\# \M(X)$ (mod 2). As in the proof of Theorem \ref{thm:nullhomology-orientable} we can assume that they intersect transversally. Hence we have
\[
\# \FM(X_0 \cup_{\S^1\times \S^2} D^2 \times \S^2)  =\# \M(X).
\]
\end{proof}

\section{Applications}
\subsection{Exotic diffeomorphisms on nonsimply connected manifolds}
Many exotic diffeomorphisms on symply connected $4$-manifolds are detected by the family Seiberg-Witten invariant. These results can be generalized to nonsymply connected manifolds by the surgery formula.

\begin{theorem}\label{thm:add-s1*s3-preserves-exotic}
Suppose $X$ is a symply connected smooth oriented compact $4$-manifold that admits an oriantation-preserving diffeomorphism $f$. Let $E_X$ be the mapping torus of $f$. Suppose the family Seiberg-Witten moduli space associated to the $\text{spin}^c_{GL}$ strcture $s$ on $E_X$ is $0$-dimensional, and 
\[
FSW(E_X , s) \neq 0.
\]
Furthermore, assume that $f$ admits an fixed point. Then the diffeomorphism $f\# id_{\S^1 \times \S^3}$ of $X\# \S^1 \times \S^3$  is not smoothly isotopic  to the identity.
\end{theorem}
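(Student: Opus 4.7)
The plan is to realise the mapping torus $E_Y$ of $g := f\#id_{\S^1\times\S^3}$ as a $Y$-bundle on $Y := X\#(\S^1\times\S^3)$ that becomes $E_X$ after a fibrewise $1$-surgery, apply Theorem~\ref{ThmA} to compute $FSW^\Theta(E_Y)$ in terms of $FSW(E_X)$, and then observe that any trivial bundle has vanishing $FSW^\Theta$, giving a contradiction.

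Using the fixed point $p$ of $f$, the constant section $s_p:\S^1\to E_X$ is available, and the fibrewise connect sum of $E_X$ with the trivial bundle $\S^1\times(\S^1\times\S^3)$ along $s_p$ produces $E_Y$. Inside $E_Y$ the core loops $\S^1\times\{pt\}\subset\S^1\times\S^3$ of the attached summands assemble into an oriented $\S^1$-subbundle $E_{\S^1}\cong\S^1\times\S^1$ whose fibre generates $H_1(Y)=\Z$; the product structure supplies the framing data of subsection~\ref{subsection:familysurgery}. Because ordinary $1$-surgery on $\S^1\times\{pt\}\subset\S^1\times\S^3$ produces $\S^4$ and $X\#\S^4=X$, family $1$-surgery on $E_{\S^1}$ returns $E_X$. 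Let $s''$ denote the extension of $s$ to $Y$ from Theorem~\ref{thm:changeOfSpinc}. Using $\chi(Y)=\chi(X)-2$, $\sigma(Y)=\sigma(X)$, and $c_1^2(s'')=c_1^2(s)$ one finds $d(Y,s'')=d(X,s)+1=0$, so the dimension hypothesis of Theorem~\ref{ThmA} is met and the family $1$-surgery formula gives
\[
FSW^\Theta(E_Y,s'') \;=\; FSW(E_X,s) \;\neq\; 0.
\]

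Suppose towards a contradiction that $g$ is smoothly isotopic to $id_Y$. Then $E_Y$ is smoothly isomorphic as an oriented $s''$-compatible $Y$-bundle to the trivial bundle $\S^1\times Y$, and naturality of the family invariant forces $FSW^\Theta(E_Y,s'')=FSW^\Theta(\S^1\times Y,s'')$. Choose a regular unparametrised parameter $\sigma_0$ with $\M(Y,s'',\sigma_0)=\{q_1,\ldots,q_N\}\subset\mathcal{B}^*_Y$; the constant family $\sigma\equiv\sigma_0$ is regular in the sense of Theorem~\ref{thm:regular} (since the unparametrised linearisation already has trivial cokernel at each $q_j$), and the parametrised moduli space is $\bigsqcup_j\{q_j\}\times\S^1$ inside $\mathcal{F}\mathcal{B}^*_{\S^1\times Y}\simeq\S^1_{\mathrm{conn}}\times\CP^\infty\times\S^1_{\mathrm{base}}$ via the splitting~(\ref{equ:FB}). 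Each component sits in a single slice $\{q_j\}\times\S^1_{\mathrm{base}}$, so its pairing with $\Theta$ (pulled back from the $\S^1_{\mathrm{conn}}$-factor) vanishes, giving $FSW^\Theta(\S^1\times Y,s'')=0$, a contradiction.

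The main obstacle is not analytic but organisational: matching the two family-theoretic frameworks so that Theorem~\ref{ThmA} applies verbatim. One must verify that the fibrewise connect sum produces a bundle with structure group $\Aut(Y,s'',\mathcal{O})$; that the oriented subbundle $E_{\S^1}$ together with the product framing satisfies the data of subsection~\ref{subsection:familysurgery}; and that the surgered bundle is canonically the original $E_X$ with its $\text{spin}^c_{GL}$-structure $s$. A secondary but necessary point is the naturality of $FSW^\Theta$ under smooth oriented bundle isomorphisms preserving $s''$, which should be routine given Lemma~\ref{lem:detect-exotic}, but is essential for passing from the isotopy hypothesis to the trivial bundle.
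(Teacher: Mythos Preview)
Your argument is correct and follows the same strategy as the paper: apply Theorem~\ref{ThmA} to the mapping torus $E_Y$ to get $FSW^\Theta(E_Y,s'')=FSW(E_X,s)\neq 0$, then show the trivial bundle has $FSW^\Theta=0$ and derive a contradiction from naturality under bundle isomorphism. The only substantive difference is in the vanishing step: the paper applies the surgery formula \emph{again} to the trivial bundle $\S^1\times Y$ to reduce to $FSW(\S^1\times X,s)=0$ (which holds because the unparametrised moduli space on $X$ has formal dimension $-1$ and is therefore empty for generic parameter), whereas you compute $FSW^\Theta(\S^1\times Y,s'')=0$ directly via a constant parameter family, observing that the resulting circles lie over the base factor and so pair trivially with $\Theta$. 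Both routes are valid; yours is arguably more elementary, while the paper's has the virtue of using the surgery formula as a uniform black box on both sides.
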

\begin{proof}
Let $E_{X\# \S^1 \times \S^3}$ be the mapping torus of $f\# id_{\S^1 \times \S^3}$. Since $H^2( \S^1 \times \S^3)=0$, the $\text{spin}^c_{GL}$ strcture $s$ on $E_X$ can be extended to a $\text{spin}^c_{GL}$ strcture $s'$ on $E_{X\# \S^1 \times \S^3}$ by trivial extension. Choose the loop $\S^1 \times \{pt\}$ in the $\S^1 \times \S^3$ to be the surgery loop $\gamma$. Then all assumptions in Theorem \ref{ThmA} are satisfied and we have
\[
FSW^\Theta(E_{X\# \S^1 \times \S^3}, s') = FSW(E_X , s) \neq 0.
\]

If $f$ is replaced by the identity on $X$, we will have the trivial product $\S^1 \times (X\# \S^1 \times \S^3)$ and $\S^1 \times (X)$, and accordingly
\begin{equation}\label{equ:trivial-family-invariant}
FSW^\Theta(\S^1 \times (X\# \S^1 \times \S^3), \ss') = FSW(\S^1 \times (X), \ss) = 0
\end{equation}
for any $\text{spin}^c$ strcture $\ss$ on $X$ (the last equality comes from that the formal dimension of the Seiberg-Witten moduli space of $X$ is $-1$). 

Suppose that $f\# id_{\S^1 \times \S^3}$ is smoothly isotopic to the identity, then the isotopy $H: (X\# \S^1 \times \S^3 )\times I \to X\# \S^1 \times \S^3$ gives a diffeomorphism $F$ between $E_{X\# \S^1 \times \S^3}$ and $\S^1 \times (X\# \S^1 \times \S^3)$ that preserves the fiber by
\begin{align*}
F: \S^1 \times (X\# \S^1 \times \S^3) &\to E_{X\# \S^1 \times \S^3} \\
(t, x) &\mapsto H(x,t).
\end{align*}
But by (\ref{equ:trivial-family-invariant})
\[
FSW^\Theta(E_{X\# \S^1 \times \S^3}, s') \neq FSW^\Theta(\S^1 \times (X\# \S^1 \times \S^3), F^*s')
\]
which contradicts the hypothesis.
\end{proof}

\begin{corollary}
Let $X$ be one of the following manifolds:
\begin{enumerate}
\item[\textbullet] $\mathbb{C}\mathbb{P}^2\# (\#^2\overline{\mathbb{C}\mathbb{P}}^2) \# E(n)$ for $n\geq 2$.
\item[\textbullet]  $\#^n(\mathbb{S}^2\times \mathbb{S}^2) \# (\#^nK3)$ for $n \geq 2$.
\item[\textbullet] $\#^{2n}\mathbb{C}\mathbb{P}^2  \# (\#^m\overline{\mathbb{C}\mathbb{P}}^2)$ for $n \geq 2$ and $m \geq 10n+1$.
\end{enumerate}
Then $X\# \S^1 \times \S^3$ admits an exotic diffeomorphism.
\end{corollary}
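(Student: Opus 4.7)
The strategy is to apply Theorem~\ref{thm:add-s1*s3-preserves-exotic} in each case. For each listed $X$, the plan is to exhibit an orientation-preserving diffeomorphism $f: X \to X$ that is topologically isotopic to the identity, admits a fixed point, and whose mapping torus $E_X$ carries a $\text{spin}^c_{GL}$ structure $s$ with $FSW(E_X, s) \neq 0$. Once such an $f$ is in hand, Theorem~\ref{thm:add-s1*s3-preserves-exotic} directly produces the exotic diffeomorphism $f \# id_{\S^1 \times \S^3}$ of $X \# \S^1 \times \S^3$, since topological isotopy to the identity persists under connected sum with $id$, while the non-smooth-isotopy conclusion is exactly what the theorem supplies.

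The existence of $f$ with nontrivial family invariant in each case is imported from prior work. For $\mathbb{C}\mathbb{P}^2 \# (\#^2\overline{\mathbb{C}\mathbb{P}}^2) \# E(n)$ with $n \geq 2$, I would use Ruberman's diffeomorphism~\cite{Ruberman1998AnOT}, which is a composition of a reflection on the $\mathbb{C}\mathbb{P}^2 \# 2\overline{\mathbb{C}\mathbb{P}}^2$ factor with a diffeomorphism of $E(n)$, detected by the $1$-parameter family SW invariant. For $\#^n(\S^2 \times \S^2) \# (\#^n K3)$ with $n \geq 2$ and for $\#^{2n}\mathbb{C}\mathbb{P}^2 \# (\#^m \overline{\mathbb{C}\mathbb{P}}^2)$ with $n \geq 2$, $m \geq 10n+1$, I would use the diffeomorphism constructed by Baraglia--Konno~\cite{BK2020}, whose nontriviality is detected by their family SW invariant (which matches ours on these simply connected pieces).

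The remaining step is to arrange a fixed point for $f$ without destroying the family invariant. In each of the cited constructions, $f$ is built from a finite composition of local diffeomorphisms: reflections along embedded surfaces, Dehn twists along embedded spheres, and analogous operations supported in prescribed compact codimension-$0$ submanifolds. Since every $X$ in the list has large rank second homology and contains many disjoint $4$-balls, one may choose a small open ball $B^4 \subset X$ disjoint from the supports of all these local moves; then $f|_{B^4} = \id$, so $f$ has an entire ball of fixed points. Performing the connected sum with $\S^1 \times \S^3$ inside $B^4$ produces $f \# id_{\S^1 \times \S^3}$, and the hypotheses of Theorem~\ref{thm:add-s1*s3-preserves-exotic} are met.

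The main obstacle I anticipate is not the invariant calculation, which is packaged inside the cited theorems and the surgery formula, but the case-by-case bookkeeping to verify that the Ruberman and Baraglia--Konno diffeomorphisms can be realized with a fixed ball while preserving the nonvanishing of $FSW(E_X, s)$. Since the constructions are explicit and the supports can be shifted by isotopy (which preserves $FSW$), this is a mechanical verification rather than a conceptual hurdle.
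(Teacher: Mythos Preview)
Your proposal is essentially correct and follows the same overall strategy as the paper: import the Ruberman and Baraglia--Konno diffeomorphisms with nonvanishing $FSW$, arrange a fixed point, and invoke Theorem~\ref{thm:add-s1*s3-preserves-exotic}. Two differences are worth noting.

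First, for producing a fixed point you argue by finding a ball disjoint from the support of the constituent local moves. The paper instead uses the general fact that any diffeomorphism of a connected manifold is smoothly isotopic to one with a fixed point (compose with an ambient isotopy carrying $f(x)$ back to $x$), which avoids inspecting the explicit constructions. Your approach is fine but requires the case-by-case check you flag; the paper's is uniform.

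Second, and more importantly, you assert that ``topological isotopy to the identity persists under connected sum with $\id$'' without justification. This is not automatic: the continuous isotopy $H$ from $\id_X$ to $f$ need not fix the connected-sum point, so $H$ does not extend to $X\#(\S^1\times\S^3)$ as written. The paper handles this explicitly: given the trajectory $t\mapsto H(x,t)$ of the fixed point, it produces a smooth ambient isotopy $G$ with $G(H(x,t),t)=x$, and replaces $H$ by $H'(\,\cdot\,,t)=G(H(\,\cdot\,,t),t)$. The new isotopy $H'$ fixes $x$ throughout, so $H'\#\id$ is a continuous isotopy on the connected sum; the endpoint $f''=g\circ f'$ is smoothly isotopic to the original $f$, so the family invariant is unchanged. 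This step uses only that paths in $X$ can be contracted by ambient isotopy, which is standard, but you should not skip it.
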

\begin{proof}
By \cite{Ruberman1998AnOT} and \cite{BK2020}, there exists a diffeomorphism $f: X \to X$ which is continuously isotopic to the identity, but the family Seiberg-Witten invariant of it is nonzero. It's possible to change $f$ by a smooth isotopy and get a diffeomorphism $f'$ that admits a fixed point. 

Let $H: X \times I \to X$ be a continuous isotopy from $id_X$ to $f'$.  Denote the fixed point of $f'$ by $x$. Let 
\begin{align*}
p: I &\to X \\
t &\mapsto H(x,t)
\end{align*}
be the trajectory of $x$ under the isotopy $H$. We want to squeeze this path to a point. By the property of the smooth manifold, there is a smooth isotopy $G:X\times I \to X$ starts from the identity, such that $G(p(t),t) = x$. Denote the end of $G$ by $g$. Then $g$ still fixes $x$. Let 
\begin{align*}
H': X\times  I &\to X \\
(x,t) &\mapsto G(H(x,t),t).
\end{align*}
Then $H'$ is a continuous isotopy starts from the identity, and preserves $x$. Denote the end of $H'$ by $f''$. $f''$ is just $g\circ f'$, so $f''$ is smoothly isotopic to $f'$. Form the connected sum $X\# \S^1 \times \S^3$ at $x\in X$ and any point in $\S^1 \times \S^3$. Then $H'\#id_{\S^1 \times \S^3} $ gives a continuous isotopy from $id_{X\# \S^1 \times \S^3}$ to $f'' \#id_{\S^1 \times \S^3}$.

$f''$ is smoothly isotopic to $f$. So the family Seiberg-Witten invariant of $f''$ is also nonzero. Therefore $f''$ satifies all assumptions in Theorem \ref{thm:add-s1*s3-preserves-exotic}. So $f''\# id_{\S^1 \times \S^3}$ is not smoothly isotopic to the identity.
\end{proof}

\subsection{Positive scalar curvature metrics on nonsimply connected $4$-manifolds}
Ruberman~\cite{ruberman2002positive} gives examples of simply connected manifolds for which the space of positive scalar curvature (psc) metrics is disconnected. This is demonstrated using family Seiberg-Witten invariant. We can generalize these results by the surgery formula.

First recall some definitions in \cite{ruberman2002positive}:
\begin{definition}
Let $X$ be a symply-connected smooth oriented compact $4$-manifold. For two generic parameters $h_0$ and $h_1$ on $X$, and $\ss$ a $\text{spin}^c$ structure such that the formal dimension of the Seiberg-Witten moduli space of $X$ is $-1$, define
\[
I(X,\ss;h_0,h_1) := \#\FM(X\times [0,1],\ss; \{h_t\})
\]
for any generic path $\{h_t\}$ connecting $h_0$ and $h_1$.
\end{definition}

\begin{definition}
For $f$ a diffeomorphism of $X$ and $h_0$ a parameter on $X$, 
\[
SW(f,\ss;h_0):= I(X,\ss;h_0,f^*h_0)
\]
Let $\mathcal O(f,\ss)$ be the orbit of $\ss$ by the action of the group $\langle f \rangle \subset\text{Diff}(X)$,
\[
SW_{tot}(f,\ss):= \sum_{\ss' \in \mathcal O(f,\ss)}SW(f,\ss'; h_0).
\]
\end{definition}

Also recall that 
\begin{enumerate}
\item[\textbullet] If $f$ preserves the orientation of $H^2_+(X)$, then
\[
SW_{tot}(f,\ss) = \sum_n I(X,\ss;f_{n}^*h_0,f_{n+1}^*h_0),
\]
where $f_{n}$ is the $n$-fold composition of $f$. If $f$ doesn't preserve the orientation of $H^2_+(X)$, then above equation is true in $\Z/2$.
\item[\textbullet] $SW_{tot}(f,\ss)$ is a finite sum since there are finite number of $\text{spin}^c$ structures on $X$ for which the parameterized moduli space is non-empty.
\item[\textbullet] $SW_{tot}(f,\ss)$ doesn't depend on the choice of the ground parameter $h_0$.
\end{enumerate}

This invariant is used to show that the space of psc metrics has infinite many components for some simply connected manifold, by showing that the total invariant is nonzero. We can generalize such result to nonsimply connected manifold. First we convert the definition of Ruberman to the family invariant setting in this paper. We can easily see the following facts from the definition of the family invariant. 
\begin{prop}
Let $h_0$ be any generic parameter of $X$. Let $E_X$ and the parameter family $h$ be the following:
\begin{enumerate}
\item[\textbullet] If $|\mathcal O(f,\ss)| = n$ is finite, let $E_X$ be the mapping torus of $f_n$, and $h$ be any generic path connecting $h_0, f^*h_0,\cdots, f_n^*h_0$ (note that $h$ is a parameter family on $E_X$ since $f_n$ sends the start of $h$ to the end of $h$).
\item[\textbullet] If $|\mathcal O(f,\ss)|$ is infinite, let $E_X$ be the the family of $X$ indexed over $\R$, and $h$ be any infinite generic path passing through $\cdots, f_{-1}^*h_0, h_0, f^*h_0,\cdots$.
\end{enumerate}
Then $SW_{tot}(f,\ss) = FSW(E_X, \ss, h)$.
\end{prop}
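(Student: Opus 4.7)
The plan is to unfold the mapping torus $E_X$ to a trivial family $X\times I$ over an interval (or $\R$), decompose the base into unit subintervals, and identify each piece's contribution with Ruberman's relative invariant $I(X,\ss;\cdot,\cdot)$. In the finite orbit case the hypothesis $|\mathcal O(f,\ss)|=n$ gives $f_n^*\ss\cong\ss$, so $\ss$ extends to a $\text{spin}^c$ structure on the mapping torus $E_X$ of $f_n$ over $B=S^1=[0,n]/(0\sim n)$, and the loop-valued parameter family $h$ closes up precisely because $f_n^*h_0$ is identified with $h_0$. Genericity of $h$ and $h_0$ ensures that no solutions sit at the identification point. In the infinite case $E_X=X\times\R$ is already a trivial family and $h$ is a bi-infinite generic path through the parameters $\{f_k^*h_0\}_{k\in\Z}$.

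First I would pull back $E_X$ along the covering $[0,n]\to S^1$ (respectively the identity on $\R$) to the trivial family $X\times[0,n]$ (respectively $X\times\R$); because $h$ is generic at the gluing/limit points, the signed count $FSW(E_X,\ss,h)$ equals that of the pulled-back parameterized moduli space. Next I would decompose the unfolded base into unit intervals $[k,k+1]$: on the $k$-th interval the parameter varies from $f_k^*h_0$ to $f_{k+1}^*h_0$ with fixed underlying $\text{spin}^c$ structure $\ss$, so by definition the signed point count of this piece is $I(X,\ss;f_k^*h_0,f_{k+1}^*h_0)$. Summing over $k$ and invoking the formula $SW_{tot}(f,\ss)=\sum_n I(X,\ss;f_n^*h_0,f_{n+1}^*h_0)$ recalled just above the proposition yields $FSW(E_X,\ss,h)=SW_{tot}(f,\ss)$. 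In the infinite case $SW_{tot}(f,\ss)$ is tacitly a finite sum, since only finitely many $\ss'\in\mathcal O(f,\ss)$ admit nonempty parameterized moduli spaces; the same finiteness controls which intervals contribute to $FSW(E_X,\ss,h)$.

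The main obstacle is matching orientation conventions. Ruberman's $I$ is oriented via a fixed homology orientation of $X$, while $FSW$ is oriented via the fiberwise homology orientation transported by the structure group $\Aut(X,s,\mathcal O)$. When $f$ preserves the orientation of $H^2_+(X)$, the homology orientation transports consistently around $B=S^1$ and the signs match Ruberman's piece-by-piece; when $f$ reverses it, both $SW_{tot}(f,\ss)$ and $FSW(E_X,\ss,h)$ are interpreted modulo $2$, and the same piecewise identification holds. The only genuine verification required is that the orientation on a transversal slice at the gluing point agrees with Ruberman's convention; beyond that, the decomposition is tautological from the definitions of $I$ and the parameterized moduli space.
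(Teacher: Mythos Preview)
Your proposal is correct and is essentially the approach the paper intends: the paper does not write out a proof but simply states that the proposition ``can be easily seen from the definition of the family invariant,'' and your unfolding-and-decomposition argument is exactly the way one unpacks that definition using the recalled identity $SW_{tot}(f,\ss)=\sum_k I(X,\ss;f_k^*h_0,f_{k+1}^*h_0)$.
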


For a nonsimply connected manifold $X$ with $H^1(X;\Z) = \Z$ and a $\text{spin}^c$-structure $\ss$ such that the formal dimension of the parameterized moduli space is $\dim \FM(E_X,\ss)=1$, we can similarly define 
\begin{equation}\label{def:total}
SW_{tot}^\Theta(f,\ss) := FSW^\Theta(E_X, \ss, h).
\end{equation}
When $|\mathcal O(f,\ss)|$ is infinite, $\Theta$ is a noncompact element in the first cohomology group of the parameterized configuration space $\CP^\infty \times \S^1 \times \R$. In this case $FSW^\Theta(E_X, \ss, h)$ is still well defined because the parameterized moduli space is compact by an analogue of \cite{ruberman2002positive} Proposition 2.4. Also, $FSW^\Theta(E_X, \ss, h)$ doesn't depend on the choice of the ground parameter, so the definition (\ref{def:total}) makes sense:

\begin{theorem}
Suppose that $b^+(X)>2$ and that $f$ is a diffeomorphism preseving both the orientation and the homology orientation. Let $h_0$ and $k_0$ be generic paramters and $h, k$ be corresponding path. Then $FSW^\Theta(E_X, \ss, h) = FSW^\Theta(E_X, \ss, k)$.
\end{theorem}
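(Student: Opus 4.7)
The plan is to mimic the unfamily argument of Lemma~\ref{lem:detect-exotic}, upgraded to the family setting, by producing a compact oriented $1$-manifold whose signed boundary count equals $FSW^\Theta(E_X,\ss,h)-FSW^\Theta(E_X,\ss,k)$.

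First, I would choose a generic path $\{K_s\}_{s\in[0,1]}$ of parameter families on $E_X$ with $K_0=h$ and $K_1=k$. The space of parameter families sections of $\Pi(E_X)\to B$ is affine, so the straight-line homotopy can be perturbed slightly. The key point is that, fiberwise, the wall $\mathcal{F}\mathcal{W}^{k-1}_{g,s}$ has codimension $b^+$, so the total wall in $B\times[0,1]$ is cut out transversally provided $b^+\geq \dim B+2$; since $\dim B=1$ here and $b^+>2$, a generic $\{K_s\}$ avoids the wall entirely, and all $K_s$-monopoles are irreducible. A Sard--Smale argument of exactly the type in Theorem~\ref{thm:regular}, applied to the universal parametrized moduli space over $B\times[0,1]\times\mathcal{Z}$, shows that the total parametrized moduli space
\[
\mathcal{W}:=\bigsqcup_{s\in[0,1]}\FM(E_X,\ss,K_s)
\]
is a smooth oriented $2$-manifold with boundary $\FM(E_X,\ss,h)\sqcup(-\FM(E_X,\ss,k))$, sitting inside $[0,1]\times\mathcal{F}\mathcal{B}^*_X$.

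Next, I would pull back the class $\Theta\in H^1(\mathcal{F}\mathcal{B}^*_X)$ along the projection $[0,1]\times\mathcal{F}\mathcal{B}^*_X\to\mathcal{F}\mathcal{B}^*_X$, still denote it by $\Theta$, and pair with $[\mathcal{W}]$. Geometrically, cutting $\mathcal{W}$ by a generic representative of $\Theta$ (a codimension--$1$ submanifold of $\mathcal{F}\mathcal{B}^*_X$ transverse to $\mathcal{W}$) produces a compact oriented $1$-manifold $\Sigma\subset\mathcal{W}$ whose boundary is precisely the intersection of $\Theta$ with $\partial\mathcal{W}$. Counting boundary points with sign gives
\[
0=\#\partial\Sigma=\langle\Theta,[\FM(E_X,\ss,h)]\rangle-\langle\Theta,[\FM(E_X,\ss,k)]\rangle,
\]
which is the desired equality.

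The main technical obstacle is compactness of $\mathcal{W}$: a priori the $1$-parameter family of $1$-dimensional parametrized moduli spaces might develop non-compact ends. Reducible bubbles are excluded because the wall is avoided throughout, thanks to $b^+>2$. Sequential compactness then follows from the usual Seiberg-Witten a-priori $C^0$ bound on the spinor in terms of the scalar curvature and the perturbation, applied uniformly in $(b,s)\in B\times[0,1]$; since $B\times[0,1]$ is compact and $\{K_s\}$ is a continuous family, this yields a uniform bound and hence compactness of $\mathcal{W}$. A secondary, minor subtlety is that $\Theta$ needs a geometric representative transverse to both $\mathcal{W}$ and $\partial\mathcal{W}$; this is standard because $\mathcal{F}\mathcal{B}^*_X\simeq\mathbb{S}^1\times E_{\CP^\infty}$ and $\Theta$ is Poincaré dual to the fiber of the $\mathbb{S}^1$-factor, so a generic constant section of the $\mathbb{S}^1$-factor works after a small perturbation. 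Independence of $FSW^\Theta$ on the choice of generic $K_s$ (and hence well-definedness of the pairing) follows from the same cobordism machine applied one dimension higher.
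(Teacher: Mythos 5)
Your strategy coincides with the paper's: connect $h$ and $k$ by a generic two-parameter family of perturbations over $B\times[0,1]$ (the paper assembles it in stages, first generic paths joining $h_0$ to $k_0$ and $f^*h_0$ to $f^*k_0$ using $b^+>1$, then a generic two-parameter filling using $b^+>2$, while you take a generic path in the space of sections of $\Pi(E_X)\to B$ and check wall-avoidance via $b^+\ge\dim B+2$), obtain a two-dimensional cobordism of parametrized moduli spaces, and cut it with $\Theta$ to equate the two counts. In the case $|\mathcal{O}(f,\ss)|<\infty$, where $B$ is a circle, your argument is complete and essentially identical to the paper's proof.

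The gap is in the compactness step for the case $|\mathcal{O}(f,\ss)|=\infty$, which the theorem also covers: there $E_X$ is indexed over $B=\R$, the paths $h,k$ run through $\cdots,f_{-1}^*h_0,h_0,f^*h_0,\cdots$, and $\Theta$ is a noncompact class on $\CP^\infty\times\S^1\times\R$. Your compactness argument rests on ``since $B\times[0,1]$ is compact,'' which fails here; a uniform $C^0$ bound on spinors does not prevent the cobordism $\mathcal{W}$ from escaping to infinity in the $\R$-direction, and without compactness neither the pairing with the noncompact class $\Theta$ nor the conclusion $\#\partial\Sigma=0$ is justified. What is needed, and what the paper relies on (the analogue of Proposition 2.4 of \cite{ruberman2002positive} already invoked to make $FSW^\Theta$ well defined in this setting, together with choosing the interpolating family far out along $\R$ to consist of pulled-back copies of the interpolation between $h_0$ and $k_0$), is that all but finitely many of the segments between $f_m^*h_0$ and $f_{m+1}^*h_0$ (and likewise for $k$ and for the interpolating family) carry no monopoles, so that $\mathcal{W}$ lies over a compact portion of $\R\times[0,1]$. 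With that supplement, and reading your ``straight-line homotopy'' in the space of sections of $\Pi(E_X)$ (the metric component is convex rather than affine), your proof goes through.
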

\begin{proof}
Denote the path from $h_0$ to $f^*h_0$ by $K_{s,0}$, and the path from $k_0$ to $f^*k_0$ by $K_{s,1}$. Since $b^+(X)>1$, by Theorem \ref{thm:regular}, there exists a generic path $K_{0,t}$ from $h_0$ to $k_0$, and a generic path $K_{1,t}$ from $f^*h_0$ to $f^*k_0$. Since $b^+(X)>2$, by Theorem \ref{thm:regular} again, there exists a generic $2$-parameter family $K_{s,t}$ bounded by $K_{s,0}$, $K_{s,1}$, $K_{0,t}$, and $K_{1,t}$. Do this inductively we obtain a generic $2$-parameter family from $h$ to $k$. Hence there exists a cobordism from $\FM(E_X,\ss,h)$ to $\FM(E_X,\ss,k)$. This cobordism is a $2$-dimensional manifold with $1$-dimensional boundary, so after cutting it by the class $\Theta$, we obtain a $1$-dimensional cobordism which gives $FSW^\Theta(E_X, \ss, h) = FSW^\Theta(E_X, \ss, k)$ (see Figure \ref{fig:infinite-cobordism} for infinite $|\mathcal O(f,\ss)|$ case).
\begin{figure}[ht!]
    \begin{Overpic}{\includegraphics[scale=0.6]{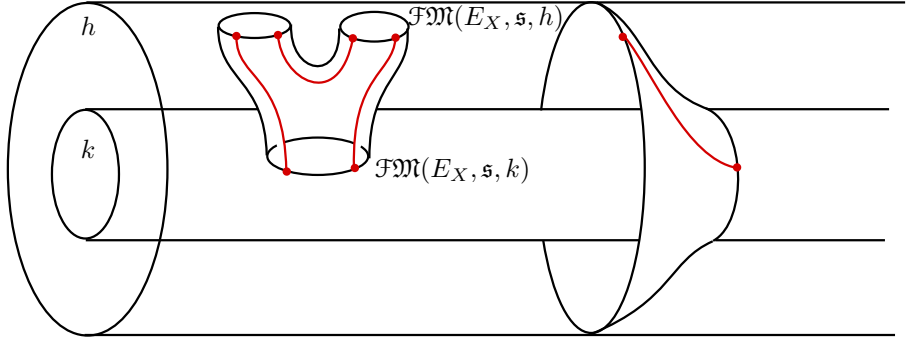}}
     \put(10,33){$h$}
    \put(10,20){$k$}
         \put(44.5,34){$\FM(E_X,\ss,h)$}
        \put(41,18){$\FM(E_X,\ss,k)$}
        
    \end{Overpic}
    \caption{The cobordism in $\CP^\infty \times \S^1 \times \R \times I$}
    \label{fig:infinite-cobordism}
\end{figure}
\end{proof}

We can show that the cut-down total invariant $SW_{tot}^\Theta(f,\ss) $ detects path components of the space of all psc metrics $\text{PSC}$:

\begin{theorem}\label{thm:component-trivial}
Suppose $b^+_2(X) > 2$ and $b^1(X)=1$. If $g_0$ is a psc metric on $Y$ such that $h_0=(g_0,0)$ is a generic parameter, and there exists a path $g_t$ in $\text{PSC}(X)$ connecting $g_0$ with $f^* g_0$, then $SW_{tot}^\Theta(f,\ss) = 0$.
\end{theorem}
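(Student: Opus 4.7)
The plan is to compute $SW_{tot}^\Theta(f,\ss)$ using a parameter family adapted to the given psc path and show that the associated parameterized moduli space is empty. By the independence of the choice of generic parameter family, this will imply $FSW^\Theta(E_X,\ss,h)=0$, and therefore $SW_{tot}^\Theta(f,\ss)=0$ by definition~\eqref{def:total}.

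First I would assemble a ``psc parameter family'' $\sigma^{\mathrm{psc}}$ on the relevant base. If $|\mathcal{O}(f,\ss)|=n<\infty$, take $E_X$ to be the mapping torus of $f_n$ over $B=\S^1$; if the orbit is infinite, take $E_X$ to be the $\Z$-equivariant family over $B=\R$ obtained by pulling back $g_t$ repeatedly by powers of $f$. In either case, concatenating the psc paths $f_k^*g_t$ for integers $k$ produces a continuous section $(g_t,0)$ of $\Pi(E_X)\to B$ lying entirely in the psc locus with zero two-form perturbation. This section is the candidate parameter family; the identification $g_1=f^*g_0$ guarantees that on the mapping torus it glues into a genuine loop.

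Next I would replace $\sigma^{\mathrm{psc}}$ by a generic perturbation $\sigma^{(k)}=(g_t,\eta_t^{(k)})$ with $\eta_t^{(k)}\to 0$ in $L^{k-1,2}$. Theorem~\ref{thm:regular} produces a residual set of generic families provided $b^+_2(X)\geq\dim B+1=2$; this is exactly what the hypothesis $b^+_2(X)>2$ delivers (with one extra dimension to spare so that the cut-down invariant is independent of the choice of generic family, via the standard $2$-parameter cobordism argument). In particular the set of generic perturbation families is dense, so we may pick $\eta_t^{(k)}$ generic with $\sup_{t\in B}\|\eta_t^{(k)}\|\to 0$.

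Now comes the main step: for $k$ large, $\FM(E_X,\ss,\sigma^{(k)})=\emptyset$. There are two things to rule out. Reducible solutions would force $\eta_t^{(k)}$ to meet the ``family wall'' of~\eqref{equ:familywall}, which has fibre-codimension $b^+_2\geq 3$; since $\dim B=1$ and $\eta_t^{(k)}$ is close to the zero section (which lies outside the wall by our hypothesis that $h_0$, and hence its orbit under $f$ via $f$-equivariance of PSC, is generic), we may further assume $\eta_t^{(k)}$ avoids the wall entirely. For irreducible solutions the Weitzenb\"ock formula applied to an $\eta_t^{(k)}$-monopole $(A,\Phi)$ at parameter $(g_t,\eta_t^{(k)})$ yields
\[
0=\int_X\Bigl(|\nabla^A\Phi|^2+\tfrac{s(g_t)}{4}|\Phi|^2+\tfrac14|\Phi|^4+\langle\mathbf{c}(i\eta_t^{(k)})\Phi,\Phi\rangle\Bigr)\,dv_{g_t},
\]
and since $s(g_t)\geq s_0>0$ uniformly on the compact base, the negative contribution from the perturbation term is dominated by the scalar curvature term as soon as $\|\eta_t^{(k)}\|_{C^0}<s_0/4$, forcing $\Phi\equiv 0$ and hence reducibility---contradicting our avoidance of the wall. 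So for $k$ large the moduli space is empty and $FSW^\Theta(E_X,\ss,\sigma^{(k)})=0$.

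The main obstacle is the uniform Weitzenb\"ock control: we must ensure that the same sequence $\eta_t^{(k)}$ is both generic (for Theorem~\ref{thm:regular}) and $C^0$-small (for the Weitzenb\"ock argument), and that the implied $L^\infty$-bound extends through the Sobolev embedding from the $L^{k-1,2}$-topology in which genericity is proved. This is routine: density of the generic set in $L^{k-1,2}$ together with the Sobolev embedding $L^{k-1,2}\hookrightarrow C^0$ (for $k$ large enough, which may always be arranged by bootstrapping regularity of solutions) provides generic families arbitrarily small in $C^0$. Concluding, by independence of the generic parameter,
\[
SW_{tot}^\Theta(f,\ss)=FSW^\Theta(E_X,\ss,\sigma^{(k)})=0
\]
for all large $k$.
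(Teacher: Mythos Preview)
Your argument is correct and essentially parallel to the paper's, with one cosmetic but instructive difference: you keep the psc metric path $g_t$ fixed and perturb the two-form $\eta$ to be generic and $C^0$-small, then use the perturbed Weitzenb\"ock identity to rule out irreducibles. The paper instead keeps $\eta\equiv 0$ throughout and perturbs the \emph{metric} path inside the open set $\mathrm{PSC}(X)\subset\mathrm{Met}(X)$, using the variation of $F_A^{+_g}$ in place of $\eta$ for the transversality argument (cf.\ Theorem~\ref{thm:regular}); this avoids the perturbed Weitzenb\"ock estimate entirely, since with $\eta=0$ positive scalar curvature forces $\Phi\equiv 0$ outright. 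Both routes reach the same conclusion---an empty moduli space for a generic representative---and yours has the minor advantage of invoking Theorem~\ref{thm:regular} verbatim rather than its metric-perturbation analogue, at the cost of the extra Sobolev/uniform-$s_0$ bookkeeping you flag. One small correction: your parenthetical that the zero section lies outside the wall is only guaranteed at $t=0$ (where $h_0$ is assumed generic), not for intermediate $t$; but since your generic family $\eta_t^{(k)}$ avoids the family wall of~\eqref{equ:familywall} by construction, this does not affect the argument.
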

\begin{proof}
In the case where we don't perturb the self dual two form, we can perturb the metric instead. The ordinary generic parameter argument (see Theorem \ref{thm:regular}) can be modified for a path of metrics, such that the term $F_A^{+_g}$ plays the role as the perturbing $2$-form in that theorem. Hence $b^+_2(X) > 2$ implies that the regular paths of metrics are generic. Being positive is an open condition, so $\text{PSC}(X)$ is open in the space of all metrics $\text{Met}(X)$. Therefore, there exists a regular path $g_t'$ in a neighborhood of $g_t$ with the same end points. Regularity means that $\FM(X\times [0,1],\ss; \{g_t'\})$ contains no reducible solutions.

By the Weitzenb{\"o}ck formula, a non-negative scalar curvature on $3$- or $4$-manifolds leads solely to reducible solutions of the Seiberg-Witten equations without the perturbing $2$-forms (see \cite{kronheimer_mrowka_2007} (4.22)). Hence $\FM(X\times [0,1],\ss; \{g_t'\})$ contains no irreducible solutions. Therefore $SW_{tot}^\Theta(f,\ss)$ is the integral on an empty space. Thus $SW_{tot}^\Theta(f,\ss) = 0$.
\end{proof}

On the other hand, the surgery formula we proved gives a relation between the total invariant and the cut-down total invariant:

\begin{theorem}\label{thm:formula-for-total-inv}
Suppose $X$ is a nonsimply connected manifold with $H^1(X;\Z) = \Z$, a diffeomorphism $f$, and a $\text{spin}^c$-structure $\ss$ such that the formal dimension of the parameterized moduli space is $\dim \FM(M(f),\ss)=1$. 

Let $\tau$ be a generator of $H^1(X;\Z)$. Suppose $\gamma$ is a loop of $X$ and $\tau$ evaluates $1$ at $\gamma$. Denote the resulting manifold by $X'$ and the $\text{spin}^c$ structure by $\ss'$. Let $f'$ be a diffeomorphism of $X'$ such that a family surgery on $(M(f),\ss)$ produces $(M(f'),\ss')$. Then 
\begin{equation}
SW_{tot}^\Theta(f,\ss) = SW_{tot}(f',\ss').
\end{equation}
\end{theorem}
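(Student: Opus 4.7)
The plan is to apply the family $1$-surgery formula (Theorem \ref{ThmA}) directly to the families associated to $f$ and $f'$. By the definition \eqref{def:total} and the discussion preceding it, both sides of the desired equality are reinterpretations as family Seiberg-Witten invariants: $SW_{tot}^\Theta(f,\ss) = FSW^\Theta(E_X,\ss,h)$ and $SW_{tot}(f',\ss') = FSW(E_{X'},\ss',h')$, where $E_X$ and $E_{X'}$ are the appropriate mapping torus families over the base $B$ (a circle when $|\mathcal O(f,\ss)|$ is finite, an $\R$-family when it is infinite). So the task is to verify that Theorem \ref{ThmA} applies.

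First I would check the structural hypotheses. The hypothesis that family surgery on $(M(f),\ss)$ produces $(M(f'),\ss')$ means exactly that we are given a compatible $\S^1$-subbundle $E_{\S^1}\subset E_X$ whose fibre is a loop dual to the generator $\tau\in H^1(X;\Z)$, together with a family of framings (as in Subsection \ref{subsection:familysurgery}) producing $E_{X'}$; and by Theorem \ref{thm:changeOfSpinc} the $\text{Spin}^c$-structure $\ss'$ is the unique extension of $\ss|_{X_0}$ fibrewise. The cut-down class $\Theta\in H^1(\mathcal{F}\mathcal B^*_X)$ is well-defined by the discussion in Subsection \ref{subsection:Parametrized} because $b_1(X)=1$ and the structure group respects the $\S^1$-factor of the reducible locus. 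Assuming the ambient setting in which $b^+(X)>\dim B+1$ (i.e.\ $b^+(X)>2$, matching the regime of Theorem \ref{thm:component-trivial}), Theorem \ref{ThmA} gives $FSW^\Theta(E_X,\ss)=FSW(E_{X'},\ss')$, and this is exactly the claimed equation by the definitions above.

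The main obstacle is the infinite orbit case $|\mathcal O(f,\ss)|=\infty$, where $B=\R$ is noncompact and Theorem \ref{ThmA} as stated requires a compact base. To handle this, I would invoke the compactness result (the analogue of \cite{ruberman2002positive} Proposition 2.4 already cited in the construction of $SW_{tot}^\Theta$) to conclude that the parameterized moduli space $\FM(E_X,\ss,h)$ is compact, hence contained in the preimage of some compact $[-N,N]\subset\R$. Outside this interval the moduli space on $E_X$ is empty, and by the same argument the one on $E_{X'}$ is empty (since surgery preserves irreducibility of the gluing picture on $X_0$). I would then replace $h$ by a perturbation that is $f$-equivariantly constant outside $[-N-1,N+1]$ so that the relevant slice of $E_X$ closes up to a compact $\S^1$-family (or alternatively, apply Theorem \ref{ThmA} over $B=[-N-1,N+1]$ with boundary, noting that the gluing theory of Subsection \ref{section:global-gluing} is local in $B$ and extends without change). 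On this reduced compact family Theorem \ref{ThmA} applies, and passing back to the original $\R$-family yields $FSW^\Theta(E_X,\ss,h) = FSW(E_{X'},\ss',h')$, completing the proof.
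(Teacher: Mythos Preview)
Your approach is essentially the paper's: unwind the definitions to rewrite both sides as family Seiberg--Witten invariants of the mapping torus families, then invoke Theorem~\ref{ThmA}. The paper's proof is in fact just the three-line chain
\[
SW_{tot}^\Theta(f,\ss) = FSW^\Theta(M(f),\ss) = FSW(M(f'),\ss') = SW_{tot}(f',\ss'),
\]
with no further verification.

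Where you differ is that you actually check hypotheses and confront the infinite-orbit case $B=\R$, which the paper simply elides. Your treatment (compactness of the parameterized moduli space forces everything into a bounded interval, then apply the surgery formula locally or after closing up) is reasonable and more honest than the paper's bare citation of Theorem~\ref{ThmA}, whose statement assumes $B$ compact. So your argument is the same route, just walked more carefully.
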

\begin{proof}
By Theorem \ref{ThmA}, we have
\begin{align*}
SW_{tot}^\Theta(f,\ss) &= FSW^\Theta(M(f), \ss) \\
&= FSW^\Theta(M(f'), \ss') \\
&=SW_{tot}(f',\ss').
\end{align*}
\end{proof}

\begin{corollary}
Let $X =\#_{2n}\mathbb{C}\mathbb{P}^2\# (\#_{k}\overline{\mathbb{C}\mathbb{P}}^2) $ for any $n \le 2$ and $k> 10n$. Then the space of psc metrics on 
\[
X\# (\S^1 \times \S^3)
\] 
is nonempty and has infinitely many path components.
\end{corollary}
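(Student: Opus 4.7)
My plan is to reduce to the simply connected case, where the analogous result is already proved by Baraglia--Konno and Ruberman, via the surgery formula.

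First I would observe that $X \# (\S^1 \times \S^3)$ carries at least one psc metric. Each summand $\CP^2$, $\overline{\CP^2}$ and $\S^1 \times \S^3$ admits a psc metric, and Gromov--Lawson surgery propagates psc through connected sums; after a small perturbation I may assume the chosen $g_0$ is such that $h_0=(g_0,0)$ is a generic parameter.

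Next I would import, from \cite{BK2020}, an orientation- and homology-orientation-preserving self-diffeomorphism $\phi$ of the simply connected manifold $X = \#_{2n}\CP^2 \# (\#_k \overline{\CP^2})$ together with a $\text{spin}^c$-structure $\mathfrak{s}$ on $X$ such that the mapping torus $M(\phi)$ has zero-dimensional parameterized moduli space and $SW_{tot}(\phi,\mathfrak{s})\neq 0$. (Our hypotheses $n\ge 2$, $k\ge 10n+1$ are exactly those of their theorem.) After a smooth isotopy I may assume $\phi$ fixes an interior point, so that $F := \phi \# \mathrm{id}_{\S^1\times \S^3}$ is a well-defined self-diffeomorphism of $Y := X\#(\S^1\times\S^3)$; let $\tilde{\mathfrak{s}}$ denote the canonical extension of $\mathfrak{s}$ provided by Theorem~\ref{thm:changeOfSpinc}. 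The bundle $M(F)$ is obtained from $M(\phi)$ by family $1$-surgery along the canonical $\S^1\times\{\mathrm{pt}\}$-subbundle, which is homologically nontrivial in each fiber and which evaluates $1$ against the generator of $H^1(Y;\Z)=\Z$. A dimension count confirms $\dim \FM(M(F),\tilde{\mathfrak{s}}) = 1$, so Theorem~\ref{thm:formula-for-total-inv} yields
\[
SW_{tot}^\Theta(F,\tilde{\mathfrak{s}}) \;=\; SW_{tot}(\phi,\mathfrak{s})\;\neq\;0.
\]
Since $b^+(Y)=2n\ge 4>2$ and $b^1(Y)=1$, the hypotheses of Theorem~\ref{thm:component-trivial} are met for $Y$.

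To separate infinitely many path-components, I would follow Ruberman's iteration strategy. The cobordism that concatenates $m$ copies of $M(F)$ end to end yields the additivity formula
\[
SW_{tot}^\Theta(F^m,\tilde{\mathfrak{s}}) \;=\; m\cdot SW_{tot}^\Theta(F,\tilde{\mathfrak{s}}),
\]
so the left-hand side is nonzero for every $m\neq 0$. If $g_0$ and $(F^m)^* g_0$ lay in the same path-component of $\mathrm{PSC}(Y)$ for some $m\neq 0$, Theorem~\ref{thm:component-trivial} would force this to vanish, a contradiction. Pulling back any psc path by $(F^m)^{-1}$ (which sends psc metrics to psc metrics) shows that $(F^{m_1})^* g_0$ and $(F^{m_2})^* g_0$ are path-connected iff $g_0$ and $(F^{m_2-m_1})^* g_0$ are; hence the infinite family $\{(F^m)^* g_0\}_{m\in\Z}$ represents pairwise distinct path-components, completing the proof.

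The main obstacle will be the additivity step in the cut-down setting: the usual cobordism argument must be checked to respect the class $\Theta$, i.e.\ that the parameterized moduli space of $M(F^m)$ is cobordant to the disjoint union of $m$ copies of $\FM(M(F),\tilde{\mathfrak{s}})$ inside the ambient configuration bundle, and that $\Theta$ restricts additively to these copies. One subsidiary concern is arranging $\phi^*\mathfrak{s}\cong\mathfrak{s}$ on the nose (so that the orbit $|\mathcal{O}(\phi,\mathfrak{s})|$ is finite and well-behaved under iteration), and another is verifying that the perturbation family of Proposition~\ref{prop:perturbationCondition} can be chosen compatibly along the stacked cobordism. Once these points are handled, the argument above goes through.
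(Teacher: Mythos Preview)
Your proposal is correct and follows essentially the same route as the paper's proof: nonemptiness via Gromov--Lawson surgery, import of a diffeomorphism on $X$ with nonzero total invariant, transplant it to $X\#(\S^1\times\S^3)$ via connected sum at a fixed point, apply the surgery formula (Theorem~\ref{thm:formula-for-total-inv}) to get $SW_{tot}^\Theta(F,\tilde{\mathfrak s})\neq 0$, and then iterate using Theorem~\ref{thm:component-trivial}. The only cosmetic differences are that the paper cites Ruberman~\cite{ruberman2002positive} (Theorem~4.1) rather than \cite{BK2020} for the input diffeomorphism with $SW_{tot}\neq 0$, and for the iteration step the paper simply defers to the proof of \cite[Theorem~3.4]{ruberman2002positive} instead of writing out the additivity formula you state; the concerns you flag about additivity with respect to $\Theta$ and compatibility of perturbations are precisely what that reference handles in the ordinary case and carry over here without change.
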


\begin{proof}
By \cite{ruberman2002positive} Corollary 5.2, the space of psc metrics on $X$ is nonempty and has infinitely many path components. By the results on how a surgery preserves the psc metrics (see \cite{SY} and \cite{GL}), the space of psc metrics on \[
X\# (\S^1 \times \S^3)
\]  is nonempty. By \cite{ruberman2002positive} Theorem 4.1, $X$ supports a diffeomorphism $g$ such that $SW_{tot}(g,\ss) \neq 0$. By a smooth isotopy of $f$, we can find a fixed point of $f$ and do the connected sum over that point, and get the diffeomorphism $f:= g\# (id_{\S^1 \times \S^3})$ on $X\# (\S^1 \times \S^3)$. Now we can apply Theorem \ref{thm:formula-for-total-inv} and get 
\[
SW_{tot}^\Theta(f,\ss\# \ss_0) = SW_{tot}(g,\ss) \neq 0,
\]
where $\ss_0$ is the unique $\text{spin}^c$-structure of $\S^1 \times \S^3$. 

Now we claim that $PSC(X\# (\S^1 \times \S^3))$ has infinitely many path components. Indeed, if $f_k^*g_0$ and $f_l^*g_0$ are in the same component of $PSC(X\# (\S^1 \times \S^3))$ for different integer $k$ and $l$, then there exists a path $g_t$ in $PSC(X\# (\S^1 \times \S^3))$ connecting $f_k^*g_0$ and $f_l^*g_0$, which indicates $SW_{tot}^\Theta(f_{k-l},\ss\# \ss_0) = 0$ by Theorem \ref{thm:component-trivial}. But this means that $SW_{tot}^\Theta(f,\ss\# \ss_0) = 0$ by definition (see the proof of \cite{ruberman2002positive} Theorem 3.4).
\end{proof}

\subsection{Path components of $\text{Diff}(X)$ for nonsimply connected manifold $X$}
We can generalize half-total invariant to nonsimply connected manifolds. 

\subsection{Family surgery on a homologically trivial but homotopically nontrivial loop}
To construct a nontrivial example, we use the construction of Gompf (\cite{Gompf95}, see also \cite{gompf19994} Theorem 10.2.10) that can construct a symplectic manifold with desired fundamental group. 

Use the notations in \cite{gompf19994} Theorem 10.2.10. Let $F$ be a genus $2$ surfaces with circles $\alpha_1, \alpha_2, \beta_1, \beta_2$ that represent a basis of $H^1(F;\Z)$ and $\alpha_i \alpha_j=0=\beta_i \beta_j, \alpha_i\beta_j = \delta_{ij}$. Take a $2$-torus $T^2$ with generating circles $\alpha, \beta$. In the product $F\times T^2$, take a collection of tori $T_i = \beta_i \times \alpha (i = 1,2)$ and $T_0 = \{pt\} \times T^2$. Purturb these tori and the product symplectic form $\omega$ on $F\times T^2$ such that the resulting tori $\{T_i'\}_0^2$ are disjoint symplectic submanifolds in $(F\times T^2, \omega')$. Let $X$ to be the symplectic normal connected sum of $F\times T^2$ and $3$ copies of $E(1)$ along each torus $T_i' \subset F \times T^2$ and a genetic fiber in each copy of $E(1)$. Then $X$ is symplectic $4$-manifold with a symlectic form $\omega_X$ and $\pi_1(X) = \langle \alpha_1, \alpha_2\rangle$.

Take a reflection $r$ of $F$ that fixes $\beta_i$ and reverse $\alpha_i$. This map can be extended to an involution of $X$, and we still call it $r$. The loop we choose to do the surgery is $\gamma = (\alpha_1\alpha_2\alpha_1^{-1}\alpha_2^{-1})(\alpha_2^{-1}\alpha_1^{-1}\alpha_2\alpha_1)$. $\gamma$ is a commutator so it is trivial in the homology. As the involution $r$ sends $\alpha_i$ to $\alpha_i^{-1}$, we see that $r(\gamma) = \gamma^{-1}$. 

From the construction of $X$, $\chi(X) = \sigma(X) = 0$. So the formal dimension of the moduli space with the class $c_1(X, \omega_X)$ is $0$. Moreover,
\[
SW(X, c_1(X, \omega_X)) = \pm 1.
\]
Let $E_X$ to be the mapping torus of $r$. $E_X$ satisfies the condition of Theorem \ref{thm:nullhomology-nonorientable}. Let $X'$ be the surgery manifold of $X$ along $\gamma$.

Let $E_{X'}$ be the resulting mapping torus after the family surgery. Then 
\[
FSW(E_{X'}, s')= SW(X, c_1(X, \omega_X)) = \pm 1.
\]
Therefore, there exists an exotic diffeomorphism of $X'$.

\bibliographystyle{alpha}
\bibliography{./diff}
\end{document}